\numberwithin{equation}{section}
\newtheorem{theorem}{Theorem}[section]
\newtheorem{lemma}[theorem]{Lemma}
\newtheorem{proposition}[theorem]{Proposition}
\newtheorem{definition}[theorem]{Definition}
\newtheorem{conjecture}[theorem]{Conjecture}
\newtheorem{corollary}[theorem]{Corollary}
\newtheorem{example}[theorem]{Example}
\theoremstyle{remark}
\newtheorem{rem}[theorem]{Remark}
\newenvironment{customthm}[1]
  {\innercustomthm}
  {\endinnercustomthm}
\newcommand{\clr}{rgb:black,2;blue,2;red,0}
\tikzset{anchorbase/.style={baseline={([yshift=-0.5ex]current bounding box.center)}}}
\tikzset{wipe/.style={white,line width=4pt}}
\newcommand{\cred}{rgb:black,.5;blue,0;red,1.5}
\newcommand{\cgreen}{rgb:black,0;blue,2;red,2}
\DeclareFontFamily{OT1}{pzc}{}
\DeclareFontShape{OT1}{pzc}{m}{it}{ <-> s*[1.2] pzcmi7t }{}
\DeclareMathAlphabet{\mathpzc}{OT1}{pzc}{m}{it}
\newcommand{\AH}{\mathpzc{AH}}
\newcommand{\HH}{\mathpzc{H}}
\newcommand{\W}{\mathpzc{Web}}
\newcommand{\AW}{\mathpzc{Web}^\bullet}  
\newcommand{\AWC}{\W^{\bullet\prime}}
\newcommand{\ScatDJM}{\mathpzc{S}^{\text{DJM}}}
\newcommand{\Sch}{\mathpzc{Schur}}
\newcommand{\ASch}{\mathpzc{Schur}^{{\hspace{-.03in}}\bullet}} 
\newcommand{\ASchC}{\mathpzc{Schur}^{{\hspace{-.03in}}\bullet\prime}}
\newcommand{\WSch}{\mathpzc{wSchur}}
\newcommand{\Mat}{\text{Mat}}
\newcommand{\PMat}{\text{ParMat}}
\newcommand{\Par}{\text{Par}}
\newcommand{\SST}{\text{SST}}
\newcommand{\D}{\text{D}}
\newcommand{\x}{\textsc{x}}
\newcommand{\bfu}{\mathbf{u}}
\newcommand\C{\mathbb{C}}
\newcommand\Z{\mathbb{Z}}
\newcommand\N{\mathbb{N}}
\newcommand\kk{\Bbbk}
\newcommand\la{\lambda}
\newcommand{\Hom}{{\rm Hom}}
\newcommand{\End}{{\rm End}}
\newcommand{\rot}{\rotatebox[origin=c]{180}}
\newcommand{\arxiv}[1]{\href{http://arxiv.org/abs/#1}{\tt arXiv:\nolinkurl{#1}}}
\def\std{\text{ST}}
\def\s{\mathfrak s}
\def\t{\mathfrak t}
\def\Sc{{\mathcal S}^{\text{DJM}}}
\newcommand{\str}{\begin{tikzpicture}[baseline = 10pt, scale=0.4, color=\clr]
            \draw[-,thick] (0,0.5)to[out=up,in=down](0,1.7);
            \draw (0,0.2) node{$\scriptstyle 1$};
\end{tikzpicture} 
}
\newcommand{\stra}{\begin{tikzpicture}[baseline = 10pt, scale=0.4, color=\clr]
            \draw[-,line width=1.6pt] (0,0.5)to[out=up,in=down](0,1.7);
            \draw (0,0.2) node{$\scriptstyle a$};
\end{tikzpicture} 
}
\newcommand{\stru}{\begin{tikzpicture}[baseline = 10pt, scale=0.4, color=\cred]
\draw[-,line width=1pt] (0,0.4) to (0,1.7);
\draw(0,0.1) node {$\scriptstyle u$};
\end{tikzpicture}
}
\newcommand{\strui}{\begin{tikzpicture}[baseline = 10pt, scale=0.4, color=\cred]
\draw[-,line width=1pt] (0,0.5) to (0,1.7);
\draw(0,0.1) node {$\scriptstyle u_i$};
\end{tikzpicture}
}
\newcommand{\bdot}{ node[circle, draw, fill=\clr, thick, inner sep=0pt, minimum width=3pt]{}}
\newcommand{\merge}
{\begin{tikzpicture}[baseline = -.5mm,scale=.8,color=\clr]
	\draw[-,line width=1pt] (0.28,-.3) to (0.08,0.04);
	\draw[-,line width=1pt] (-0.12,-.3) to (0.08,0.04);
	\draw[-,line width=1.5pt] (0.08,.4) to (0.08,0);
        \node at (-0.22,-.4) {$\scriptstyle a$};
        \node at (0.35,-.4) {$\scriptstyle b$};\node at (0,.55){$\scriptstyle a+b$};\end{tikzpicture} }
\newcommand{\splits}
{\begin{tikzpicture}[baseline = -.5mm, scale=.8,color=\clr]
	\draw[-,line width=1.5pt] (0.08,-.3) to (0.08,0.04);
	\draw[-,line width=1pt] (0.28,.4) to (0.08,0);
	\draw[-,line width=1pt] (-0.12,.4) to (0.08,0);
        \node at (-0.22,.5) {$\scriptstyle a$};
        \node at (0.36,.5) {$\scriptstyle b$};
        \node at (0.1,-.45){$\scriptstyle a+b$};
\end{tikzpicture}}
\newcommand{\cross}{\begin{tikzpicture}[baseline=-.5mm,scale=.8,color=\clr]
	\draw[-,thick] (-0.3,-.3) to (.3,.4);
	\draw[-,thick] (0.3,-.3) to (-.3,.4);
\end{tikzpicture}}
\newcommand{\crossone}{\begin{tikzpicture}[baseline=-.5mm,scale=.8,color=\clr]
	\draw[-,thick] (-0.3,-.3) to (.3,.4);
	\draw[-,thick] (0.3,-.3) to (-.3,.4);
        \node at (0.3,-.45) {$\scriptstyle 1$};
        \node at (-0.3,-.45) {$\scriptstyle 1$};
\end{tikzpicture}}
\newcommand{\crossing}{\begin{tikzpicture}[baseline=-.5mm,scale=.8, color=\clr]
	\draw[-,thick] (-0.3,-.3) to (.3,.4);
	\draw[-,thick] (0.3,-.3) to (-.3,.4);
        \node at (0.3,-.4) {$\scriptstyle b$};
        \node at (-0.3,-.4) {$\scriptstyle a$};
         \node at (0.3,.55) {$\scriptstyle a$};
        \node at (-0.3,.55) {$\scriptstyle b$};
\end{tikzpicture}}
\newcommand{\dotgen}
{\begin{tikzpicture}[baseline = 3pt, scale=0.4, color=\clr]
\draw[-,thick] (0,0) to[out=up, in=down] (0,1.4);
\draw(0,0.6) \bdot;
\node at (0,-.3) {$\scriptstyle 1$};
\end{tikzpicture}}
\newcommand{\rightcrossing}{\begin{tikzpicture}[baseline = 10pt, scale=.8, color=\clr]
 \draw[-,line width=1.2pt] (-0.3,0) to (.3,.7);
\draw[-,line width=1pt,color=\cred] (0.3,0) to (-.3,.7);
\draw(-.3,-0.1) node{$\scriptstyle a$};
\draw (.3, -0.1) node{$\scriptstyle \red{u}$};
\end{tikzpicture}}
\newcommand{\leftcrossing}{\begin{tikzpicture}[baseline = 10pt, scale=.8, color=\clr]
 \draw[-,line width=1pt,color=\cred] (-0.3,0) to (.3,.7);
\draw[-,line width=1.2pt] (0.3,0) to (-.3,.7);
\draw(-.3,-.1) node{$\scriptstyle \red{u}$};
\draw (.3, -.1) node{$\scriptstyle a$};
\end{tikzpicture}}
\newcommand{\upliftui}{\begin{tikzpicture}[baseline = 10pt, scale=.8, color=\clr]
 \draw[-,line width=1.2pt] (-0.3,.3) to (.3,1);
\draw[-,line width=1pt,color=\cred] (0.3,.3) to (-.3,1);
\draw(0.3,.15) node {$\scriptstyle \red{u_i}$};
\draw (-.3,0.15) node{$\scriptstyle a$};
\draw (.3,1.15) node{$\scriptstyle {a}$};
\end{tikzpicture}}
\newcommand{\upliftuip}{\begin{tikzpicture}[baseline = 10pt, scale=.8, color=\clr]
 \draw[-,line width=1.2pt] (-0.3,.3) to (.3,1);
\draw[-,line width=1pt,color=\cred] (0.3,.3) to (-.3,1);
\draw(0.35,.15) node {$\scriptstyle \red{u_{i+1}}$};
\draw (-.3,0.15) node{$\scriptstyle a$};
\draw (.3,1.15) node{$\scriptstyle {a}$};
\end{tikzpicture}}
\newcommand{\downliftui}{\begin{tikzpicture}[baseline = 10pt, scale=.8, color=\clr]
 \draw[-,line width=1pt,color=\cred] (-0.3,.3) to (.3,1);
 \draw(-.3,.15) node {$\scriptstyle \red{u_i}$};
\draw[-,line width=1.2pt] (0.3,.3) to (-.3,1);
\draw (-.3,1.15) node{$\scriptstyle a$};
\draw (.3,0.15) node{$\scriptstyle {a}$};
\end{tikzpicture}}
\newcommand{\downliftuip}{\begin{tikzpicture}[baseline = 10pt, scale=.8, color=\clr]
 \draw[-,line width=1pt,color=\cred] (-0.3,.3) to (.3,1);
 \draw(-.35,.15) node {$\scriptstyle \red{u_{i+1}}$};
\draw[-,line width=1.2pt] (0.3,.3) to (-.3,1);
\draw (-.3,1.15) node{$\scriptstyle a$};
\draw (.3,0.15) node{$\scriptstyle {a}$};
\end{tikzpicture}}
\newcommand{\wkdota}{\begin{tikzpicture}[baseline = 3pt, scale=0.4, color=\clr]
\draw[-,line width=1.2pt] (0,0) to[out=up, in=down] (0,1.4);
\draw(0,0.6) \bdot; 
\draw (0.7,0.6) node {$\scriptstyle \omega_r$};
\node at (0,-.3) {$\scriptstyle a$};
\end{tikzpicture} }
\newcommand{\wkdotaa}{\begin{tikzpicture}[baseline = 3pt, scale=0.4, color=\clr]
\draw[-,line width=1.2pt] (0,0) to[out=up, in=down] (0,1.4);
\draw(0,0.6) \bdot; 
\draw (0.7,0.6) node {$\scriptstyle \omega_a$};
\node at (0,-.3) {$\scriptstyle a$};
\end{tikzpicture} }
\newcommand{\wkdotr}{
\begin{tikzpicture}[baseline = 3pt, scale=0.4, color=\clr]
\draw[-,line width=1.2pt] (0,0) to[out=up, in=down] (0,1.4);
\draw(0,0.6) \bdot; 
\draw (0.7,0.6) node {$\scriptstyle \omega_r$};
\node at (0,-.3) {$\scriptstyle r$};
\end{tikzpicture}
}
\newcommand{\wdotgen}{
\begin{tikzpicture}[baseline = 3pt, scale=0.4, color=\clr]
\draw[-,line width=1.2pt] (0,0) to[out=up, in=down] (0,1.4);
\draw(0,0.6) \bdot; 
\draw (0.7,0.6) node {$\scriptstyle \omega_1$};
\node at (0,-.3) {$\scriptstyle 1$};
\end{tikzpicture}}
\newcommand{\blue}[1]{{\color{blue}#1}}
\newcommand{\red}[1]{{\color{red}#1}}
\begin{document}
\setlength{\baselineskip}{17pt}
\title{Affine and cyclotomic Schur categories}

\author{Linliang Song}
\address{School of Mathematical Science, Key Laboratory of Intelligent Computing and Applications(Ministry of Education), Tongji University, Shanghai, 200092, China}\email{llsong@tongji.edu.cn}

 \author{Weiqiang Wang}
 \address{Department of Mathematics, University of Virginia,
Charlottesville, VA 22903, USA}\email{ww9c@virginia.edu}

\subjclass[2020]{Primary 18M05, 20C08.}

\keywords{Affine web category, affine Schur category, cyclotomic Schur category, cyclotomic Schur algebra.}

\begin{abstract}
Using the affine web category introduced in a prequel as a building block, we formulate a diagrammatic $\Bbbk$-linear monoidal category, {\em the affine Schur category}, for any commutative ring $\Bbbk$. We then formulate the {\em cyclotomic Schur categories}. Integral bases consisting of elementary diagrams are obtained for affine and cyclotomic Schur categories. A second diagrammatic basis, called a double SST basis, for any such cyclotomic Schur category is also established, leading to a higher level RSK correspondence. We show that the path algebras with the double SST bases are isomorphic to degenerate cyclotomic Schur algebras of Dipper-James-Mathas with their cellular bases, providing a first diagrammatic presentation of the latter. The presentations for the affine and cyclotomic Schur categories are simplified for $\Bbbk=\mathbb C$.  
\end{abstract}

\maketitle

\setcounter{tocdepth}{1}
\tableofcontents

%
\section{Introduction}

\subsection{Schur algebras}

Schur algebras associated to Hecke algebras of type $A$ are quasi-hereditary algebras in the sense of \cite{CPS} which enjoy favorable homological properties. Dipper-James-Mathas \cite{DJM98B} and Du-Scott \cite{DS00} independently generalized the construction associated to Hecke algebras of type $B$. These constructions were then generalized in \cite{DJM98} to cyclotomic $q$-Schur algebras, which are by definition the endomorphism algebras of a direct sum of permutation modules over cyclotomic Hecke algebras. Cyclotomic $q$-Schur algebras are quasi-hereditary and admit cellular bases. The discussions above apply to both the degenerate and quantum cases. 

Representations of cyclotomic $q$-Schur algebras have been well studied and they categorify the higher level Fock spaces (see \cite{Y06, R08} and \cite{RSVV, Los16, Web17}); many of these works are in a more general setup of rational Cherednik algebras.

\subsection{Affine and cyclotomic webs}

Throughout this paper, all categories and functors will be $\kk$-linear for an arbitrary commutative ring $\kk$ with $1$. Brundan, Entova-Aizenbud, Etingof and Ostrik \cite{BEEO} formulated a strict monoidal category $\W$,  called a (polynomial) web category $\W$. Th category $\W$ is a simpler $\mathfrak{gl}_\infty$-version of Cautis-Kamnitzer-Morrison $\mathfrak{sl}_N$-web category \cite{CKM}. They further showed that $\W$ is isomorphic to a Schur category $\Sch$ whose path algebras are Schur algebras over $\kk$. This in particular provides a diagrammatic presentation for the Schur algebras. An integral (double ladder) basis for the $\mathfrak{sl}_N$-web category was constructed by Elias \cite{El15} and two integral bases for $\W$ were given in \cite{BEEO}. 

A monoidal category, called the affine web category and denoted $\AW$, was introduced in our prequel \cite{SWweb} and in \cite{DKM25}. The category  $\AW$ admits certain quotient categories called cyclotomic web categories and denoted $\W_\bfu$, for $\bfu \in \kk^\ell$ and $\ell \ge 1$. It was shown \cite{SWweb} that $\W_\bfu$ is isomorphic to the $W$-Schur category $\WSch_{-\bfu}$ whose path algebras are idempotent subalgebras of $W$-Schur algebras which arise from Brundan-Kleshchev's higher level Schur duality for finite W-algebras of type $A$ \cite{BK08}. Thanks to such an isomorphism, $\W_\bfu$ has a well-developed representation theory over $\kk=\C$. 

\subsection{Goal}

In this paper we introduce a new $\kk$-linear strict diagrammatic monoidal category, {\em the affine Schur category} which will be denoted $\ASch$, building on the groundwork laid by the prequel \cite{SWweb}. We further formulate cyclotomic Schur categories $\Sch_\bfu$ as distinguished quotient categories of $\ASch$, for $\bfu \in \kk^\ell$. They fit together with the affine/cyclotomic web categories into the following commutative diagram of categories (with all vertical arrows as inclusions of full subcategories):  
\begin{align}  \label{diag:HWS}
\xymatrix{
\HH \ar[r]   \ar[d] 
&\AH \ar[r]  
 \ar[d]  & \HH_{\bfu}
 \ar[d]  
  \\
\W \ar[r]  \ar[d]^{\cong}
& \AW \ar[r]  \ar@[red][d]  
& \W_\bfu   \ar@[red][d] 
\\
\Sch \ar[r]
 & \red{\ASch} \ar@[red][r] & \red{\Sch_\bfu}
}
\end{align}
The top row of the diagram \eqref{diag:HWS} consists of the well-known (degenerate) Hecke category  $\HH$  (with $\cross$ as a generating morphism), the affine Hecke category $\AH$ (with $\cross, \dotgen$ as generating morphisms) and the cyclotomic Hecke categories $\HH_\bfu$. As explained in \cite{SWweb}, the vertical arrows from the top row to the second row are fully faithful functors. Building on the intermediate constructions $\AW \rightarrow \W_\bfu$ in the prequel, we construct in this paper the categories $\ASch$ and $\Sch_\bfu$ and hence complete the right bottom square of the diagram \eqref{diag:HWS}, where the vertical arrows are shown to be fully faithful functors as well. 

We show that the path algebras of $\ASch$ (respectively, $\Sch_\bfu$) are isomorphic to degenerate versions of higher level affine $q$-Schur algebras in \cite{MS21} (and respectively, cyclotomic $q$-Schur algebras in \cite{DJM98}), providing diagrammatic presentations for these algebras. Alternatively, one defines a category $\ScatDJM_\bfu$ out of the cyclotomic Schur algebras, and we establish an isomorphism of categories $\Sch_\bfu \cong \ScatDJM_\bfu$. 

This paper will serve as a foundation on a research program aiming at formulating new monoidal categories of various types and their cyclotomic quotients, which admit rich representation theories and natural connections to categorification. See \S\ref{subsec:future} for more details. 

\subsection{A basis theorem for $\ASch$}

By Definition~\ref{def-affine-Schur}, the category $\ASch$ extends the generating objects $\stra$ ($a\in \Z_{\ge 1}$) for $\AW$ to include new generating objects $\stru$ ($u\in \kk$); it extends the generating morphisms $\merge, \splits, \crossing$ and $\wkdotaa$,
for $a,b \in \Z_{\ge 1}$, for $\AW$ to include two new types of generating morphisms
\begin{equation}
\label{rightleftcrossinggen1}
\begin{tikzpicture}[baseline = 10pt, scale=.8, color=\clr]
 \draw[-,line width=1.5pt] (-0.3,.3) to (.3,1);
\draw[-,line width=1pt,color=\cred] (0.3,.3) to (-.3,1);
\draw(-.3,0.15) node{$\scriptstyle a$};
\draw (.3, 0.15) node{$\scriptstyle \red{u}$};
\end{tikzpicture} \;\; (\text{traverse-up}), 
                \qquad
\begin{tikzpicture}[baseline = 10pt, scale=.8, color=\clr]
 \draw[-,line width=1pt,color=\cred] (-0.3,.3) to (.3,1);
\draw[-,line width=1.5pt] (0.3,.3) to (-.3,1);
\draw(-.3,0.15) node{$\scriptstyle \red{u}$};
\draw (.3, 0.15) node{$\scriptstyle a$};
\end{tikzpicture}  \;\; (\text{traverse-down}), 
\end{equation}
subject to relations in \eqref{webassoc}--\eqref{intergralballon} and \eqref{adaptorR}--\eqref{adaptermovemerge}. Diagrammatrics similar to \eqref{rightleftcrossinggen1} appeared in Webster's work \cite{WebMemoirs}. 

The monoidal category $\ASch$ over $\kk =\C$ admits a much simpler  presentation given in Section~ \ref{sec:C} with rather simple relations, that is, the more involved relations in Definition~\ref{def-affine-Schur} can be derived from these simple relations over $\C$. 

Our first main result is the construction of a faithful polynomial representation of $\ASch$. 

 \begin{customthm} {\bf A}   [Theorem \ref{thm:polyRep}, Remark \ref{rem:faithfulofF}]
 \label{th:A}
There is a $\kk$-linear monoidal 
faithful functor $\mathcal F$ from $\ASch$ to the category  $\mathpzc{Vec}_\kk$ of free $\kk$-modules, which sends objects $a\in \N$ to $ \text{Sym}_a$, $u\in \kk$ to $ \kk$, and sends the generating morphisms to the linear maps in the same symbols defined in \S\ref{polynomilrep}.
 \end{customthm} 

Recall \cite{SWweb} that the Hom-space $\Hom_{\AW}(\overline{\mu},\overline{\la})$,  for strict compositions $\overline{\mu}, \overline{\la}$, admits an elementary diagram basis $\PMat_{\overline{\la}, \overline{\mu}}$, which can also be identified with partition-enhanced $\N$-matrices with $\overline{\la}, \overline{\mu}$ as row/column sum vectors. An elementary diagram is a reduced chicken foot diagram connecting $\overline{\mu},\overline{\la}$ (cf. \cite{BEEO}) with decoration by ``elementary dot packets" parametrized by partitions. 

An arbitrary object in $\ASch$ is of the form $\lambda^{(0)} \red{u_1} \lambda^{(1)} \red{u_2}  \ldots \red{u_{\ell}} \lambda^{(\ell)}$. When fixing $\bfu =(u_1, \ldots, u_\ell)$ in the background, such an object is simply identified with a strict multicomposition $\la=(\lambda^{(0)} \red{,} \lambda^{(1)} \red{,}  \ldots  \red{,} \lambda^{(\ell)})$. There is a natural forgetful map sending $\la$ to a strict composition $\overline{\la}$ (by ignoring the color of the $\ell$ commas and empty components). 
By Lemma \ref{lem:Homzero}, the Hom-space $\Hom_{\ASch}(\mu,\lambda)$ is zero unless $\lambda,\mu \in \Lambda_{\text{st}}^{1+\ell}(m)$, for some $m$ and $\ell$. In this case, we define $\PMat_{\lambda,\mu}$ as in \eqref{PMatblock}, which can be viewed as a $(1+\ell) \times (1+\ell)$-block matrix generalization (or a multicomposition generalization) of $\PMat_{\overline{\la}, \overline{\mu}}$ above. Diagrammatically, $\PMat_{\lambda,\mu}$ can be represented by ornamented elementary chicken foot diagrams by making (and fixing) a choice of adding $\ell$ red strands connecting the corresponding  $\strui$ (i.e., the red commas above) in $\la$ and $\mu$ in a diagram of $\PMat_{\overline{\la}, \overline{\mu}}$. 

 \begin{customthm} {\bf B} [Theorem~ \ref{thm:basisASchur}]
  \label{th:B}
$\PMat_{\lambda,\mu}$ forms a basis for the Hom-space $\Hom_{\ASch}(\mu,\lambda)$, for any $\lambda,\mu \in \Lambda_{\text{st}}^{1+\ell}(m)$. 
 \end{customthm} 
 
To show that $\PMat_{\lambda,\mu}$ is a spanning set for $\Hom_{\ASch}(\mu,\lambda)$, it suffices to show that multiplying an arbitrary generating morphism with any diagram in $\PMat_{\lambda,\mu}$ can be rewritten (by using the defining relations for $\ASch$) as a linear combination of diagrams in $\PMat_{\lambda,\mu}$. The proof of linear independence of $\PMat_{\lambda,\mu}$ follows by showing that they act as linearly independent operators on the representation of $\ASch$ given in Theorem~\ref{th:A}.

\subsection{Bases for cyclotomic Schur categories $\Sch_\bfu$}

Denote by $\SST(\lambda,\mu)$ the set of semistandard $\la$-tableaux of type $\mu$, for $\la \in \Par^\ell(m)$ and $\mu\in \Lambda_{\text{st}}^\ell(m)$. They can be represented by reduced chicken foot diagrams from $\la$ to $\mu$; see \S\ref{subsec:SST}. Recall that $\div$ denotes the symmetry by  reflection across the  horizontal axis \eqref{anti-autocyc}. 

 \begin{customthm} {\bf C}   [Theorem~\ref{thm:SSTbasis}(1)]
 \label{th:C}
For any $\mu,\nu\in \Lambda_{\text{st}}^\ell(m)$, the Hom-space $\Hom_{\Sch_{\mathbf u}}(\mu,\nu)$ has a double SST basis 
\begin{align}  \label{eq:SST2}
\bigcup_{\lambda\in \Par^\ell(m)}
\big\{ [\mathbf T]\circ [\mathbf S]^\div \mid \mathbf T\in \SST(\lambda,\nu), \mathbf S\in \SST(\lambda,\mu) \big\}. 
\end{align}
 \end{customthm} 
In case when $\ell=1$, the basis in Theorem~\ref{th:C} appeared in \cite{BEEO} as a reformulation of Green's codeterminant basis \cite{Gre93} for Schur algebras. A basis over a commutative ring $\kk$ for the $\mathfrak{sl}_n$-web category of \cite{CKM} was constructed earlier by Elias \cite{El15}. 

One can form a category $\ScatDJM_\bfu$ with the degenerate cyclotomic Schur algebras as path algebras. We further specify certain distinguished morphisms in  $\ScatDJM_\bfu$ and represent them in the same symbols as for the generating morphisms in $\Sch_\bfu$; see \S\ref{subsec:functorG}. 

 \begin{customthm} {\bf D} [Theorem~\ref{thm:G}, Proposition~\ref{pro:mapsdjm}, Theorem~\ref{thm:SSTbasis}(3)]
   \label{th:D}
 There is an isomorphism of categories $\mathcal G: \Sch_{\mathbf u}\rightarrow \ScatDJM_{\mathbf u}$, which sends an object $\mu$ to $\mu$ and morphisms to morphisms in the same symbols. 
The induced algebra isomorphism $\Sch_\mathbf u(m)\cong \Sc_{m,\mathbf u}$  matches the double SST basis with the cellular basis \eqref{basisofdjm} for $\Sc_{m,\mathbf u}$.  
 \end{customthm} 
In the theorem above and also in \eqref{Schurm}, we have denoted the path algebras of $\Sch_\bfu$, $\Sch_\bfu(m)= \oplus_{\mu,\nu\in \Lambda_{\text{st}}^\ell(m)}\Hom_{\Sch_{\bfu}}(\mu,\nu)$, for $m\ge 1$. Hence, we have obtained a diagrammatic presentation of the cyclotomic Schur algebra $\Sc_{m,\mathbf u}$. In the case when $\ell=1$, our isomorphism in Theorem~\ref{th:D}  (with a new proof over any field $\kk$) specializes to the isomorphism $\W \cong \Sch$ due to \cite{BEEO}. 
 
 The proofs of Theorems \ref{th:C} and \ref{th:D} go hand in hand. One first proves that \eqref{eq:SST2} forms a spanning set for $\Hom_{\Sch_{\mathbf u}}(\mu,\nu)$. Then one constructs a natural functor $\mathcal G: \Sch_{\mathbf u}\rightarrow \Sc_{\mathbf u}$ and shows that the spanning set \eqref{eq:SST2} is mapping by $\mathcal G$ to the cellular basis for $\Hom_{\Sc_{\mathbf u}}(\mu,\nu)$ in \cite{DJM98}. The theorems follow from these. 

 Recall from \eqref{def:barlambdaum}
the subset $\Lambda_{\text{st}}^{\emptyset,\ell}(m)$ of $\Lambda_{\text{st}}^{1+\ell}(m)$ consisting of strict  multicompositions of the form $(\emptyset, \la^{(1)}, \ldots, \la^{(\ell)})$. This can be clearly identified with $\Lambda_{\text{st}}^{\ell}(m)$, except that the diagrams for morphisms between objects in $\Lambda_{\text{st}}^{\emptyset,\ell}(m)$ start with a red strand on the left. 
Denote by $\PMat_{\lambda,\mu}^\flat$ from \eqref{PM0} the set of bounded-partition-enhanced $\ell\times \ell$-block $\N$-matrices; diagrammatically these block matrices represent elementary diagrams with bounded elementary dot packets.

 \begin{customthm} {\bf E} [Theorem~\ref{thm:basis2CycSchur}]
   \label{th:E}
    Suppose $\mu,\nu\in \Lambda_{\text{st}}^\ell(m)$. Then $\PMat_{\nu,\mu}^\flat$ forms a basis for $\Hom_{\Sch_{\mathbf u}}(\mu,\nu)$.
 \end{customthm} 
In case when $\ell=1$, the basis in Theorem~\ref{th:E} is known as the reduced chicken foot diagram basis for $\Sch$ \cite{BEEO}.

As a consequence of Theorem~\ref{th:E}, one obtains (see \S\ref{subsec:proofbasis}) a natural functor $\W_\bfu \rightarrow \Sch_\bfu$, which is clearly fully faithful by comparing a spanning set $\PMat_{\la,\mu}^\ell$ of the Hom-space $\Hom_{\W_\bfu} (\mu,\la)$ in $\W_\bfu$ (see \cite[Lemma~4.2]{SWweb}) with the basis of the corresponding Hom-space in $\Sch_\bfu$ in Theorem~\ref{th:E}. This completes the proof of the basis theorem for $\W_\bfu$ in \cite[Theorem~C or 4.3]{SWweb}. 

Theorems \ref{th:C} and \ref{th:E} naturally lead to a conjecture on higher level RSK correspondence between the sets $\PMat^\flat_{\nu,\mu}$ and $\SST^2_{\nu,\mu}$; see Conjecture~\ref{conjRSK}.




\subsection{Past and future works}
 \label{subsec:future}

A quantum version of the main constructions in \cite{SWweb} and this paper will appear in a sequel \cite{SSW25}. In particular, we shall show that cyclotomic $q$-Schur categories provide diagrammatic presentations for cyclotomic $q$-Schur algebras in \cite{DJM98}.  

Wada \cite{Wad11} gave a presentation for cyclotomic $q$-Schur algebras based on some non-standard quantum group like algebras, which looks quite involved. We are not aware of any connection with our simple diagrammatic construction.
 
Maksimau and Stroppel \cite{MS21} constructed a monoidal category called higher level affine Hecke category with path algebras called higher level affine Hecke algebras; this category appears as a monoidal subcategory of a $q$-version of $\ASch$ (see \cite{SSW25}) with generating objects being a thin strand $\str$ and $\stru$ ($u\in \kk$) 
and generating morphisms $\crossone$, $\dotgen$, and thin mixed color crossings (i.e. \eqref{rightleftcrossinggen1} with $a=1$). Then they followed \cite{DJM98} to formulate higher level affine $q$-Schur algebras as endomorphism algebras of a sum of permutation modules over higher level affine Hecke algebras. The path algebra of $\ASch$ can be viewed as a degenerate version of higher level $q$-Schur algebras. 
A representation and a basis of a higher level affine $q$-Schur algebra were constructed in \cite{MS21}. Theorems {\bf A} and {\bf B} can be viewed as a monoidal categorical enhancement of (degenerate versions of) these constructions. Without connecting to webs, the approach {\em loc. cit.} does not lead to a presentation for higher level affine Schur algebras or cyclotomic $q$-Schur algebras even though the abstract therein seems to give a reader an opposite impression. 

The constructions in \cite{SWweb} and this paper of (degenerate) affine/cyclotomic web and Schur categories admit several (quantum, graded, beyond type A) ramifications and generalizations; see \cite[\S1.7]{SWweb} for more discussions and references at the level of web categories. 

To our best knowledge, there was little hint of interest in the literature in developing the representation theory of web categories. As a point of departure, a main goal for our constructions of the cyclotomic web/Schur categories is to produce diagrammatically presented algebras with rich representation theories and natural categorical actions. As shown in \cite{SWweb} and this paper, for type $A$, the resulting algebras were well-known important algebras with well-developed representation theory and their diagrammatic presentations are new here; for quantum type $A$, the resulting algebras from cyclotomic web category will be new. 

In the sequels we shall generalize the main constructions in \cite{SWweb} and this paper in several directions, and the strategies developed in these two papers will continue to be applicable. 
Within type $A$, we can extend to affine rational (i.e., oriented) Schur categories and their cyclotomic quotients. As further generalizations, we shall construct degenerate/quantum affine Schur categories of types $Sp/O, Q,$ and $P$. The corresponding cyclotomic web/Schur categories will afford rich representation theories which are intimately related to the representations of cyclotomic web/Schur categories of type $A$. The representations of cyclotomic Schur categories of types $Sp/O$ shall further be connected to $\imath$canonical bases via $\imath$categorification.
In yet another direction, we shall develop a graded version, leading to a diagrammatic presentation of quiver Schur algebras and a thick calculus for 2-Kac-Moody categories. 

\subsection{Organization}
The paper is organized as follows. 

In Section~\ref{sec:AWeb}, the affine web category $\AW$ and its elementary diagram basis from \cite{SWweb} are reviewed. In Section~\ref{sec:ASchur}, we define the affine Schur category $\ASch$, construct its faithful polynomial representation (Theorem~\ref{th:A}), and use it to establish the elementary diagram basis for $\ASch$ (Theorem~\ref{th:B}). It follows that $\AW$ is a full subcategory of $\ASch$ with compatible elementary diagram bases. 
In Section~\ref{sec:cycSchur}, we formulate the cyclotomic Schur categories $\Sch_\bfu$. We establish a functor $\mathcal G: \Sch_\bfu \rightarrow \ScatDJM_\bfu$, which maps a double SST set to a cellular basis for $\Sc_\bfu$. 

In Section~\ref{sec:basis-cycschur}, we show the double SST set is a spanning set for the Hom-spaces in $\Sch_\bfu$ and hence a basis (Theorem~\ref{th:C}) when combined with results on the functor $\mathcal G$ established from Section~\ref{sec:cycSchur}. This also allows us to conclude that $\mathcal G$ is an isomorphism (Theorem~\ref{th:D}), leading to a diagrammatic presentation of the cyclotomic Schur algebras. 

In addition, we establish in  Section~\ref{sec:basis-cycschur} an elementary diagram basis for $\Sch_\bfu$ (Theorem~\ref{th:E}) by proving that it is a spanning set with the same cardinality as the double SST basis. It follows by the compatibility of the elementary diagrams that $\AW$ is a full subcategory of $\ASch$. A quick proof of the elementary diagram basis result for the cyclotomic web categories $\W_\bfu$ is also given (as promised in \cite{SWweb}). 
In Section~\ref{sec:C}, we give a simplified presentation of $\ASch$ over $\kk=\C$.

%

\section{The affine web category}
 \label{sec:AWeb}

In this section we recall some basic results for the affine web category from \cite{SWweb} (and also \cite{CKM, BEEO}).

\subsection{Definition of the affine web category} 

Throughout this paper, let $\kk$ be a commutative ring with $1$. All categories and functors will be $\kk$-linear without further mention.

The affine web category $\AW$ is a strict monoidal category with generating  objects $a\in \mathbb Z_{\ge1}$ drawn as a vertical strand labeled by $a$
\[
\begin{tikzpicture}[baseline = 10pt, scale=0.4, color=\clr]
            \draw[-,line width=1.5pt] (0,0.5)to[out=up,in=down](0,2.2);
            \draw (0,0.1) node{$ a$};
\end{tikzpicture} \; .
\]
We will use the usual string calculus for a strict monoidal category.

\begin{definition}
\label{def-affine-web}
\cite{SWweb}
The affine web category $\AW$ is the strict monoidal category with generating  objects $a\in \mathbb Z_{\ge1}$ and generating  morphisms  
\begin{align}
\label{merge+split+crossing}
\begin{tikzpicture}[baseline = -.5mm,color=\clr]
	\draw[-,line width=1pt] (0.28,-.3) to (0.08,0.04);
	\draw[-,line width=1pt] (-0.12,-.3) to (0.08,0.04);
	\draw[-,line width=1.5pt] (0.08,.4) to (0.08,0);
        \node at (-0.22,-.4) {$\scriptstyle a$};
        \node at (0.35,-.4) {$\scriptstyle b$};\node at (0,.55){$\scriptstyle a+b$};\end{tikzpicture} 
&:(a,b) \rightarrow (a+b),&
\begin{tikzpicture}[baseline = -.5mm,color=\clr]
	\draw[-,line width=1.5pt] (0.08,-.3) to (0.08,0.04);
	\draw[-,line width=1pt] (0.28,.4) to (0.08,0);
	\draw[-,line width=1pt] (-0.12,.4) to (0.08,0);
        \node at (-0.22,.5) {$\scriptstyle a$};
        \node at (0.36,.5) {$\scriptstyle b$};
        \node at (0.1,-.45){$\scriptstyle a+b$};
\end{tikzpicture}
&:(a+b)\rightarrow (a,b),&
\begin{tikzpicture}[baseline=-.5mm,color=\clr]
	\draw[-,line width=1pt] (-0.3,-.3) to (.3,.4);
	\draw[-,line width=1pt] (0.3,-.3) to (-.3,.4);
        \node at (0.3,-.45) {$\scriptstyle b$};
        \node at (-0.3,-.45) {$\scriptstyle a$};
         \node at (0.3,.55) {$\scriptstyle a$};
        \node at (-0.3,.55) {$\scriptstyle b$};
\end{tikzpicture}
&:(a,b) \rightarrow (b,a),
\end{align}
(called the merges, splits and crossings, respectively), and 
\begin{equation}
\label{dotgenerator}
\begin{tikzpicture}[baseline = 3pt, scale=0.5, color=\clr]
\draw[-,line width=1.5pt] (0,0) to[out=up, in=down] (0,1.4);
\draw(0,0.6) \bdot; 
\draw (0.7,0.6) node {$\scriptstyle \omega_a$};
\node at (0,-.3) {$\scriptstyle a$};
\end{tikzpicture}  
\;, 
\end{equation}
for $a,b \in \Z_{\ge 0}$, 
subject to the following relations \eqref{webassoc}--\eqref{dotmovesplitss}, for $a,b,c,d \in \Z_{\ge 1}$ with $d-a=c-b$:
\begin{align}
\label{webassoc}
\begin{tikzpicture}[baseline = 0,color=\clr]
	\draw[-,thick] (0.35,-.3) to (0.08,0.14);
	\draw[-,thick] (0.1,-.3) to (-0.04,-0.06);
	\draw[-,line width=1pt] (0.085,.14) to (-0.035,-0.06);
	\draw[-,thick] (-0.2,-.3) to (0.07,0.14);
	\draw[-,line width=1.5pt] (0.08,.45) to (0.08,.1);
        \node at (0.45,-.41) {$\scriptstyle c$};
        \node at (0.07,-.4) {$\scriptstyle b$};
        \node at (-0.28,-.41) {$\scriptstyle a$};
\end{tikzpicture}
&=
\begin{tikzpicture}[baseline = 0, color=\clr]
	\draw[-,thick] (0.36,-.3) to (0.09,0.14);
	\draw[-,thick] (0.06,-.3) to (0.2,-.05);
	\draw[-,line width=1pt] (0.07,.14) to (0.19,-.06);
	\draw[-,thick] (-0.19,-.3) to (0.08,0.14);
	\draw[-,line width=1.5pt] (0.08,.45) to (0.08,.1);
        \node at (0.45,-.41) {$\scriptstyle c$};
        \node at (0.07,-.4) {$\scriptstyle b$};
        \node at (-0.28,-.41) {$\scriptstyle a$};
\end{tikzpicture}\:,
\qquad
\begin{tikzpicture}[baseline = -1mm, color=\clr]
	\draw[-,thick] (0.35,.3) to (0.08,-0.14);
	\draw[-,thick] (0.1,.3) to (-0.04,0.06);
	\draw[-,line width=1pt] (0.085,-.14) to (-0.035,0.06);
	\draw[-,thick] (-0.2,.3) to (0.07,-0.14);
	\draw[-,line width=1.5pt] (0.08,-.45) to (0.08,-.1);
        \node at (0.45,.4) {$\scriptstyle c$};
        \node at (0.07,.42) {$\scriptstyle b$};
        \node at (-0.28,.4) {$\scriptstyle a$};
\end{tikzpicture}
=\begin{tikzpicture}[baseline = -1mm, color=\clr]
	\draw[-,thick] (0.36,.3) to (0.09,-0.14);
	\draw[-,thick] (0.06,.3) to (0.2,.05);
	\draw[-,line width=1pt] (0.07,-.14) to (0.19,.06);
	\draw[-,thick] (-0.19,.3) to (0.08,-0.14);
	\draw[-,line width=1.5pt] (0.08,-.45) to (0.08,-.1);
        \node at (0.45,.4) {$\scriptstyle c$};
        \node at (0.07,.42) {$\scriptstyle b$};
        \node at (-0.28,.4) {$\scriptstyle a$};
\end{tikzpicture}\:,
\\
\label{mergesplit}
\begin{tikzpicture}[baseline = 7.5pt,scale=.8, color=\clr]
	\draw[-,line width=1.2pt] (0,0) to (.275,.3) to (.275,.7) to (0,1);
	\draw[-,line width=1.2pt] (.6,0) to (.315,.3) to (.315,.7) to (.6,1);
        \node at (0,1.13) {$\scriptstyle b$};
        \node at (0.63,1.13) {$\scriptstyle d$};
        \node at (0,-.1) {$\scriptstyle a$};
        \node at (0.63,-.1) {$\scriptstyle c$};
\end{tikzpicture}
&=
\sum_{\substack{0 \leq s \leq \min(a,b)\\0 \leq t \leq \min(c,d)\\t-s=d-a}}
\begin{tikzpicture}[baseline = 7.5pt,scale=.8, color=\clr]
	\draw[-,thick] (0.58,0) to (0.58,.2) to (.02,.8) to (.02,1);
	\draw[-,thick] (0.02,0) to (0.02,.2) to (.58,.8) to (.58,1);
	\draw[-,thick] (0,0) to (0,1);
	\draw[-,line width=1pt] (0.61,0) to (0.61,1);
        \node at (0,1.13) {$\scriptstyle b$};
        \node at (0.6,1.13) {$\scriptstyle d$};
        \node at (0,-.1) {$\scriptstyle a$};
        \node at (0.6,-.1) {$\scriptstyle c$};
        \node at (-0.1,.5) {$\scriptstyle s$};
        \node at (0.77,.5) {$\scriptstyle t$};
\end{tikzpicture},
\\
\label{splitmerge}
\begin{tikzpicture}[baseline = -1mm,scale=.8,color=\clr]
\draw[-,line width=1.5pt] (0.08,-.8) to (0.08,-.5);
\draw[-,line width=1.5pt] (0.08,.3) to (0.08,.6);
\draw[-,thick] (0.1,-.51) to [out=45,in=-45] (0.1,.31);
\draw[-,thick] (0.06,-.51) to [out=135,in=-135] (0.06,.31);
\node at (-.33,-.05) {$\scriptstyle a$};
\node at (.45,-.05) {$\scriptstyle b$};
\end{tikzpicture}
&= 
\binom{a+b}{a}\:
\begin{tikzpicture}[baseline = -1mm,scale=.7,color=\clr]
	\draw[-,line width=1.5pt] (0.08,-.8) to (0.08,.6);
        \node at (.08,-1) {$\scriptstyle a+b$};
\end{tikzpicture} ,
\\
\label{dotmovecrossing}
\begin{tikzpicture}[baseline = 7.5pt, scale=0.35, color=\clr]
\draw[-,line width=1.2pt] (0,-.2) to  (1,2.2);
\draw[-,line width=1.2pt] (1,-.2) to  (0,2.2);
\draw(0.2,1.6)\bdot;
\draw(-.4,1.6) node {$\scriptstyle \omega_b$};
\node at (0, -.5) {$\scriptstyle a$};
\node at (1, -.5) {$\scriptstyle b$};
\end{tikzpicture}
&= 
 \sum_{0 \leq t \leq \min(a,b)}t!~
\begin{tikzpicture}[baseline = 7.5pt,scale=.7, color=\clr]
\draw[-,thick] (0.58,-.2) to (0.58,0) to (-.48,.8) to (-.48,1);
\draw[-,thick] (-.48,-.2) to (-.48,0) to (.58,.8) to (.58,1);
\draw[-,thick] (-.5,-.2) to (-.5,1);
\draw[-,thick] (0.605,-.2) to (0.605,1);
\draw(.35,0.2)\bdot;
\draw(.1,-0.1) node {$\scriptstyle {\omega_{b-t}}$};
\node at (-.5,-.3) {$\scriptstyle a$};
\node at (0.6,-.3) {$\scriptstyle b$};
\node at (0.77,.5) {$\scriptstyle t$};
\end{tikzpicture},
\qquad
\begin{tikzpicture}[baseline = 7.5pt, scale=0.35, color=\clr]
\draw[-, line width=1.2pt] (0,-.2) to (1,2.2);
\draw[-,line width=1.2pt] (1,-.2) to(0,2.2);
\draw(.2,0.2)\bdot;
\draw(-.4,.2)node {$\scriptstyle \omega_b$};
\node at (0, -.5) {$\scriptstyle b$};
\node at (1, -.5) {$\scriptstyle a$};
\end{tikzpicture}
=  
 \sum_{0 \leq t \leq \min(a,b)}t!~
\begin{tikzpicture}[baseline = 7.5pt,scale=.7, color=\clr]
\draw[-,thick] (0.58,-.2) to (0.58,0) to (-.48,.8) to (-.48,1);
\draw[-,thick] (-.48,-.2) to (-.48,0) to (.58,.8) to (.58,1);
\draw[-,thick] (-.5,-.2) to (-.5,1);
\draw[-,thick] (0.605,-.2) to (0.605,1);
\draw(.35,0.6)\bdot;
\draw(.01,0.85) node {$\scriptstyle {\omega_{b-t}}$};
\node at (-.5,-.3) {$\scriptstyle b$};
\node at (0.6,-.3) {$\scriptstyle a$};
\node at (0.77,.5) {$\scriptstyle t$};
\end{tikzpicture},
\\
\label{dotmovesplitss}
\begin{tikzpicture}[baseline = -.5mm,scale=.8,color=\clr]
\draw[-,line width=1.5pt] (0.08,-.5) to (0.08,0.04);
\draw[-,line width=1pt] (0.34,.5) to (0.08,0);
\draw[-,line width=1pt] (-0.2,.5) to (0.08,0);
\node at (-0.22,.6) {$\scriptstyle a$};
\node at (0.36,.65) {$\scriptstyle b$};
\draw (0.08,-.2) \bdot;
\draw (0.65,-.2) node{$\scriptstyle \omega_{a+b}$};
\end{tikzpicture} 
&=
\begin{tikzpicture}[baseline = -.5mm,scale=.8,color=\clr]
\draw[-,line width=1.5pt] (0.08,-.5) to (0.08,0.04);
\draw[-,line width=1pt] (0.34,.5) to (0.08,0);
\draw[-,line width=1pt] (-0.2,.5) to (0.08,0);
\node at (-0.22,.6) {$\scriptstyle a$};
\node at (0.36,.65) {$\scriptstyle b$};
\draw (-.05,.24) \bdot;
\draw (-0.4,.2) node{$\scriptstyle \omega_{a}$};
\draw (0.6,.2) node{$\scriptstyle \omega_{b}$};
\draw (.22,.24) \bdot;
\end{tikzpicture}, 
\qquad\qquad 
\begin{tikzpicture}[baseline = -.5mm, scale=.8, color=\clr]
\draw[-,line width=1pt] (0.3,-.5) to (0.08,0.04);
\draw[-,line width=1pt] (-0.2,-.5) to (0.08,0.04);
\draw[-,line width=1.5pt] (0.08,.6) to (0.08,0);
\node at (-0.22,-.6) {$\scriptstyle a$};
\node at (0.35,-.6) {$\scriptstyle b$};
\draw (0.08,.2) \bdot;
\draw (0.65,.2) node{$\scriptstyle \omega_{a+b}$};  
\end{tikzpicture}
 = ~
\begin{tikzpicture}[baseline = -.5mm,scale=.8, color=\clr]
\draw[-,line width=1pt] (0.3,-.5) to (0.08,0.04);
\draw[-,line width=1pt] (-0.2,-.5) to (0.08,0.04);
\draw[-,line width=1.5pt] (0.08,.6) to (0.08,0);
\node at (-0.22,-.6) {$\scriptstyle a$};
\node at (0.35,-.6) {$\scriptstyle b$};
\draw (-.08,-.3) \bdot; \draw (.22,-.3) \bdot;
\draw (-0.4,-.3) node{$\scriptstyle \omega_{a}$};
\draw (0.6,-.3) node{$\scriptstyle \omega_{b}$};
\end{tikzpicture}.
\end{align}
\end{definition}

\begin{rem}
  The original definition of $\AW$ in \cite{SWweb} includes the following relation 
 \begin{equation}
     \label{intergralballon}
\begin{tikzpicture}[baseline = 1.5mm, scale=.4, color=\clr]
\draw[-, line width=1.5pt] (0.5,2) to (0.5,2.5);
\draw[-, line width=1.5pt] (0.5,0) to (0.5,-.4);
\draw[-,thin]  (0.5,2) to[out=left,in=up] (-.5,1)
to[out=down,in=left] (0.5,0);
\draw[-,thin]  (0.5,2) to[out=left,in=up] (0,1)
 to[out=down,in=left] (0.5,0);      
\draw[-,thin] (0.5,0)to[out=right,in=down] (1.5,1)
to[out=up,in=right] (0.5,2);
\draw[-,thin] (0.5,0)to[out=right,in=down] (1,1) to[out=up,in=right] (0.5,2);
\node at (0.5,.7){$\scriptstyle \cdots$};
\draw (-0.5,1) \bdot; 
\draw (0,1) \bdot; 
\node at (0.5,-.6) {$\scriptstyle a$};
\draw (1,1) \bdot;
\draw (1.5,1) \bdot; 
\node at (-.22,0) {$\scriptstyle 1$};
\node at (1.2,0) {$\scriptstyle 1$};
\node at (.3,0.3) {$\scriptstyle 1$};
\node at (.7,0.3) {$\scriptstyle 1$};
\end{tikzpicture}
=~a!~
\begin{tikzpicture}[baseline = 1.5mm, scale=.6, color=\clr]
\draw[-,line width=1.5pt] (0,-0.1) to[out=up, in=down] (0,1.4);
\draw(0,0.6) \bdot; 
\draw (0.4,0.6) node {$\scriptstyle \omega_a$};
\node at (0,-.3) {$\scriptstyle a$} ;
\end{tikzpicture} 
 \end{equation} 
 as a defining relation, 
where $\dotgen :=\wdotgen$. The $a$-fold merges (and splits) in \eqref{intergralballon} are defined by using \eqref{webassoc} via iterated merges (and splits), and then \eqref{intergralballon} follows from \eqref{webassoc}, \eqref{splitmerge} and \eqref{dotmovesplitss}. 
\end{rem}

Define the following elements 
\begin{equation}
  \label{equ:r=0andr=a}
\begin{tikzpicture}[baseline = -1mm,scale=.6,color=\clr]
\draw[-,line width=1.2pt] (0.08,-.7) to (0.08,.5);
\node at (.08,-.9) {$\scriptstyle a$};
\draw(0.08,0) \bdot;
\draw(.55,0)node {$\scriptstyle \omega_r$};
\end{tikzpicture}
:=
\begin{tikzpicture}[baseline = -1mm,scale=.6,color=\clr]
\draw[-,line width=1.5pt] (0.08,-.8) to (0.08,-.5);
\draw[-,line width=1.5pt] (0.08,.3) to (0.08,.6);
\draw[-,line width=1pt] (0.1,-.51) to [out=45,in=-45] (0.1,.31);
\draw[-,line width=1pt] (0.06,-.51) to [out=135,in=-135] (0.06,.31);
\draw(-.1,0) \bdot;
\draw(-.5,0)node {$\scriptstyle \omega_r$};
\node at (-.25,-.3) {$\scriptstyle r$};
\node at (.08,-1) {$\scriptstyle a$};
\end{tikzpicture},
\end{equation}
for $1\le r\le a$.
 We have  adopted the following convention which will be used  throughout the paper that
\[ \begin{tikzpicture}[baseline = 10pt, scale=0.4, color=\clr]
            \draw[-,line width=1.2pt] (0,0.5)to[out=up,in=down](0,1.9);
\node at (0,.2) {$\scriptstyle a$};
\end{tikzpicture}=0 
\text{ unless  } a \ge 0, 
\quad \text{ }
\begin{tikzpicture}[baseline = 3pt, scale=0.4, color=\clr]
\draw[-,line width=1.2pt] (0,0) to[out=up, in=down] (0,1.4);
\draw(0,0.6) \bdot; 
\draw (0.65,0.6) node {$\scriptstyle \omega_r$};
\node at (0,-.3) {$\scriptstyle a$};
\end{tikzpicture}=
0  \text{ unless } 0\le r\le a, \quad \text{ and } 
\begin{tikzpicture}[baseline = 3pt, scale=0.4, color=\clr]
\draw[-,line width=1.2pt] (0,0) to[out=up, in=down] (0,1.4);
\draw(0,0.6) \bdot; 
\draw (0.65,0.6) node {$\scriptstyle \omega_0$};
\node at (0,-.3) {$\scriptstyle a$};
\end{tikzpicture} 
=
\begin{tikzpicture}[baseline = 10pt, scale=0.4, color=\clr]
            \draw[-,line width=1.2pt] (0,0.5)to[out=up,in=down](0,1.9);
\node at (0,.2) {$\scriptstyle a$};
\end{tikzpicture}.
 \]

\subsection{Basis for the affine web category}

In this subsection, we recall the basis theorem for the category $\AW$ in \cite{SWweb}.

For any $ m\in \mathbb N$, a composition (respectively, partition) $\mu=(\mu_1,\mu_2,\ldots,\mu_n)$ of $m$ is a sequence of (respectively, weakly decreasing) non-negative (respectively, positive) integers such that $\sum_{i\ge 1} \mu_i=m$. The length of $\mu$ is denoted by $l(\mu)$. A composition $\mu$ is called strict if  $\mu_i>0$ for any $i$. Let $\Lambda_{\text{st}}(m)$ (respectively, $\Par(m)$) be the set of all strict compositions (respectively, partitions) of $m$ and $\Lambda_{\text{st}}$ (respectively, $\Par$) be the set of all strict compositions (respectively, partitions).

The set of objects in $\AW$ is $\Lambda_{\text{st}}$ with the empty sequence being the unit object. 
It follows from the generating morphisms  preserving $m$ that  $\Hom_{\AW}(\lambda,\mu)=0$ if $\lambda\in \Lambda_{\text{st}}(m)$ and $\mu \in\Lambda_{\text{st}}(m')$ with $m\ne m'$.

Let $\lambda,\mu\in \Lambda_{\text{st}}(m)$. A $\lambda\times \mu$ \emph{chicken foot diagram} (following \cite{BEEO}) is a  web diagram in $\Hom_{\W}(\mu,\lambda)$ consisting of three  horizontal  parts such that
\begin{itemize}
    \item the bottom part consists of only splits such that the thick strings at the bottom are determined by $\mu$,
    \item the top part consists of only merges such that the thick strings at the top are determined by $\lambda$,
    \item the middle part consists of only crossings of the thinner strands split from the thick strings of $\mu$ at the bottom, and merge back to the thick strings of $\lambda$ at the top.
\end{itemize}
We will always refer to the thinner strings in the middle as the \emph{toes} and the thick strings at the top and bottom as the \emph{legs}.
A chicken foot diagram (CFD) is called \emph{reduced} if there is at most one
crossing or one merge/split between every pair of the toes (i.e., the thinner strings). 

The following is a reduced $\lambda\times \mu$ CFD  for $\lambda=(6,5)$ and $\mu=(4,3,4)$:
\begin{align}
\label{exam-CFD}
\begin{tikzpicture}[anchorbase,scale=1.8,color=\clr]
\draw[-,line width=.6mm] (.212,.5) to (.212,.39);
\draw[-,line width=.75mm] (.595,.5) to (.595,.39);
\draw[-,line width=.15mm] (0.0005,-.396) to (.2,.4);
\draw[-,line width=.15mm]  (.2,.4)to (.4,-.4);
\draw[-,line width=.3mm] (0.01,-.4) to (.59,.4);
\draw[-,line width=.3mm] (.4,-.4) to (.607,.4);
\draw[-,line width=.45mm] (.79,-.4) to (.214,.4);
\draw[-,line width=.15mm] (.8035,-.398) to (.614,.4);
\draw[-,line width=.3mm] (.4006,-.5) to (.4006,-.395);
\draw[-,line width=.6mm] (.788,-.5) to (.788,-.395);
\draw[-,line width=.45mm] (0.011,-.5) to (0.011,-.395);
\node at (0.05,0.15) {$\scriptstyle 3$};
\node at (0.79,0.05) {$\scriptstyle 2$};
\node at (0.35,0.37) {$\scriptstyle 2$};
\node at (0.17,-0.3) {$\scriptstyle 1$};
\node at (0.3,-0.3) {$\scriptstyle 1$};
\node at (0.5,-0.3) {$\scriptstyle 2$};
\node at (0.22,0.58) {$\scriptstyle 6$};
\node at (0.59,0.58) {$\scriptstyle 5$};
\node at (0.0,-0.57) {$\scriptstyle 4$};
\node at (0.4,-0.57) {$\scriptstyle 3$};
\node at (0.78,-0.57) {$\scriptstyle 4$};
\end{tikzpicture}    
\end{align}
  
For $\lambda,\mu\in \Lambda_{\text{st}}(m)$, let 
$\Mat_{\lambda,\mu}$ be the set of all $l(\lambda)\times l(\mu)$
non-negative integer matrices $A=(a_{ij})$ with row sum vector $\la$ and column sum vector $\mu$, i.e.,  $\sum_{1\le h\le l(\mu)}a_{ih}=\lambda_i$ and $\sum_{1\le h\le l(\lambda)}a_{hj}=\mu_j$ for all $1\le i\le l(\lambda) $ and $1\le j\le l(\mu)$. 
The information of each reduced chicken foot diagram is encoded in a matrix $A=(a_{ij})\in \Mat_{\lambda,\mu}$ such that 
$a_{ij}$ is the thickness of the unique strand connecting $\lambda_i$ and $\mu_j$. In this case, we say the reduced chicken foot diagram is of shape $A$.
For example, the CFD in \eqref{exam-CFD} is of shape 
$\begin{pmatrix} 3&1&2\\1&2&2\end{pmatrix}$.

The {\em $r$th elementary dot packet (of thickness $a$)} is defined as 
\[
\omega_{a,r}:= \wkdota.   
\]
We may also write $\omega_r=\omega_{a,r}$ if $a$ is clear in the context.
Let $\Par_a$ be the set of  partitions $\nu=(\nu_1,\ldots,\nu_k)$ such that each part  $\nu_i\le a$.
For any partition $\nu=(\nu_1,\nu_2,\ldots, \nu_k)\in \Par_a$, the {\em elementary dot packet (of thickness $a$)} is defined to be 
\[
\omega_{a, \nu}:= \omega_{a,\nu_1}\omega_{a,\nu_2}\cdots \omega_{a,\nu_k}=
\begin{tikzpicture}[baseline = 3pt, scale=0.5, color=\clr]
\draw[-,line width=1.5pt] (0,-.2) to[out=up, in=down] (0,2.2);
\draw(0,1.7) \bdot; 
\node at (0.6,1.7) {$ \scriptstyle \nu_1$};
\node at (0.3,1.2) {$\vdots$};
\draw(0,0.3) \bdot; 
\node at (0.6,0.2) {$ \scriptstyle \nu_k$};
\node at (0,-.4) {$\scriptstyle a$};
\end{tikzpicture}
\in \End_{\AW}(a).
\]
Recall we only define dot packet for partitions instead of arbitrary compositions since the dots commute (e.g., \cite{SWweb}).
We write $\omega_\nu=\omega_{a,\nu}$ if $a$ is clear from the context and draw it as 
$
\begin{tikzpicture}[baseline = 3pt, scale=0.5, color=\clr]
\draw[-,line width=1.5pt] (0,-.2) to[out=up, in=down] (0,1.2);
\draw(0,0.5) \bdot; \node at (0.6,0.5) {$ \scriptstyle \nu$};
\node at (0,-.4) {$\scriptstyle a$};
\end{tikzpicture}$.

\begin{definition} \label{def:diagram}
    For $\lambda,\mu\in \Lambda_{\text{st}}(m)$, a $\lambda\times \mu$ elementary chicken foot diagram (or simply elementary diagram) is a $\lambda\times \mu$ reduced chicken foot diagram with an elementary dot packet $\omega_{\nu}$, for some partition $\nu\in \Par_a$, attached at the bottom of each toe with thickness $a$. 
\end{definition}

Just as a reduced chicken foot diagram of shape $A$ is encoded by $A\in \Mat_{\lambda,\mu}$, the elementary chicken foot diagrams of shape $A$ are encoded in the matrix $A$ enriched by certain partitions. Denote  
\begin{equation}\label{dottedreduced}
 \PMat_{\lambda,\mu}:=\{ (A, P)\mid  A=(a_{ij})\in \Mat_{\lambda,\mu}, P=(\nu_{ij}), \nu_{ij}\in \Par_{a_{ij}} \}.   
\end{equation}
We will identify the set of all elementary chicken foot diagrams from $\mu$ to $\lambda$ with $\PMat_{\lambda,\mu}$.

\begin{theorem} \cite{SWweb}
 \label{basisAW}
For any $\mu,\lambda\in \Lambda_{\text{st}}(m)$, $\Hom_{\AW}(\mu,\lambda)$ has a basis $\PMat_{\lambda,\mu}$ which consists of all elementary chicken foot diagrams from $\mu$ to $\la$. In particular, for each $a\in \N$, the $\kk$-algebra $\End_{\AW}(a)$ is a polynomial algebra in generators $\wkdota$, for $1 \le r \le a$.  
\end{theorem}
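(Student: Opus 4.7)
The plan is to prove the two statements of Theorem \ref{basisAW} simultaneously, separating the spanning question from the linear independence question. Spanning is a direct calculation in the string calculus using only the defining relations, while linear independence is more delicate and will be obtained either via a faithful representation or via a triangularity argument that reduces to the already-established basis theorem of \cite{CKM} for the non-affine web category.

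For spanning, start with any morphism $\mu\to\lambda$ written as a composition of generators. The associativity relations \eqref{webassoc} let me rearrange all merges (respectively, splits) into locally canonical tree shape. The square-switch relation \eqref{mergesplit} is the key tool: applying it repeatedly pushes every split below every merge at the cost of introducing additional crossings, so the diagram becomes splits at the bottom, a middle band of thin crossings, and merges at the top. Any repeated intersection or join between the same pair of legs is then removed using \eqref{splitmerge}, which trades a bigon for a binomial coefficient and ensures the underlying CFD is reduced. For the dots, relations \eqref{dotmovecrossing} and \eqref{dotmovesplitss} allow each $\omega_r$ to be slid through crossings, splits, and merges, with correction terms of strictly smaller dot-degree or strictly simpler shape, and this pushes every dot to a leg at the bottom of the diagram. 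Finally, combining the balloon relation \eqref{intergralballon} with the commutativity of dots on a single strand (a special case of \eqref{dotmovesplitss}), any dot packet on a leg of thickness $a$ can be expanded as a $\kk$-linear combination of the elementary dot packets $\omega_\nu$ indexed by $\nu\in\Par_a$. This establishes spanning.

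For linear independence, I would construct a faithful functor from $\AW$ into a concrete category in which the independence of $\PMat_{\lambda,\mu}$ is visible. The natural target is an enlargement of the classical $\mathfrak{gl}_N$-action of \cite{CKM} on tensor products of symmetric powers, in which the dot $\omega_{a,r}$ acts as a central element of $U(\mathfrak{gl}_a)$ (for instance the $r$-th power sum or the $r$-th Gelfand invariant) on an $a$-th symmetric power strand. Alternatively, one can filter $\Hom_{\AW}(\mu,\lambda)$ by total dot-degree and analyze the associated graded: reduced chicken foot diagrams with dots on the legs factorize as a shape in $\Mat_{\lambda,\mu}$ tensored with polynomial data on each leg, and independence then reduces to the non-affine basis theorem of \cite{CKM} combined with freeness of the polynomial algebras attached to each leg. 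Once $\End_{\AW}(a)\cong\kk[\omega_{a,1},\dots,\omega_{a,a}]$ is established, the general $\Hom$ space decomposes into reduced CFD shapes decorated with polynomial data on each leg, yielding the asserted basis $\PMat_{\lambda,\mu}$.

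The hard part is proving that the $\omega_{a,r}$ are algebraically independent, i.e., that no hidden relations among elementary dots arise from combining \eqref{intergralballon}, \eqref{dotmovecrossing}, and \eqref{dotmovesplitss}; since these relations involve dots on strands of varying thicknesses, ruling out induced relations on a single strand of thickness $a$ is not automatic. This is exactly what an explicit faithful action (distinguishing monomials in the $\omega_{a,r}$ via their matrix coefficients) or a careful triangularity argument on the associated graded accomplishes. Once freeness of dots on a single strand is secured, the remaining combinatorics of CFD shapes is precisely the content of \cite{CKM}, so the rest of the argument is comparatively routine.
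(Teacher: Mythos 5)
Your spanning argument is essentially the one used in \cite{SW1} (and reprised for $\ASch$ in the proof of Theorem \ref{thm:basisASchur} here): normalize the tree shapes with \eqref{webassoc}, push splits below merges with \eqref{mergesplit}, remove repeated joins with \eqref{splitmerge}, and slide dots to the bottoms of legs with \eqref{dotmovecrossing}--\eqref{dotmovesplitss} modulo terms of lower dot-degree, then expand dot packets into elementary ones. That half is fine and matches the paper's route.

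The gap is in linear independence, and you have correctly located it but not closed it. Your first route --- a $\mathfrak{gl}_N$-type action on symmetric powers in which $\omega_{a,r}$ acts by ``the $r$-th power sum or the $r$-th Gelfand invariant'' --- is only a candidate, and the verification is not automatic: relation \eqref{dotmovecrossing}, with its $t!$-weighted correction terms, pins down the normalization of the dots rigidly (in the actual representation $\omega_{a,a}$ acts as multiplication by $x_1\cdots x_a$, an elementary-symmetric-type expression in Jucys--Murphy-style generators, not a power sum), so a generic central element of $U(\mathfrak{gl}_a)$ will fail the defining relations. Your second route is circular as stated: the ``freeness of the polynomial algebras attached to each leg'' that you feed into the associated-graded argument is exactly the algebraic independence of the $\omega_{a,r}$ that is at issue, as you yourself concede in the final paragraph. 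What \cite{SW1} actually does (and what this paper does for $\ASch$ in Theorems \ref{thm:polyRep} and \ref{thm:basisASchur}) is to build the polynomial representation sending $a$ to $\kk[x_1,\dots,x_a]^{\mathfrak S_a}$, with merges acting through the degenerate affine Hecke algebra action \eqref{actionofdeaffonpoly}, and then to separate elementary diagrams by computing leading terms on a monomial whose exponents on the successive blocks are $N,2N,3N,\dots$ for $N\gg 0$: distinct $(A,P)$ produce visibly distinct leading exponent vectors, which simultaneously gives faithfulness-type separation and the algebraic independence of the $\omega_{a,r}$. Until you either carry out such a leading-term computation or fully verify a candidate action against all of \eqref{webassoc}--\eqref{intergralballon}, the independence half of the theorem remains unproved.
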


\subsection{The isomorphism $\W \cong \Sch$}

For any $m\in \mathbb N$, let $\mathcal H_m:= \kk \mathfrak S_m$ be the group algebra of the symmetric group $\mathfrak S_m$.
For any $\lambda=(\lambda_1,\ldots, \lambda_k)\in \Lambda_{\text{st}}(m)$, let $\mathfrak S_\lambda$ be the Young subgroup $\mathfrak S_{\lambda_1}\times \ldots \times \mathfrak S_{\lambda_k}$ and define 
\begin{equation}
    \textsc{x}_{\lambda}=\sum_{w\in \mathfrak S_\lambda} w.
\end{equation}
Then the permutation module of $\mathcal H_m$ is defined to be
$$ M^\lambda: = \x_\lambda \mathcal H_m.$$
The Schur category $\Sch$ is a category with the object set $\Lambda_{\text{st}}$. For any $\lambda\in \Lambda_{\text{st}}(m)$ and $\mu\in\Lambda_{\text{st}}(m')$, the morphism space is given by
$\Hom_{\Sch}(\lambda,\mu)=\Hom_{\mathcal H_m}(M^\lambda,M^\mu)$ if $m=m'$ and $0$ otherwise.

For any $\lambda =(\lambda_1, \ldots,\lambda_{k})\in \Lambda_{\text{st}}$, the {\em $i$-th merge} of $\lambda$ is defined to be     $\la^{\vartriangle_i}$.
$$\la^{\vartriangle_i}:=(\lambda_1, \lambda_2,\ldots, \lambda_{i-1},\lambda_{i}+\lambda_{i+1},\lambda_{i+2},\ldots, \lambda_k),$$ 
for $1\le i\le k-1$. We also introduce the following sum of minimal length representatives: 
\begin{align}
    \label{eq:minl}
\sigma_{\lambda_i, \lambda_{i+1}} =\sum_{w\in (\mathfrak S_\la/\mathfrak S_{\la^{\vartriangle_i}})_{\text{min}} }w \in \mathcal H_m. 
\end{align}
Then we have 
\begin{equation}\label{glambdai}
    \x_{\la^{\vartriangle_i}}=\x_\lambda \sigma_{\lambda_i,\lambda_{i+1}} = \sigma_{\lambda_i,\lambda_{i+1}}^* \x_\lambda,
\end{equation}
where $*$ is the anti-involution of $\mathcal H_m$ fixing all generators $s_i$. 

\begin{proposition}
[{\cite[Theorem 4.10]{BEEO}}]
\label{cor-isom-schur}
 We have an algebra isomorphism  
\[
\phi: \bigoplus_{\lambda,\mu \in \Lambda_{\text{st}}(m)}\Hom_{\W}(\lambda,\mu) \stackrel{\cong}{\longrightarrow} \End_{\mathcal H_m}(\oplus _{\lambda\in \Lambda_{\text{st}}(m)} M^\lambda)
\]
such that the generators are sent by $\phi$ to 
$$\begin{aligned}\phi\Big(1_{*}\begin{tikzpicture}[baseline = -.5mm,color=\clr]
	\draw[-,line width=1pt] (0.28,-.3) to (0.08,0.04);
	\draw[-,line width=1pt] (-0.12,-.3) to (0.08,0.04);
	\draw[-,line width=1.5pt] (0.08,.4) to (0.08,0);
        \node at (-0.22,-.4) {$\scriptstyle \lambda_i$};
        \node at (0.4,-.45) {$\scriptstyle \lambda_{i+1}$};\end{tikzpicture}1_{*}\Big) &: M^{\lambda} \rightarrow M^{\la^{\vartriangle_i} }, \quad  \x_\lambda h\mapsto \x_{\la^{\vartriangle_i}}h, \\     
\phi\Big(1_{*}\begin{tikzpicture}[baseline = -.5mm,color=\clr]
	\draw[-,line width=1.5pt] (0.08,-.3) to (0.08,0.04);
	\draw[-,line width=1pt] (0.28,.4) to (0.08,0);
	\draw[-,line width=1pt] (-0.12,.4) to (0.08,0);
        \node at (-0.22,.6) {$\scriptstyle \lambda_i$};
        \node at (0.36,.6) {$\scriptstyle \lambda_{i+1}$};
\end{tikzpicture}1_{*} \Big)&: M^{\la^{\vartriangle_i}}\mapsto M^{\lambda},\quad  \x_{\la^{\vartriangle_i}}h\mapsto \x_{\la^{\vartriangle_i}} h,
\end{aligned}
$$
for any $h\in \mathcal H_m$, where $1_*$ stands for suitable identity morphisms.    
\end{proposition}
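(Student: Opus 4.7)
The plan is to construct $\phi$ as a functor by defining it on generators, verifying the defining relations of $\W$ are satisfied in $\End_{\mathcal H_m}(\oplus_\lambda M^\lambda)$, and then proving bijectivity by matching a basis on each side. Well-definedness of $\phi$ on the merge generator uses \eqref{glambdai}: if $\x_\la h = 0$ in $M^\la$, then $\x_{\la^{\vartriangle_i}} h = \x_\la \sigma_{\lambda_i,\lambda_{i+1}} h = 0$, so $\x_\la h \mapsto \x_{\la^{\vartriangle_i}} h$ descends to $M^\la \to M^{\la^{\vartriangle_i}}$. Well-definedness of the split comes from $M^{\la^{\vartriangle_i}} \subseteq M^\la$, which is immediate from $\x_{\la^{\vartriangle_i}} = \sigma_{\lambda_i,\lambda_{i+1}}^* \x_\la$, again by \eqref{glambdai}. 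The crossing is mapped to the obvious $\mathcal H_m$-isomorphism $M^{(\ldots,a,b,\ldots)} \to M^{(\ldots,b,a,\ldots)}$ implementing the tensor swap, realized by right multiplication by a minimal-length double-coset representative.

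Next, I would check each defining relation of $\W$ (i.e., the non-dotted relations \eqref{webassoc} and \eqref{mergesplit}, together with the naturality of crossings implicit in the strict monoidal structure). Associativity \eqref{webassoc} is immediate since composing merges or splits in either order produces the same map $\x_\lambda h \mapsto \x_{\lambda^{\triangle\triangle}} h$ by associativity of the symmetric group product. The key relation is the square-switch \eqref{mergesplit}: on the module side, this is precisely the Mackey-type decomposition of $\x_\lambda \mathcal H_m \x_\mu$ as a sum over double cosets $\mathfrak S_\lambda \backslash \mathfrak S_m / \mathfrak S_\mu$. The summation parameters $s,t$ with $t-s = d-a$ on the web side are in bijection with the $2\times 2$ nonnegative integer matrices with row sums $(a,c)$ and column sums $(b,d)$; each matrix indexes a distinct double coset, and the coefficient $1$ matches the identity coefficient in the standard Mackey expansion.

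For bijectivity, I would use the two bases. By Theorem \ref{basisAW}, restricted to $\W$ (i.e., with no dot packets), $\Hom_{\W}(\mu,\lambda)$ has a basis indexed by $\Mat_{\lambda,\mu}$ via reduced chicken foot diagrams. On the other side, the classical Dipper--James--Murphy basis of the Schur algebra $\End_{\mathcal H_m}(\oplus_\lambda M^\lambda)$ is also indexed by $\bigsqcup_{\lambda,\mu}\Mat_{\lambda,\mu}$, with basis element $\varphi_A$ for $A\in \Mat_{\lambda,\mu}$ determined by $\varphi_A(\x_\lambda) = \sum_{w \in \mathfrak S_\lambda d_A \mathfrak S_\mu} w$, where $d_A$ is the distinguished double coset representative attached to $A$. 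The plan is to compute $\phi$ of a reduced chicken foot diagram of shape $A$ and match it, up to a unitriangular change of basis (with respect to a natural partial order on $\Mat_{\lambda,\mu}$), with $\varphi_A$. This triangularity over the commutative ring $\kk$ yields simultaneously the surjectivity and injectivity of $\phi$.

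The hard part will be the square-switch verification and the basis matching: both require careful bookkeeping of $(\mathfrak S_\lambda,\mathfrak S_\mu)$-double cosets and ensuring the combinatorics of ``splitting then merging'' along a reduced CFD reproduces the sum over a single double coset orbit. Establishing the unitriangularity over an arbitrary commutative ring $\kk$, rather than just a field, is where one has to be most careful; but since all structure constants are integers and the dominant term on each side is $\varphi_A$ with coefficient $1$, the argument descends from $\Z$ to $\kk$ by base change.
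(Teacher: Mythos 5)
The paper offers no proof of this proposition: it is quoted directly from \cite[Theorem~4.10]{BEEO}, so there is no in-paper argument to compare against, and your proposal is essentially a faithful reconstruction of the standard proof from that reference --- define $\phi$ on merges and splits, reduce the relation check to the square-switch relation \eqref{mergesplit} via the Mackey/double-coset decomposition of $\x_\lambda \mathcal H_m \x_\mu$, and match the reduced chicken foot diagram basis indexed by $\Mat_{\lambda,\mu}$ with the double-coset basis $\{\varphi_A\}$ of the Schur algebra (in fact one can check that $\phi$ sends the reduced chicken foot diagram of shape $A$ exactly to $\varphi_A$, so no unitriangular correction is needed). One small repair: for well-definedness of the merge you should use the factorization $\x_{\la^{\vartriangle_i}}h=\sigma_{\lambda_i,\lambda_{i+1}}^{*}\,(\x_\lambda h)$ from \eqref{glambdai} rather than $\x_\lambda\sigma_{\lambda_i,\lambda_{i+1}}h$, since $\sigma_{\lambda_i,\lambda_{i+1}}$ does not commute with $h$; with the starred form, $\x_\lambda h=0$ immediately forces $\x_{\la^{\vartriangle_i}}h=0$.
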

We include some well-known implied relations in $\W$ (see \cite{CKM} and \cite[(4.30)--(4.33)]{BEEO}), and hence also in $\AW$, which shall be used frequently:
\begin{align}
\begin{tikzpicture}[baseline = .3mm,scale=.7,color=\clr]
	\draw[-,line width=1.5pt] (0.08,.3) to (0.08,.5);
\draw[-,thick] (-.2,-.8) to [out=45,in=-45] (0.1,.31);
\draw[-,thick] (.36,-.8) to [out=135,in=-135] (0.06,.31);
        \node at (-.3,-.95) {$\scriptstyle a$};
        \node at (.45,-.95) {$\scriptstyle b$};
\end{tikzpicture}
&=
\begin{tikzpicture}[baseline = -.6mm,scale=.7,color=\clr]
	\draw[-,line width=1.5pt] (0.08,.1) to (0.08,.5);
\draw[-,thick] (.46,-.8) to [out=100,in=-45] (0.1,.11);
\draw[-,thick] (-.3,-.8) to [out=80,in=-135] (0.06,.11);
        \node at (-.3,-.95) {$\scriptstyle a$};
        \node at (.43,-.95) {$\scriptstyle b$};
\end{tikzpicture},
 \qquad
\begin{tikzpicture}[anchorbase,scale=.7,color=\clr]
	\draw[-,line width=1.5pt] (0.08,-.3) to (0.08,-.5);
\draw[-,thick] (-.2,.8) to [out=-45,in=45] (0.1,-.31);
\draw[-,thick] (.36,.8) to [out=-135,in=135] (0.06,-.31);
        \node at (-.3,.95) {$\scriptstyle a$};
        \node at (.45,.95) {$\scriptstyle b$};
\end{tikzpicture}
=
\begin{tikzpicture}[anchorbase,scale=.7,color=\clr]
	\draw[-,line width=1.5pt] (0.08,-.1) to (0.08,-.5);
\draw[-,thick] (.46,.8) to [out=-100,in=45] (0.1,-.11);
\draw[-,thick] (-.3,.8) to [out=-80,in=135] (0.06,-.11);
        \node at (-.3,.95) {$\scriptstyle a$};
        \node at (.43,.95) {$\scriptstyle b$};
\end{tikzpicture},
\label{swallows}
\\
\begin{tikzpicture}[anchorbase,scale=0.7,color=\clr]
	\draw[-,thick] (0.4,0) to (-0.6,1);
	\draw[-,thick] (0.08,0) to (0.08,1);
	\draw[-,thick] (0.1,0) to (0.1,.6) to (.5,1);
        \node at (0.6,1.13) {$\scriptstyle c$};
        \node at (0.1,1.16) {$\scriptstyle b$};
        \node at (-0.65,1.13) {$\scriptstyle a$};
\end{tikzpicture}
\!\!=\!\!
\begin{tikzpicture}[anchorbase,scale=0.7,color=\clr]
	\draw[-,thick] (0.7,0) to (-0.3,1);
	\draw[-,thick] (0.08,0) to (0.08,1);
	\draw[-,thick] (0.1,0) to (0.1,.2) to (.9,1);
        \node at (0.9,1.13) {$\scriptstyle c$};
        \node at (0.1,1.16) {$\scriptstyle b$};
        \node at (-0.4,1.13) {$\scriptstyle a$};
\end{tikzpicture},
\quad
\begin{tikzpicture}[anchorbase,scale=0.7,color=\clr]
	\draw[-,thick] (-0.4,0) to (0.6,1);
	\draw[-,thick] (-0.08,0) to (-0.08,1);
	\draw[-,thick] (-0.1,0) to (-0.1,.6) to (-.5,1);
        \node at (0.7,1.13) {$\scriptstyle c$};
        \node at (-0.1,1.16) {$\scriptstyle b$};
        \node at (-0.6,1.13) {$\scriptstyle a$};
\end{tikzpicture}
\!\!& =\!\!
\begin{tikzpicture}[anchorbase,scale=0.7,color=\clr]
	\draw[-,thick] (-0.7,0) to (0.3,1);
	\draw[-,thick] (-0.08,0) to (-0.08,1);
	\draw[-,thick] (-0.1,0) to (-0.1,.2) to (-.9,1);
        \node at (0.4,1.13) {$\scriptstyle c$};
        \node at (-0.1,1.16) {$\scriptstyle b$};
        \node at (-0.95,1.13) {$\scriptstyle a$};
\end{tikzpicture},
\quad
\:\begin{tikzpicture}[baseline=-3.3mm,scale=0.7,color=\clr]
	\draw[-,thick] (0.4,0) to (-0.6,-1);
	\draw[-,thick] (0.08,0) to (0.08,-1);
	\draw[-,thick] (0.1,0) to (0.1,-.6) to (.5,-1);
        \node at (0.6,-1.13) {$\scriptstyle c$};
        \node at (0.07,-1.13) {$\scriptstyle b$};
        \node at (-0.6,-1.13) {$\scriptstyle a$};
\end{tikzpicture}
\!\!=\!\!
\begin{tikzpicture}[baseline=-3.3mm,scale=0.7,color=\clr]
	\draw[-,thick] (0.7,0) to (-0.3,-1);
	\draw[-,thick] (0.08,0) to (0.08,-1);
	\draw[-,thick] (0.1,0) to (0.1,-.2) to (.9,-1);
        \node at (1,-1.13) {$\scriptstyle c$};
        \node at (0.1,-1.13) {$\scriptstyle b$};
        \node at (-0.4,-1.13) {$\scriptstyle a$};
\end{tikzpicture},
\quad
\begin{tikzpicture}[baseline=-3.3mm,scale=0.7,color=\clr]
	\draw[-,thick] (-0.4,0) to (0.6,-1);
	\draw[-,thick] (-0.08,0) to (-0.08,-1);
	\draw[-,thick] (-0.1,0) to (-0.1,-.6) to (-.5,-1);
        \node at (0.6,-1.13) {$\scriptstyle c$};
        \node at (-0.1,-1.13) {$\scriptstyle b$};
        \node at (-0.6,-1.13) {$\scriptstyle a$};
\end{tikzpicture}
\!\!=\!\!
\begin{tikzpicture}[baseline=-3.3mm,scale=0.7,color=\clr]
	\draw[-,thick] (-0.7,0) to (0.3,-1);
	\draw[-,thick] (-0.08,0) to (-0.08,-1);
	\draw[-,thick] (-0.1,0) to (-0.1,-.2) to (-.9,-1);
        \node at (0.34,-1.13) {$\scriptstyle c$};
        \node at (-0.1,-1.13) {$\scriptstyle b$};
        \node at (-0.95,-1.13) {$\scriptstyle a$};
\end{tikzpicture},
\label{sliders}
\\
\mathord{
\begin{tikzpicture}[baseline = -1mm,scale=0.8,color=\clr]
	\draw[-,thick] (0.28,0) to[out=90,in=-90] (-0.28,.6);
	\draw[-,thick] (-0.28,0) to[out=90,in=-90] (0.28,.6);
	\draw[-,thick] (0.28,-.6) to[out=90,in=-90] (-0.28,0);
	\draw[-,thick] (-0.28,-.6) to[out=90,in=-90] (0.28,0);
        \node at (0.3,-.75) {$\scriptstyle b$};
        \node at (-0.3,-.75) {$\scriptstyle a$};
\end{tikzpicture}
}
&=
\mathord{
\begin{tikzpicture}[baseline = -1mm,scale=0.8,color=\clr]
	\draw[-,thick] (0.2,-.6) to (0.2,.6);
	\draw[-,thick] (-0.2,-.6) to (-0.2,.6);
        \node at (0.2,-.75) {$\scriptstyle b$};
        \node at (-0.2,-.75) {$\scriptstyle a$};
\end{tikzpicture}
}\:,
\qquad
\mathord{
\begin{tikzpicture}[baseline = -1mm,scale=0.8,color=\clr]
	\draw[-,thick] (0.45,.6) to (-0.45,-.6);
	\draw[-,thick] (0.45,-.6) to (-0.45,.6);
        \draw[-,thick] (0,-.6) to[out=90,in=-90] (-.45,0);
        \draw[-,thick] (-0.45,0) to[out=90,in=-90] (0,0.6);
        \node at (0,-.77) {$\scriptstyle b$};
        \node at (0.5,-.77) {$\scriptstyle c$};
        \node at (-0.5,-.77) {$\scriptstyle a$};
\end{tikzpicture}
}
=
\mathord{
\begin{tikzpicture}[baseline = -1mm,scale=0.8,color=\clr]
	\draw[-,thick] (0.45,.6) to (-0.45,-.6);
	\draw[-,thick] (0.45,-.6) to (-0.45,.6);
        \draw[-,thick] (0,-.6) to[out=90,in=-90] (.45,0);
        \draw[-,thick] (0.45,0) to[out=90,in=-90] (0,0.6);
        \node at (0,-.77) {$\scriptstyle b$};
        \node at (0.5,-.77) {$\scriptstyle c$};
        \node at (-0.5,-.77) {$\scriptstyle a$};
\end{tikzpicture}
}\:.
\label{braid}
\end{align}

\section{The affine Schur category}
 \label{sec:ASchur}

In this section, we introduce the affine Schur category $\ASch$ over a commutative ring $\kk$ with 1. Then we construct a polynomial representation of $\ASch$ and use it to establish a basis theorem for $\ASch$.

\subsection{Definition of the affine Schur category}
Recall the notation \eqref{equ:r=0andr=a}. For any $r\in \Z_{>0}$ and $u\in \kk$, let
\begin{equation}
    \label{def-gau}
 g_{r}(u):=\sum_{0\le i\le r}(-1)^i\left(\prod_{0\le j\le i-1}(u+j)\right)\:\:
 \begin{tikzpicture}[baseline = -1mm,scale=.7,color=\clr]
\draw[-,line width=1.2pt] (0.08,-.6) to (0.08,.5);
\node at (.08,-.8) {$\scriptstyle r$};
\draw(0.08,0) \bdot;
\draw(.7,0)node {$\scriptstyle \omega_{r-i}$};
\end{tikzpicture}.
\end{equation}
Recall in \cite[Lemma 2.11]{SWweb} that  $ g_r(u)$ is a balloon morphism (up to a factor of $r!$) and it is also used to define the cyclotomic web category  \cite[Definition 4.1]{SWweb}.
We  shall also draw $g_{r}(u)$ as 
\begin{tikzpicture}[baseline = 10pt, scale=0.4, color=\clr]
\draw[-,line width=1pt] (-1,2) to (-1,0.2);
\draw (-1,1) \bdot;
\draw (0.1,1) node{$\scriptstyle {g_{r}(u)}$};
\draw (-1,-.2) node{$\scriptstyle {r}$}; 
\end{tikzpicture}
and use them in the definition below. 

\begin{definition}  \label{def-affine-Schur}
The affine Schur category $\ASch$ is a strict $\kk$-linear monoidal category with generating objects $a\in \Z_{\ge 1}$ and $u\in \kk$. We denote the object $a \in \Z_{\ge1}$ by a thick strand labeled by $a $ and denote the object $u\in \kk$ by a red strand labeled by $u$: 
\[
\begin{tikzpicture}[baseline = 10pt, scale=0.5, color=\cred]
\draw[-,line width=1pt] (0,0.2) to (0,1.5);
\draw(0,-.2) node {$\scriptstyle u$};
\end{tikzpicture} \; .
\]
The morphisms are generated by 
\begin{equation} 
\label{generator-affschur}
  \begin{tikzpicture}[baseline = 10pt, scale=0.4, color=\clr] 
                \draw[-,line width=1pt] (-2,0.2) to (-1.5,1);
	\draw[-,line width=1pt ] (-1,0.2) to (-1.5,1);
	\draw[-,line width=1.5pt] (-1.5,1) to (-1.5,1.8);
                \draw (-1,0) node{$\scriptstyle {b}$};
                \draw (-2,0) node{$\scriptstyle {a}$};
                \draw (-1.5,2.2) node{$\scriptstyle {a+b}$};
                \end{tikzpicture}, 
                \quad 
\begin{tikzpicture}[baseline = 10pt, scale=0.4, color=\clr] 
                \draw[-,line width=1pt] (-2,0.2) to (-1,1.8);
	\draw[-,line width=1pt ] (-1,0.2) to (-2,1.8); 
                \draw (-1,0) node{$\scriptstyle {b}$};
                \draw (-2,0) node{$\scriptstyle {a}$};
                \draw (-1,2.2) node{$\scriptstyle {a}$};
                 \draw (-2,2.2) node{$\scriptstyle {b}$};
                \end{tikzpicture},  
                \quad 
  \begin{tikzpicture}[baseline = 10pt, scale=0.4, color=\clr]
                \draw[-,line width=1pt] (-2,1.8) to (-1.5,1);
	\draw[-,line width=1pt ] (-1,1.8) to (-1.5,1);
	\draw[-,line width=1.5pt] (-1.5,1) to (-1.5,0.2);
                \draw (-1,2.2) node{$\scriptstyle {b}$};
                \draw (-2,2.2) node{$\scriptstyle {a}$};
                \draw (-1.5,0) node{$\scriptstyle {a+b}$};
                \end{tikzpicture}, 
                \quad  \;
 ~
 \wkdotaa \quad   \text{ for } a,b \in \Z_{\ge1},              
 \end{equation}
and two mixed-color crossings
\begin{equation}
\label{rightleftcrossinggen}
\begin{tikzpicture}[baseline = 10pt, scale=.8, color=\clr]
 \draw[-,line width=1.2pt] (-0.3,.3) to (.3,1);
\draw[-,line width=1pt,color=\cred] (0.3,.3) to (-.3,1);
\draw(-.3,0.15) node{$\scriptstyle a$};
\draw (.3, 0.15) node{$\scriptstyle \red{u}$};
\end{tikzpicture} \;\; (\text{traverse-up}), 
                \qquad
\begin{tikzpicture}[baseline = 10pt, scale=.8, color=\clr]
 \draw[-,line width=1pt,color=\cred] (-0.3,.3) to (.3,1);
\draw[-,line width=1.2pt] (0.3,.3) to (-.3,1);
\draw(-.3,0.15) node{$\scriptstyle \red{u}$};
\draw (.3, 0.15) node{$\scriptstyle a$};
\end{tikzpicture}  \;\; (\text{traverse-down}),
\qquad \text{ for } a\ge 1, u \in \kk
\end{equation}
subject to relations in \eqref{webassoc}--\eqref{intergralballon} among generating morphisms in \eqref{generator-affschur} and additional relations \eqref{adaptorR}--\eqref{adaptermovemerge} involving also traverse-ups and traverse-downs:
\begin{align}
  \label{adaptorR}
\begin{tikzpicture}[baseline = 10pt, scale=0.35, color=\clr]
\draw[-,line width=1pt,color=\cred](0,-.2)to (0,2.6);
\draw (0,-.4) node{$\scriptstyle \red{u}$};
\draw[-,line width= 1.2pt](-.5,-.1) to[out=45, in=down] (.5,1.4);
\draw[-,line width= 1.2pt](.5,1.4) to[out=up, in=300] (-.5,2.4); 
\draw (-.5,-0.4) node{$\scriptstyle {r}$};
\end{tikzpicture}
&=~
 \begin{tikzpicture}[baseline = -1mm,scale=.7,color=\clr]
\draw[-,line width=1.2pt] (0.08,-.6) to (0.08,.5);
\node at (.08,-.8) {$\scriptstyle r$};
\draw(0.08,0) \bdot;
\draw(-.5,0)node {$\scriptstyle g_r(u)$};
\draw[-,line width=1pt,color=\cred](.5,-.6)to (.5,.5);
\draw (.5,-.8) node{$\scriptstyle \red{u}$};
\end{tikzpicture},  
\\
  \label{adaptorL}
\begin{tikzpicture}[baseline = 10pt, scale=0.35, color=\clr]
\draw[-,line width=1pt,color=\cred](0,-.6)to (0,2.4);
\draw (0,-.9) node{$\scriptstyle \red{u}$};
\draw[-,line width= 1.2pt](.5,-.5) to[out=135, in=down] (-.5,1);
\draw[-,line width= 1.2pt](-.5,1) to[out=up, in=270] (.5,2.3);
\draw (.5,-0.9) node{$\scriptstyle {r}$};
\end{tikzpicture}
&=~
 \begin{tikzpicture}[baseline = -1mm,scale=.7,color=\clr]
\draw[-,line width=1.2pt] (0.08,-.6) to (0.08,.5);
\node at (.08,-.8) {$\scriptstyle r$};
\draw(0.08,0) \bdot;
\draw(.7,0)node {$\scriptstyle g_{r}(u)$};
\draw[-,line width=1pt,color=\cred](-.4,-.6)to (-.4,.5);
\draw (-.4,-.8) node{$\scriptstyle \red{u}$};
\end{tikzpicture} , 
\\
\label{adaptrermovecross} 
\begin{tikzpicture}[baseline = 7.5pt, scale=0.35, color=\clr]
\draw[-,line width=1pt,color=\cred](.8,2) to[out=down,in=90] (0.1,1) to [out=down,in=up] (.8,-.4);
\draw (.8,-.65) node{$\scriptstyle \red{u}$};
\draw[-,line width=1.2pt] (0,-.35) to (1.5,2);
\draw[-,line width=1.2pt](0,2) to  (1.5,-.35);
\node at (0, -.6) {$\scriptstyle a$};
\node at (1.5, -.6) {$\scriptstyle b$};
\end{tikzpicture}
&=~
\begin{tikzpicture}[baseline = 7.5pt, scale=0.35,color=\clr]
\draw[-,line width=1pt,color=\cred](.8,2) to[out=down,in=90] (1.5,1) to [out=down,in=up] (.8,-.4);
\draw (.8,-.65) node{$\scriptstyle \red{u}$};
\draw[-,line width=1.2pt] (0,-.35) to (1.5,2);
\draw[-,line width=1.2pt](0,2) to  (1.5,-.35);
\node at (0, -.6) {$\scriptstyle a$};
\node at (1.5, -.6) {$\scriptstyle b$};
\end{tikzpicture}
~+~
\sum_{1\le t\le \min\{a,b\}} t! 
\begin{tikzpicture}[baseline = 7.5pt, scale=0.35, color=\clr]
\draw[-, line width=1pt] (0,-.3) to (0,2);
\draw[-, line width=1.2pt](0,-.3) to (0,.4);
\draw[-, line width=1.2pt] (1.8,-.2) to (1.8,.5);
\draw[-, line width=1.2pt] (1.8,1.6) to (1.8,2);
\draw[-, line width=1.2pt]   (0,.2) to  (1.8,1.6);
\draw[-, line width=1.2pt]   (0,1.6) to(1.8,0);
\draw[-,line width=1pt,color=\cred](.8,2) to[out=down,in=90] (1.5,1) to [out=down,in=up] (.8,-.4);
\draw (.8,-.6) node{$\scriptstyle \red{u}$};
\draw[-, line width=1.2pt]   (0,1.6) to (0,2); 
\draw[-, line width=1pt] (1.8,2) to (1.8,-.2); 
\node at (0, -.5) {$\scriptstyle a$};
\node at (2, -.5) {$\scriptstyle b$};
\node at (2, 2.2) {$\scriptstyle a$};
\node at (0, 2.2) {$\scriptstyle b$};
\node at (2,1.2) {$\scriptstyle t$};
\end{tikzpicture} ,
\\
 \label{adaptermovemerge}
\begin{tikzpicture}[anchorbase,scale=.7,color=\clr]
	\draw[-,line width=1pt,color=\cred] (0.4,0) to (-0.6,1);
  \draw (-.6,1.2) node{$\scriptstyle \red{u}$};
	\draw[-,thick] (0.08,0) to (0.08,1);
	\draw[-,thick] (0.1,0) to (0.1,.6) to (.5,1);
        \node at (0.6,1.13) {$\scriptstyle c$};
        \node at (0.1,1.16) {$\scriptstyle b$};
\end{tikzpicture}
& =\!\!
\begin{tikzpicture}[anchorbase,scale=.7,color=\clr]
	\draw[-,line width=1pt,color=\cred] (0.7,0) to (-0.3,1);
  \draw (-.3,1.16) node{$\scriptstyle \red{u}$};
	\draw[-,thick] (0.08,0) to (0.08,1);
	\draw[-,thick] (0.1,0) to (0.1,.2) to (.9,1);
        \node at (0.9,1.13) {$\scriptstyle c$};
        \node at (0.1,1.16) {$\scriptstyle b$};
\end{tikzpicture},
\:
\qquad
\begin{tikzpicture}[anchorbase,scale=.7,color=\clr]
	\draw[-,line width=1pt,color=\cred] (-0.4,0) to (0.6,1);
  \draw (.6,1.16) node{$\scriptstyle \red{u}$};
	\draw[-,thick] (-0.08,0) to (-0.08,1);
	\draw[-,thick] (-0.1,0) to (-0.1,.6) to (-.5,1);
        \node at (-0.1,1.16) {$\scriptstyle b$};
        \node at (-0.6,1.13) {$\scriptstyle a$};
\end{tikzpicture}
\!\!=\!\!
\begin{tikzpicture}[anchorbase,scale=.7,color=\clr]
	\draw[-,line width=1pt,color=\cred] (-0.7,0) to (0.3,1);
 \draw (.3,1.16) node{$\scriptstyle \red{u}$};
	\draw[-,thick] (-0.08,0) to (-0.08,1);
	\draw[-,thick] (-0.1,0) to (-0.1,.2) to (-.9,1);
        \node at (-0.1,1.16) {$\scriptstyle b$};
        \node at (-0.95,1.13) {$\scriptstyle a$};
\end{tikzpicture},
\\
\notag
\begin{tikzpicture}[baseline=-3.3mm,scale=.7,color=\clr]
\draw[-,line width=1pt,color=\cred] (0.4,.2) to (-0.6,-.8);
\draw (-.6,-.91) node{$\scriptstyle \red{u}$};
\draw[-,thick] (0.08,0.2) to (0.08,-.75);
\draw[-,thick] (0.1,0.2) to (0.1,-.4) to (.5,-.8);
\node at (0.6,-.91) {$\scriptstyle c$};
\node at (0.07,-.9) {$\scriptstyle b$};
\end{tikzpicture}
&=\!\!
\begin{tikzpicture}[baseline=-3.3mm,scale=.7,color=\clr]
\draw[-,line width=1pt,color=\cred] (0.7,0.2) to (-0.3,-.8);
\draw (-.3,-.91) node{$\scriptstyle \red{u}$};
\draw[-,thick] (0.08,0.2) to (0.08,-.75);
\draw[-,thick] (0.1,0.2) to (0.1,0) to (.9,-.8);
\node at (1,-.91) {$\scriptstyle c$};
\node at (0.1,-.9) {$\scriptstyle b$};
\end{tikzpicture}
,\:
\qquad
\begin{tikzpicture}[baseline=-3.3mm,scale=.7,color=\clr]
\draw[-,line width=1pt,color=\cred] (-0.4,0.2) to (0.6,-.8);
\draw (.6,-.91) node{$\scriptstyle \red{u}$};
\draw[-,thick] (-0.08,0.2) to (-0.08,-.75);
\draw[-,thick] (-0.1,0.2) to (-0.1,-.4) to (-.5,-.8);
\node at (-0.1,-.9) {$\scriptstyle b$};
\node at (-0.6,-.91) {$\scriptstyle a$};
\end{tikzpicture}
\!\!=\!\!
\begin{tikzpicture}[baseline=-3.3mm,scale=.7,color=\clr]
\draw[-,line width=1pt,color=\cred] (-0.7,0.2) to (0.3,-.8);
\draw (.3,-.91) node{$\scriptstyle \red{u}$};
\draw[-,thick] (-0.08,0.2) to (-0.08,-.75);
\draw[-,thick] (-0.1,0.2) to (-0.1,0) to (-.9,-.8);
\node at (-0.1,-.9) {$\scriptstyle b$};
\node at (-0.95,-.91) {$\scriptstyle a$};
\end{tikzpicture} .
\end{align}
\end{definition}

\begin{rem}
The relations \eqref{adaptorR}--\eqref{adaptorL} for a general $r\ge 1$ will be understood in terms of the corresponding relations for $r=1$ for $\ASch$ over $\C$ in Section~\ref{sec:C}. 
\end{rem}

\begin{rem} 
A ``free" monoidal category with essentially the same generating objects and many more generating morphisms as a $q$-version of $\ASch$ (but with no relations imposed) was also suggested in \cite{MS21}.
\end{rem}

By the symmetry of the defining relations in Definition \ref{def-affine-Schur}, there is an equivalence  of  categories:
\begin{equation} 
\label{anti-autocyc}
\div : \ASch\longrightarrow \mathpzc{Schur}^{{\hspace{-.03in}}\bullet\,\text{op}}
\end{equation}
defined by reflecting each diagram along a horizontal axis.

\subsection{More relations in $\ASch$}

\begin{lemma}
 \label{adamovecrossings}
The following local relations hold in $\ASch$:
\begin{align*}
\begin{tikzpicture}[baseline = 7.5pt, scale=0.4, color=\clr]
\draw[-,line width=1.2pt](-.5,-.5) to (.5,1.5); 
\draw[-,line width=1.2pt](-.5,1.5) to (.5,-.5);
\draw[-,line width=1pt,color=\cred] (.8,1.5) to (0.8,.8) to[out=down,in=up] (-.8,-.5);
\draw(-.85,-.7) node{$\scriptstyle \red u$};
\end{tikzpicture}
    &
    ~=~
\begin{tikzpicture}[baseline = 7.5pt, scale=0.4, color=\clr]
\draw[-,line width=1.2pt](-.5,-.5) to (.5,1.5); 
\draw[-,line width=1.2pt](-.5,1.5) to (.5,-.5);
\draw[-,line width=1pt,color=\cred] (.8,1.5) to [out=down,in=45](-.7,.5) to [out=-135,in=up] (-.8,-.4);
\draw(-.85,-.7) node{$\scriptstyle \red u$};
\end{tikzpicture},  
    \qquad\qquad
\begin{tikzpicture}[baseline = 7.5pt, scale=0.4, color=\clr]
\draw[-,line width=1.2pt](-.5,-.5) to (.5,1.5); 
\draw[-,line width=1.2pt](-.5,1.5) to (.5,-.5);
\draw[-,line width=1pt,color=\cred] (-.8,1.5) to (-.8,.9) to [out=down,in=up] (.8,-.5);
\draw(.8,-.7) node{$\scriptstyle \red u$};
\end{tikzpicture}
    ~=~
\begin{tikzpicture}[baseline = 7.5pt, scale=0.4, color=\clr]
\draw[-,line width=1.2pt](-.5,-.5) to (.5,1.5); 
\draw[-,line width=1.2pt](-.5,1.5) to (.5,-.5);
\draw[-,line width=1pt,color=\cred] (-.8,1.5) to [out=down,in=134] 
(.7,.5) to[out=-45,in=up](.8,-.4);
\draw(.8,-.7) node{$\scriptstyle \red{u}$};
\end{tikzpicture}.
\end{align*}
\end{lemma}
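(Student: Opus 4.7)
The plan is to prove the first equation; the second follows by an entirely analogous, left--right mirror argument. Both equations assert that a red strand whose two endpoints sit at opposite corners of a thick $a,b$-crossing may be routed around the outside of that crossing along two different paths, yielding the same morphism in $\ASch$. Algebraically, the first equation is a Yang--Baxter-type identity saying that the two compositions $(\sigma_{a,b}\otimes 1_u)\circ(1_a\otimes L_{u,b})\circ(L_{u,a}\otimes 1_b)$ and $(1_b\otimes L_{u,a})\circ(L_{u,b}\otimes 1_a)\circ(1_u\otimes\sigma_{a,b})$ from $u\otimes a\otimes b$ to $b\otimes a\otimes u$ agree.

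The idea is to reduce everything to \eqref{adaptermovemerge}, which moves the red strand past merges and splits with \emph{no} correction terms. First I would use the chicken-foot identity \eqref{mergesplit} from the affine web category to expand the thick $a,b$-crossing appearing on both sides as a sum $\sigma_{a,b}=\sum_{s,t}\mathrm{CF}_{s,t}$, where each summand consists of two splits at the bottom, two merges at the top, and four thin legs in the middle (two of which cross, two of which are vertical).

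In both the LHS and the RHS of the first equation, the red strand can be isotoped to stay strictly outside the interior of every $\mathrm{CF}_{s,t}$; it only passes by the outer splits and merges, never through any interior thin crossing. The next step is to apply \eqref{adaptermovemerge} repeatedly, in the four local orientations it provides, to each outer split/merge of each $\mathrm{CF}_{s,t}$. This slides the red strand from the outside-right of the chicken foot to the outside-left (or vice versa) summand by summand, without correction, so both sides of the equation reduce to the same sum indexed by $s,t$. The second equation then drops out by reflecting every diagram left--right (which preserves the relations in Definition~\ref{def-affine-Schur}).

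The main obstacle I expect is the bookkeeping involved in threading the red strand through the four outer merges/splits of each $\mathrm{CF}_{s,t}$ and matching the resulting web data on the two sides. The crucial point is that the red path may be chosen to avoid the interior of the chicken foot entirely, so that no instance of \eqref{adaptrermovecross}, which would inject nontrivial $t!$-correction terms, is ever invoked; the locality of \eqref{adaptermovemerge} then makes the iteration routine.
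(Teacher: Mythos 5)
Your plan has a genuine gap, and it sits exactly at the point you flag as unproblematic. First, a citation issue: \eqref{mergesplit} rewrites a merge followed by a split, not a crossing, so it cannot be used to "expand the thick $a,b$-crossing"; the relation that expands the crossing is \eqref{crossgen}. But the more serious problem is structural. In the chicken-foot expansion you describe, every summand still contains a thin crossing in its interior (you say so yourself: "two of which cross"). The two red routings you are comparing lie on \emph{opposite} sides of that interior crossing vertex: the left-hand routing leaves the central crossing above the red arc, the right-hand routing leaves it below. Since both endpoints of the red arc are fixed on the boundary, any passage from one routing to the other must drag the red strand across that crossing vertex. That move is precisely the one that is \emph{not} free in $\ASch$ — it is governed by \eqref{adaptrermovecross}, which injects $t!$-weighted correction terms, or else it is an instance of the very lemma you are trying to prove (for thinner strands), which would make the argument circular. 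So the assertion that "no instance of \eqref{adaptrermovecross} is ever invoked" cannot be arranged: the red strand can avoid passing \emph{through} the interior crossing on each side separately, but the comparison between the two sides unavoidably crosses it.

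The repair is the paper's (very short) argument: use \eqref{crossgen} to rewrite the crossing as a $\Z$-linear combination of diagrams built \emph{only} from splits and merges, with no crossing vertices at all. In each such planar summand the obstruction disappears, and the two routings of the red strand are related by a sequence of moves each of which is either a planar isotopy or an application of \eqref{adaptermovemerge}; these are genuinely correction-free, so the two sides agree. Your mirror-symmetry argument for the second identity is fine once the first is proved this way.
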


\begin{proof}
Recall that  the crossing can be defined via (e.g., \cite[(4.36)]{BEEO})  
\begin{equation}
\label{crossgen}
\begin{tikzpicture}[baseline = 2mm,scale=.8, color=\clr]
	\draw[-,line width=1.2pt] (0,0) to (.6,1);
	\draw[-,line width=1.2pt] (0,1) to (.6,0);
        \node at (0,-.1) {$\scriptstyle a$};
        \node at (0.6,-0.1) {$\scriptstyle b$};
\end{tikzpicture}
:=\sum_{t=0}^{\min(a,b)}
(-1)^t
\begin{tikzpicture}[baseline = 2mm,scale=.8, color=\clr]
\draw[-,thick] (0,0) to (0,1);
\draw[-,thick] (.015,0) to (0.015,.2) to (.57,.4) to (.57,.6)to (.015,.8) to (.015,1);
\draw[-,line width=1.2pt] (0.6,0) to (0.6,1);
\node at (0.6,-.1) {$\scriptstyle b$};
\node at (0,-.1) {$\scriptstyle a$};
\node at (0.6,1.1) {$\scriptstyle a$};
\node at (0,1.1) {$\scriptstyle b$};        \node at (-0.13,.5) {$\scriptstyle t$};
\end{tikzpicture}.
\end{equation}
Then the result follows since the red strand can move through merges and splits freely by \eqref{adaptermovemerge}. 
\end{proof}

\begin{lemma}
The following  relations hold in $\ASch$:
\begin{equation}
\label{dotmoveadaptor}
\begin{tikzpicture}[baseline = 10pt, scale=0.4, color=\clr]
\draw[-,line width=1.2pt] (-1.5,2.2) to (-1.5,1);
\draw[-,line width=1,color=\cred](-2,2.2) to [out=down,in=up] (-1,0);
\draw (-1,-.5) node{$\scriptstyle \red{u}$};
\draw[-,line width=1.2pt] (-1.5,1) to (-1.5,0); 
\draw(-1.5,.5)\bdot;
\draw (-2.1,.5) node{$\scriptstyle \omega_r$};
\draw (-1.5,-.5) node{$\scriptstyle {a}$};
\end{tikzpicture}
 ~=~
\begin{tikzpicture}[baseline = 10pt, scale=0.4, color=\clr]
\draw[-,line width=1.2pt] (-1.5,2.2) to (-1.5,1);
\draw[-,line width=1,color=\cred](-2.2,2) to [out=down,in=up] (-1,0);
\draw (-1,-.5) node{$\scriptstyle \red{u}$};
\draw[-,line width=1.2pt] (-1.5,1) to (-1.5,0); 
\draw(-1.5,1.5)\bdot;
\draw (-.9,1.5) node{$\scriptstyle \omega_r$};
\draw (-1.5,-.5) node{$\scriptstyle {a}$};
\end{tikzpicture}, 
\quad 
\begin{tikzpicture}[baseline = 10pt, scale=0.4, color=\clr]
\draw[-,line width=1.2pt] (-1.5,2.2) to (-1.5,1);
\draw[-,line width=1,color=\cred](-1,2) to [out=down,in=up] (-2.2,0);
\draw (-2.2,-.5) node{$\scriptstyle \red{u}$};
\draw[-,line width=1.2pt] (-1.5,1) to (-1.5,0);
\draw(-1.5,.5)\bdot;
\draw (-.9,.5) node{$\scriptstyle \omega_r$};
\draw (-1.5,-.5) node{$\scriptstyle {a}$};
\end{tikzpicture}
~=~
\begin{tikzpicture}[baseline = 10pt, scale=0.4, color=\clr]
\draw[-,line width=1.2pt] (-1.5,2.2) to (-1.5,1);
\draw[-,line width=1,color=\cred](-.8,2) to [out=down,in=up] (-2.2,0);
\draw (-2.2,-.5) node{$\scriptstyle \red{u}$};
\draw[-,line width=1.2pt] (-1.5,1) to (-1.5,0);
\draw(-1.5,1.5)\bdot;
\draw (-2.1,1.5) node{$\scriptstyle \omega_r$};
\draw (-1.5,-.5) node{$\scriptstyle {a}$};
\end{tikzpicture}
 \end{equation}
 for all admissible $a,r$ and any  $u\in \kk$.
 \end{lemma}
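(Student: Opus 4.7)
The plan is to prove the first identity in \eqref{dotmoveadaptor}; the second follows by the same argument with the roles of \eqref{adaptorR} and \eqref{adaptorL} interchanged (or by applying the reflection isomorphism \eqref{anti-autocyc}). Write $X_R$ and $X_L$ for the traverse-up and traverse-down mixed crossings, respectively. I would prove
\[
X_R \circ (\omega_{a,r} \otimes \mathrm{id}_{\red u}) \;=\; (\mathrm{id}_{\red u} \otimes \omega_{a,r}) \circ X_R
\]
by induction on the dot index $r \ge 0$; the base case $r = 0$ is trivial because $\omega_{a,0} = \mathrm{id}_a$.

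The first task in the inductive step is to reduce the general-thickness case to the case $a = r$. Using \eqref{equ:r=0andr=a}, I expand $\omega_{a,r}$ as (split $a \to (r,a-r)$) $\circ$ ($\omega_{r,r}$ on the $r$-leg) $\circ$ (merge $(r,a-r) \to a$), and then use \eqref{adaptermovemerge} to slide the red strand past the top merge and the bottom split. This rewrites the single mixed crossing between the $a$-strand and red as a composition of two smaller mixed crossings, one with the $r$-leg and one with the $(a-r)$-leg. Performing this rewriting symmetrically on both sides of the desired identity makes all ingredients match, except for whether $\omega_{r,r}$ on the $r$-leg sits below or above the mixed crossing on that leg; the mixed crossing on the disjoint $(a-r)$-leg commutes with $\omega_{r,r}$ by the interchange law. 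Thus, the general case reduces to
\[
X_R \circ (\omega_{r,r} \otimes \mathrm{id}_{\red u}) \;=\; (\mathrm{id}_{\red u} \otimes \omega_{r,r}) \circ X_R. \qquad(\ast)
\]

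To prove $(\ast)$, I combine the two adaptor identities: composing \eqref{adaptorR}, $X_L \circ X_R = g_r(u) \otimes \mathrm{id}_{\red u}$, on the left by $X_R$, and composing \eqref{adaptorL}, $X_R \circ X_L = \mathrm{id}_{\red u} \otimes g_r(u)$, on the right by $X_R$, both yield $X_R X_L X_R$, whence
\[
X_R \circ (g_r(u) \otimes \mathrm{id}_{\red u}) \;=\; (\mathrm{id}_{\red u} \otimes g_r(u)) \circ X_R.
\]
Expanding $g_r(u) = \omega_{r,r} + \sum_{i=1}^{r} (-1)^{i} \prod_{j=0}^{i-1}(u+j)\, \omega_{r,r-i}$ via \eqref{def-gau} and invoking the inductive hypothesis (through the reduction of the previous paragraph) to slide each $\omega_{r,r-i}$ for $i \ge 1$ past $X_R$, all contributions with $i \ge 1$ match on the two sides and cancel, leaving exactly $(\ast)$.

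The main obstacle is the careful bookkeeping in the reduction step: one has to invoke \eqref{adaptermovemerge} in its four variants (merge/split and red on either side) in the right order, and verify that after the dust settles the two sides of the identity differ only in the placement of $\omega_{r,r}$ on the $r$-leg. Once this reorganization is made explicit, the inductive step itself is a short polynomial manipulation using only the two adaptor identities and the formula for $g_r(u)$.
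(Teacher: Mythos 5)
Your proposal is correct and follows essentially the same route as the paper: induction on the dot index, reduction from general thickness $a$ to $a=r$ via \eqref{equ:r=0andr=a} and \eqref{adaptermovemerge}, and in the core case $a=r$ the identity $X_R\circ(g_r(u)\otimes 1)=(1\otimes g_r(u))\circ X_R$ obtained from the two adaptor relations (the paper phrases this as substituting \eqref{adaptorR}, sliding the lower-order dots by induction, then applying \eqref{adaptorL}, which is exactly your two-ways computation of $X_R X_L X_R$). The symmetry argument for the second identity also matches the paper's appeal to $\div$.
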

 
\begin{proof}
We only prove the first one as the second one is obtained by symmetry.

We prove by induction on $r$. 
Suppose $a=1=r$. Then  we have 
\begin{align}
\label{r=1anda=1}
\begin{tikzpicture}[baseline = 10pt, scale=0.35, color=\clr]
\draw[-,line width=1pt] (-1.5,2.2) to (-1.5,1);
\draw[-,line width=1,color=\cred](-2,2.2) to [out=down,in=up] (-1,0);
\draw (-1,-.35) node{$\scriptstyle \red{u}$};
\draw[-,line width=1pt] (-1.5,1) to (-1.5,0); 
\draw(-1.5,.5)\bdot;
\draw (-1.5,-.35) node{$\scriptstyle {1}$};
 \end{tikzpicture}
\overset{\eqref{adaptorR}}
=
\begin{tikzpicture}[baseline = 10pt, scale=0.35, color=\clr]
\draw[-,line width=1pt] (0,0) to (0,2);
\draw[-,line width=1,color=\cred](-.5,2) to [out=down,in=up] (.5,1.5) to[out=down,in=up] (-.5,0.5)to[out=down,in=up](.5,0);
\draw (.5,-.35) node{$\scriptstyle \red{u}$};
\draw (0,-.35) node{$\scriptstyle {1}$};
\end{tikzpicture} 
+~u~ 
\begin{tikzpicture}[baseline = 10pt, scale=0.35, color=\clr]
\draw[-,line width=1pt] (-1.5,2) to (-1.5,0);
\draw[-,line width=1,color=\cred](-2,2) to [out=down,in=up] (-1,0);
\draw (-1,-.35) node{$\scriptstyle \red{u}$};
\draw (-1.5,-.35) node{$\scriptstyle {1}$};
\end{tikzpicture}          
~\overset{\eqref{adaptorL}}
=~
\begin{tikzpicture}[baseline = 10pt, scale=0.35, color=\clr]
\draw[-,line width=1pt] (-1.5,2.2) to (-1.5,1);
\draw[-,line width=1,color=\cred](-2.2,2) to [out=down,in=up] (-1,0);
\draw (-1,-.35) node{$\scriptstyle \red{u}$};
\draw[-,line width=1pt] (-1.5,1) to (-1.5,0); 
\draw(-1.5,1.5)\bdot;
\draw (-1.5,-.35) node{$\scriptstyle {1}$};
\end{tikzpicture}. 
    \end{align}
Suppose $a\ge 2$ and $r=1$. Then the result follows from 
\eqref{r=1anda=1} and \eqref{adaptermovemerge} since 
$
\begin{tikzpicture}[baseline = 3pt, scale=0.4, color=\clr]
\draw[-,line width=1.2pt] (0,0) to[out=up, in=down] (0,1.4);
\draw(0,0.6) \bdot; 
\draw (0.7,0.6) node {$\scriptstyle \omega_1$};
\node at (0,-.3) {$\scriptstyle a$};
\end{tikzpicture}
=
\begin{tikzpicture}[baseline = 5pt, scale=0.4, color=\clr] 
 \draw[-, line width=1.2pt] (0.5,.9) to (0.5,1.5);
 \draw[-, line width=1.2pt] (0.5,0) to (0.5,-.4);
\draw[-,thick] (0.6,1) to (0.5,1) to[out=left,in=up] (0,0.5)to[out=down,in=left] (0.5,0);
\draw[-,line width=1.2pt] (0.5,0)to[out=right,in=down] (1,0.5)to[out=up,in=right] (0.5,1);
\draw (0,0.5) \bdot;
\node at (0.1,-.1) {$\scriptstyle 1$};
\node at (0.5,-0.6) {$\scriptstyle a$};
\end{tikzpicture}
$
by \eqref{equ:r=0andr=a}.
 Suppose $a=r\ge2$.  We have 
\begin{align*}
\begin{tikzpicture}[baseline = 10pt, scale=0.4, color=\clr]
\draw[-,line width=1.2pt] (-1.5,2.2) to (-1.5,1);
\draw[-,line width=1,color=\cred](-2,2.2) to [out=down,in=up] (-1,0);
\draw (-1,-.5) node{$\scriptstyle \red{u}$};
\draw[-,line width=1.2pt] (-1.5,1) to (-1.5,0); 
\draw(-1.5,.5)\bdot;
\draw (-2.1,.5) node{$\scriptstyle \omega_r$};
\draw (-1.5,-.5) node{$\scriptstyle {r}$};
\end{tikzpicture}
&
\overset{\eqref{adaptorR}}
=
\begin{tikzpicture}[baseline = 10pt, scale=0.4, color=\clr]
\draw[-,line width=1.2pt] (0,0) to (0,2);
\draw[-,line width=1,color=\cred](-.5,2) to [out=down,in=up] (.5,1.5) to[out=down,in=up] (-.5,0.5)to[out=down,in=up](.5,0);
\draw (.5,-.35) node{$\scriptstyle \red{u}$};
\draw (0,-.35) node{$\scriptstyle {r}$};
\end{tikzpicture} 
-
\sum_{1\le i\le r}(-1)^i\prod_{0\le j\le i-1}(u+j)\:\:
\begin{tikzpicture}[baseline = 10pt, scale=0.4, color=\clr]
\draw[-,line width=1.2pt] (-1.5,2) to (-1.5,0);
\draw[-,line width=1,color=\cred](-2,2) to [out=down,in=up] (-1,0);
\draw (-1,-.35) node{$\scriptstyle \red{u}$};
\draw(-1.5,.5)\bdot;
\draw (-2.5,.5) node{$\scriptstyle \omega_{r-i}$};
\draw (-1.5,-.35) node{$\scriptstyle {r}$};
\end{tikzpicture}\\
&
\stackrel{(*)}{=} \begin{tikzpicture}[baseline = 10pt, scale=0.4, color=\clr]
\draw[-,line width=1.2pt] (0,0) to (0,2);
\draw[-,line width=1,color=\cred](-.5,2) to [out=down,in=up] (.5,1.5) to[out=down,in=up] (-.5,0.5)to[out=down,in=up](.5,0);
\draw (.5,-.35) node{$\scriptstyle \red{u}$};
\draw (0,-.35) node{$\scriptstyle {r}$};
\end{tikzpicture} 
-
\sum_{1\le i\le r}(-1)^i\prod_{0\le j\le i-1}(u+j)\:\:
\begin{tikzpicture}[baseline = 10pt, scale=0.4, color=\clr]
\draw[-,line width=1.2pt] (-1.5,2) to (-1.5,0);
\draw[-,line width=1,color=\cred](-2,2) to [out=down,in=up] (-1,0);
\draw (-1,-.35) node{$\scriptstyle \red{u}$};
\draw(-1.5,1.5)\bdot;
\draw (-.4,1.5) node{$\scriptstyle \omega_{r-i}$};
\draw (-1.5,-.35) node{$\scriptstyle {r}$};
\end{tikzpicture}  \\
&
\overset{\eqref{adaptorL}}=
\begin{tikzpicture}[baseline = 10pt, scale=0.4, color=\clr]
\draw[-,line width=1.2pt] (-1.5,2.2) to (-1.5,1);
\draw[-,line width=1,color=\cred](-2.2,2) to [out=down,in=up] (-1,0);
\draw (-1,-.5) node{$\scriptstyle \red{u}$};
\draw[-,line width=1.2pt] (-1.5,1) to (-1.5,0); 
\draw(-1.5,1.5)\bdot;
\draw (-.8,1.5) node{$\scriptstyle \omega_r$};
\draw (-1.5,-.5) node{$\scriptstyle {r}$};
\end{tikzpicture},
\end{align*}
where we have applied the inductive assumption on $r-i$ for the equation (*) above.
The case $a>r$ follows from the case $a=r$,  \eqref{adaptermovemerge}  and \eqref{splitmerge}.
This completes the proof of the first relation.
\end{proof}

\subsection{ A polynomial representation of $\ASch$}
 \label{polynomilrep}

Recall the degenerate affine Hecke algebra $\widehat {\mathcal H}_d$ is the associative unital $\kk$-algebra generated by $s_i$ and $x_l$, for $1\le i\le d-1$, $1\le l \le d$, subject to the relations for all admissible $i,j,l$:
\begin{align}
        s_i^2=1, \quad s_is_{i+1}s_i &=s_{i+1}s_is_{i+1}, \quad s_js_i =s_is_j \; (|i-j|>1),
        \label{ss} \\
         x_ix_l& =x_lx_i,  \quad     s_jx_i=x_is_j\; (j\neq i,i+1),
        \label{xx} \\
        x_{i+1}s_i &=s_ix_i-1. 
        \label{xs}
\end{align}  
(
The isomorphism  $\End_{\AW}(1^m)\cong \hat {\mathcal H}_d$ \cite{SWweb} sends the diagram with a dot on $i$-th strand, reading from left to right, to $x_i$.)
 
For any $a\in \N$, let 
\[
\text{Sym}_a:=\kk[x_1,x_2,\ldots, x_a]^{\mathfrak S_a}
\]
be the ring of symmetric polynomials in $x_1,\ldots, x_a$.
More generally we consider the ring of partially symmetric functions
\[\text{Sym}_\lambda:=\kk[x_1,\ldots, x_a]^{\mathfrak S_{\lambda}}\cong \text{Sym}_{\lambda_1} \otimes \ldots\otimes \text{Sym}_{\lambda_s}
\]
for any $\lambda=(\lambda_1,\ldots, \lambda_s)\in \Lambda_{\text{st}}(a)$.

 Recall that the degenerate affine  Hecke algebra $\widehat H_d$ acts  on $\kk[x_1,\ldots, x_d]$ by letting $x_i$ act by multiplication by $x_i$ and $s_j$ act by
 \begin{equation}\label{actionofdeaffonpoly}
     s_j. f= f^{s_j}+\frac{ f-f^{s_j}}{x_j-x_{j+1}}
 \end{equation}
for all 
$f\in \kk[x_1,\ldots, x_d] $ and all admissible $i,j$,
where 
\[ f^{s_j}(x_1,\ldots, x_d)=f(x_1, \ldots, x_{j+1}, x_j,\ldots,x_d).\]
 
Associated with each generator of $\ASch$ in \eqref{merge i}--\eqref{chameleonsgen}, we introduce certain linear maps between polynomial rings denoted by the same symbols as follows. 
Associated with the split $\splits$, we define the natural embedding
\[
\splits: \text{Sym}_{a+b}\longrightarrow \text{Sym}_{(a,b)}, \quad f\mapsto f.
\]

Associated with the merge $\merge$, we define 
\[
\merge: \text{Sym}_{(a,b)}\longrightarrow \text{Sym}_{a+b}, \quad f\mapsto \sigma_{a,b}.f
\]
where the action of 
\begin{align}  \label{eq:sigmaab}
    \sigma_{a,b} := \sum_{w\in (\mathfrak S_{a+b} /\mathfrak S_a \times \mathfrak S_b)_{\min} }w
\end{align}
is given by the action of the degenerate affine Hecke algebra on the polynomial ring in \eqref{actionofdeaffonpoly}; one sees that the image of $\sigma_{a,b}$ is in $\text{Sym}_{a+b}$. 
In fact, $\sigma_{a,b}.f=\frac{1}{a!b!} \sum_{w\in\mathfrak S_{a+b}}w. f$
since $f\in \text{Sym}_{a,b}$. Note that here the coefficient $\frac{1}{a!b!}$ will disappear when written as linear combination of symmetric polynomials as $f\in \text{Sym}_{a,b}$.  
Then for any $1\le j\le a+b-1$, we have $s_j(\sigma_{a,b}.f)=\sigma_{a,b}.f$ and hence  $\sigma_{a,b}.f\in \text{Sym}_{a+b}$.

Associated with $\wkdotr$, we define the map
 \begin{equation} 
 \label{equ:defofdotaction}
 \wkdotr: \text{Sym}_r\longrightarrow \text{Sym}_r, \quad f\mapsto x_1x_2\cdots x_r f.
 \end{equation}
In general, the map associated to $\wkdota$ with $r<a$ is determined by \eqref{equ:r=0andr=a},  i.e.,  the compositions of  maps for $
\begin{tikzpicture}[baseline = -.5mm,scale=.8,color=\clr]
	\draw[-,line width=1.5pt] (0.08,-.3) to (0.08,0.04);
	\draw[-,line width=1pt] (0.28,.4) to (0.08,0);
	\draw[-,line width=1pt] (-0.12,.4) to (0.08,0);
        \node at (-0.22,.5) {$\scriptstyle r$};
        \node at (0.36,.55) {$\scriptstyle b$};
        \node at (0.1,-.45){$\scriptstyle a$};
\end{tikzpicture}$,
$
\begin{tikzpicture}[baseline = 3pt, scale=0.4, color=\clr]
\draw[-,line width=1.2pt] (0,0) to[out=up, in=down] (0,1.4);
\draw(0,0.6) \bdot; 
\draw (-.6,0.6) node {$\scriptstyle \omega_r$};
\node at (0,-.3) {$\scriptstyle r$};
\draw[-,line width=1.2pt](0.4,0) to (0.4,1.4);
  \draw(0.4,-.3)node {$\scriptstyle b$};
\end{tikzpicture}
$ 
and 
$\begin{tikzpicture}[baseline = -.5mm,scale=.8,color=\clr]
	\draw[-,line width=1pt] (0.28,-.3) to (0.08,0.04);
	\draw[-,line width=1pt] (-0.12,-.3) to (0.08,0.04);
	\draw[-,line width=1.5pt] (0.08,.4) to (0.08,0);
        \node at (-0.13,-.45) {$\scriptstyle r$};
        \node at (0.35,-.4) {$\scriptstyle b$};\node at (0.08,.55){$\scriptstyle a$};\end{tikzpicture} $ 
        with $b=a-r$. That is, $\wkdota$ sends $f\in \text{Sym}_a$ to $\sigma_{r,b}.(x_1\ldots x_rf) $.

For the traverse-up, we define 
\begin{equation}
\label{equ:actionofmixedcrossiup}
\rightcrossing: \text{Sym}_a \longrightarrow \text{Sym}_a, \quad  f \mapsto f.
\end{equation}
For the traverse-down, we define 
\begin{equation}
\label{equ:actionofmixedcrossingdown}
\leftcrossing: \text{Sym}_a\longrightarrow \text{Sym}_a, \quad f\mapsto \prod_{1\le j\le a}(x_j-u)f.
\end{equation}
Finally, the linear maps associated with the crossings $\crossing$ are uniquely determined by those of splits and merges by \eqref{crossgen}. 
For example, the linear map for 
$ \begin{tikzpicture}[baseline = 2mm,scale=.7, color=\clr]
	\draw[-,thick] (0,0) to (.6,1);
	\draw[-,thick] (0,1) to (.6,0);
        \node at (0,-.2) {$\scriptstyle 1$};
        \node at (0.6,-0.2) {$\scriptstyle 1$};
\end{tikzpicture} $
is given by the action of $s_1$ in \eqref{actionofdeaffonpoly}. In general,
the action of $\crossing$ on $\text{Sym}_{(a,b)}$ is given by the action of the longest element
$w_{a,b}\in (\mathfrak S_{a+b}/\mathfrak S_{a}\times \mathfrak S_b)_{\min}$; more explicitly the permutation $w_{a,b}$ is specified as  
\begin{equation}\label{actionofcrossing}
    w_{a,b}(i)=\begin{cases}
        i+b, & \text{ if } 1\le i\le a,\\
        i-a, & \text{ if } a+1\le i\le a+b. 
    \end{cases}
\end{equation}

Denote by $\mathpzc{Vec}_\kk$ the category of free $\kk$-modules. 

\begin{theorem}
\label{thm:polyRep} 
There is a $\kk$-linear monoidal functor 
\[
\mathcal F: \ASch \longrightarrow \mathpzc{Vec}_\kk, 
\]
which sends objects $a\in \N$ to $ \text{Sym}_a$, and  $u\in \kk$ to $ \kk$, and sends the generating morphisms to the linear maps in the same symbols.
\end{theorem}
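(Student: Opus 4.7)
The functor $\mathcal F$ is specified on generators, so to prove that it extends to a well-defined $\kk$-linear monoidal functor it suffices to verify that its images satisfy every defining relation of $\ASch$ as an identity of $\kk$-linear maps between tensor products of symmetric-polynomial rings. These relations split naturally into two groups: the monochrome web-category relations \eqref{webassoc}--\eqref{intergralballon}, and the new mixed-colour relations \eqref{adaptorR}--\eqref{adaptermovemerge} involving the red strand. My plan is to dispatch the two groups in turn.

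First, I would handle the monochrome relations by restriction: the images of the $\AW$-generators under $\mathcal F$ are exactly those defining the polynomial representation of $\AW$ constructed in the prequel \cite{SW1}, so \eqref{webassoc}--\eqref{intergralballon} hold automatically. An important consequence to extract at this stage is the identity that $g_r(u)$, as an element of $\End_{\AW}(r)$, acts on $\text{Sym}_r$ by multiplication by $\prod_{j=1}^r(x_j-u)$; this is essentially \cite[Lemma 2.11]{SW1} combined with the balloon relation \eqref{intergralballon}, since in the polynomial picture the twisted balloon which represents $r!\,g_r(u)$ splits $\text{Sym}_r$ into $\kk[x_1,\dots,x_r]$, multiplies by the symmetric polynomial $\prod_j(x_j-u)$, and re-averages by $\sum_{w\in\mathfrak S_r}w$, producing an overall factor of $r!$.

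The sliding relations \eqref{adaptermovemerge}, which allow the red strand to pass freely across any merge or split, are straightforward: the traverse-up map is the identity on $\text{Sym}_a$ and the traverse-down map is multiplication by the fully symmetric polynomial $\prod_j(x_j-u)$. A short check using the divided-difference formula \eqref{actionofdeaffonpoly} shows that multiplication by a fully symmetric polynomial commutes with the action of each $s_j$, and hence with the merge map $\sigma_{a,b}.(\cdot)$ as well as with the tautological split embedding. The adapter relations \eqref{adaptorR} and \eqref{adaptorL} then follow immediately, because traverse-up composed with traverse-down (in either order) on a thick $r$-strand realises multiplication by $\prod_{j=1}^r(x_j-u)$, which agrees with the action of $g_r(u)$ by the identity just recorded.

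The step I expect to be the main obstacle is \eqref{adaptrermovecross}, the exchange relation for moving the red strand across a thick--thick crossing. My plan is to expand both thick crossings via the ladder formula \eqref{crossgen} as signed sums indexed by a middle thickness $t\in\{0,\dots,\min(a,b)\}$; within each ladder summand the red strand encounters only merges and splits, and so can be pulled across from one side of the diagram to the other using \eqref{adaptermovemerge}. After this rearrangement the difference of the two sides of \eqref{adaptrermovecross} collapses to closed configurations of $t$ thin parallel strands which evaluate, by \eqref{splitmerge} and \eqref{intergralballon}, to exactly the $t!$ weights appearing on the right-hand side. Alternatively, having already written the action of the thick crossing as the divided-difference action of the permutation $w_{a,b}$ from \eqref{actionofcrossing}, I would verify \eqref{adaptrermovecross} by direct computation in the polynomial representation on a generic element of $\text{Sym}_{(a,b)}$, which sidesteps the diagrammatic combinatorics at the cost of a somewhat involved manipulation of divided differences.
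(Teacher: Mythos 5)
Your overall architecture is the right one (reduce to checking each defining relation on the images of the generators, dispose of the monochrome relations by restriction to the $\AW$-representation, then attack the mixed-colour relations), and several of your intermediate observations are correct: multiplication by a fully symmetric polynomial does commute with the divided-difference action, which gives \eqref{adaptermovemerge}, and the identification of $g_r(u)$ with multiplication by $\prod_{j=1}^r(x_j-u)$ is indeed the content of \eqref{adaptorR}--\eqref{adaptorL}. But there are two genuine gaps. The first is that the theorem is stated over an arbitrary commutative ring $\kk$, and essentially every nontrivial step in your plan cancels a nonzero integer: your derivation of the $g_r(u)$ identity divides the balloon relation \eqref{intergralballon} by $r!$, and any reduction of \eqref{adaptrermovecross} to small thickness (as in Lemma~\ref{chraterzero}) cancels the scalar $a$. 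Over $\kk=\Z/p\Z$ these cancellations are illegal. The paper's proof is organized around exactly this point: one verifies the relations over $\C$ using the simplified presentation of Section~\ref{sec:C}, notes that the modules $\text{Sym}_\lambda$ and the maps in question admit $\Z$- (or $\Z[\tilde u]$-) forms, deduces the integral identity from torsion-freeness after cancelling the integer, and only then base-changes to $\kk$. Without this your argument establishes the theorem only over fields of characteristic zero.

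The second gap is that your primary argument for \eqref{adaptrermovecross} does not work as described. After expanding the crossing by \eqref{crossgen}, the red strand still has to pass the thin rungs of the ladder, and those local encounters are traverse-ups and traverse-downs, not merges or splits, so \eqref{adaptermovemerge} does not apply to them. Indeed, if the red strand could be pulled freely through every ladder summand, the two sides of \eqref{adaptrermovecross} would coincide on the nose and the $t!$ correction terms could not arise; those terms measure precisely the failure of multiplication by $\prod_{j=1}^{a}(x_j-u)$ (symmetric only in the first $a$ variables) to commute with the action of $w_{a,b}$. Your fallback, a direct divided-difference computation, is viable but is exactly where the real work lies and is left undone. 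The paper instead reduces to $a=b=1$ by a separate diagrammatic induction (Lemma~\ref{chraterzero}, again carried out over $\C$ and transferred by base change), after which the relation is the one-line identity $(x_1-u)s_1=s_1(x_2-u)+1$ coming from \eqref{xs}.
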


\begin{proof}
We shall check that all the relations \eqref{webassoc}--\eqref{intergralballon} and \eqref{adaptorR}--\eqref{adaptermovemerge} are preserved under the functor $\mathcal F$. 

Let $\x_{(a+b)} :=\sum_{w\in \mathfrak S_{a+b}}w$ and $\x_{(a,b)}:=\sum_{w\in \mathfrak S_{a}\times \mathfrak S_b}w$. 
Note first that by definition the split $\splits$ is determined by sending  $\x_{(a+b)}f$ to $\x_{(a+b)}f$ and the merge $\merge$ is determined by sending $\x_{(a,b)}f$ to $\x_{(a+b)}f$, for $f\in \kk[x_1,\ldots, x_d] $. This is in the same way as those in Proposition~\ref{cor-isom-schur}. Hence, the relations in \eqref{webassoc}--\eqref{splitmerge} are satisfied by noting that these are also the defining relations for  $\W\cong\Sch$.
Moreover, the relation \eqref{intergralballon} is satisfied by definition. The relation \eqref{adaptermovemerge}
can be checked by using the fact that
\[
\sigma_{a,b}\prod_{1\le j\le a+b}(x_j-u) = \prod _{1\le j\le a+b} (x_j-u) \sigma_{a,b}.
\]

It remains to check the relations \eqref{dotmovecrossing}--\eqref{dotmovesplitss}, \eqref{adaptorR}--\eqref{adaptorL} and \eqref{adaptrermovecross}. The strategy here is to verify these relations over $\C$ (and then over $\Z$ and then $\kk$ by base change). Below we will freely quote the results from the equivalent but simpler presentation of $\ASch$ over $\C$ in Section~\ref{sec:C} (which does not rely on any results before that).

For \eqref{dotmovecrossing}, the case for $a=1=b$ follows since the actions of the crossing and the dot here are the same as the action of the degenerate affine Hecke algebra on the polynomial ring.
 For general $a,b$, let $\phi_\kk $ and $\varphi_\kk$ be the linear maps in $\Hom_{\kk}(\text{Sym}_{(a,b)}, \text{Sym}_{(b,a)})$ given by the left and right side of any equation in \eqref{dotmovecrossing}. Note that  $ \text{Sym}_{(a,b)}$ are free modules defined over $\Z$, i.e., 
 $_\kk \text{Sym}_{(a,b)}= \:_\Z \text{Sym}_{(a,b)}\otimes_\Z \kk$, 
 and also
 \begin{equation}
\phi_\kk(f\otimes 1)=\phi_\Z(f) \otimes _\Z \kk, \quad \varphi_\kk(f\otimes 1)=\varphi_\Z(f) \otimes_\Z \kk     
 \end{equation}
for any $f\in\:  _\Z \text{Sym}_{(a,b)}$.
We remark that each generators preserve $_\Z \text{Sym}$ and hence $\phi$ and $\psi$ preserves $_\Z \text{Sym}$. 
Thanks to the proof of \cite[Lemma~5.3]{SWweb}, which establishes \eqref{dotmovecrossing} over $\mathbb C$ using the affine Hecke relations, 
we have 
$c \phi_\Z(f)= c\varphi_\Z(f)$, 
for some non-zero integer $c$. This implies that 
$\phi_\Z(f)=\varphi_\Z(f)$ and hence 
$\phi_\kk(f\otimes 1)=\varphi_\kk(f\otimes 1)$ for any $f$. Thus, $\phi_\kk=\varphi_\kk$ and \eqref{dotmovecrossing} is satisfied.

The relations \eqref{dotmovesplitss}   follow from the proof of \cite[Lemma 5.4]{SWweb} and the same argument about $\Z$-forms of the symmetric  polynomial rings as in the preceding paragraph. 

 For \eqref{adaptorR}--\eqref{adaptorL}, we may also assume that $r=1$ by Lemma \ref{adpterLRa=1} and considering the $\Z[\tilde u]$-form of the polynomial rings, where $\Z[\tilde u]$ is the symmetric polynomial ring in one variable and $\tilde u$ acts on $\kk$ as $u$ via base change. Then \eqref{adaptorR}--\eqref{adaptorL} are satisfied by the definitions \eqref{equ:defofdotaction}-\eqref{equ:actionofmixedcrossingdown} above.
 
Finally, for \eqref{adaptrermovecross}, we may assume $a=1=b$ by Lemma \ref{chraterzero} and the same reasoning as above. Then the map defined by the left (respectively, the first summand on the right) hand side
sends $f$ to $(x_1-u)(s_1f)$ (respectively, $s_1(x_2-u)f$).
We have 
\[ 
(x_1-u)(s_1f)=[(x_1-u)s_1](f)=[s_1(x_2-u)+1](f)=s_1(x_2-u)f+f,
\]
and hence \eqref{adaptrermovecross} is satisfied.

This finishes the proof of the theorem.
\end{proof}

\begin{rem}
    A representation similar to the one in Theorem \ref{thm:polyRep} was constructed for the higher level affine $q$-Schur algebra in \cite{MS21}.
 In the sequel \cite{SSW25} it will be shown that the path algebras of the affine $q$-Schur category therein are isomorphic to the higher level affine $q$-Schur algebras in \cite{MS21}.
\end{rem}

\subsection{ An elementary diagram basis of $\ASch$}
In this subsection, we shall construct a spanning set of $\ASch$ and then prove that it is linearly independent by using the polynomial representation in \S\ref{polynomilrep}.

For $\ell\in \N$, an $(1+\ell)$-multicomposition of $m$ is an ordered $(1+\ell)$-tuple of compositions 
$\mu=(\mu^{(0)}, \mu^{(1)}, \ldots, \mu^{(\ell)})$ 
such that $\sum_{i=0}^{\ell}|\mu^{(i)}|=m$. 

We call an $(1+\ell)$-multicomposition $\mu$ strict if all its components are strict compositions (where the partition $\emptyset$ is understood to be strict). Let $\Lambda_{\text{st}}^{1+\ell}(m)$ be the set of all strict $(1+\ell)$-multicompositions of $m$. Write 
\[
\Lambda_{\text{st}}^{1+\ell}:=\bigcup_{m\in \N}\Lambda_{\text{st}}^{1+\ell}(m).
\]
(The reason we use the superscript $1+\ell$ instead of $\ell$ in this section will be clear in Section~\ref{sec:basis-cycschur} when we introduce a subset $\Lambda_{\text{st}}^{\emptyset,\ell}$ of $\Lambda_{\text{st}}^{1+\ell}$ in \eqref{def:barlambdau} which is in bijection with $\Lambda_{\text{st}}^{\ell}$.)

By definition, an arbitrary object of $\ASch$ is of the form 
\begin{equation}
\label{equ:object-of-affineschur}
 \lambda^{(0)} \red{u_1} \lambda^{(1)} \red{u_2}  \ldots \lambda^{(\ell-1)} \red{u_{\ell}} \lambda^{(\ell)}    
\end{equation}
where $\lambda^{(i)}$ is a strict composition and $\mathbf u=(u_1,u_2,\ldots,u_{\ell})$ is a vector in $ \kk^{\ell}$, for some $\ell\in \N$. When $\ell=0$ should, the convention is that there is no $u_i$.
Thus, the set of objects of $\ASch$
can be identified with $ \bigcup_{\ell\in \N} \big(\Lambda_{\text{st}}^{1+\ell}\times \kk^{\ell} \big)$.

\begin{lemma}
\label{lem:Homzero}
Let $(\lambda,\mathbf u )\in \Lambda_{\text{st}}^{1+\ell}(m)\times \kk^{\ell}$ and $ (\mu,\mathbf u')\in \Lambda_{\text{st}}^{1+\ell'}(m')\times \kk^{\ell'}$.
Then $\Hom_{\ASch}((\lambda,\mathbf u ),  (\mu,\mathbf u')) =0$ unless
$m=m',  \ell=\ell',$ and $\mathbf u=\mathbf u'.
$
\end{lemma}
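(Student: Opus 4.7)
My plan is to exhibit three invariants on the objects of $\ASch$ that are preserved by every generating morphism and behave additively/concatenatively under the monoidal structure; then any $\Hom$-space between objects with different invariants must vanish. For an object written as $\lambda^{(0)}\red{u_1}\lambda^{(1)}\cdots\red{u_{\ell}}\lambda^{(\ell)}$, the three invariants I would track are: the total black weight $m=\sum_{i=0}^{\ell}|\lambda^{(i)}|$; the number $\ell$ of red strands; and the ordered $\ell$-tuple $\mathbf u=(u_1,\ldots,u_\ell)$ of red-strand labels read from left to right.

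First I would address $m=m'$ and $\ell=\ell'$ by a direct inspection of the generators. The merges, splits, black crossings, and dots in \eqref{generator-affschur} involve only black strands: they preserve the total black thickness and involve no red strand at all. The traverse-up and traverse-down morphisms in \eqref{rightleftcrossinggen} each have exactly one red strand (with a fixed label $u$) and one black strand (of fixed thickness $a$) on both source and target. Hence every generator preserves both $m$ and $\ell$, and since these quantities add under tensor product, they are preserved by arbitrary composites and tensor products of generators.

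For $\mathbf u = \mathbf u'$, the key observation is that \emph{there is no red--red crossing} among the generators: the only generators involving a red strand are the two mixed-color crossings, and each of them moves a single red strand across a single black strand while leaving the red label $u$ unchanged. Consequently, reading the red labels left-to-right yields an ordered tuple that agrees on source and target of every generator and concatenates under tensor product. Since every morphism of $\ASch$ is a $\kk$-linear combination of composites of generators (possibly tensored with identity morphisms), the triple $(m,\ell,\mathbf u)$ is preserved by arbitrary morphisms, and the lemma follows.

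The main obstacle, if any, is only a bookkeeping one: to legitimately read off $\mathbf u$ as an invariant, one must confirm that none of the defining relations \eqref{adaptorR}--\eqref{adaptermovemerge} either reorders two red strands or alters a red label. I expect this to be immediate by inspection: both sides of each such relation display the same sequence of red strands carrying the same labels $u$, so the triple $(m,\ell,\mathbf u)$ descends to a well-defined invariant on $\Hom$-spaces.
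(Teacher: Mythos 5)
Your proof is correct and is essentially the paper's argument: the paper's proof is the one-line observation that the claim "follows from the forms of the generating morphisms," which is exactly the invariance of $(m,\ell,\mathbf u)$ that you spell out. Your final caveat about the relations is harmless but unnecessary — each defining relation equates morphisms that already share the same source and target, so only the generators (and their composites and tensor products) need to be inspected.
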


\begin{proof}
Follows from the forms of the generating morphisms of $\ASch$ given in \eqref{generator-affschur}--\eqref{rightleftcrossinggen}.
\end{proof}

Thus, it suffices to consider the morphisms between the objects of the form \eqref{equ:object-of-affineschur}
for given $m, \ell$ and $\mathbf u\in \kk^{\ell}$.
Below we shall always fix some $\mathbf u$; in this way we identify an object in \eqref{equ:object-of-affineschur} without ambiguity with
\begin{equation}
\label{equ:object-comma}
 \lambda^{(0)} \red{,} \lambda^{(1)} \red{,}  \ldots \lambda^{(\ell-1)} \red{,} \lambda^{(\ell)}   
\end{equation}
which can be viewed as an $(1+\ell)$-multicomposition $\lambda \in \Lambda_{\text{st}}^{1+\ell}(m)$. 

We define a forgetful map
\begin{align*}
    \Lambda_{\text{st}}^{1+\ell}(m) \longrightarrow \Lambda_{\text{st}}(m),
    \qquad \mu \mapsto \overline{\mu},
\end{align*}
by viewing it as a strict $1$-multicomposition (by omitting the empty components, i.e., those $\mu^{(i)}=\emptyset$).

We shall refer to the diagrams obtained by compositions and tensor products of \eqref{generator-affschur}--\eqref{rightleftcrossinggen} (and identity morphisms) {\em red strand dotted web diagrams}. Recall that a composition of generating morphisms in the affine web category $\AW$ is represented by a dotted web diagram. Thus, any morphism of $\Hom_{\ASch}(\lambda,\mu)$ is a linear combination of  red strand dotted web diagrams,  which
is obtained from  a dotted web diagram from $\bar \lambda$ to $\bar \mu$ by adding $\ell$ red strands to connect $u_i$ at the bottom to $u_i$ at the top, for $1\le i\le \ell$, 
such that
\begin{enumerate}
    \item there is no crossings between red strands, and
    \item a red strand intersects with the dotted web diagram only at toe segments.
\end{enumerate} 
(Here a toe segment of a dotted web diagram means a connected component of the diagram after removing all the intersection and dots.) 

\begin{example}
Let $\lambda=((\emptyset),(9),(7))$ and $\mu=((5),(5),(6))$. The following are two  red strand dotted web diagrams from $\mu$ to $\lambda$: 
\begin{align}
    \label{ex-of-dotchickenfootd-red}
\begin{tikzpicture}[anchorbase,scale=1.6,color=\clr]
\draw[-,line width=.6mm] (.212,.9) to (.212,.8);
\draw(.212,1) node{$\scriptstyle 9$};
\draw(1,1) node{$\scriptstyle 7$};
\draw(-1.1,-.55) node{$\scriptstyle 5$};
\draw(.4,-.6) node{$\scriptstyle 5$};
\draw(1.4,-.6) node{$\scriptstyle 6$};
\draw[-,line width=.75mm] (1,.9) to (1,.8);
\draw[-,line width=.15mm] (-1,-.396) to (.2,.8);
\draw[-,line width=.75mm](-1,-.39) to (-1.1,-.45);
\draw[-,line width=.15mm]  (.2,.8)to (.4,-.4);
\draw[-,line width=.75mm](.4,-.4) to (.4,-.5);
\draw[-,line width=.15mm]  (.2,.8)to (1.4,-.4);
\draw[-,line width=.75mm](1.4,-.4) to (1.4,-.5);
\draw[-,line width=.15mm] (1,.8) to (-1,-.39);
\draw[-,line width=.15mm] (1,.8) to (.4,-.39);
\draw[-,line width=.15mm] (1,.8) to (1.4,-.39);
\node at (-0.3,0.5) {$\scriptstyle 2$}; 
\draw (-.5,0.13)\bdot;
\node at (-.7,0.13) {$\scriptstyle  \nu^1$};
\draw (-.3,0)\bdot; \node at (-.3,-0.13) {$\scriptstyle  \nu^2$};
\draw (.35,-0.1)\bdot;\node at (.2,-0.13) {$\scriptstyle  \nu^3$};
\draw (.55,-0.1)\bdot;\node at (.7,-0.2) {$\scriptstyle  \nu^4$};
\draw (1.1,-0.1)\bdot;\node at (1,-0.2) {$\scriptstyle  \nu^5$};
\draw (1.3,-0.1)\bdot;\node at (1.5,-0.13) {$\scriptstyle  \nu^6$};
\node at (1.2,0.5) {$\scriptstyle 2$};
\node at (0.15,0.5) {$\scriptstyle 3$};
\node at (0.1,0.13) {$\scriptstyle 3$};
\node at (0.4,0.7) {$\scriptstyle 4$};
\node at (.9,.4) {$\scriptstyle 2$};
\draw[-,line width=1pt, color=\cred](.6,.8) to (0.6,.4);
\draw[-,line width=1pt, color=\cred](.6,.4) to[out=down,in=135] (1,-.4);
\node at (0.6, .9){$\red{\scriptstyle u_2}$};
\node at (1,-.5){$\red{\scriptstyle u_2}$};
\draw[-,line width=1pt, color=\cred](0,.8) to (0,-.4);
\node at (0, .9){$\red{\scriptstyle u_1}$};
\node at (0,-.5){$\red{\scriptstyle u_1}$};
\end{tikzpicture}
, \quad 
\begin{tikzpicture}[anchorbase,scale=1.6,color=\clr]
\draw[-,line width=.6mm] (.212,.9) to (.212,.8);
\draw(.212,1) node{$\scriptstyle 9$};
\draw(1,1) node{$\scriptstyle 7$};
\draw(-1.1,-.55) node{$\scriptstyle 5$};
\draw(.4,-.6) node{$\scriptstyle 5$};
\draw(1.4,-.6) node{$\scriptstyle 6$};
\draw[-,line width=.75mm] (1,.9) to (1,.8);
\draw[-,line width=.15mm] (-1,-.396) to (.2,.8);
\draw[-,line width=.75mm](-1,-.39) to (-1.1,-.45);
\draw[-,line width=.15mm]  (.2,.8)to (.4,-.4);
\draw[-,line width=.75mm](.4,-.4) to (.4,-.5);
\draw[-,line width=.15mm]  (.2,.8)to (1.4,-.4);
\draw[-,line width=.75mm](1.4,-.4) to (1.4,-.5);
\draw[-,line width=.15mm] (1,.8) to (-1,-.39);
\draw[-,line width=.15mm] (1,.8) to (.4,-.39);
\draw[-,line width=.15mm] (1,.8) to (1.4,-.39);
\node at (-0.3,0.5) {$\scriptstyle 2$}; 
\draw (-.5,0.13)\bdot;
\node at (-.7,0.13) {$\scriptstyle  \nu^1$};
\draw (-.3,0)\bdot; \node at (-.3,-0.13) {$\scriptstyle  \nu^2$};
\draw (.35,-0.1)\bdot;\node at (.2,-0.13) {$\scriptstyle  \nu^3$};
\draw (.55,-0.1)\bdot;\node at (.7,-0.2) {$\scriptstyle  \nu^4$};
\draw (1.1,-0.1)\bdot;\node at (1,-0.2) {$\scriptstyle  \nu^5$};
\draw (1.3,-0.1)\bdot;\node at (1.5,-0.13) {$\scriptstyle  \nu^6$};
\node at (1.2,0.5) {$\scriptstyle 2$};
\node at (0.15,0.5) {$\scriptstyle 3$};
\node at (0.1,0.13) {$\scriptstyle 3$};
\node at (0.4,0.7) {$\scriptstyle 4$};
\node at (.9,.4) {$\scriptstyle 2$};
\draw[-,line width=1pt, color=\cred](.6,.8) to [out=down, in =135] (1,.4);
\draw[-,line width=1pt, color=\cred](1,.4) to[out=-45,in=90] (1.2,-.4);
\node at (0.6, .9){$\red{\scriptstyle u_2}$};
\node at (1.2,-.5){$\red{\scriptstyle u_2}$};
\draw[-,line width=1pt, color=\cred](0,.8) to (0,-.4);
\node at (0, .9){$\red{\scriptstyle u_1}$};
\node at (0,-.5){$\red{\scriptstyle u_1}$};
\end{tikzpicture}.
\end{align} 
\end{example}

\begin{definition}
  \label{def:degree}
Letting the degree of $\wkdota$ be $r$, the degree of  $\merge, \splits$, $\crossing$ and a traverse-up $\rightcrossing$ be $0$, and the degree of a traverse-down $\leftcrossing$ be $a$, we define the {\em degree} of a red strand dotted web diagram as the sum of the degrees of its local components.
\end{definition}
The degrees for the generating morphisms in Definition \ref{def:degree} are motivated  by  the degrees of the corresponding operators in the definition of the polynomial representation of $\ASch$ in \S\ref{polynomilrep} when regarding the polynomial ring as graded ring such that  each variable is of  degree 1. 

Let $\Hom_{\ASch}(\lambda,\mu)_{\le k}$ (respectively, $\Hom_{\ASch}(\lambda,\mu)_{< k}$)
be the $\kk$-span of all red strand dotted web diagram  with degree $\le k$ (respectively, $<k$).
For any  $D$ and $D'$  in  $\Hom_{\ASch}(\lambda,\mu)_{\le k}$, we write 
\begin{align}  \label{modLOT}
D\equiv D' \quad \text{ if } D=D'  \mod \Hom_{\ASch}(\lambda,\mu)_{< k}.
\end{align}

It follows from the defining relations \eqref{dotmovecrossing}, \eqref{dotmovesplitss}, 
and \eqref{adaptrermovecross} that we may move dots through crossings, splits and merges, and pull red lines through crossings freely up to lower degree terms, as formulated in the following lemma, where (1)--(3) are already appeared in \cite[Lemma 2.12]{SWweb}. This fact will be used frequently without any explicit reference.

\begin{lemma}
\label{dotmovefreely}
In $\ASch$, the following $\equiv$-relations hold:
 \begin{enumerate}
  \item 
    $\begin{tikzpicture}[baseline = 7.5pt, scale=0.35, color=\clr]
\draw[-,line width=1.2pt] (0,-.2) to  (1,2.2);
\draw[-,line width=1.2pt] (1,-.2) to  (0,2.2);
\draw(0.2,1.6)\bdot;
\draw(-.5,1.6) node {$\scriptstyle \omega_r$};
\node at (0, -.5) {$\scriptstyle a$};
\node at (1, -.5) {$\scriptstyle b$};
\end{tikzpicture}
~\equiv~ 
\begin{tikzpicture}[baseline = 7.5pt, scale=0.35, color=\clr]
\draw[-, line width=1.2pt] (0,-.2) to (1,2.2);
\draw[-,line width=1.2pt] (1,-.2) to (0,2.2);
\draw(.8,0.3)\bdot;
\draw(1.55,0.3) node {$\scriptstyle \omega_r$};
\node at (0, -.5) {$\scriptstyle a$};
\node at (1, -.5) {$\scriptstyle b$};
\end{tikzpicture}, 
\quad 
\begin{tikzpicture}[baseline = 7.5pt, scale=0.35, color=\clr]
\draw[-, line width=1.2pt] (0,-.2) to (1,2.2);
\draw[-,line width=1.2pt] (1,-.2) to(0,2.2);
\draw(.2,0.2)\bdot;
\draw(-.5,.2)node {$\scriptstyle \omega_r$};
\node at (0, -.5) {$\scriptstyle b$};
\node at (1, -.5) {$\scriptstyle a$};
\end{tikzpicture}
~\equiv~
\begin{tikzpicture}[baseline = 7.5pt, scale=0.35, color=\clr]
\draw[-,line width=1.2pt] (0,-.2) to  (1,2.2);
\draw[-,line width=1.2pt] (1,-.2) to  (0,2.2);
\draw(0.8,1.6)\bdot;
\draw(1.6,1.6)node {$\scriptstyle \omega_r$};
\node at (0, -.5) {$\scriptstyle b$};
\node at (1, -.5) {$\scriptstyle a$};
\end{tikzpicture}
$;
\item
$
\begin{tikzpicture}[baseline = -.5mm,scale=.7,color=\clr]
\draw[-,line width=1.5pt] (0.08,-.5) to (0.08,0.04);
\draw[-,line width=1pt] (0.34,.5) to (0.08,0);
\draw[-,line width=1pt] (-0.2,.5) to (0.08,0);
\node at (-0.22,.6) {$\scriptstyle a$};
\node at (0.36,.65) {$\scriptstyle b$};
\draw (0.08,-.2) \bdot;
\draw (0.55,-.2) node{$\scriptstyle \omega_r$};
\end{tikzpicture} 
\equiv~
\sum\limits_{c+d=r} ~
\begin{tikzpicture}[baseline = -.5mm,scale=.7,color=\clr]
\draw[-,line width=1.5pt] (0.08,-.5) to (0.08,0.04);
\draw[-,line width=1pt] (0.34,.5) to (0.08,0);
\draw[-,line width=1pt] (-0.2,.5) to (0.08,0);
\node at (-0.22,.6) {$\scriptstyle a$};
\node at (0.36,.65) {$\scriptstyle b$};
\draw (-.05,.24) \bdot;
\draw (-0.4,.2) node{$\scriptstyle \omega_{c}$};
\draw (0.6,.2) node{$\scriptstyle \omega_{d}$};
\draw (.22,.24) \bdot;
\end{tikzpicture}~
$;
\item 
$
\begin{tikzpicture}[baseline = -.5mm, scale=.7, color=\clr]
\draw[-,line width=1pt] (0.3,-.5) to (0.08,0.04);
\draw[-,line width=1pt] (-0.2,-.5) to (0.08,0.04);
\draw[-,line width=1.5pt] (0.08,.6) to (0.08,0);
\node at (-0.22,-.6) {$\scriptstyle a$};
\node at (0.35,-.6) {$\scriptstyle b$};
\draw (0.08,.2) \bdot;
\draw (0.5,.2) node{$\scriptstyle \omega_r$};  
\end{tikzpicture}
 ~\equiv~
\sum\limits_{c+d=r}~
\begin{tikzpicture}[baseline = -.5mm,scale=.7, color=\clr]
\draw[-,line width=1pt] (0.3,-.5) to (0.08,0.04);
\draw[-,line width=1pt] (-0.2,-.5) to (0.08,0.04);
\draw[-,line width=1.5pt] (0.08,.6) to (0.08,0);
\node at (-0.22,-.6) {$\scriptstyle a$};
\node at (0.35,-.6) {$\scriptstyle b$};
\draw (-.08,-.3) \bdot; \draw (.22,-.3) \bdot;
\draw (-0.5,-.3) node{$\scriptstyle \omega_{c}$};
\draw (0.7,-.3) node{$\scriptstyle \omega_{d}$};
\end{tikzpicture}
$;
\item 
  $\begin{tikzpicture}[baseline = -1mm,scale=0.6,color=\clr]
	\draw[-,thick] (0.2,-.6) to (0.2,.6);
	\draw[-,thick] (-0.2,-.6) to (-0.2,.6);
        \node at (0.2,-.75) {$\scriptstyle b$};
        \node at (-0.2,-.75) {$\scriptstyle a$};
        \draw (.2, 0) \bdot;
        \node at (0.6,0) {$\scriptstyle \omega_r$};
\end{tikzpicture}
\equiv
\begin{tikzpicture}[baseline = -1mm,scale=0.6,color=\clr]
	\draw[-,thick] (0.28,0) to[out=90,in=-90] (-0.28,.6);
	\draw[-,thick] (-0.28,0) to[out=90,in=-90] (0.28,.6);
	\draw[-,thick] (0.28,-.6) to[out=90,in=-90] (-0.28,0);
	\draw[-,thick] (-0.28,-.6) to[out=90,in=-90] (0.28,0);
        \node at (0.3,-.75) {$\scriptstyle b$};
        \draw (-.28, 0) \bdot;
        \node at (-0.7,0) {$\scriptstyle \omega_r$};
        \node at (-0.3,-.75) {$\scriptstyle a$};
\end{tikzpicture}$;
\item 
$
\begin{tikzpicture}[baseline = 7.5pt, scale=0.35, color=\clr]
\draw[-,line width=1pt,color=\cred](.8,2) to[out=down,in=90] (0.1,1) to [out=down,in=up] (.8,-.4);
\draw (.8,-.65) node{$\scriptstyle \red{u}$};
\draw[-,line width=1.2pt] (0,-.35) to (1.5,2);
\draw[-,line width=1.2pt](0,2) to  (1.5,-.35);
\node at (0, -.6) {$\scriptstyle a$};
\node at (1.5, -.6) {$\scriptstyle b$};
\end{tikzpicture}
~\equiv~
\begin{tikzpicture}[baseline = 7.5pt, scale=0.35,color=\clr]
\draw[-,line width=1pt,color=\cred](.8,2) to[out=down,in=90] (1.5,1) to [out=down,in=up] (.8,-.4);
\draw (.8,-.65) node{$\scriptstyle \red{u}$};
\draw[-,line width=1.2pt] (0,-.35) to (1.5,2);
\draw[-,line width=1.2pt](0,2) to  (1.5,-.35);
\node at (0, -.6) {$\scriptstyle a$};
\node at (1.5, -.6) {$\scriptstyle b$};
\end{tikzpicture}
$;
\item 
$\begin{tikzpicture}[baseline = 10pt, scale=0.35, color=\clr]
\draw[-,line width=1pt,color=\cred](0,-.2)to (0,2.6);
\draw (0,-.4) node{$\scriptstyle \red{u}$};
\draw[-,line width= 1.2pt](-.5,-.1) to[out=45, in=down] (.5,1.4);
\draw[-,line width= 1.2pt](.5,1.4) to[out=up, in=300] (-.5,2.4); 
\draw (-.5,-0.4) node{$\scriptstyle {r}$};
\end{tikzpicture}
\equiv~
 \begin{tikzpicture}[baseline = -1mm,scale=.7,color=\clr]
\draw[-,line width=1.2pt] (0.08,-.6) to (0.08,.5);
\node at (.08,-.8) {$\scriptstyle r$};
\draw(0.08,0) \bdot;
\draw[-,line width=1pt,color=\cred](.5,-.6)to (.5,.5);
\draw (.5,-.8) node{$\scriptstyle \red{u}$};
\end{tikzpicture}, \quad
 \begin{tikzpicture}[baseline = 10pt, scale=0.35, color=\clr]
\draw[-,line width=1pt,color=\cred](0,-.6)to (0,2.4);
\draw (0,-.9) node{$\scriptstyle \red{u}$};
\draw[-,line width= 1.2pt](.5,-.5) to[out=135, in=down] (-.5,1);
\draw[-,line width= 1.2pt](-.5,1) to[out=up, in=270] (.5,2.3);
\draw (.5,-0.9) node{$\scriptstyle {r}$};
\end{tikzpicture}
\equiv~
 \begin{tikzpicture}[baseline = -1mm,scale=.7,color=\clr]
\draw[-,line width=1.2pt] (0.08,-.6) to (0.08,.5);
\node at (.08,-.8) {$\scriptstyle r$};
\draw(0.08,0) \bdot;
\draw[-,line width=1pt,color=\cred](-.4,-.6)to (-.4,.5);
\draw (-.4,-.8) node{$\scriptstyle \red{u}$};
\end{tikzpicture}$.
\end{enumerate}  
\end{lemma}

Recall from \eqref{dottedreduced} and Definition \ref{def:diagram} the set $\PMat_{-,-}$ of elementary (chicken foot) diagrams.

\begin{definition}
Suppose that $\lambda,\mu \in \Lambda_{\text{st}}^{1+\ell}(m)$.
A red strand dotted web diagram $\D$ from $\mu$ to $\lambda$ is  called an elementary diagram  if 
 \begin{enumerate}
     \item 
     the diagram $\overline \D $ obtained from $\D$ by removing the red strands is an elementary chicken foot diagram in  $\PMat_{\bar\lambda,\bar\mu}$, and 
     \item 
     there is at most one intersection between each red strand and any other strand.
 \end{enumerate}   
In this case, we call $\D$ a $\la \times\mu$-ornamentation of $\overline \D$. 
\end{definition}

There are usually several $\la \times\mu$-ornamentations for a given elementary diagram in $ \PMat_{\bar\lambda,\bar\mu}$, but they are all equal modulo lower degree terms in $\Hom_{\ASch}(\mu,\lambda)$ thanks to
Lemma \ref{dotmovefreely}(4), 
Lemma \ref{adamovecrossings}, and \eqref{dotmoveadaptor}. For example, the two red strand dotted web diagrams in \eqref{ex-of-dotchickenfootd-red} are two different ornamentations of the same elementary chicken foot diagram. We shall fix once for all a $\la \times\mu$-ornamentation for each elementary chicken foot diagram in $\PMat_{\bar\lambda,\bar\mu}$.

For $\lambda,\mu \in \Lambda_{\text{st}}^{1+\ell}(m)$, we introduce
\begin{align}  \label{PMatblock}
\PMat_{\lambda,\mu} = \Big\{\big(A=(A_{pq})_{0\le p,q\le \ell},  P=(P_{pq})_{0\le p,q\le \ell} \big)  \mid (A, P) \in  \PMat_{\overline{\lambda},\overline{\mu}}  \Big\}. 
\end{align}
This can be viewed as a $(1+\ell) \times (1+\ell)$-block (the blocks corresponding to the empty components are also counted here) matrix generalization (or a multicomposition generalization) of $\PMat_{-,-}$ defined earlier in \eqref{dottedreduced}, and indeed \eqref{PMatblock} for $\ell=0$ reduces to \eqref{dottedreduced}. We define a forgetful map (which is indeed a bijection) 
\begin{align}  \label{bijPM}
    \PMat_{\la,\mu} \longrightarrow \PMat_{\overline\lambda, \overline\mu},
    \qquad (A,P) \mapsto (A,P),
\end{align}
by viewing a block matrix as a standard matrix; the map is bijective as there is a unique way to view a matrix in $\PMat_{\overline\lambda, \overline\mu}$ as a block matrix in $\PMat_{\lambda,\mu}$ where the blocks are determined by the fixed $(1+\ell)$-multicompositions. Note the notation   $\PMat_{\la,\mu}$ has different meaning as in \eqref{dottedreduced} since it is now defined for multi-compositions. 

Let $\lambda,\mu \in \Lambda_{\text{st}}^{1+\ell}(m)$. Just as an elementary chicken foot diagram $\overline{D}$ from $\overline\mu$ to $\overline\la$ is encoded in $(A,P) \in\PMat_{\overline\lambda, \overline\mu}$, an elementary diagram $D$ from $\mu$ to $\la$ which is a (fixed) ornamentation of $\overline{D}$ is encoded by $(A,P) \in\PMat_{\la,\mu}$ via the identification \eqref{bijPM}. In this way, the set of (fixed) $\la \times\mu$-ornamented elementary chicken foot diagrams in $\PMat_{\bar\lambda,\bar\mu}$ is identified with $\PMat_{\lambda,\mu}.$

\begin{example}
Let $\lambda=((9),(7))$ and $\mu=((5,5),(6))$. 
Suppose $(A,P)\in \Par\Mat_{\lambda,\mu} $ with
\[A=\left[\begin{array}{cc|c}
2 & 3 & 4 \\
\hline
3& 2 & 2 \\  
\end{array}\right],\quad  P=  \left[\begin{array}{cc|c}
\nu^1 & \nu^3 & \nu^5 \\
\hline
\nu^2& \nu^4 & \nu^6 \\  
\end{array}\right].\]
The following are two  ornamentations of $(A,P)$ which represent the same morphism up to lower degree terms:
\begin{align}
    \label{ex-of-dotchickenfootd-red-matrix}
\begin{tikzpicture}[anchorbase,scale=1.5,color=\clr]
\draw[-,line width=.6mm] (.212,.9) to (.212,.8);
\draw(.212,1) node{$\scriptstyle 9$};
\draw(1,1) node{$\scriptstyle 7$};
\draw(-1.1,-.55) node{$\scriptstyle 5$};
\draw(.4,-.6) node{$\scriptstyle 5$};
\draw(1.4,-.6) node{$\scriptstyle 6$};
\draw[-,line width=.75mm] (1,.9) to (1,.8);
\draw[-,line width=.15mm] (-1,-.396) to (.2,.8);
\draw[-,line width=.75mm](-1,-.39) to (-1.1,-.45);
\draw[-,line width=.15mm]  (.2,.8)to (.4,-.4);
\draw[-,line width=.75mm](.4,-.4) to (.4,-.5);
\draw[-,line width=.15mm]  (.2,.8)to (1.4,-.4);
\draw[-,line width=.75mm](1.4,-.4) to (1.4,-.5);
\draw[-,line width=.15mm] (1,.8) to (-1,-.39);
\draw[-,line width=.15mm] (1,.8) to (.4,-.39);
\draw[-,line width=.15mm] (1,.8) to (1.4,-.39);
\node at (-0.3,0.5) {$\scriptstyle 2$}; 
\draw (-.5,0.13)\bdot;
\node at (-.7,0.13) {$\scriptstyle  \nu^1$};
\draw (-.3,0)\bdot; \node at (-.3,-0.13) {$\scriptstyle  \nu^2$};
\draw (.35,-0.1)\bdot;\node at (.2,-0.13) {$\scriptstyle  \nu^3$};
\draw (.55,-0.1)\bdot;\node at (.7,-0.2) {$\scriptstyle  \nu^4$};
\draw (1.1,-0.1)\bdot;\node at (1,-0.2) {$\scriptstyle  \nu^5$};
\draw (1.3,-0.1)\bdot;\node at (1.5,-0.13) {$\scriptstyle  \nu^6$};
\node at (1.2,0.5) {$\scriptstyle 2$};
\node at (0.15,0.5) {$\scriptstyle 3$};
\node at (0.1,0.13) {$\scriptstyle 3$};
\node at (0.4,0.7) {$\scriptstyle 4$};
\node at (.9,.4) {$\scriptstyle 2$};
\draw[-,line width=1pt, color=\cred](.6,.8) to (0.6,.4);
\draw[-,line width=1pt, color=\cred](.6,.4) to[out=down,in=135] (1,-.4);
\node at (0.6, .9){$\red{\scriptstyle u_1}$};
\node at (1,-.5){$\red{\scriptstyle u_1}$};
\end{tikzpicture}
, \quad 
\begin{tikzpicture}[anchorbase,scale=1.5,color=\clr]
\draw[-,line width=.6mm] (.212,.9) to (.212,.8);
\draw(.212,1) node{$\scriptstyle 9$};
\draw(1,1) node{$\scriptstyle 7$};
\draw(-1.1,-.55) node{$\scriptstyle 5$};
\draw(.4,-.6) node{$\scriptstyle 5$};
\draw(1.4,-.6) node{$\scriptstyle 6$};
\draw[-,line width=.75mm] (1,.9) to (1,.8);
\draw[-,line width=.15mm] (-1,-.396) to (.2,.8);
\draw[-,line width=.75mm](-1,-.39) to (-1.1,-.45);
\draw[-,line width=.15mm]  (.2,.8)to (.4,-.4);
\draw[-,line width=.75mm](.4,-.4) to (.4,-.5);
\draw[-,line width=.15mm]  (.2,.8)to (1.4,-.4);
\draw[-,line width=.75mm](1.4,-.4) to (1.4,-.5);
\draw[-,line width=.15mm] (1,.8) to (-1,-.39);
\draw[-,line width=.15mm] (1,.8) to (.4,-.39);
\draw[-,line width=.15mm] (1,.8) to (1.4,-.39);
\node at (-0.3,0.5) {$\scriptstyle 2$}; 
\draw (-.5,0.13)\bdot;
\node at (-.7,0.13) {$\scriptstyle  \nu^1$};
\draw (-.3,0)\bdot; \node at (-.3,-0.13) {$\scriptstyle  \nu^2$};
\draw (.35,-0.1)\bdot;\node at (.2,-0.13) {$\scriptstyle  \nu^3$};
\draw (.55,-0.1)\bdot;\node at (.7,-0.2) {$\scriptstyle  \nu^4$};
\draw (1.1,-0.1)\bdot;\node at (1,-0.2) {$\scriptstyle  \nu^5$};
\draw (1.3,-0.1)\bdot;\node at (1.5,-0.13) {$\scriptstyle  \nu^6$};
\node at (1.2,0.5) {$\scriptstyle 2$};
\node at (0.15,0.5) {$\scriptstyle 3$};
\node at (0.1,0.13) {$\scriptstyle 3$};
\node at (0.4,0.7) {$\scriptstyle 4$};
\node at (.9,.4) {$\scriptstyle 2$};
\draw[-,line width=1pt, color=\cred](.6,.8) to [out=down, in =135] (1,.4);
\draw[-,line width=1pt, color=\cred](1,.4) to[out=-45,in=90] (1.2,-.4);
\node at (0.6, .9){$\red{\scriptstyle u_1}$};
\node at (1.2,-.5){$\red{\scriptstyle u_1}$};
\end{tikzpicture}.
\end{align} 
The forgetful map in \eqref{bijPM} sends $(A,P)$
to 
\[A=\left[\begin{array}{ccc}
2 & 3 & 4 \\
3& 2 & 2 \\  
\end{array}\right],\quad  P=  \left[\begin{array}{ccc}
\nu^1 & \nu^3 & \nu^5 \\
\nu^2& \nu^4 & \nu^6 \\  
\end{array}\right].\]
\end{example}

\begin{theorem}
  \label{thm:basisASchur}
$\PMat_{\lambda,\mu}$ forms a basis for the Hom-space $\Hom_{\ASch}(\mu,\lambda)$, for any $\lambda,\mu \in \Lambda_{\text{st}}^{1+\ell}(m)$.    
\end{theorem}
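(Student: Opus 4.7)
The proof naturally splits into establishing the spanning property and linear independence, and I would handle them by quite different techniques.

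For spanning, I would argue by induction on the degree $k$ in the filtration $\Hom_{\ASch}(\mu,\lambda)_{\le k}$ of Definition~\ref{def:degree}. Given an arbitrary red strand dotted web diagram $D$ of degree $k$, the goal is to write it as a $\kk$-linear combination of elements of $\PMat_{\lambda,\mu}$ modulo $\Hom_{\ASch}(\mu,\lambda)_{<k}$. The freedom recorded in Lemma~\ref{dotmovefreely} (derived from \eqref{dotmovecrossing}, \eqref{dotmovesplitss} and \eqref{adaptrermovecross}), combined with Lemma~\ref{adamovecrossings}, the move \eqref{adaptermovemerge} (which is exact, not merely $\equiv$), and the dot--red interchange \eqref{dotmoveadaptor}, let me slide each red strand past any black crossing, merge, or split at the cost of terms of strictly smaller degree. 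After finitely many such moves, I may assume modulo $\Hom_{\ASch}(\mu,\lambda)_{<k}$ that the red strands occupy the previously fixed canonical positions associated with the underlying black diagram $\overline{D}$; that is, $D$ is an ornamentation of $\overline{D}\in\Hom_{\AW}(\overline\mu,\overline\lambda)$. The affine web basis theorem (Theorem~\ref{basisAW}) expresses $\overline{D}$ as a $\kk$-linear combination of elementary chicken foot diagrams in $\PMat_{\overline\lambda,\overline\mu}$, and re-attaching the canonical ornamentations via the bijection \eqref{bijPM} exhibits $D$ as a $\kk$-linear combination of elements of $\PMat_{\lambda,\mu}$ modulo lower degree. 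The inductive hypothesis disposes of the lower-degree error terms.

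For linear independence, I would invoke the polynomial representation $\mathcal{F}\colon \ASch\to\mathpzc{Vec}_\kk$ from Theorem~\ref{thm:polyRep}. Assume a nontrivial dependence $\sum_{(A,P)} c_{A,P} D_{A,P}=0$ in $\Hom_{\ASch}(\mu,\lambda)$ with $c_{A,P}\in\kk$, and let $k$ be the maximal degree appearing. Applying $\mathcal{F}$ and comparing terms of degree $k$ reduces the problem to the top-degree piece. In the polynomial model, each traverse-down that crosses an $a$-strand labelled by $\red u$ contributes a multiplication by $\prod_{j=1}^{a}(x_j-u)$, whose top $x$-degree is $x_1\cdots x_a$ and whose $u$-dependence has a distinguished top coefficient. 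Separating the sum according to the matrix shape $A$ (which determines the pattern of red-strand crossings in the canonical ornamentation, hence the exact product of factors $(x_j-u_i)$ appearing at leading order), and then, within each fixed $A$, invoking the injectivity of the analogous polynomial action for $\AW$ established in \cite[\S5]{SW1} (the elementary dot packets $\omega_\nu$ for $\nu\in\Par_a$ give linearly independent operators because the power sums generate $\text{Sym}_a$ freely over $\kk$), I conclude that all the coefficients $c_{A,P}$ must vanish, contradicting the assumption.

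The main obstacle is the spanning step: the relations \eqref{adaptrermovecross} and \eqref{dotmovecrossing} produce lower-degree corrections whose red-strand configurations are genuinely different from the original, so one must be careful to organize the induction on degree so that every rewriting step lands in $\Hom_{\ASch}(\mu,\lambda)_{<k}$ (controlled by the explicit formulas) and that the choice of canonical ornamentation for each element of $\PMat_{\overline\lambda,\overline\mu}$ is made once and for all, independently of the diagram $D$ being reduced. A secondary subtlety in the linear-independence half is verifying that the top-degree contribution really does separate the matrix shape $A$ from the partition datum $P$; this is done by noting that the factors $\prod_j(x_j-u_i)$ carry information about $A$ via the exponents of the $u_i$, while the symmetric-polynomial multipliers produced by the elementary dot packets carry information about $P$, and the two filtrations are compatible.
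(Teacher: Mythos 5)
Your spanning argument follows essentially the same route as the paper's: induction on the degree filtration, using \eqref{adaptermovemerge}, Lemma~\ref{adamovecrossings}, \eqref{dotmoveadaptor} and the $\equiv$-relations of Lemma~\ref{dotmovefreely} to push red strands and dots into a fixed canonical position modulo lower-degree terms, and then invoking the affine web basis for the black part. One step you should make explicit: when a traverse-down meets a merge, a red strand can end up intersecting a single leg \emph{twice}, and this configuration is not removed by sliding at the cost of lower-degree terms --- it is resolved by \eqref{adaptorR}, whose leading contribution is the dot $\omega_r$, a term of the \emph{same} degree that must then be absorbed into the elementary dot packets. Your blanket claim that every move costs only strictly smaller degree is therefore not quite accurate at this point, though the induction still closes; I would call this an imprecision rather than an error.

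The linear-independence half, however, has a genuine gap. You propose first to separate the dependence relation according to the matrix shape $A$ on the grounds that $A$ determines ``the exact product of factors $(x_j-u_i)$ appearing at leading order,'' and only then to distinguish the partition data $P$ via faithfulness of the $\AW$-action. The implication goes the wrong way: $A$ determines the red-strand crossing pattern, but that pattern does not determine $A$. The factors $\prod_j(x_j-u_i)$ record only, for each bottom variable, which red strands its leg traverses downward --- i.e.\ which \emph{component} $\lambda^{(p)}$ the leg lands in, not which row of that component; two matrices that redistribute thickness among rows inside a single top block produce identical $u$-factors, and for $\ell=0$ there are no such factors at all while $A$ still varies freely. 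Worse, if you pass to the top $x$-degree as you suggest, each $(x_j-u_i)$ degenerates to $x_j$ and the $u$-dependence disappears entirely. (A smaller point: the dot packets act by multiplication by polynomials whose leading terms are elementary symmetric polynomials, not power sums; power sums do not generate $\text{Sym}_a$ over a general commutative ring $\kk$.) What actually works --- and what the paper does --- is to evaluate every element of $\PMat_{\lambda,\mu}$ on a single test monomial $m=\prod_k x_{p_{k-1}+1}^{kN}\cdots x_{p_k}^{kN}$ with $N\gg 0$, so the blocks of exponents $kN$ are widely separated; the leading monomial of the image then simultaneously records $A$ (through how the chicken-foot permutation redistributes the exponents $kN$), $P$ (through the added dual partitions $\nu'_{ij}$), and the traverse-down shifts $r_{ij}$, and distinct $(A,P)$ give distinct leading monomials. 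Your two-stage separation does not reduce to this and, as written, does not establish injectivity.
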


\begin{proof}
First we prove that $\PMat_{\lambda,\mu}$ spans $\Hom_{\ASch}(\mu,\lambda)$.

It suffices to show the claim  that $fg$ can be written as a linear combination of elementary diagrams up to lower degree terms (i.e, with degree $<\deg f+\deg g$)  for an arbitrary elementary diagram $g$ and a generating morphism $f$ of the following five types: 
\[ 
1_* \wkdota 1_*,\qquad  1_*\merge 1_*,\qquad  1_*\splits 1_*,   \qquad  1_*\rightcrossing1_*, \qquad   1_*\leftcrossing1_*,
\]
where $1_*$ represents suitable identity morphisms.
   
We proceed by induction on the degree $k$ of  $g\in \Hom_{\ASch}(\mu,\lambda)_{\le k}$.
Suppose $k=0$. Then any diagram with degree $0$ is generated by splits, merges and traverse-ups.
Since splits and merges can pass through red strands freely by \eqref{adaptermovemerge}, the claim holds by the argument in \cite[Lemma 4.9]{BEEO}.
Note that all ornamentations of a given elementary  chicken foot diagram represent the same morphism by \eqref{adaptermovemerge} and Lemma~ \ref{adamovecrossings}
since there are no traverse-down.
It is clear that the left multiplication (=composing vertically from the top) of the traverse-up is still an ornamentation of some degree zero elementary diagram and hence the claim holds for $k=0$.

Suppose $k>0$. Even though two different ornamentations of a given elementary chicken foot diagram may not represent the same morphism, they are equal up to some lower  degree diagrams by \eqref{adaptrermovecross}. So we shall not distinguish ornamentations of a given elementary chicken foot diagram below. In addition to splits, merges, dots, and crossings in Lemma \ref{adamovecrossings}, all other types of crossings (i.e., in \eqref{adaptrermovecross}) can also pass through red strands freely (up to lower degree terms). Keeping these in mind, we see that the claim holds for $ 1_* \wkdota 1_*,\quad  1_*\merge 1_*,\quad  1_*\splits 1_*$ by the same arguments as in the proof of \cite[Proposition~3.6]{SWweb}. 

It is also clear that the claim holds for the traverse-up by \eqref{adaptermovemerge}. 

It remains to consider the traverse-down. Suppose $f$ is a traverse-down joining to the $i$th vertex of an $r$-fold merge at the top of $g$. Then use \eqref{adaptermovemerge} to pass the merge through the red strand so that the resulting diagram without red strands is an elementary diagram $B$. For example, suppose $g$ is the first diagram in \eqref{ex-of-dotchickenfootd-red} and $f$ is the traverse-down joining the first vertex of $g$, then 
\[ 
  fg~=~ 
   \begin{tikzpicture}[anchorbase,scale=1.6,color=\clr]
\draw[-,line width=.6mm] (.212,.9) to (.212,.8);
\draw[-,line width=.75mm] (1,.9) to (1,.8);
\draw[-,line width=.15mm] (-1,-.396) to (.2,.8);
\draw[-,line width=.75mm](-1,-.39) to (-1.1,-.45);
\draw[-,line width=.15mm]  (.2,.8)to (.4,-.4);
\draw[-,line width=.75mm](.4,-.4) to (.4,-.5);
\draw[-,line width=.15mm]  (.2,.8)to (1.4,-.4);
\draw[-,line width=.75mm](1.4,-.4) to (1.4,-.5);
\draw[-,line width=.15mm] (1,.8) to (-1,-.39);
\draw[-,line width=.15mm] (1,.8) to (.4,-.39);
\draw[-,line width=.15mm] (1,.8) to (1.4,-.39);
\node at (-0.3,0.5) {$\scriptstyle 2$}; 
\draw (-.5,0.13)\bdot;
\node at (-.7,0.13) {$\scriptstyle  \nu^1$};
\draw (-.3,0)\bdot; \node at (-.3,-0.13) {$\scriptstyle  \nu^2$};
\draw (.35,-0.1)\bdot;\node at (.2,-0.13) {$\scriptstyle  \nu^3$};
\draw (.55,-0.1)\bdot;\node at (.64,-0.2) {$\scriptstyle  \nu^4$};
\draw (1.1,-0.1)\bdot;\node at (1.04,-0.24) {$\scriptstyle  \nu^5$};
\draw (1.3,-0.1)\bdot;\node at (1.5,-0.13) {$\scriptstyle  \nu^6$};
\node at (1.2,0.5) {$\scriptstyle 2$};
\node at (0.15,0.5) {$\scriptstyle 3$};
\node at (0.1,0.13) {$\scriptstyle 3$};
\node at (0.4,0.7) {$\scriptstyle 4$};
\node at (.9,.4) {$\scriptstyle 2$};
\draw[-,line width=1pt, color=\cred](.6,.8) to (0.6,.4);
\draw[-,line width=1pt, color=\cred](.6,.4) to[out=down,in=135] (1,-.4);
\node at (0.6, .9){$\red{\scriptstyle u_2}$};
\node at (1,-.5){$\red{\scriptstyle u_2}$};
\draw[-,line width=1pt, color=\cred](0,.8) to (0,-.4);
\draw[-,line width=1pt, color=\cred](0,.8) to (0.3,1.1);
\draw[-,line width=1.5pt] (0.2,.9) to (0, 1.1);
\node at (0.3, 1.2){$\red{\scriptstyle u_1}$};
\node at (0,-.5){$\red{\scriptstyle u_1}$};
\end{tikzpicture}
~=~
   \begin{tikzpicture}[anchorbase,scale=1.6,color=\clr]
\draw[-,line width=.6mm] (0,1.1) to (0,1);
\draw[-,line width=.75mm] (1,.9) to (1,.8);
\draw[-,line width=.15mm] (-1,-.396) to (0,1);
\draw[-,line width=.75mm](-1,-.39) to (-1.1,-.45);
\draw[-,line width=.15mm]  (0,1)to (.4,-.4);
\draw[-,line width=.75mm](.4,-.4) to (.4,-.5);
\draw[-,line width=.15mm]  (0,1)to (1.4,-.4);
\draw[-,line width=.75mm](1.4,-.4) to (1.4,-.5);
\draw[-,line width=.15mm] (1,.8) to (-1,-.39);
\draw[-,line width=.15mm] (1,.8) to (.4,-.39);
\draw[-,line width=.15mm] (1,.8) to (1.4,-.39);
\node at (-0.3,0.5) {$\scriptstyle 2$}; 
\draw (-.65,0.13)\bdot;
\node at (-.8,0.13) {$\scriptstyle \nu^1$};
\draw (-.55,-.12)\bdot; \node at (-.5,-0.23) {$\scriptstyle \nu^2$};
\draw (.32,-0.1)\bdot;\node at (.2,-0.13) {$\scriptstyle \nu^3$};
\draw (.55,-0.1)\bdot;\node at (.67,-0.2) {$\scriptstyle  \nu^4$};
\draw (1.1,-0.1)\bdot;\node at (1.04,-0.24) {$\scriptstyle  \nu^5$};
\draw (1.3,-0.1)\bdot;\node at (1.5,-0.13) {$\scriptstyle  \nu^6$};
\node at (1.2,0.5) {$\scriptstyle 2$};
\node at (0.2,0.5) {$\scriptstyle 3$};
\node at (0.1,0.13) {$\scriptstyle 3$};
\node at (0.4,0.7) {$\scriptstyle 4$};
\node at (.9,.4) {$\scriptstyle 2$};
\draw[-,line width=1pt, color=\cred](.6,.8) to (0.6,.4);
\draw[-,line width=1pt, color=\cred](.6,.4) to[out=down,in=135] (1,-.4);
\node at (0.6, .9){$\red{\scriptstyle u_2}$};
\node at (1,-.5){$\red{\scriptstyle u_2}$};
\draw[-,line width=1pt, color=\cred](-.2,-.4) to[out=135,in=200] (-.4,.6) to[out=45,in=down] (0.3,1.1);
\node at (0.3, 1.2){$\red{\scriptstyle u_1}$};
\node at (-.2,-.5){$\red{\scriptstyle u_1}$};
\end{tikzpicture}.
\]

However, the resulting red strand dotted web diagram may not be elementary since there may be two intersections between the red strand (corresponding to that in the traverse-down in $f$) and some toes of the  $r$-fold merge. Then we use \eqref{adaptorR} to replace the diagram with the ornamentation of $B$ with additional dots in the toes with not more than one intersection between the red strand and toes. Now we can use \eqref{adaptermovemerge}, \eqref{dotmoveadaptor} and arguments in the proof of  \cite[Proposition~3.6]{SWweb}
to slide the dots down to the bottom, leading to some elementary diagrams. This completes the proof that $\PMat_{\lambda,\mu}$ is a spanning set for $\Hom_{\ASch}(\mu,\lambda)$.

Next, we show that $\PMat_{\lambda,\mu}$ is linearly independent.   

For any $\overline B=( A, (\nu_{ij}))\in \Mat_{\bar \lambda, \bar \mu}$, we denote by $B$ an ornamentation of $\overline B$.  
Suppose $l(\mu^{(i)})=h_i$, $l(\lambda^{(i)})=t_i$. Denote $h:=l(\bar \mu)= \sum_{1\le k\le \ell}h_k$ and $t:=l(\bar \lambda)=\sum_{1\le k\le \ell}t_k$. Write $ p_j=\sum_{1\le i\le j}\bar \mu_i$, for $1\le j\le \sum_{1\le k\le \ell}h_k$. We compute the leading term of $B$ on 
\[
m= \prod_{1\le k\le h} x_{p_{k-1}+1}^{kN} x_{p_{k-1}+2}^{kN}\cdots x_{p_k}^{kN}\in \text{Sym}_{\bar \mu},
\]
for  $N\gg 0$. Here the leading term means the top degree term of the polynomial.    
Note that by definition  the leading term of  $\omega_{a,\nu}$
on $x_b^lx_{b+1}^l\cdots x_{b+a}^{ l}$ is 
$x_b^{l+\nu'_1}x_{b+1}^{l+\nu'_2}\ldots x_{b+a}^{l+\nu'_a},$
where $\nu'$ is the dual partition of $\nu$.
Using this together with the actions of splits, merges, crossings (see \eqref{actionofcrossing}), traverse-ups and traverse-downs we conclude  that  the leading term of $B$ on $m$
is 
\[
x^{((hN)^{a_{1h}})+\nu'_{1h}+((r_{1h})^{a_{1h}})}_{1,\ldots, a_{1h}}
x^{((h-1)N)^{a_{1h-1}}+ \nu'_{1h-1}+(r_{1h-1})^{a_{1h-1}})}_{a_{1h}+1, \ldots, a_{1h}+a_{1h-1}}
\ldots x_{p_k-a_{t1}+1,\ldots, p_k}^{(N)^{a_{t1}} + \nu'_{t1} +(r_{t1}^{a_{t1}}) },\]
where 
\begin{itemize}
    \item $x_{a+1,\ldots, a+b}^{(d_1,\ldots, d_b)}=x_{a+1}^{d_1}\ldots x_{a+b}^{d_b}$,
    \item $r_{ij}$ is the number of traverse-downs which are produced by the strand $a_{ij}$.
\end{itemize}
This shows that for different $\overline B$'s, the ornamentation $B$ will produce different leading terms which are clearly linearly independent. This implies the linear independence of $\PMat_{\lambda,\mu}$. 

When combining with the spanning result for  $\PMat_{\lambda,\mu}$ earlier, we have proved that $\PMat_{\lambda,\mu}$ forms a basis for $\Hom_{\ASch}(\mu,\lambda)$.
\end{proof}

\begin{rem}
\label{rem:faithfulofF}
The argument in the proof  of the linear independence in   Theorem \ref{thm:basisASchur}
 also implies that  the functor $\mathcal F$ in Theorem \ref{thm:polyRep} is faithful.
\end{rem}

\begin{example}
Suppose $B$ is the first diagram in \eqref{ex-of-dotchickenfootd-red} with
\[\nu'_1=(2,1), \nu'_2=(3,2,1), \nu'_3=(4,3,2), \nu'_4=(5,4), \nu'_5=(6,5,4,3), \nu'_{6}=(7,6).\]
Then 
$m=x_1^Nx_2^N\cdots x_5^N x_6^{2N}\cdots x_{10}^{2N}x_{11}^{3N} \ldots x_{16}^{3N}$ and 
\[
r_{13}=1, \quad r_{ij}=0, \quad \text{for } (i,j)\neq (1,3). 
\]

Thus, the leading term of $B m$ is 
\begin{align*}
x_1^{3N+7}x_2^{3N+6}x_3^{3N+5} x_4^{3N+4} x_5^{2N+4}x_6^{2N+3} x_7^{2N+2}
x_8^{N+2}x_9^{N+1} \times
\\
\qquad x_{10}^{3N+7}x_{11}^{3N+6}
x_{12}^{2N+5}x_{13}^{2N+4}
x_{14}^{N+3} x_{15}^{N+2} x_{16}^{N+1}.
\end{align*}
\end{example}

\begin{corollary}
    The affine web category $\AW$ is a full subcategory of the affine Schur category $\ASch$.
\end{corollary}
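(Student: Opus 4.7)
The plan is to exhibit a natural monoidal functor $\iota \colon \AW \to \ASch$ and then show it is fully faithful by comparing the two basis theorems already established.

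First, I would construct $\iota$ on objects by sending $\lambda \in \Lambda_{\text{st}}$ to the same $\lambda$ regarded as a $1$-multicomposition in $\Lambda_{\text{st}}^{1+0}$ (i.e., with empty red-strand vector $\mathbf u$). On morphisms, $\iota$ sends each of the generating merges, splits, crossings, and dot generators $\wkdotaa$ of $\AW$ to the morphism in $\ASch$ denoted by the same diagram. Well-definedness is immediate: every relation \eqref{webassoc}--\eqref{intergralballon} defining $\AW$ is, by Definition~\ref{def-affine-Schur}, also imposed in $\ASch$, so the universal property of $\AW$ gives a (necessarily unique) strict $\kk$-linear monoidal functor $\iota$. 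The functor is injective on objects since distinct strict compositions remain distinct in $\Lambda_{\text{st}}^{1+0} = \Lambda_{\text{st}}$.

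Second, I would show $\iota$ is fully faithful on each hom-space. Fix $\lambda,\mu \in \Lambda_{\text{st}}(m)$. By Theorem~\ref{basisAW}, $\Hom_{\AW}(\mu,\lambda)$ has a $\kk$-basis indexed by $\PMat_{\lambda,\mu}$, the set of elementary chicken foot diagrams from $\mu$ to $\lambda$. On the other hand, viewing $\lambda,\mu$ as objects of $\ASch$ with $\ell=0$, Theorem~\ref{thm:basisASchur} gives that $\Hom_{\ASch}(\mu,\lambda)$ has a $\kk$-basis indexed by $\PMat_{\lambda,\mu}$ (the block-matrix definition \eqref{PMatblock} with $\ell=0$ collapses to the original definition \eqref{dottedreduced}). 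Since no red strands appear in such an elementary diagram, the functor $\iota$ carries each basis element of $\Hom_{\AW}(\mu,\lambda)$ to the corresponding basis element of $\Hom_{\ASch}(\mu,\lambda)$. Hence $\iota$ induces a bijection on hom-spaces, and $\AW$ is realized as a full subcategory of $\ASch$.

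There is no real obstacle here; the work has already been done in proving the two basis theorems. The only point to double-check is that elementary diagrams from $\mu$ to $\lambda$ in $\ASch$ (with $\ell=0$) literally contain no red strands, which is clear from Definition~\ref{def:diagram} and the bijection \eqref{bijPM}: with no objects $u_i \in \kk$ on the boundary and no generator of $\ASch$ creating or annihilating red strands, red strands cannot appear in any morphism between objects with empty $\mathbf u$.
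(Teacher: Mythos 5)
Your argument is correct and is essentially the paper's own proof: define the natural functor $\AW \to \ASch$ matching objects and morphisms in the same names, then observe that it carries the elementary diagram basis of Theorem~\ref{basisAW} bijectively onto the ($\ell=0$) elementary diagram basis of Theorem~\ref{thm:basisASchur}, hence is fully faithful. Your extra remarks on well-definedness and on the absence of red strands when $\ell=0$ are correct elaborations of the same route.
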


\begin{proof}
    We have a natural functor $\AW \rightarrow \ASch$, which matches objects and morphisms in the same names. The statement follows by noting that the elementary diagram basis for $\AW$ in Theorem~ \ref{basisAW} and the one for $\ASch$ in Theorem~ \ref{thm:basisASchur} clearly matches.
\end{proof}

\section{Cyclotomic Schur categories}
 \label{sec:cycSchur}

In this section, we first review (a degenerate version of) some basic results about the cyclotomic Schur algebras $\Sc_{\bfu}$ following \cite{DJM98}. 
Then we introduce the cyclotomic Schur category $\Sch_{\bfu}$ associated to a fixed vector $\bfu\in \kk^{\ell}$. We construct a functor $\mathcal G$ from $\Sch_{\bfu}$ to $\ScatDJM_{\bfu}$ and formulate some first properties of $\mathcal G$ which connect to the cellular basis for the cyclotomic Schur algebras $\Sc_{\bfu}$.

\subsection{Cyclotomic Hecke algebras}\label{subsec-cychecke}

 Fix 
\begin{equation}
\label{cyclotomic-polynomial}
    \bfu=(u_1,\ldots, u_{\ell})\in \kk^{\ell},
    \qquad \text{ for } \ell \ge 1.
\end{equation} 
The degenerate cyclotomic Hecke algebra $\mathcal{H}_{m,\bfu}$ with parameter $\bfu$ is the quotient of $\widehat {\mathcal{H}}_m$ by the two-sided ideal generated by $\prod_{1\le i\le \ell}(x_1-u_i)$. We are going to recall the cellular basis and permutation modules of $\mathcal{H}_{m,\bfu}$ in \cite[Theorem 6.3]{AMR06} (see also  \cite{DJM98} for the quantum case).

Recall the set $\Lambda_{\text{st}}^{1+\ell}(m)$ of strict  $(1+\ell)$-multicompositions of $m$ used in Section~\ref{sec:ASchur}. We will identify the set $\Lambda_{\text{st}}^{\ell}(m)$ of 
strict $\ell$-multicompositions of $m$ with the subset 
$\Lambda_{\text{st}}^{\emptyset,\ell}(m)$ of $\Lambda_{\text{st}}^{1+\ell}(m)$, where 
\begin{align}
\label{def:barlambdaum}
\Lambda_{\text{st}}^{\emptyset,\ell}(m) &:=\{\lambda\in \Lambda_{\text{st}}^{1+\ell}(m)
\mid \lambda^{(0)}=\emptyset\},
\\
\label{def:barlambdau}
\Lambda_{\text{st}}^{\emptyset,\ell} &:=\bigcup_{m\in \N} \Lambda_{\text{st}}^{\emptyset,\ell}(m).    
\end{align}

For any multicomposition $\lambda=(\lambda^{(1)}, \ldots,\lambda^{(\ell)})\in \Lambda_{\text{st}}^\ell(m)$, set 
$a_i=\sum_{1\le j\le i}|\lambda^{(j)}|$, for $1\le i\le \ell-1$, and define 
\begin{equation}\label{def-pilambda}
 \pi_\lambda= \pi_{a_1,1} \pi_{a_2,2} \cdots \pi_{a_{\ell-1},\ell-1},   
\end{equation}
where $\pi_{a,i}=\prod_{1\le j\le a}(x_j-u_{i+1})$.
For example, for $\lambda=(\lambda^{(1)},\lambda^{(2)},\lambda^{(3)})$ with $\lambda^{(1)}=(2),\lambda^{(2)}=(3),\lambda^{(3)}=(1) $, $\pi_\lambda=\prod_{1\le j\le 2}(x_j-u_2) \prod_{1\le j\le 5}(x_j-u_3)$.

Recall the Young subgroup $\mathfrak S_\lambda$ of $\mathfrak S_m$ and the element $\x_\lambda=\sum_{w\in  \mathfrak S_\lambda}w$. Set 
\begin{equation}  \label{def-mlambda}
    m_\lambda= \pi_{\lambda} \x_{\lambda}
    (=\x_\lambda\pi_\lambda).
\end{equation}
The second equality in \eqref{def-mlambda} follows since each element in  $ \mathfrak S_{|\lambda^{(1)}|}\times \ldots \times \mathfrak S_{|\lambda^{(\ell)}|}$ commutes with $\pi_\lambda$. 

There is an anti-involution $*$ of $\mathcal{H}_{m,\bfu}$ which fixes all generators $x_i$'s and $s_j$'s. Note that $m_\lambda^*=m_\lambda$.
For any $\lambda\in \Lambda_{\text{st}}^\ell(m)$, we refer to the right $\mathcal{H}_{m,\bfu}$-module 
\[
M^\lambda=m_\lambda \mathcal{H}_{m,\bfu}
\]
as a permutation module.

An $\ell$-multicomposition $\la$ is an $\ell$-multipartition if each of its components $\la^{(i)}$'s is a partition. Let $\Par^\ell(m)$  be the set of all $\ell$-multipartitions  of $m$. We have $\Par^\ell(m)\subset \Lambda_{\text{st}}^\ell(m)$.

We define a dominance order $\unlhd$ on $\Lambda_{\text{st}}^\ell(m)$ as follows: $\lambda\unlhd \mu$ if 
$$ 
\sum_{i=1}^{j-1}|\lambda^{(i)}|+\sum_{h=1}^l\lambda^{(j)}_h  \le \sum_{i=1}^{j-1}|\mu^{(i)}|+\sum_{h=1}^l\mu^{(j)}_h,
\qquad
\text{for all } 1\le j\le \ell \text{ and all } l.
$$
 We denote $\la \lhd \mu$ or $\mu \rhd \la$ if $\la \unlhd \mu$ and $\la \neq \mu$. Recall the Young diagram  $Y(\mu)$ for any composition $\mu$.  For any $\ell$-multicomposition  $\lambda$, the Young diagram is $Y(\lambda)=(Y(\lambda^{(1)}), \ldots, Y(\lambda^{(\ell)}))$. 
A $\lambda$-tableau $\t=(\t_1,\ldots, \t_{\ell})$ is obtained from $Y(\lambda)$ by inserting the numbers $1,2,\ldots,m$ into each box without repetition. A $\lambda$-tableau $\t$ is standard if 
$\lambda$ is a multipartition and the entries in each component $\t_i$ are row and column increasing.
Let $\std(\lambda)$ be the set of all standard $\lambda$-tableaux.

For any $\ell$-multicomposition $\lambda$ of $m$, the symmetric group $\mathfrak S_m$ acts from the right on the set of $\lambda$-tableaux by permuting the entries in each tableau. 
Let $\t^\lambda$ be the $\lambda$-tableau obtained by 
inserting the numbers $1,2,\ldots,m$  from  left to right along the rows of $Y(\lambda^{(1)})$ and then $Y(\lambda^{(2)})$ and so on. 
 For any $\t\in \std(\lambda)$, denote $\mathfrak S_\lambda=\mathfrak S_{\lambda^{(1)}}\times \ldots\times \mathfrak S_{\lambda^{(\ell)}}$ and 
\begin{align}
   \label{eq:dt}
    d(\t) &= \text{the minimal length coset representative in }
    \mathfrak S_\lambda \backslash \mathfrak S_m \text{ such that } \t=\t^\lambda d(\t).
\end{align}
 
\begin{example}
\label{example:tlambda}
If $\lambda=((3,2), (2),(2,1))$ then 
   \[\t^\lambda=\left(\ytableaushort{{1}{2}{3},{4}{5}},\quad  \ytableaushort{{6}{7}}, \quad \ytableaushort{{8}{9},{10}}  \right). \] 
\end{example}

For $\lambda\in \Par^\ell(m)$ and $\s,\t \in\std(\lambda)$, we define 
 \begin{align}  \label{def-mst}
 m_{\s\t}:= d(\s)^* m_\lambda d(\t).  
 \end{align}
 In particular, we have 
 \begin{align*}
     m_\lambda=m_{\t^\lambda\t^\lambda}, \qquad \text{ and }\quad  m_{\s\t}^*=m_{\t\s}\in \mathcal{H}_{m,\bfu}.
 \end{align*}
 
 
To give the basis for a permutation module, we need to recall the notion of $\lambda$-tableaux $\mathbf S$ of type $\mu$, for $\lambda\in \Par^\ell(m)$ and $\mu\in \Lambda_{\text{st}}^\ell(m)$. A $\la$-tableau $\mathbf S$ is obtained from $Y(\lambda)$ by inserting ordered pairs $(i,p)$, also denoted by $i_p$, into $Y(\lambda)$, where $1\le p\le \ell$ and $i$ is a positive integer.  Moreover, $\mathbf S$ has type $\mu$ if the number of times of $(i,p)$ as an entry in $\mathbf S$ is $\mu^{(p)}_i$, for all $i,p$.  

\begin{example}
\label{exam:taleumu}
Suppose $\lambda=((3,2), (2), (2,1))$,  $\mu=((2,3), (1),(2,1^2))$ and 
\[
\mathbf S=\left(\ytableaushort{{1_1}{1_1}{2_1},{2_1}{2_1}},\quad  \ytableaushort{{1_2}{2_3}}, \quad \ytableaushort{{1_3}{1_3},{3_3}}  \right). \]
Then the $\lambda$-tableau  $\mathbf S$ is of type $\mu$.
\end{example}
 Given two ordered pairs $(i_1,p_1 )$ and $(i_2,p_2)$, we say $(i_1,p_1)<(i_2,p_2)$ if $p_1<p_2$, or $p_1=p_2$ and $i_1<i_2$.

Suppose $\lambda\in \Par^\ell(m)$ and $\mu\in \Lambda_{\text{st}}^\ell(m)$. A  $\lambda$-tableau $\mathbf S=(\mathbf S^{(1)}, \ldots, \mathbf S^{(\ell)})$ of type $\mu$ is called semistandard (cf. \cite[Definition (4.4)]{DJM98}) if 
\begin{enumerate}
        \item 
        the entries of each row of $\mathbf S^{(p)}$
         are weakly increasing, for $1\le p\le \ell$; and 
        \item 
        the entries of each column of $\mathbf S^{(p)}$ are strictly increasing, for $1\le p\le \ell$; and 
        \item 
        if $(i,p)$ appears in $\mathbf S^{(j)}$, for $1\le j\le \ell$,  then we must have $p\ge j$.
\end{enumerate}
For example, the tableau $\mathbf S$ in Example \ref{exam:taleumu} is semistandard. Denote by $\SST(\lambda,\mu)$ the set of all semistandard $\lambda$-tableaux of type $\mu$. It is known that $\SST(\lambda,\mu) = \emptyset$ unless $\lambda\unrhd \mu$ (e.g., \cite[\S 2]{Math03}).

Suppose that $\lambda\in \Par^\ell(m)$, $\mu \in \Lambda_{\text{st}}^\ell(m)$ and  $\s\in \std(\lambda)$. Let
  $\mu(\s)$ be the $\lambda$-tableau of type $\mu$
  which is obtained from $\s$ by replacing each entry $n$ in $\s$ by $(i,j)$ (or just $i_j$) if $n$ is in row $i$ of the $j$th component of  $\t^\mu$. For example, for $\s=\t^\lambda$  in Example \ref{example:tlambda} and  $\mu=((2,3), (1),(2,1^2))$,  we have 
  \[\mu(\t^\lambda)=\left(\ytableaushort{{1_1}{1_1}{2_1},{2_1}{2_1}},\quad  \ytableaushort{{1_2}{1_3}}, \quad \ytableaushort{{1_3}{2_3},{3_3}}  \right). \]  
  For any $\mathbf S\in \SST(\lambda,\mu)$ and $\t\in \std(\lambda)$, let 
 \begin{equation}
     m_{\mathbf S\t}:= \sum_{\s\in \mu^{-1}(\mathbf S)}m_{\s\t}
 \end{equation}
 where  
\begin{align}  \label{def-mu-1}
\mu^{-1}(\mathbf S):=\{\s\in \std(\lambda)\mid \mu(\s)=\mathbf S\}. 
\end{align}
  Then the permutation module $M^\mu$  has a $\kk$-basis (cf. \cite[Theorem~ 4.14]{DJM98}) 
  \begin{equation}
      \label{basis-permutation}
  \{m_{\mathbf S\t}\mid \mathbf S\in \SST(\lambda,\mu), \t\in \std(\lambda), \lambda\in \Par^\ell(m)\}.
  \end{equation}

 \subsection{Cyclotomic Schur algebras}
 
 We can define the cyclotomic Schur algebra as in \cite{DJM98} thanks to $\Par^\ell(m)\subset \Lambda_{\text{st}}^\ell(m)$. Suppose that $\bfu =(u_1,\ldots, u_{\ell})\in \kk^{\ell}$ is fixed as for the cyclotomic Hecke algebra $\mathcal{H}_{m,\bfu}$. The cyclotomic Schur algebra is the endomorphism algebra 
     \begin{equation}
        \Sc_{m,\bfu}:= \End_{\mathcal{H}_{m,\bfu}}\Big(\bigoplus _{\mu\in \Lambda_{\text{st}}^\ell(m)}M^\mu\Big).
     \end{equation}
We further denote 
   \begin{align}
   \Sc_\bfu=\bigoplus_{m\in \N}\Sc_{m,\bfu}.  
   \end{align}

 Then $\Sc_{m,\bfu}$ is a degenerate version of $\Sc(\Lambda)$ in \cite[Definition (6.1)]{DJM98} with $\Lambda=\Lambda_{\text{st}}^\ell(m)$.
 Moreover, 
 $\Sc_{m,\bfu}\cong \oplus_{\mu,\nu\in\Lambda_{\text{st}}^\ell(m) }\Hom_{\mathcal{H}_{m,\bfu}}(M^\nu,M^\mu)$.
 We may regard  any $\phi\in \Hom_{\mathcal{H}_{m,\bfu}}(M^\nu,M^\mu)$ as an element of $\Sc_{m,\bfu}$ by extension of zero.

For any $\mu\in \Lambda_{\text{st}}^\ell(m)$, let $1_\mu\in \Hom_{\mathcal{H}_{m,\bfu}}(M^\mu,M^\mu)$ be the 
identity morphism of $M^\mu$.
Then the algebra $\Sc_{\bfu}$ is a locally unital algebra with 
$\{1_\mu\mid \mu\in \Lambda_{\text{st}}^{\ell}\}$ as a family of mutually orthogonal idempotents such that 
\[
\Sc_{\bfu}=\bigoplus _{\mu,\nu\in\Lambda_{\text{st}}^{\ell}} 1_\nu \Sc_{\bfu}1_\mu.
\]
The algebra $\Sc_{\bfu}$ can be identified with a small $\kk$-linear category $ \ScatDJM_\bfu$, with the object set $\Lambda_{\text{st}}^{\ell}$ and morphisms $\Hom_{\ScatDJM_\bfu}(\mu,\nu):=1_\nu \Sc_{\bfu}1_\mu$, which is  $ \Hom_{\mathcal{H}_{m,\bfu}}(M^\mu,M^\nu) $ (respectively, $0$ ) if $\mu\in \Lambda_{\text{st}}^\ell(m), \nu \in\Lambda_{\text{st}}(m',\ell)$ with $m=m'$ (respectively, $m\neq m'$).

We are going to recall the cellular basis of $\Sc_{m,\bfu}$ given in \cite{DJM98}.
    Suppose $\lambda\in \Par^\ell(m)$ and $ \mu,\nu\in \Lambda_{\text{st}}^\ell(m)$. For any $\mathbf S\in \SST(\lambda,\mu)$ and $\mathbf T\in \SST(\lambda,\nu)$, let 
    \begin{equation}
        m_{\mathbf S\mathbf T}=\sum_{\s\in \mu^{-1}(\mathbf S),\t\in\nu^{-1}(\mathbf T) }m_{\s\t}.
    \end{equation}
   Set $\phi_{\mathbf S\mathbf T}\in \Hom_{\mathcal{H}_{m,\bfu}}(M^\nu,M^\mu)$ by 
 \begin{equation} \label{phi}
     \phi_{\mathbf S\mathbf T}(m_\nu h)=m_{\mathbf S\mathbf T}h
 \end{equation}
 for $h\in \mathcal{H}_{m,\bfu}$.   
The following is a degenerate analog of {\cite[Theorems~ (6.6),(6.12)]{DJM98}}, where the proof is essentially the same as the quantum analog.
 \begin{proposition} 
     \label{prop:DJM}
 The algebra $\Sc_{m,\bfu}$ is a free $\kk$-module with basis 
    \begin{equation}
    \label{basisofdjm}
    \{\phi_{\mathbf S\mathbf T}\mid \mathbf S\in \SST(\lambda,\mu), \mathbf T\in \SST(\lambda,\nu), \mu,\nu\in \Lambda_{\text{st}}^\ell(m), \lambda\in \Par^\ell(m)\}. 
    \end{equation}
    Moreover, $\Sc_{m,\bfu}$ is a cellular algebra with \eqref{basisofdjm} as its cellular basis, where the required anti-involution $*$ is given by 
    $\phi^*_{\mathbf S\mathbf T}=\phi_{\mathbf T\mathbf S}$.
\end{proposition}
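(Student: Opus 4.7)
The plan is to import the cellular basis result from the quantum setting of Dipper--James--Mathas \cite{DJM98} to our degenerate setting essentially verbatim, since the combinatorics (multipartitions, semistandard tableaux, dominance order) and the shape of the elements $m_\lambda=\pi_\lambda \x_\lambda$ are formally identical; only the defining relation between the $x_i$'s and the $s_j$'s in $\mathcal H_{m,\bfu}$ changes, and this difference is invisible to the cellular combinatorics (it is absorbed into Garnir-type straightening computations that can be redone in the degenerate setting). Thus my proposal has two layers: (i) a self-contained argument that realizes the $\phi_{\mathbf S\mathbf T}$ as a basis of the Hom-spaces built on top of the basis \eqref{basis-permutation} of the permutation modules, and (ii) a verification of the Graham--Lehrer cellularity axioms.

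For (i), first I would check that each $\phi_{\mathbf S\mathbf T}$ in \eqref{phi} is a well-defined $\mathcal H_{m,\bfu}$-map: writing $m_{\mathbf S\mathbf T}=\sum_{\s\in\mu^{-1}(\mathbf S),\t\in\nu^{-1}(\mathbf T)}d(\s)^* m_\lambda d(\t)$, I would show using $m_\lambda^*=m_\lambda$ and the fact that $\pi_\nu$ commutes with $\mathfrak S_\nu$ that $m_{\mathbf S\mathbf T}h=m_{\mathbf S\mathbf T}h'$ whenever $m_\nu h=m_\nu h'$, i.e.\ that $m_{\mathbf S\mathbf T}$ annihilates the kernel of $\mathcal H_{m,\bfu}\twoheadrightarrow M^\nu$. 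Next, for the spanning claim, given any $\phi\in\Hom_{\mathcal H_{m,\bfu}}(M^\nu,M^\mu)$ I would expand $\phi(m_\nu)\in M^\mu$ in the basis \eqref{basis-permutation}; the identity $m_\nu w=m_\nu$ for $w\in\mathfrak S_\nu$ together with $\phi(m_\nu)=\phi(m_\nu w)=\phi(m_\nu)w$ forces the coefficients to be constant on each $\mathfrak S_\nu$-orbit of $\std(\lambda)$, and these orbits are exactly the fibres $\nu^{-1}(\mathbf T)$ for $\mathbf T\in\SST(\lambda,\nu)$. This exhibits $\phi$ as a $\kk$-combination of $\phi_{\mathbf S\mathbf T}$. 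Linear independence then follows by evaluating a relation $\sum c_{\mathbf S\mathbf T}\phi_{\mathbf S\mathbf T}=0$ on $m_\nu$ and invoking the basis \eqref{basis-permutation}: the semistandardness and the disjointness of the fibres $\nu^{-1}(\mathbf T)$ show that different $(\mathbf S,\mathbf T)$ produce linearly independent elements of $M^\mu$.

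For (ii), the anti-involution axiom $\phi_{\mathbf S\mathbf T}^*=\phi_{\mathbf T\mathbf S}$ is immediate from $m_{\s\t}^*=m_{\t\s}$ and hence $m_{\mathbf S\mathbf T}^*=m_{\mathbf T\mathbf S}$. The cell-multiplicativity axiom, namely
\[
h\cdot m_{\mathbf S\mathbf T}\equiv \sum_{\mathbf S'\in\SST(\lambda,\mu)} r_h(\mathbf S,\mathbf S')\, m_{\mathbf S'\mathbf T}\pmod{\mathcal H_{m,\bfu}^{\rhd\lambda}},
\]
with coefficients $r_h(\mathbf S,\mathbf S')$ independent of $\mathbf T$, is the genuinely nontrivial ingredient; once established, translating across the isomorphism $\Sc_{m,\bfu}=\End_{\mathcal H_{m,\bfu}}(\oplus M^\mu)$ yields the corresponding cellular axiom for $\phi_{\mathbf U\mathbf V}\phi_{\mathbf S\mathbf T}$. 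To prove it I would reduce, by expanding $d(\s)^*$ along a reduced expression, to the standard Garnir/straightening identities for $\x_\lambda$ modulo the dominance ideal, combined with the identity $\pi_\lambda s_j\equiv s_j\pi_\lambda\pmod{\mathcal H^{\rhd\lambda}}$ when $s_j$ moves entries across the $\pi_\lambda$-boundary. The main obstacle is precisely this last point: in the degenerate relation $x_{i+1}s_i=s_ix_i-1$ the correction term $-1$ must be shown to land in the higher dominance ideal $\mathcal H_{m,\bfu}^{\rhd\lambda}$. The verification is a careful bookkeeping mirroring the quantum case in \cite[\S 6]{DJM98} but with the Hecke relation replaced by \eqref{xs}; no new combinatorics arises.

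Finally, freeness over $\kk$ is automatic once the basis is exhibited, and cellularity then follows from Graham--Lehrer's axioms as verified above. In summary, the only non-formal step is the straightening modulo $\mathcal H_{m,\bfu}^{\rhd\lambda}$, and my plan is to reduce every such computation to its analogue in \cite{AMR06, DJM98} via the identical combinatorics of $\ell$-multipartitions and semistandard $\la$-tableaux of type $\mu$.
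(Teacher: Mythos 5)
The paper does not prove this proposition: it is quoted from Dipper--James--Mathas \cite[Theorems (6.6), (6.12)]{DJM98}, with the degenerate analogue supplied by \cite{AMR06}, so there is no in-paper argument to compare against. Your roadmap is the standard one from that literature and is broadly sound: well-definedness of $\phi_{\mathbf S\mathbf T}$ via the right annihilator of $m_\nu$, linear independence via the basis \eqref{basis-permutation} together with the disjointness of the fibres $\nu^{-1}(\mathbf T)$, and cellularity via straightening modulo the dominance ideal, the only degenerate-specific input being that the correction term in $x_{i+1}s_i=s_ix_i-1$ lands in the ideal $\mathcal H_{m,\bfu}^{\rhd\lambda}$ (this is exactly the adjustment carried out in \cite{AMR06}).

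One step, as written, would not go through. In the spanning argument you use only the right $\mathfrak S_\nu$-invariance of $\phi(m_\nu)$, deduced from $m_\nu w=m_\nu$ for $w\in\mathfrak S_\nu$. That constraint alone does not cut $M^\mu$ down to the span of $\{m_{\mathbf S\mathbf T}\mid \mathbf T\in\SST(\lambda,\nu)\}$: it sees nothing of the conditions imposed by the factor $\pi_\nu$ and the cyclotomic relation, which are precisely the source of condition (3) in the definition of semistandardness (the requirement $p\ge j$). Without them the invariant subspace is strictly larger and the count fails. One must use the full right annihilator of $m_\nu$ in $\mathcal H_{m,\bfu}$ --- equivalently, identify $\Hom_{\mathcal H_{m,\bfu}}(M^\nu,M^\mu)$ with $m_\mu\mathcal H_{m,\bfu}\cap\mathcal H_{m,\bfu}m_\nu$ via the $*$-involution, which is the actual content of \cite[Theorem (6.6)]{DJM98}. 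Relatedly, $\mathfrak S_\nu$ does not act on $\std(\lambda)$ (it does not preserve standardness), so the phrase ``coefficients constant on $\mathfrak S_\nu$-orbits of $\std(\lambda)$'' has to be replaced by a straightening argument on row-standard tableaux. These are the points where the real work of the Dipper--James--Mathas proof lives; the remainder of your outline, including the linear-independence and cellularity steps, matches theirs.
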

We remark that the above cellular basis of $\Sc_{m,\bfu}$ is not an analogue of a double coset basis, but is the analogue of Green's codeterminant basis for the Schur algebra.
\subsection{Definition of cyclotomic Schur categories}

 Fix $\bfu \in \kk^\ell$, for $\ell \ge 1$, as in \eqref{cyclotomic-polynomial}. This allows us to identify the objects in \eqref{equ:object-of-affineschur} with elements in  $\Lambda_{\text{st}}^{1+\ell}$. 
The cyclotomic Schur category we are going to define is a small category with the set of objects given by $\Lambda_{\text{st}}^{1+\ell}$. Recall  the identity morphism $1_\mu$ of $\mu$ is drawn as 
\begin{equation}
\label{coloredidentity}
\begin{tikzpicture}[baseline = 10pt, scale=0.5, color=\clr]
\draw[-,line width=1.2pt] (-5,0.5)to[out=up,in=down](-5,2.2);
\draw (-4,1) node {$\ldots$};
\draw (-5,0) node{$ \scriptstyle{\mu^{(0)}_1}$}; 
\draw[-,line width=1.2pt] (-3,0.5)to[out=up,in=down](-3,2.2);
\draw (-3,0) node{$\scriptstyle {\mu^{(0)}_{h_0}}$};
\draw[-,line width=1pt, color=\cred](-2,0.2) to (-2,2.3);
\draw (-2,-.2) node{$\scriptstyle \red{u_1}$};
\draw[-,line width=1.2pt] (-1,0.5)to[out=up,in=down](-1,2.2);
\draw (0,1) node {$\ldots$};
\draw (-1,0) node{$ \scriptstyle{\mu^{(1)}_1}$};   \draw[-,line width=1.2pt] (1,0.5)to[out=up,in=down](1,2.2);
\draw (1,0) node{$\scriptstyle {\mu^{(1)}_{h_1}}$}; 
\draw[-,line width=1pt, color=\cred](2,0.2) to  (2,2.3);
\draw (2,-.2) node{$\scriptstyle \red{u_2}$};
\draw (3,.5) node {$\ldots$};
\draw[-,line width=1pt, color=\cred](4,0.2) to (4,2.3);
\draw (4,-.2) node{$\scriptstyle \red{u_i}$};
\draw[-,line width=1.2pt] (5,0.5)to[out=up,in=down](5,2.2);
\draw (6,1) node {$\ldots$};
\draw (5,0) node{$ \scriptstyle{\mu^{(i)}_1}$};   \draw[-,line width=1.2pt] (7,0.5)to[out=up,in=down](7,2.2);
\draw (7,0) node{$\scriptstyle {\mu^{(i)}_{h_i}}$}; 
\draw[-,line width=1pt, color=\cred](8,0.2) to (8,2.3);
\draw (10,-.2) node{$\scriptstyle \red{u_\ell}$};
\draw (8.3,-.2) node{$\scriptstyle \red{u_{i+1}}$};
\draw (9,1) node {$\ldots$};
\draw[-,line width=1pt, color=\cred] (10,0.2) to (10,2.3);
\draw[-,line width=1.2pt] (11,0.5)to[out=up,in=down](11,2.2);
\draw (12,1) node {$\ldots$};
\draw (11,0) node{$ \scriptstyle{\mu^{(\ell)}_1}$};   \draw[-,line width=1.2pt] (13,0.5)to[out=up,in=down](13,2.2);
\draw (13,0) node{$\scriptstyle {\mu^{(\ell)}_{h_\ell}}$};
\end{tikzpicture},                   
\end{equation}
where $h_j=l(\mu^{(j)})$, $0\le j\le \ell$.
Here the subscript $u_i$ of the red line indicates it is the $i$th red line from the left to right and hence it is between $\mu^{(i-1)}$ and $\mu^{(i)}$. 
We may omit the subscript $u_i$ if it is clear from the context. 
 
Any diagram we will use is obtained from the identity morphism in \eqref{coloredidentity} by replacing some parts with some other diagrams.  
To simplify the notation, we will only draw the different parts to denote the whole diagram if it is clear for us to draw the whole diagram from this.
 For example, a diagram of the form 
 \[
 \begin{tikzpicture}[baseline = 10pt, scale=0.5, color=\clr]
\draw[-,line width=1.2pt] (-8,0.5)to[out=up,in=down](-8,2.2);
\draw (-7,1) node {$\ldots$};
\draw (-8,0) node{$ \scriptstyle{\mu^{(0)}_1}$};   \draw[-,line width=1.2pt] (-6,0.5)to[out=up,in=down](-6,2.2);
\draw (-6,0) node{$\scriptstyle {\mu^{(0)}_{h_0}}$};
\draw[-,line width=1pt, color=\cred](-5,0.2) to (-5,2.3);
\draw (-5,-.2) node{$\scriptstyle \red{u_1}$};
\draw[-,line width=1.2pt] (-4,0.5)to[out=up,in=down](-4,2.2);
\draw (-3,1) node {$\ldots$};
\draw (-4,0) node{$ \scriptstyle{\mu^{(1)}_1}$};
\draw[-,line width=1pt] (-2,1)to[out=up,in=down](-2,1.8) to (2,1.8)to (2,1) to (-2,1);
\draw (-2,0) node{$ \scriptstyle{\mu^{(i)}_j}$}; 
\draw (2,0) node{$ \scriptstyle{\mu^{(i)}_l}$};
\draw[-,line width=1.2pt](-1.7,.5) to(-1.7,1);
\draw[-,line width=1.2pt](1.5,.5) to(1.5,1);
\draw[-,line width=1.2pt](-1.7,1.8) to(-1.7,2.2);
\draw[-,line width=1.2pt](1.5,1.8) to(1.5,2.2);
\draw (0,.6) node {$\ldots$};
\draw (0,2) node {$\ldots$};
\draw[-,line width=1pt, color=\cred](4,0.2) to (4,2.3);
\draw (4,-.2) node{$\scriptstyle \red{u_\ell}$};
\draw (3,1) node {$\ldots$};
\draw[-,line width=1.2pt] (5,0.5)to[out=up,in=down](5,2.2);
\draw (6,1) node {$\ldots$};
\draw (5,0) node{$ \scriptstyle{\mu^{(\ell)}_1}$};   \draw[-,line width=1.2pt] (7,0.5)to[out=up,in=down](7,2.2);
\draw (7,0) node{$\scriptstyle {\mu^{(\ell)}_{h_\ell}}$};
\end{tikzpicture},  
\]
can be expressed concisely as 
\[
\begin{tikzpicture}[baseline = 10pt, scale=0.5, color=\clr]
\draw[-,line width=1pt] (-2,1)to[out=up,in=down](-2,1.8) to (2,1.8)to (2,1) to (-2,1);
\draw (-2,0) node{$ \scriptstyle{\mu^{(i)}_j}$}; 
\draw (2,0) node{$ \scriptstyle{\mu^{(i)}_l}$};
\draw[-,line width=1.2pt](-1.7,.5) to(-1.7,1);
\draw[-,line width=1.2pt](1.5,.5) to(1.5,1);
\draw[-,line width=1.2pt](-1.7,1.8) to(-1.7,2.2);
\draw[-,line width=1.2pt](1.5,1.8) to(1.5,2.2);
\draw (0,.6) node {$\ldots$};
\draw (0,2) node {$\ldots$};
\end{tikzpicture}~.
\]

\begin{definition}\label{def-of-cyc-Schur}
The cyclotomic Schur category $\Sch_{\bfu}$ with parameter $\bfu \in \kk^\ell$ is a $\kk$-linear category with objects being strict $(1+\ell)$-multicompositions $\mu =(\mu^{(0)},\mu^{(1)},\ldots,\mu^{(\ell)}) \in \Lambda_{\text{st}}^{1+\ell}$.
The generating morphisms are the splits, merges,  crossings, and dotted strands (for $a,b \ge 1$)
\begin{equation} \label{merge i}
  \begin{tikzpicture}[baseline = 10pt, scale=0.4, color=\clr] 
                \draw[-,line width=1pt] (-2,0.2) to (-1.5,1);
	\draw[-,line width=1pt ] (-1,0.2) to (-1.5,1);
	\draw[-,line width=1.5pt] (-1.5,1) to (-1.5,1.8);
                \draw (-1,0) node{$\scriptstyle {b}$};
                \draw (-2,0) node{$\scriptstyle {a}$};
                \draw (-1.5,2.2) node{$\scriptstyle {a+b}$};
                \end{tikzpicture}, 
\quad \begin{tikzpicture}[baseline = 10pt, scale=0.4, color=\clr] 
                \draw[-,line width=1pt] (-2,0.2) to (-1,1.8);
	\draw[-,line width=1pt ] (-1,0.2) to (-2,1.8); 
                \draw (-1,0) node{$\scriptstyle {b}$};
                \draw (-2,0) node{$\scriptstyle {a}$};
                \draw (-1,2.2) node{$\scriptstyle {a}$};
                 \draw (-2,2.2) node{$\scriptstyle {b}$};
                \end{tikzpicture},   \quad 
  \begin{tikzpicture}[baseline = 10pt, scale=0.4, color=\clr]
                \draw[-,line width=1pt] (-2,1.8) to (-1.5,1);
	\draw[-,line width=1pt ] (-1,1.8) to (-1.5,1);
	\draw[-,line width=1.5pt] (-1.5,1) to (-1.5,0.2);
                \draw (-1,2.2) node{$\scriptstyle {b}$};
                \draw (-2,2.2) node{$\scriptstyle {a}$};
                \draw (-1.5,0) node{$\scriptstyle {a+b}$};
                \end{tikzpicture}, 
                \quad  
 \wkdota,             
 \end{equation}
and traverse-ups and traverse-downs:
\begin{equation}
\label{chameleonsgen}
\begin{tikzpicture}[baseline = 10pt, scale=.8, color=\clr]
 \draw[-,line width=1.2pt] (-0.3,.3) to (.3,1);
\draw[-,line width=1pt,color=\cred] (0.3,.3) to (-.3,1);
\draw(0.3,.15) node {$\scriptstyle \red{u_i}$};
\draw (-.3,0.15) node{$\scriptstyle a$};
\draw (.3,1.15) node{$\scriptstyle {a}$};
\end{tikzpicture}, 
                \qquad
\begin{tikzpicture}[baseline = 10pt, scale=.8, color=\clr]
 \draw[-,line width=1pt,color=\cred] (-0.3,.3) to (.3,1);
 \draw(-.3,.15) node {$\scriptstyle \red{u_i}$};
\draw[-,line width=1.2pt] (0.3,.3) to (-.3,1);
\draw (-.3,1.15) node{$\scriptstyle a$};
\draw (.3,0.15) node{$\scriptstyle {a}$};
\end{tikzpicture}.     
\end{equation}
 The generators \eqref{merge i}--\eqref{chameleonsgen}  are subject to the  local relations in  \eqref{webassoc}--\eqref{intergralballon}, \eqref{adaptorR}--\eqref{adaptrermovecross} and the following additional relations:
\begin{equation}
\label{cyclotomicpoly}
1_{\lambda}=0 \text{ unless } \lambda =(\emptyset,\la^{(1)},\ldots,\la^{(\ell)}). 
\end{equation}

Finally, we impose commuting relations via obvious diagrammatic isotopes that do not change the combinatorial type of the diagram (see also the commuting relations \cite[(2.8)]{BCNR} for any monoidal category). For example,  
\begin{equation}
\label{commurelation}
     ~\begin{tikzpicture}[baseline = 10pt, scale=0.4, color=\clr] 
               \draw[-,line width=1pt] (-2,0.2) to (-1.5,1);
	\draw[-,line width=1pt] (-1,0.2) to (-1.5,1);
\draw[-,line width=1.5pt] (-1.5,1) to (-1.5,1.8);
 \draw (-1,0) node{$\scriptstyle {b}$};
\draw (-2,0) node{$\scriptstyle {a}$};
\draw (-1.5,2.2) node{$\scriptstyle {a+b}$};
               \end{tikzpicture}~
\begin{tikzpicture}[baseline = 10pt, scale=0.4, color=\clr] 
\draw[-,line width=1pt] (-2,0.2) to (-1,1.8);
\draw[-,line width=1pt] (-1,0.2) to (-2,1.8); 
\draw (-1,0) node{$\scriptstyle {c}$};
 \draw (-2,0) node{$\scriptstyle {d}$};
\draw (-1,2.2) node{$\scriptstyle {d}$};
\draw (-2,2.2) node{$\scriptstyle {c}$};
\end{tikzpicture}~ ~
:= ~  
~\begin{tikzpicture}[baseline = 10pt, scale=0.4, color=\clr] 
 \draw[-,line width=1pt] (-2,0.7) to (-1.5,1.5);
\draw[-,line width=1pt] (-2,0.7) to (-2,-.3);  \draw[-,line width=1pt] (-1,0.7) to (-1,-.3);   
\draw[-,line width=1pt] (-1,0.7) to (-1.5,1.5);
\draw[-,line width=1.5pt] (-1.5,1.5) to (-1.5,2.3);
 \end{tikzpicture}
~\begin{tikzpicture}[baseline = 10pt, scale=0.4, color=\clr] 
\draw[-,line width=1pt] (-2,-.3) to (-1,.7);
\draw[-,line width=1pt] (-1,.7) to (-1,2.3);
\draw[-,line width=1pt] (-2,.7) to (-2,2.3);
\draw[-,line width=1pt] (-1,-0.3) to (-2,.7); 
\end{tikzpicture}~
~=~ 
~\begin{tikzpicture}[baseline = 10pt, scale=0.4, color=\clr] 
 \draw[-,line width=1pt] (-2,-0.3) to (-1.5,.6);  
\draw[-,line width=1pt] (-1,-0.3) to (-1.5,.6);
\draw[-,line width=1.5pt] (-1.5,.6) to (-1.5,2.3);
 \end{tikzpicture}~
\begin{tikzpicture}[baseline = 10pt, scale=0.4, color=\clr] 
 \draw[-,line width=1pt] (-2,.8) to (-1,2.3);
 \draw[-,line width=1pt] (-1,.8) to (-1,-.3);
\draw[-,line width=1pt] (-2,.8) to (-2,-.3);
\draw[-,line width=1pt] (-1,.8) to (-2,2.3); 
\end{tikzpicture}~.
\end{equation} 
\end{definition}

\begin{rem} We do not use the last commuting relations elsewhere since we define affine Schur and web categories as monoidal categories, where it is implicit. We add them in the definition because the cyclotomic Schur category is not monoidal. 
On the other hand, the cyclotomic Schur category $\Sch_\bfu$ can be viewed as the quotient of the $\kk$-linear category $\ASch$ by the relations in \eqref{cyclotomicpoly} and additional relations  $1_\lambda=0$
for all $\lambda\notin \Lambda_{\text{st}}^{1+\ell}$ (associated to the given $\mathbf u$). 
\end{rem}

Thanks to \eqref{cyclotomicpoly}, the morphism space 
$\Hom_{\Sch_\bfu}(\lambda,\mu)=0$ unless 
$\lambda, \mu\in\Lambda_{\text{st}}^{\emptyset,\ell}$; see \eqref{def:barlambdau}. 
Recall the morphisms $g_{r}(u)$ introduced in \eqref{def-gau}, for $r \ge 1$. For $1\le i\le \ell$ and $r\in\Z_{> 0}$, we define 
\begin{align}
  \label{eq:gri}
g_{r,i}=\prod_{1\le j\le i}g_{r}(u_j).
\end{align}
Then we may draw $g_{r,i}$ as a diagram
\begin{tikzpicture}[baseline = 10pt, scale=0.4, color=\clr]
\draw[-,line width=1pt] (-1,2) to (-1,0.2);
\draw (-1,1) \bdot;
\draw (-0.2,1) node{$\scriptstyle {g_{r,i}}$};
\draw (-1,-.2) node{$\scriptstyle {r}$}; 
\end{tikzpicture} 
which is the vertical concatenation of 
\begin{tikzpicture}[baseline = 10pt, scale=0.4, color=\clr]
\draw[-,line width=1pt] (-1,2) to (-1,0.2);
\draw (-1,1) \bdot;
\draw (0.2,1) node{$\scriptstyle {g_{r}(u_j)}$};
\draw (-1,-.2) node{$\scriptstyle {r}$}; 
\end{tikzpicture}, for $1\le j\le i$.
The following result shows that the black strand with concatenation of $g_{r,i}$ can be slide to the left of all the $i$ red strands.  
\begin{lemma}
\label{lem:cycpolyvanish}
The following relation holds in $\Sch_{\bfu}$:
\begin{equation}
\label{cyclotomicpolyi}
~
\begin{tikzpicture}[baseline = 10pt, scale=0.4, color=\clr]
\draw[-,line width =1pt,color=\cred] (-4.5,.2) to (-4.5,2);
\draw (-4.6,-.2) node{$\scriptstyle \red{u_1}$};    
\draw[-,line width =1pt,color=\cred] (-3.5,.2) to (-3.5,2);
\draw (-3.4,-.2) node{$\scriptstyle \red{u_2}$};
\draw(-2.6,.8) node{$\ldots$};
\draw[-,line width =1pt,color=\cred] (-2,.2) to (-2,2);
\draw (-2,-.2) node{$\scriptstyle \red{u_{i}}$};
\draw[-,line width=1pt] (-1,2) to (-1,0.2);
\draw (-1,1) \bdot;
\draw (-0.2,1) node{$\scriptstyle {g_{r,i}}$};
\draw (-1,-.2) node{$\scriptstyle {r}$}; 
\end{tikzpicture}
~1_*
=0, 
\qquad 1\le i\le \ell,
\end{equation}
where $1_*$ represents any suitable identity morphisms. 
\end{lemma}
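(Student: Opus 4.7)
The plan is to prove this by induction on $i$, using relation \eqref{adaptorL} as the sole nontrivial input. The key observation is that \eqref{adaptorL} converts the dot packet $g_r(u)$ sitting on an $r$-strand placed immediately to the right of a red $u$-strand into a ``loop'': the composition of a traverse-down followed by a traverse-up through that red strand, with no dot. Crucially, this loop factors through an intermediate object in which the $r$-strand has been pulled one slot to the left, past the red strand.

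For the base case $i=1$, the morphism in question is $g_r(u_1)$ acting on an $r$-strand positioned immediately to the right of the single red $u_1$-strand. Relation \eqref{adaptorL} rewrites this as the loop of the $r$-strand around $u_1$, which factors through an object $\lambda$ in which the $r$-strand has been pulled to the leftmost position; thus $\lambda^{(0)}$ contains $(r)$ and in particular $\lambda^{(0)} \neq \emptyset$. The cyclotomic relation \eqref{cyclotomicpoly} forces $1_\lambda = 0$, and so the whole morphism vanishes.

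For the inductive step, assume the result is known at level $i-1$. Since dots on a single strand commute (Theorem \ref{basisAW}), we may write $g_{r,i} = g_r(u_i)\, g_{r,i-1}$ and place the factor $g_r(u_i)$ near the top of the $r$-strand, with $g_{r,i-1}$ below. Apply \eqref{adaptorL} to the top factor: the $g_r(u_i)$ is replaced by a loop of the $r$-strand around $u_i$. Inside the loop, the $r$-strand now sits strictly between $u_{i-1}$ and $u_i$, i.e.\ strictly to the right of $u_1,\ldots,u_{i-1}$, and still carries $g_{r,i-1}$ (slid up via \eqref{dotmoveadaptor} if needed). With the outer $u_i$-loop absorbed into the ambient ``$1_*$'' identities, the middle slice is precisely the diagram appearing in the statement at level $i-1$. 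By the inductive hypothesis it vanishes, and hence the whole composite vanishes.

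The main obstacle is purely geometric bookkeeping: verifying that after the single application of \eqref{adaptorL} the middle slice of the resulting diagram really is in the exact form demanded by the inductive hypothesis, and that the outer $u_i$-loop can legitimately be swallowed into the ambient identity $1_*$. Both points are straightforward thanks to the freedom to move red strands through splits/merges via \eqref{adaptermovemerge} and to move dots through red strands via \eqref{dotmoveadaptor}, so the induction goes through with no further difficulty.
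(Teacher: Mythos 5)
Your proof is correct and follows essentially the same route as the paper: induction on $i$, with the base case from \eqref{adaptorL} plus \eqref{cyclotomicpoly}, and the inductive step peeling off the factor $g_r(u_i)$ via \eqref{adaptorL} to create a loop around the red $u_i$-strand, sliding $g_{r,i-1}$ into the loop with \eqref{dotmoveadaptor}, and applying the inductive hypothesis to the resulting middle slice. The only cosmetic difference is the order in which you write the commuting factors of $g_{r,i}$, which is immaterial since $\End_{\AW}(r)$ is a polynomial algebra in the dots.
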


\begin{proof}
We prove by induction on $i$. The case for $i=1$ follows from \eqref{adaptorL} and \eqref{cyclotomicpoly}.  Suppose $2\le i\le \ell$. Note by \eqref{eq:gri} that $g_{r,i} =g_{r,i-1} g_r(u_i)$. By the inductive assumption on $i-1$, we have 
\[
\begin{tikzpicture}[baseline = 10pt, scale=0.4, color=\clr]
\draw[-,line width =1pt,color=\cred] (-4,.2) to (-4,2);
\draw (-4.1,-.2) node{$\scriptstyle \red{u_1}$};    
\draw[-,line width =1pt,color=\cred] (-3.5,.2) to (-3.5,2);
\draw (-3.4,-.2) node{$\scriptstyle \red{u_2}$};
\draw(-2.6,.8) node{$\ldots$};
\draw[-,line width =1pt,color=\cred] (-2,.2) to (-2,2);
\draw (-2,-.2) node{$\scriptstyle \red{u_i}$};
\draw[-,line width=1pt] (-1,2) to (-1,0.2);
\draw (-1,1) \bdot;
\draw (-.2,1) node{$\scriptstyle {g_{r,i}}$};
\draw (-1,-.2) node{$\scriptstyle {r}$}; 
\end{tikzpicture}
~\overset{\eqref{adaptorL}}=~
\begin{tikzpicture}[baseline = 10pt, scale=0.4, color=\clr]
\draw[-,line width =1pt,color=\cred] (-4,.2) to (-4,2);
\draw (-4.1,-.2) node{$\scriptstyle \red{u_1}$};  
\draw[-,line width=1,color=\cred](-2,2) to [out=down,in=up] (-1,1.5) to[out=down,in=up] (-2.3,0);
\draw[-,line width =1pt,color=\cred] (-3.5,.2) to (-3.5,2);
\draw (-3.3,-.2) node{$\scriptstyle \red{u_2}$};
\draw(-2.6,.8) node{$\ldots$};
\draw (-2.5,-.2) node{$\scriptstyle \red{u_i}$};
\draw[-,line width=1pt] (-1.5,2) to (-1.5,0);
\draw (-1.5,.5) \bdot;
\draw (-.3,.5) node{$\scriptstyle {g_{r,i-1}}$};
\draw (-1.5,-.2) node{$\scriptstyle {r}$}; 
\end{tikzpicture}
~\overset{\eqref{dotmoveadaptor}}=~
\begin{tikzpicture}[baseline = 10pt, scale=0.4, color=\clr]
\draw[-,line width =1pt,color=\cred] (-4,.2) to (-4,2);
\draw (-4,-.2) node{$\scriptstyle \red{u_1}$};  
\draw[-,line width=1,color=\cred](-2,2) to [out=down,in=up] (-1,1.5) to[out=down,in=up] (-2.3,0);
\draw[-,line width =1pt,color=\cred] (-3.5,.2) to (-3.5,2);
\draw (-3.3,-.2) node{$\scriptstyle \red{u_2}$};
\draw(-2.6,.8) node{$\ldots$};
\draw (-2.5,-.2) node{$\scriptstyle \red{u_i}$};
\draw[-,line width=1pt] (-1.3,2) to (-1.3,0);
\draw (-1.3,1.4) \bdot;
\draw (-2.45,1.4) node{$\scriptstyle {g_{r,i-1}}$};
\draw (-1.3,-.2) node{$\scriptstyle {r}$}; 
\end{tikzpicture}
=0,
\]
where the identity morphism $1_*$ on the right of each diagram is omitted for simplicity.
The lemma is proved. 
\end{proof}

\subsection{A functor $\mathcal G$ from $\Sch_{\bfu}$ to $\ScatDJM_{\bfu}$}
\label{subsec:functorG}

Recall the set $\Lambda_{\text{st}}^{\emptyset,\ell} $ in \eqref{def:barlambdau} which can be identified with $\Lambda_{\text{st}}^{\ell}$ via $\lambda =(\emptyset, \lambda^{(1)}, \ldots, \lambda^{(\ell)}) \in \Lambda_{\text{st}}^{\emptyset,\ell} \leftrightarrow(\lambda^{(1)}, \ldots, \lambda^{(\ell)})\in \Lambda_{\text{st}}^{\ell}$.
Moreover, the algebra $\ScatDJM_{\bfu}$ is identified with its category version which is also denoted by $\ScatDJM_{\bfu}$.
We may extend the object set of  $\ScatDJM_{\bfu}$ to $\Lambda_{\text{st}}^{1+\ell}$ such that $ \Hom_{\Sc_\bfu}(\lambda,\mu)=0$ if $\lambda\notin \Lambda_{\text{st}}^{\emptyset,\ell} $ or $\mu\notin \Lambda_{\text{st}}^{\emptyset,\ell} $.

 Associated with each generator of $\Sch_{\bfu}$ in \eqref{merge i}--\eqref{chameleonsgen}, we introduce certain elements in $\ScatDJM_{\bfu}$ denoted by the same symbols in (1)--(4) below. 
 We suppose the any generator in \eqref{merge i}--\eqref{chameleonsgen} is in $\Hom_{\Sch_\bfu}(\mu,\tilde \mu)$, for $ \mu ,\tilde \mu \in  \Lambda_{\text{st}}^{\emptyset,\ell} $. Moreover, in the following we will omit the $1_*$ on the left and right of the generators. For example, we just use $ \merge$ to denote $1_* \merge1_*$ if it is clear in the context. 
\begin{enumerate}
\item 
Consider
$
\merge 
\in \Hom_{\Sch_{\bfu}}(\mu,\tilde\mu)
$, where $\tilde \mu$ is obtained from $\mu$  by contracting $a=\mu^{(i)}_j$ and $b=\mu^{(i)}_{j+1}$ as $\mu^{(i)}_j+\mu^{(i)}_{j+1}$ for some $i$th component and some $j$. 
Then
$\pi_{\mu}=\pi_{\tilde \mu}$
and 
$
\x_{\tilde\mu}=\x_{\mu}\sigma_{a,b}=\sigma_{a,b}^*\x_\mu
$, where 
$\sigma_{a,b}$
is given as in \eqref{eq:sigmaab}. Moreover, we have 
$
\sigma_{a,b}\pi_{\mu}=\pi_{\mu}\sigma_{a,b}
$
and
$
m_{\tilde \mu}=\sigma_{a,b}^*m_{\mu}=m_\mu \sigma_{a,b} 
$.
Then we define 
$
\blue{\rot{Y}}\in \Hom_{\mathcal{H}_{m,\bfu}}(M^\mu,M^{\tilde \mu})
$
such that 
 \[
 \blue{\rot{Y}}: M^\mu \longrightarrow M^{\tilde \mu} , \quad m_{\mu}\mapsto m_{\tilde \mu}. 
 \]
 That is, $\blue{\rot{Y}}$ is given by left multiplication of $\sigma_{a,b}^* $.
 
 For the generator 
 $\splits \in \Hom_{\Sch_{\bfu}}(\tilde\mu,\mu)$, 
we define 
$
\blue{\text{Y}}\in \Hom_{\mathcal{H}_{m,f}}(M^{\tilde \mu},M^{\mu}) 
$ 
such that 
\[
\blue{\text{Y}}: M^{\tilde \mu}\longrightarrow M^\mu, \quad m_{\tilde \mu } \mapsto m_{\tilde \mu}, 
\]
i.e.,  $\blue{\text{Y}}$ is the inclusion from $M^{\tilde \mu}$ to $M^\mu$. 
 
\item 
Consider the traverse-up   
  $
  \upliftuip\in \Hom_{\Sch_{\bfu}}(\mu,\tilde \mu)
  $ 
  with $1\le i \le \ell-1$ and $a=\mu^{(i)}_{h_i}$, where $h_i=l(\mu^{(i)})$. 
  Then $\tilde \mu$ and $\mu$ differ only on the $i$th and $(i+1)$st components, i.e., $\tilde \mu^{(i)}$ is obtained from $\mu^{(i)}$ with its last part $\mu^{(i)}_{h_i}$ removed while $\tilde \mu^{(i+1)}$ is obtained from $\mu^{(i+1)}$ by inserting a new part $\mu^{(i)}_{h_i}$ to the leftmost. 
  We further have
  \[
  \mathfrak S_\mu=\mathfrak S_{\tilde\mu}, 
  \quad
\x_\mu=\x_{\tilde \mu}, 
\quad \text{ and }
\quad
\pi_\mu=  \prod_{1\le j\le \mu^{(i)}_{h_i}}(x_{t+j}-u_{i+1}) \pi_{\tilde \mu},
\]
where 
$t=\sum_{1\le j\le i}|\mu^{(i)}_j|-\mu^{(i)}_{h_i}$.
Then 
\[
m_{\mu}=\prod_{1\le j\le \mu^{(i)}_{h_i}}(x_{t+j}-u_{i+1}) m_{\tilde \mu}=m_{\tilde \mu} \prod_{1\le j\le \mu^{(i)}_{h_i}}(x_{t+j}-u_{i+1}). 
\]
We define the morphism 
$\upliftuip\in \Hom_{\mathcal{H}_{m,\bfu}}(M^\mu,M^{\tilde \mu})$ 
such that 
\[
\upliftuip: M^\mu\longrightarrow M^{\tilde \mu}, \quad m_{\mu}\mapsto m_{\mu},  
\]
i.e., $\upliftuip$ is the inclusion from $M^\mu$ to $M^{\tilde \mu}$.

With the same $\mu, \tilde\mu$ above, for the generator 
$ \downliftuip\in \Hom_{\Sch_{\bfu}}(\tilde \mu,\mu)$, we define the associated morphism
$\downliftuip\in \Hom_{\mathcal{H}_{m,f}}(M^{\tilde \mu}, M^\mu)$ such that    
     \[
     \downliftuip: M^{\tilde\mu} \longrightarrow M^\mu, \quad m_{\tilde \mu}\mapsto m_{\mu}, 
     \]      
i.e., $\downliftuip$  
is given by the left multiplication of
$\prod_{1\le j\le \mu^{(i)}_{h_i}}(x_{t+j}-u_{i+1})  $.              
\item                
 Consider the dot generator   
 $
 \wkdotr
~\in \Hom_{\Sch_{\bfu}}(\mu,\mu)
$,
where $r=\mu^{(i)}_j$ for some $i,j$ with $i\ge1$. Set 
$s=\sum_{1\le h\le i-1}|\mu^{(h)}|+\sum_{1\le l\le j-1}\mu^{(i)}_l$.
Then 
\[\prod_{s+1\le k\le s+r} x_k \x_\mu=\x_\mu \prod_{s+1\le k\le s+r} x_k, 
\text{ and }  
\prod_{s+1\le k\le s+r}x_km_\mu=m_\mu \prod_{s+1\le k\le s+r}x_k.
\]
We define 
$
\wkdotr
\in \Hom_{\mathcal{H}_{m,\bfu}}(M^\mu,M^\mu)
$
such that  
\[
\wkdotr:
 M^\mu\longrightarrow M^\mu, \quad m_\mu\mapsto \prod_{s+1\le k\le s+\mu^{(i)}_j}x_km_\mu, 
 \]
i.e., the left multiplication by $\prod_{s+1\le k\le s+\mu^{(i)}_j}x_k$.           
 In general, the map associated to $\wkdota$ with $r<a$ is determined by \eqref{splitmerge},  i.e.,  the composition of  maps for $
\begin{tikzpicture}[baseline = -.5mm,scale=.8,color=\clr]
	\draw[-,line width=1.5pt] (0.08,-.3) to (0.08,0.04);
	\draw[-,line width=1pt] (0.28,.4) to (0.08,0);
	\draw[-,line width=1pt] (-0.12,.4) to (0.08,0);
        \node at (-0.22,.5) {$\scriptstyle r$};
        \node at (0.36,.55) {$\scriptstyle b$};
        \node at (0.1,-.45){$\scriptstyle a$};
\end{tikzpicture}$,
$
\begin{tikzpicture}[baseline = 3pt, scale=0.4, color=\clr]
\draw[-,line width=1.2pt] (0,0) to[out=up, in=down] (0,1.4);
\draw(0,0.6) \bdot; 
\draw (-.6,0.6) node {$\scriptstyle \omega_r$};
\node at (0,-.3) {$\scriptstyle r$};
\draw[-,line width=1.2pt](0.4,0) to (0.4,1.4);
  \draw(0.4,-.3)node {$\scriptstyle b$};
\end{tikzpicture}
$ 
and 
$\begin{tikzpicture}[baseline = -.5mm,scale=.8,color=\clr]
	\draw[-,line width=1pt] (0.28,-.3) to (0.08,0.04);
	\draw[-,line width=1pt] (-0.12,-.3) to (0.08,0.04);
	\draw[-,line width=1.5pt] (0.08,.4) to (0.08,0);
        \node at (-0.13,-.45) {$\scriptstyle r$};
        \node at (0.35,-.4) {$\scriptstyle b$};\node at (0.08,.55){$\scriptstyle a$};\end{tikzpicture} $ with $b=a-r$.               
                
\item 
Consider    
$
\begin{tikzpicture}[baseline = 10pt, scale=0.4, color=\clr] 
\draw[-,line width=1pt] (-2,0.2) to (-1,1.8);
\draw[-,line width=1pt] (-1,0.2) to (-2,1.8); 
\draw (-1,0) node{$\scriptstyle {b}$};
\draw (-2,0) node{$\scriptstyle {a}$};
\draw (-1,2.2) node{$\scriptstyle {a}$};
\draw (-2,2.2) node{$\scriptstyle {b}$};
\end{tikzpicture}
\in \Hom_{\Sch_{\bfu}}(\mu,\tilde \mu)
$,
where $\tilde \mu $
is obtained from $\mu$ by swapping 
$a=\mu^{(i)}_j$ 
and 
$b=\mu^{(i)}_{j+1}$ 
for some $i,j$ with $i\ge 1$.
Then $\pi_\mu=\pi_{\tilde \mu}$.
We define the morphism 
$
\begin{tikzpicture}[baseline = 10pt, scale=0.4, color=\clr] 
\draw[-,line width=1pt] (-2,0.2) to (-1,1.8);
\draw[-,line width=1pt] (-1,0.2) to (-2,1.8); 
\draw (-1,0) node{$\scriptstyle {b}$};
\draw (-2,0) node{$\scriptstyle {a}$};
\draw (-1,2.2) node{$\scriptstyle {a}$};
\draw (-2,2.2) node{$\scriptstyle {b}$};
\end{tikzpicture}
\in \Hom_{\mathcal{H}_{m,\bfu}}(M^\mu,M^{\tilde \mu}) 
$ 
determined by 
\[
\begin{tikzpicture}[baseline = 10pt, scale=0.4, color=\clr] 
\draw[-,line width=1pt] (-2,0.2) to (-1,1.8);
\draw[-,line width=1pt] (-1,0.2) to (-2,1.8); 
\draw (-1,0) node{$\scriptstyle {b}$};
\draw (-2,0) node{$\scriptstyle {a}$};
\draw (-1,2.2) node{$\scriptstyle {a}$};
\draw (-2,2.2) node{$\scriptstyle {b}$};
\end{tikzpicture}: M^\mu \longrightarrow M^{\tilde \mu}, \quad m_\mu \mapsto \pi_{\mu} \phi\Big(
\begin{tikzpicture}[baseline = 10pt, scale=0.4, color=\clr] 
\draw[-,line width=1pt] (-2,0.2) to (-1,1.8);
\draw[-,line width=1pt] (-1,0.2) to (-2,1.8); 
\draw (-1,0) node{$\scriptstyle {b}$};
\draw (-2,0) node{$\scriptstyle {a}$};
\draw (-1,2.2) node{$\scriptstyle {a}$};
\draw (-2,2.2) node{$\scriptstyle {b}$};
\end{tikzpicture}
\Big)(\x_\mu),    
\]
where $\phi$ is the isomorphism given in 
Proposition ~\ref{cor-isom-schur}.
Note that by \eqref{crossgen} and  Proposition~ \ref{cor-isom-schur}, the morphism  
$
\begin{tikzpicture}[baseline = 10pt, scale=0.4, color=\clr] 
\draw[-,line width=1pt] (-2,0.2) to (-1,1.8);
\draw[-,line width=1pt] (-1,0.2) to (-2,1.8); 
\draw (-1,0) node{$\scriptstyle {b}$};
\draw (-2,0) node{$\scriptstyle {a}$};
\draw (-1,2.2) node{$\scriptstyle {a}$};
\draw (-2,2.2) node{$\scriptstyle {b}$};
\end{tikzpicture}
$ 
is determined by 
$\blue{\text{Y}}$'s 
and
$\blue{\rot{Y}}$'s.            
\end{enumerate}
Finally,  for any generator in \eqref{merge i}--\eqref{chameleonsgen} viewed as in $\Hom_{\Sch_{\bfu}}(\lambda,\mu)$ for some $\lambda\notin \Lambda_{\text{st}}^{\emptyset,\ell}$ or 
 $\mu\notin\Lambda_{\text{st}}^{\emptyset,\ell}$, the associated morphisms in $\Sc_{\bfu}$ are defined to be zero.
 
\begin{theorem}
 \label{thm:G}
There is a functor $\mathcal G: \Sch_{\bfu}\rightarrow \ScatDJM_{\bfu}$, which sends an object $\mu$ to $\mu$, and the generating morphisms in  \eqref{merge i}--\eqref{chameleonsgen} to the corresponding morphisms 
$\blue{\text{Y}}$, 
$\blue{\rot{Y}}$, 
$\upliftui$, 
$\downliftui$,
$\wkdota$, 
$\begin{tikzpicture}[baseline = 10pt, scale=0.4, color=\clr] 
\draw[-,line width=1pt] (-2,0.2) to (-1,1.8);
\draw[-,line width=1pt] (-1,0.2) to (-2,1.8); 
\draw (-1,0) node{$\scriptstyle {b}$};
\draw (-2,0) node{$\scriptstyle {a}$};
\draw (-1,2.2) node{$\scriptstyle {a}$};
\draw (-2,2.2) node{$\scriptstyle {b}$};
\end{tikzpicture}
$, respectively.
\end{theorem}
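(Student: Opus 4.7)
The plan is to verify that the stated assignment on objects and generating morphisms respects every defining relation of $\Sch_{\bfu}$ --- the web relations \eqref{webassoc}--\eqref{intergralballon}, the adaptor relations \eqref{adaptorR}--\eqref{adaptrermovecross}, and the cyclotomic kill relation \eqref{cyclotomicpoly} --- after which the universal property of the presentation yields the desired functor $\mathcal G$. Relation \eqref{cyclotomicpoly} holds automatically by construction: we extended $\ScatDJM_{\bfu}$ so that every Hom-space involving an object with $\la^{(0)} \neq \emptyset$ is zero, and the prescription in \S\ref{subsec:functorG} sends the relevant generators to zero, so this case reduces to $0=0$.

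The pure web relations \eqref{webassoc}--\eqref{intergralballon} need only be checked between objects $\la,\mu \in \Lambda_{\text{st}}^{\emptyset,\ell}$. The assignment sends the split, merge, crossing, and dot generators to the operators on $M^\mu = \pi_\mu \x_\mu \mathcal{H}_{m,\bfu}$ given by left multiplication by $\sigma_{a,b}^*$, natural inclusion, and left multiplication by $\prod_{k=s+1}^{s+r} x_k$ respectively. These are precisely the operators defining the isomorphism $\phi$ of Proposition~\ref{cor-isom-schur}, now passed through the quotient $\mathcal{H}_m \twoheadrightarrow \mathcal{H}_{m,\bfu}$ and post-composed with multiplication by the Jucys--Murphy-type polynomial $\pi_\mu$. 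Since $\pi_\mu$ lies in the subalgebra invariant under every Young subgroup in sight, it commutes with all $\sigma_{a,b}$ and with the disjoint-variable dot factors $\prod x_k$, so every web relation descends from its counterpart verified in Proposition~\ref{cor-isom-schur}.

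For the mixed red-strand relations \eqref{adaptermovemerge}, both sides of each identity produce the same Hecke-algebra map because the traverse-down factor $\prod_{j=1}^{a+b}(x_{t+j}-u_{i+1})$ is $\mathfrak S_{a+b}$-invariant and therefore commutes with $\sigma_{a,b}$. Relation \eqref{adaptrermovecross} is verified by first expanding the crossing via \eqref{crossgen} into an alternating sum of cap-cup diagrams, and then iteratively applying the degenerate affine Hecke identity $x_{i+1}s_i = s_i x_i - 1$ to push each linear factor $(x_j-u)$ past each simple transposition in the longest coset representative $w_{a,b}$; the resulting commutator contributions collect exactly into the $t!$-weighted sum on the right-hand side.

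The main obstacle is the balloon adaptor relations \eqref{adaptorR} and \eqref{adaptorL}. The plan is induction on $r$. For $r=1$, the left-hand side of \eqref{adaptorR} corresponds on the Hecke side to composing inclusion with the evaluation pulled across the red strand, which reduces to left multiplication by $(x-u_i)$ on the appropriate cyclotomic permutation module; this matches $g_1(u_i)$ after unfolding \eqref{r=1anda=1}. For $r \ge 2$, I would unfold the $r$-fold balloon via \eqref{intergralballon} as a nested composition of 2-fold merges, splits, and elementary dots, apply the $r=1$ case one thin strand at a time, and then use the dot-absorption relations \eqref{dotmoveadaptor} together with Lemma~\ref{dotmovefreely} to slide the accumulated dots into their final positions. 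The technical heart is checking that the Hecke-algebra polynomials produced by this iterative pushdown match the signed shifted coefficients $(-1)^i\prod_{j=0}^{i-1}(u+j)$ defining $g_r(u)$ in \eqref{def-gau}; the combinatorial identity behind this matching already featured in the proof of the polynomial representation (Theorem~\ref{thm:polyRep}), and Lemma~\ref{lem:cycpolyvanish} handles the vanishing of any residual top-degree terms.
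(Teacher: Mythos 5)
Your overall skeleton (check each defining relation on generators; \eqref{cyclotomicpoly} is automatic; \eqref{adaptermovemerge} follows because $\prod_j(x_{t+j}-u)$ is symmetric) matches the paper, but there are two genuine gaps. First, you assert that ``every web relation descends from its counterpart verified in Proposition~\ref{cor-isom-schur}'' because $\pi_\mu$ commutes with the relevant elements. Proposition~\ref{cor-isom-schur} concerns the dotless web/Schur categories, so it has no counterpart for \eqref{dotmovecrossing} or \eqref{dotmovesplitss}. In particular \eqref{dotmovecrossing} for $a=b=1$ is exactly the degenerate affine Hecke relation $x_{i+1}s_i=s_ix_i-1$, with a nontrivial correction term, and for general $a,b$ its right-hand side carries the $t!\,\omega_{b-t}$ corrections; none of this is explained by the commutativity of $\pi_\mu$. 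The paper explicitly separates these relations out and verifies them (in their thin-strand form \eqref{dotmovecrossingC}) via \eqref{xs}, after reducing thick strands to thin ones through the characteristic-zero presentation $\ASchC$ and Lemmas~\ref{adpterLRa=1}--\ref{chraterzero}.

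Second, and more structurally, your direct thick-strand computations for \eqref{adaptorR}--\eqref{adaptorL} and \eqref{adaptrermovecross} (unfolding the balloon via \eqref{intergralballon}, expanding the crossing via \eqref{crossgen}, applying the $r=1$ case ``one thin strand at a time'') all pass through splittings that introduce factorials --- \eqref{splitmerge} produces $\binom{a+b}{a}$ and \eqref{intergralballon} produces $a!$ --- so at best they establish each relation after multiplication by a nonzero integer. Since $\kk$ is an arbitrary commutative ring, you cannot cancel these integers without an extra argument. The paper supplies it: the permutation modules and all the maps are defined over $\Z[\tilde u_1,\ldots,\tilde u_\ell]$, where the modules are free, so an identity holding up to a nonzero integer factor holds on the nose, and the general case follows by base change (this is the same device used in the proof of Theorem~\ref{thm:polyRep}, and it is also why the cancellation of the scalar $a$ at the end of Lemma~\ref{chraterzero} is legitimate). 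Without this step, and without a concrete verification that your ``commutator contributions collect exactly into the $t!$-weighted sum,'' the hardest relations remain unproved.
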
 

\begin{proof}
It suffices to check that all the defining relations, \eqref{webassoc}--\eqref{intergralballon}, \eqref{adaptorR}--\eqref{adaptermovemerge}, \eqref{cyclotomicpoly}, and the commuting relations of $\Sch_{\bfu}$ (see Definition~\ref{def-of-cyc-Schur}) are satisfied under the functor $\mathcal G$. Note first that \eqref{webassoc}--\eqref{splitmerge}  are satisfied by Proposition \ref{cor-isom-schur}.
The relations \eqref{cyclotomicpoly} and  \eqref{intergralballon}  are satisfied by definition. The commuting relations may also be checked easily by definitions. The relation \eqref{adaptermovemerge} can be checked by using the fact that
\[
\sigma_{a,b}\prod_{s+1\le k\le a+b}(x_k-u) = \prod _{s+1\le k\le a+b} (x_k-u) \sigma_{a,b}
\]
for any $u$. 

It remains to check the relations \eqref{dotmovecrossing}--\eqref{dotmovesplitss} and \eqref{adaptorR}--\eqref{adaptrermovecross}. Since the permutation modules and their morphisms are all defined over the polynomial ring $\Z[\tilde u_1,\ldots,\tilde u_\ell]$, by the same type of argument in the proof of Theorem~ \ref{thm:polyRep}, we are reduced to verifying these relations over $\Z[\tilde u_1,\ldots,\tilde u_\ell]$  and then are reduced to checking the relations \eqref{dotmovecrossingC}, \eqref{chamvanish1 C} and \eqref{adaptrermovecross C} over $\C$ by the proof of \cite[Lemma~ 5.3]{SWweb} and Lemmas \ref{adpterLRa=1}--\ref{chraterzero}. 

\underline{Relations \eqref{dotmovecrossingC}}.  Suppose that the two $1$'s in the labels of the first relation in \eqref{dotmovecrossingC} represent $\mu^{(i)}_j$ and $\mu^{(i)}_{j+1}$ for some $j$.  Let $t=\sum_{1\le h\le i-1}|\mu^{(h)}|+\sum_{1\le l\le j}\mu^{(i)}_l$.
Then the left (respectively, right) hand side is given by left multiplication of $s_t x_t $ (respectively, $x_{t+1}s_{t}+1$).
They are equal by \eqref{xs}. The argument for the second relation in \eqref{dotmovecrossingC} is the same.

\underline{Relation \eqref{adaptrermovecross C}}.
Suppose the two $1$'s in the labels of each diagram of \eqref{adaptrermovecross C} represent $\mu^{(i)}_{h_i}$ and $\mu^{(i+1)}_{1}$, respectively. 
Then the left hand side of \eqref{adaptrermovecross C} under $\mathcal G$ is given by left multiplication of $ (x_{t+1}-u_{i+1})s_{t+1}$, while the right hand side is 
$s_{t+1}(x_{t+2}-u_{i+1})+1$, where $t=\sum_{1\le j\le i}|\mu^{(j)}|-\mu^{(i)}_{h_i}$. They are equal by \eqref{xs}.

\underline{Relations \eqref{chamvanish1 C}}. 
For the first relation in  \eqref{chamvanish1 C}, suppose that the label $1$ therein represents $\mu^{(i)}_{h_i}$.
Then this relation follows since morphisms in both sides of the relation in \eqref{chamvanish1 C}  are given by left multiplication of $x_{t+1}-u_{i+1}$ (respectively, 0), if $i\ge 2$ (respectively, $i=1$), where $t=\sum_{1\le j\le i}|\mu^{(j)}|-\mu^{(i)}_{h_i}$.
For the second relation in  \eqref{chamvanish1 C}, suppose  the label $1$ therein represents $\mu^{(i)}_1$ with $i\ge 1$.  Then the right-hand side is  given by 
left multiplication of $x_{t+1}-u_{i+1}$, where $t=\sum_{1\le j\le i-1}|\mu^{(j)}|$, which is equal to the left-hand side for $i\ge2$.
Suppose $i=1$. Then the left-hand side is $0$ by definition. The right-hand side is also $0$ since $(x_{1}-u_{1}) m_\mu=0$ for any $\mu\in
\Lambda_{\text{st}}^{\ell}=\Lambda_{\text{st}}^{\emptyset,\ell} $ with $\mu^{(1)}\neq \emptyset$. 

This completes the proof.
\end{proof}

\subsection{The SST-diagrams}
  \label{subsec:SST}

Recall we identify $\Lambda_{\text{st}}^{\ell}$ with the subset $\Lambda_{\text{st}}^{\emptyset,\ell} $ of $\Lambda_{\text{st}}^{1+\ell}$ by identifying $\mu=(\mu^{(1)}, \ldots, \mu^{(\ell)})$ with $(\emptyset, \mu^{(1)}, \ldots, \mu^{(\ell)})$.
Suppose that $\lambda\in \Par^\ell(m)$ and $\mu\in \Lambda_{\text{st}}^\ell(m)$. Recall that applying the forgetful map to $\mu$ leads to the   composition $\bar \mu$ of $m$
\[
\bar \mu =(\mu^{(1)}_1,\ldots, \mu^{(1)}_{h_1}, \mu^{(2)}_1,\ldots, \mu^{(2)}_{h_2},\ldots \mu^{(r)}_1,\ldots, \mu^{(r)}_{h_{\ell}}),
\]
where 
$ h_i=l(\mu^{(i)})$, $1\le i\le \ell$. We may identify $\bar \mu $ as a strict composition by omitting the empty components (i.e., for those $h_i=0$).
Associated to any semistandard tableau 
$\mathbf T\in \SST(\lambda,\mu)$  
we define an element $[\mathbf T] \in 
\Hom_{\Sch_{\bfu}}(\lambda,\mu)$ in Step (a)--(b) as follows.

\begin{enumerate}
\item[(a)]
Associated to $\mathbf T$, we have a matrix $A_\mathbf T \in\Mat_{\bar\mu,\bar\lambda} $. Here we index the row  and column of any matrix in $\Mat_{\bar\mu,\bar\lambda}$
in the order 
\begin{align*}
 &
 (1,1),(1,2),\ldots, (1, l(\mu^{(1)}), (2,1), \ldots, (2, l(\mu^{(2)})), \ldots, (\ell,1),\ldots, (\ell, l(\mu^{(\ell)})), \text{ and }
  \\
 & 
 (1,1),(1,2),\ldots, (1, l(\lambda^{(1)}), (2,1), \ldots, (2, l(\lambda^{(2)})), \ldots, (\ell,1),\ldots, (\ell, l(\lambda^{(\ell)})), 
\end{align*}
 respectively.
Then the matrix $A_{\mathbf T}=(a_{(p,i),(q,j)})$ is defined such that 
$a_{(p,i),(q,j)}$ 
is the number of $i_p$ in $j$th row of
$\mathbf T^{(q)}$. Thus, we have a reduced chicken foot diagram $[A_{\mathbf T}]$  of shape $A_{\mathbf T}$.

\item[(b)] 
Let $(A_\mathbf T, (\emptyset))\in \PMat_{\bar \mu,\bar \lambda}$ be the elementary  CFD associated to $[A_\mathbf T]$ which has no dots. Then we choose an ornamentation  $(A_\mathbf T, (\emptyset))\in\PMat_{\mu,\lambda}$ (under the identification via the forgetful map in \eqref{bijPM})
of $(A_\mathbf T, (\emptyset))$. To simplify notation, we denote the resulting ornamentation $(A_\mathbf T, (\emptyset))$ by $[{\mathbf T}]$. 
\end{enumerate}
We also denote by $\div$ the anti-involution on $\Sch_{\bfu}$ induced by $\div$ on $\ASch$ in \eqref{anti-autocyc}. Applying the symmetry $\div$ gives us $[{\mathbf T}]^\div \in \Hom_{\Sch_{\bfu}}(\mu,\la)$. The diagrams of the form $[{\mathbf T}]$ or $[{\mathbf T}]^\div$ are called {\em SST-diagrams (or SST-morphisms)}. 

By Condition (3) in the  definition of semi-standard tableaux in \S \ref{subsec-cychecke}, the diagram $[{\mathbf T}]$ does not involve the traverse-downs $
\begin{tikzpicture}[baseline = 10pt, scale=.8, color=\clr]
 \draw[-,line width=1pt,color=\cred] (-0.3,.3) to (.3,1);
\draw[-,line width=1.2pt] (0.3,.3) to (-.3,1);
\draw(-.3,0.15) node{$\scriptstyle \red{u}$};
\draw (.3, 0.15) node{$\scriptstyle a$};
\end{tikzpicture}
$; similarly, the diagram $[{\mathbf T}]^\div$ does not involve the traverse-ups $\begin{tikzpicture}[baseline = 10pt, scale=.8, color=\clr]
 \draw[-,line width=1.2pt] (-0.3,.3) to (.3,1);
\draw[-,line width=1pt,color=\cred] (0.3,.3) to (-.3,1);
\draw(-.3,0.15) node{$\scriptstyle a$};
\draw (.3, 0.15) node{$\scriptstyle \red{u}$};
\end{tikzpicture}.
$
In particular, there is no crossings in either diagram $[{\mathbf T}]$ or $[{\mathbf T}]^\div$ of the form 
$\begin{tikzpicture}[baseline = 2mm,scale=.7, color=\clr]
\draw[-,line width=1.2pt] (0,0) to (.6,1);
\draw[-,line width=1pt] (0,1) to (.6,0);
\draw[-, thick, color=\cred] (.4,0) to (.4,1);
\draw(.4,-.2)node {$\scriptstyle u_i$};
\end{tikzpicture} 
\text{ or } 
\begin{tikzpicture}[baseline = 2mm,scale=.7, color=\clr]
\draw[-,line width=1.2pt] (0,0) to (.6,1);
\draw[-,line width=1pt] (0,1) to (.6,0);
\draw[-, thick, color=\cred] (.2,0) to (.2,1);
\draw(.2,-.2)node {$\scriptstyle u_i$};
\end{tikzpicture}
$
in $[\mathbf T]$. Then by the relation 
$
\mathord{
\begin{tikzpicture}[baseline = -1mm,scale=0.6,color=\clr]
	\draw[-,thick] (0.45,.6) to (-0.45,-.6);
	\draw[-,thick] (0.45,-.6) to (-0.45,.6);
        \draw[-,thick] (0,-.6) to[out=90,in=-90] (-.45,0);
        \draw[-,thick] (-0.45,0) to[out=90,in=-90] (0,0.6);
        \node at (0,-.77) {$\scriptstyle b$};
        \node at (0.5,-.77) {$\scriptstyle c$};
        \node at (-0.5,-.77) {$\scriptstyle a$};
\end{tikzpicture}
}
=
\mathord{
\begin{tikzpicture}[baseline = -1mm,scale=0.6,color=\clr]
	\draw[-,thick] (0.45,.6) to (-0.45,-.6);
	\draw[-,thick] (0.45,-.6) to (-0.45,.6);
        \draw[-,thick] (0,-.6) to[out=90,in=-90] (.45,0);
        \draw[-,thick] (0.45,0) to[out=90,in=-90] (0,0.6);
        \node at (0,-.77) {$\scriptstyle b$};
        \node at (0.5,-.77) {$\scriptstyle c$};
        \node at (-0.5,-.77) {$\scriptstyle a$};
\end{tikzpicture}
}\:
$
and the relations in Lemma~ \ref{adamovecrossings},  all ornamentations of $(A_\mathbf T, (\emptyset)) $ represent the same morphism   
 $[\mathbf T]\in \Hom_{\Sch_{\bfu}}(\lambda,\mu)$.

\begin{example}
Suppose that $\ell=3$ and $m=12$. Let 
$\lambda=((3,2),(2,2),(2,1))\in \Par^3(12)$ and 
$\mu=((2,1), (1,1,2),(1,1,1,2))\in \Lambda_{\text{st}}^3(12)$.
Let $\mathbf T\in \SST(\lambda,\mu)$ be
\[
\mathbf T= \bigg(\; \ytableaushort{{1_1}{1_1}{1_2},{2_1}{1_3}}, \;\ytableaushort{{2_2}{3_2},{3_2}{4_3}}, \; \ytableaushort{{2_3}{3_3},{4_3}} \; \bigg).
\]
Then 
 $\bar\mu=(2,1,1,1,2,1,1,1,2)$,
 $\bar \lambda=(3,2,2,2,2,1)$ 
 and 
 \[
 A_{\mathbf T}=
 \begin{pmatrix}
     2&0&0&0&0&0\\
     0&1&0&0&0&0\\
     1&0&0&0&0&0\\
     0&0&1&0&0&0\\
     0&0&1&1&0&0\\
     0&1&0&0&0&0\\
     0&0&0&0&1&0\\
     0&0&0&0&1&0\\
     0&0&0&1&0&1\\
 \end{pmatrix}
 \in \Mat_{\bar\mu,\bar \lambda}.
 \] 
 Then the diagram $[\mathbf T]$ is drawn as
 \[
 \begin{tikzpicture}[baseline = 10pt, scale=0.6, color=\clr]
\draw[-,line width=1.5pt](-4,-.3) to  (-4,0);
\draw[-, line width=1pt,color=\cred](-5,-.3) to (-5,2.2);
\draw (-5,-.6) node{$\scriptstyle \red{u_1}$};
\draw[-, line width=1pt](-4,0) to node[left]{$\scriptstyle 2$} (-4,2);
\draw[-, line width=1pt](-4,0) to node[right]{$\scriptstyle 1$} (-1,2);
\draw[-,line width=1.5pt](-3,-.3) to (-3,0);
\draw[-,line width=1pt](-3,0) to node[left]{$\scriptstyle 1$} (-3,2);
\draw[-,line width=1pt](-3,0) to node[below]{$\scriptstyle 1$} (3,2);
\draw[-, line width=1.5pt](-1,-.3) to (-1,0);
\draw[-, line width=1pt](-1,0) to node[left]{$\scriptstyle1$} (0,2);
\draw[-, line width=1pt](-1,0) to (1,2);
\draw[-, line width=1.5pt](0,-.3) to (0,0);
\draw[-,line width=1pt](0,0) to node [below] {$\scriptstyle1$} (1,2);
\draw[-,line width=1pt](0,0) to node [below] {$\scriptstyle1$} (6,2);
\draw[-, line width=1.5pt](3,-.3) to (3,0);
\draw[-, line width=1.5pt](1,2) to (1.2,2.2);
\draw[-, line width=1.5pt](6,2) to (6.2,2.2);
\draw[-,line width=1pt](3,0) to node [below] {$\scriptstyle1$} (4,2);
\draw[-,line width=1pt](3,0) to (6,2);
\draw(5,1) node {$\scriptstyle1$};
\draw[-,line width=1pt](4,-.3) to node [below] {$\scriptstyle1$} (5,2);
\draw[-, line width=1pt,color=\cred](-2,-.3) to (-2,2.2);
\draw (-2,-.6) node{$\scriptstyle \red{u_2}$};
\draw[-, line width=1pt,color=\cred](2,-.3) to (2,2.2);
\draw (2,-.6) node{$\scriptstyle \red{u_3}$};
\draw(-4,-.6) node{$\scriptstyle 3$};
\draw(-3,-.6) node{$\scriptstyle 2$};
\draw(-1,-.6) node{$\scriptstyle 2$};
\draw(0,-.6) node{$\scriptstyle 2$};
\draw(3,-.6) node{$\scriptstyle 2$};
\draw(4,-.6) node{$\scriptstyle 1$};
 \end{tikzpicture}.
 \]
It is clear that no traverse-down appears locally in the above diagram. 
\end{example}

\subsection{Property of the functor $\mathcal G$}

Recall from Theorem \ref{thm:G} the functor $\mathcal G: \Sch_{\bfu}\rightarrow \ScatDJM_{\bfu}$. Recall the cellular basis $\{ \phi_{\mathbf S\mathbf T}\}$ from \eqref{phi}--\eqref{basisofdjm} for the cyclotomic Schur algebras. 

\begin{proposition}
 \label{pro:mapsdjm}
Suppose that $\lambda\in \emph{\Par}^\ell(m)$ and $\mu,\nu\in \Lambda_{\text{st}}^\ell(m).$ Then the functor $\mathcal G$ sends $[\mathbf T]\circ [\mathbf S]^\div$ to $\phi_{\mathbf T\mathbf S}$, for any $\mathbf S\in \emph{\SST}(\lambda,\mu)$ and $\mathbf T\in \emph{\SST}(\lambda,\nu)$.
\end{proposition}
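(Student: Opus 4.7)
The plan is to unpack $\mathcal G$ on each of the two factors $[\mathbf T]$ and $[\mathbf S]^\div$ separately, then compose and match with the defining formula of $\phi_{\mathbf S\mathbf T}$.

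First, I would compute $\mathcal G([\mathbf T])(m_\lambda)$. By construction, $[\mathbf T]$ is an ornamentation of the dot-free chicken foot diagram of shape $A_\mathbf T$ using only traverse-ups (never traverse-downs, thanks to the semistandard condition that an entry $(i,p)$ appears in $\mathbf T^{(j)}$ only if $p\ge j$). Under $\mathcal G$ from \S\ref{subsec:functorG}, each traverse-up is the canonical inclusion $M^\mu\hookrightarrow M^{\tilde\mu}$ acting as the identity on Hecke algebra elements, each merge is left multiplication by the appropriate $\sigma_{a,b}^*$, and each split is an inclusion. Because all these operators commute with $\pi_\lambda$ (as used in the proof of Theorem~\ref{thm:G}), Proposition~\ref{cor-isom-schur} applied to the underlying non-cyclotomic chicken foot diagram $[A_\mathbf T]$ identifies $\mathcal G([\mathbf T])$ with the Dipper--James semistandard homomorphism, yielding
\[
\mathcal G([\mathbf T]):\, m_\lambda \;\longmapsto\; \sum_{\t\in \nu^{-1}(\mathbf T)} m_\lambda\, d(\t).
\]

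The same reasoning applied to $[\mathbf S]$, combined with the compatibility under $\mathcal G$ between the diagrammatic anti-involution $\div$ from \eqref{anti-autocyc} and the Hecke algebra anti-involution $*$ (which may be verified generator-by-generator using the definitions in \S\ref{subsec:functorG}), yields
\[
\mathcal G([\mathbf S]^\div):\, m_\mu \;\longmapsto\; \sum_{\s\in \mu^{-1}(\mathbf S)} d(\s)^*\, m_\lambda.
\]
Composing and using $\mathcal H_{m,\bfu}$-linearity (so the morphism is determined by its value on $m_\mu$) then gives
\[
\mathcal G([\mathbf T]\circ [\mathbf S]^\div):\, m_\mu \;\longmapsto\; \sum_{\s\in \mu^{-1}(\mathbf S),\;\t\in \nu^{-1}(\mathbf T)} d(\s)^* m_\lambda\, d(\t) \;=\; m_{\mathbf S\mathbf T},
\]
which matches the defining formula \eqref{phi} of $\phi_{\mathbf S\mathbf T}$.

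The main obstacle will be the first step, specifically verifying that the traverse-ups in $[\mathbf T]$ correctly transform $m_\lambda$ (which carries the cyclotomic factor $\pi_\lambda$) into an element of $M^\nu$ with the cyclotomic factor $\pi_\nu$, rather than producing spurious $g_r(u_i)$-terms via \eqref{adaptorR}--\eqref{adaptorL}. The semistandard condition is exactly what ensures this: it guarantees that each leg of the CFD is crossed only by red strands $u_k$ whose cyclotomic factors are either already present in $m_\lambda$ or get absorbed into $\pi_\nu$, so the bookkeeping closes using \eqref{adaptermovemerge} and Lemma~\ref{adamovecrossings} without any cancellation.
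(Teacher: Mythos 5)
Your overall architecture is right and matches the paper's: reduce to showing $\mathcal G([\mathbf T])=\phi_{\mathbf T\mathbf T^\lambda}$ and $\mathcal G([\mathbf S]^\div)=\phi_{\mathbf T^\lambda\mathbf S}$, then compose. But the central step --- ``Proposition~\ref{cor-isom-schur} applied to the underlying non-cyclotomic chicken foot diagram identifies $\mathcal G([\mathbf T])$ with the Dipper--James semistandard homomorphism'' --- is precisely the content of the proposition, and it does not follow from Proposition~\ref{cor-isom-schur}. That proposition only records the images of \emph{individual} merges and splits for the group algebra $\kk\mathfrak S_m$ acting on $\x_\lambda\mathcal H_m$; it says nothing about the composite encoded by $A_{\mathbf T}$ in the cyclotomic setting, where the intermediate objects carry \emph{different} cyclotomic factors $\pi_\gamma$ at each stage. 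Concretely: after some splits and traverse-ups the running element is still $m_\lambda=(\text{extra factors})\cdot m_\gamma$ for the current object $\gamma$, but the $\mathcal G$-image of a crossing is defined by $m_\gamma\mapsto \pi_\gamma\,\phi(\text{crossing})(\x_\gamma)$ and extended $\mathcal H_{m,\bfu}$-linearly, so one must commute the ``extra'' polynomial factors past symmetric-group elements; your claim that ``all these operators commute with $\pi_\lambda$'' does not address this. The paper proves the identification by an induction on the dominance order on $\mu$ (splitting off one part of $\mu^{(p)}_i$ at a time), and in the subcase where a part of $\mathbf T$ occupies a single row this produces an integer multiple $\mu^{(p)}_i\,[\mathbf T]=\pi\circ[\mathbf T']$, forcing a base-change argument over $\Z[\tilde u_1,\dots,\tilde u_\ell]$ to cancel the integer over an arbitrary commutative ring $\kk$. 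Your proposal contains none of this, and the closing paragraph (``the bookkeeping closes'') asserts rather than proves exactly the point at issue.

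Two smaller problems. First, your displayed formula $\mathcal G([\mathbf T])\colon m_\lambda\mapsto\sum_{\t\in\nu^{-1}(\mathbf T)}m_\lambda\,d(\t)$ is right multiplication, which is not a homomorphism of right $\mathcal H_{m,\bfu}$-modules; it should be $m_\lambda\mapsto\sum_{\t}d(\t)^*m_\lambda$ (your own composite formula silently uses the corrected version). Second, even at level one the identification of the reduced chicken foot diagram of shape $A_{\mathbf T}$ with the semistandard homomorphism is a double-coset computation (it is the base case of the paper's induction, handled via \cite{DJM98} and \cite{BEEO}), not a direct consequence of Proposition~\ref{cor-isom-schur} alone; if you want to take the level-one case as known you should cite that identification explicitly and then supply the inductive passage to level $\ell$.
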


\begin{proof}
Let $\mathbf T^\lambda:=\lambda(\t^\lambda) \in \SST(\lambda,\lambda)$ be the unique semistandard $\lambda$-tableau of type $\lambda$. Then $\phi_{\mathbf T^\lambda \mathbf T^\lambda}$ is the identify morphism in $\Hom_{\mathcal H_{m,\bfu}}(M^\lambda,M^\lambda)$.
Moreover, by definition  $[\mathbf T^\lambda]$ is the identity morphism in $\Hom_{\Sch_{\bfu}}(\lambda,\lambda)$, i.e, the result holds for $ \mathbf T=\mathbf S=\mathbf T^\lambda$ with $\mu=\nu=\lambda$.

Next, we prove the result for $\mathbf S=\mathbf T^\lambda$ with $\nu=\lambda$ in Steps (1)-(3) below.
Let $h_q=|\lambda^{(q)}|$ for $1\le q\le \ell$.

(1) Suppose that $\mu^{(p)}=(1^{h_p})$, for $1\le p\le \ell$.
In this case, we must have $|\mu^{-1}(\mathbf T)|=1$ and $(\mu^{(p)})^{-1}(\mathbf T^{(p)}) \in  \std(\lambda^{(p)})$, $1\le p\le \ell$; see \eqref{def-mu-1} for notation $\mu^{-1}(\cdot)$. Recall the notation $d(\cdot)$ from \eqref{eq:dt}, $m_\la$ from \eqref{def-mlambda} and $m_{*,*}$ from \eqref{def-mst}. Then $m_{\mathbf T,\mathbf T^\lambda}=d(\mu^{-1}(\mathbf T))^* m_\lambda$ and $\phi_{\mathbf T\mathbf T^\lambda}$
is given by the left multiplication of 
\[
d(\mu^{-1}(\mathbf T))^*= \prod_{1\le p\le \ell} d((\mu^{(p)})^{-1}(\mathbf T^{(p)}))^*.
\]
On the other hand, we may draw $ [\mathbf T]$ as follows.
Suppose $\mu^{-1}(\mathbf T)=\t ^\lambda$. Then 
\begin{equation}
\label{equ:case111cl}
[\mu(\t^\lambda)]= 
\begin{tikzpicture}[baseline = 10pt, scale=0.6, color=\clr]
\draw[-,line width=1.5pt](-4,-.3) to (-4,0);
\draw (-4,-.7) node{$\scriptstyle \lambda^{(1)}_1$};
\draw[-,thin](-4,0) to (-5,1.8);
\draw[-, line width=1pt,color=\cred](-5.5,-.3) to (-5.5,2.2);
\draw (-5.5,-.6) node{$\scriptstyle \red{u_1}$};
\draw (-4,1) node{$\ldots$};
\draw[-,thin](-4,0) to (-3,1.8);
\draw (-4,2) node{$\ldots$};
\draw (-5,2) node{$\scriptstyle 1$};
\draw (-3,2) node{$\scriptstyle 1$};
\draw (-2.5,1) node{$\ldots$};
\draw[-,line width=1.5pt](-1,-.3) to (-1,0);
\draw (-1,-.7) node{$\scriptstyle \lambda^{(1)}_{h_1}$};
\draw[-,thin](-1,0) to (-2,1.8);
\draw (-1,1) node{$\ldots$};
\draw[-,thin](-1,0) to (0,1.8);
\draw (-1,2) node{$\ldots$};
\draw (-2,2) node{$\scriptstyle 1$};
\draw (0,2) node{$\scriptstyle 1$};
\end{tikzpicture}
\begin{tikzpicture}[baseline = 10pt, scale=0.6, color=\clr]
\draw[-, line width=1pt,color=\cred](-6,-.3) to (-6,2.2);
\draw (-6,-.6) node{$\scriptstyle \red{u_2}$};
\draw[-,line width=1.5pt](-4,-.3) to (-4,0);
\draw (-4,-.7) node{$\scriptstyle \lambda^{(2)}_1$};
\draw[-,thin](-4,0) to (-5,1.8);
\draw (-4,1) node{$\ldots$};
\draw[-,thin](-4,0) to (-3,1.8);
\draw (-4,2) node{$\ldots$};
\draw (-5,2) node{$\scriptstyle 1$};
\draw (-3,2) node{$\scriptstyle 1$};
\draw (-2.5,1) node{$\ldots$};
\draw[-,line width=1.5pt](-1,-.3) to (-1,0);
\draw (-1,-.7) node{$\scriptstyle \lambda^{(2)}_{h_2}$};
\draw[-,thin](-1,0) to (-2,1.8);
\draw (-1,1) node{$\ldots$};
\draw[-,thin](-1,0) to (0,1.8);
\draw (-1,2) node{$\ldots$};
\draw (-2,2) node{$\scriptstyle 1$};
\draw (0,2) node{$\scriptstyle 1$};
\draw (1.8,1) node{$\ldots$};
\draw[-, line width=1pt,color=\cred](2.5,-.3) to (2.5,2.2);
\draw (2.5,-.6) node{$\scriptstyle \red{u_\ell}$};
\draw[-,line width=1.5pt](4,-.3) to (4,0);
\draw (4,-.7) node{$\scriptstyle \lambda^{(\ell)}_1$};
\draw[-,thin](4,0) to (5,1.8);
\draw (4,1) node{$\ldots$};
\draw[-,thin](4,0) to (3,1.8);
\draw (4,2) node{$\ldots$};
\draw (5,2) node{$\scriptstyle 1$};
\draw (3,2) node{$\scriptstyle 1$};
\draw (5.5,1) node{$\ldots$};
\draw[-,line width=1.5pt](7,-.3) to (7,0);
\draw (7,-.7) node{$\scriptstyle \lambda^{(\ell)}_{h_\ell}$};
\draw[-,thin](7,0) to (6,1.8);
\draw (7,1) node{$\ldots$};
\draw[-,thin](7,0) to (8,1.8);
\draw (7,2) node{$\ldots$};
\draw (6,2) node{$\scriptstyle 1$};
\draw (8,2) node{$\scriptstyle 1$};
 \end{tikzpicture}
\end{equation}
and hence $\mathcal G([\mu(\t^\lambda)])=\phi_{\mu(\t^\lambda) \mathbf T^\lambda}$ since both sides are the map determined by sending $m_\lambda$ to $m_\lambda$.
In general, $[\mathbf T]$ is obtained from 
$\mu(\t^\lambda)$ by vertical composition of the permutation diagram of $d(\mu^{-1}(\mathbf T))^*$, where the permutation diagram of any $w\in\mathfrak S_m$ is obtained via the isomorphism  $\End_{\AW}(1^m)\cong\hat {\mathcal H}_{m}$  (cf. \cite[Proposition~3.13]{SWweb}).
Then we have 
\begin{equation}
\label{equ:casecloumn}
 \mathcal G([\mathbf T])=\mathcal G(d(\mu^{-1}(\mathbf T))^* \circ [\mu(\t^\lambda)])= \mathcal G (d(\mu^{-1}(\mathbf T))^*) \circ \mathcal G([\mu(\t^\lambda)])=\phi_{\mathbf T\mathbf T^\lambda},   
\end{equation}
since $\mathcal G (d(\mu^{-1}(\mathbf T))^*) $ is given by left multiplication of $d(\mu^{-1}(\mathbf T))^*$ by definition.
This completes the proof in the case $\mu^{(p)}=(1^{h_p})$, for $1\le p\le \ell$.

(2) Suppose that $ \mu^{(p)}=(1^{t_p})$, for some $t_p\in \N$ and for $1\le p\le \ell$. Recall that $ \SST(\lambda,\mu)\neq \emptyset$ only if 
$\lambda \rhd \mu$. This implies that 
$\sum_{1\le k\le p}t_k\le \sum_{1\le k\le p}h_k$, for $1\le p\le \ell$. In this case, we also have $|\mu^{-1}(\mathbf T)|=1$. Moreover, we have
\[ 
[\mu(\t^\lambda)]= 
\begin{tikzpicture}[baseline = 10pt, scale=0.6, color=\clr]
\draw[-,line width=1.5pt](-4,-.3) to (-4,0);
\draw (-4,-.7) node{$\scriptstyle \lambda^{(1)}_1$};
\draw[-,thin](-4,0) to (-5,1.8);
\draw (-4,1) node{$\ldots$};
\draw[-,thin](-4,0) to (-3,1.8);
\draw (-4,2) node{$\ldots$};
\draw (-5,2) node{$\scriptstyle 1$};
\draw (-3,2) node{$\scriptstyle 1$};
\draw (-2.5,1) node{$\ldots$};
\draw[-,line width=1.5pt](-1,-.3) to (-1,0);
\draw (-1,-.7) node{$\scriptstyle \lambda^{(1)}_{h_1}$};
\draw[-,thin](-1,0) to (-2,1.8);
\draw (-1,1) node{$\ldots$};
\draw[-,thin](-1,0) to (0,1.8);
\draw (-1,2) node{$\ldots$};
\draw (-2,2) node{$\scriptstyle 1$};
\draw (0,2) node{$\scriptstyle 1$};
\draw[-, line width=1pt,color=\cred](1,-.3) to (-1,1.8);
\draw (1,-.6) node{$\scriptstyle \red{u_2}$};
\draw (1.8,1) node{$\ldots$};
\draw[-, line width=1pt,color=\cred](-5.5,-.3) to (-5.5,1.8);
\draw (-5.5,-.6) node{$\scriptstyle \red{u_1}$};
\draw[-,line width=1.5pt](4,-.3) to (4,0);
\draw (4,-.7) node{$\scriptstyle \lambda^{(\ell-1)}_{\ell-1}$};
\draw[-,thin](4,0) to (5,1.8);
\draw (4,1) node{$\ldots$};
\draw[-,thin](4,0) to (3,1.8);
\draw (4,2) node{$\ldots$};
\draw (5,2) node{$\scriptstyle 1$};
\draw (3,2) node{$\scriptstyle 1$};
\draw[-, line width=1pt,color=\cred](5.5,-.3) to (3.8,1.8);
\draw (5.5,-.6) node{$\scriptstyle \red{u_\ell}$};
\draw (5.5,1) node{$\ldots$};
\draw[-,line width=1.5pt](7,-.3) to (7,0);
\draw (7,-.7) node{$\scriptstyle \lambda^{(\ell)}_{h_\ell}$};
\draw[-,thin](7,0) to (6,1.8);
\draw (7,1) node{$\ldots$};
\draw[-,thin](7,0) to (8,1.8);
\draw (7,2) node{$\ldots$};
\draw (6,2) node{$\scriptstyle 1$};
\draw (8,2) node{$\scriptstyle 1$};
 \end{tikzpicture}\]
i.e., $\mu(\t^\lambda)$ is the  ornamentation of the same underlying CFD in \eqref{equ:case111cl}.
Then by definition $\mathcal G(\mu(\t^\lambda))=\phi_{\mu(\t^\lambda)\mathbf T^\lambda}$ since both are the inclusion map sending $m_\lambda$ to $m_\lambda$.
 The case for general  $\mathbf T\in \SST(\lambda,\mu)$ follows by repeating \eqref{equ:casecloumn}.  

(3) Let $\mathbf v=(t_1,\ldots, t_\ell)$ be as given in Step (2) above. Define $ \Lambda(\mathbf v)$ to be the subset of $\Lambda_{\text{st}}^\ell(m)$ consisting of $\mu$ such that $|\mu^{(p)}|=t_p$, for $1\le p\le \ell$.
Then $ \Lambda_{\text{st}}^\ell(m)=\bigsqcup_{\mathbf t}\Lambda(\mathbf v)$.
We now proceed by induction on the order $\lhd$ of each set $\Lambda(\mathbf v)$.
The minimal case is $\mu^{(p)}=(1^{t_p})$, $1\le p\le \ell$, and in this case the result is proven in (2). In general, there are some $p,i$ such that 
\[
\mu^{(p)}_i>1, \text{ and }
\mu^{(k)}_s=1 \text{ for all } k< p \text{ or } k=p \text{ and } s<i. 
\]
We shall separate the discussion into two subcases (i)--(ii) below.
\begin{itemize}
    \item[(i)] Suppose that $a_{(p,i), (q,j)}= \mu^{(p)}_i$, for some $\lambda^{(q)}_j$. 
    Then  $a_{(p,i), (q',j')}=0$ for all $(q',j')\neq (q,j)$ and there is no merge at the vertex $\mu^{(p)}_i$ in $[\mathbf T]$.
    Moreover, there is a component in $\mathbf T$ of the form  
    \[\ytableaushort{{i_p}{i_p}{\ldots}{i_p}}. \]
Let $\mu'\in \Lambda_{\text{st}}^\ell(m)$ be obtained from $\mu$ by splitting $\mu^{(p)}_i$ to $(\mu^{(p)}_i-1,1)$. Then $\mu'\lhd \mu$.
Let $\mathbf T'\in \SST(\lambda,\mu')$ be obtained from $\mathbf T$ by changing the last $i_p$  of the above component to $(i+1)_p$ and all other $k_p$ to $(k+1)_p$ for all $k>i$.
Then 
$\mu^{(p)}_i[\mathbf T]= \pi \circ [\mathbf T'] $, where 
\[\pi= 1_*
\begin{tikzpicture}[baseline = -.5mm,color=\clr]
\draw[-,line width=1pt] (0.28,-.3) to (0.08,0.04);
\draw[-,line width=1pt] (-0.12,-.3) to (0.08,0.04);
\draw[-,line width=1.5pt] (0.08,.4) to (0.08,0);
\node at (0.35,-.4) {$\scriptstyle 1$};
\node at (0.06,.7){$\scriptstyle \mu^{(p)}_i$};
\end{tikzpicture}
1_* .\]
Note that $\mu(\s)=\mathbf T$ if and only if $\mu'(\s)=\mathbf T'$ for any $\s\in \std(\lambda)$.
Then by the inductive assumption, we have 
\begin{align*}
\mathcal G(\mu^{(p)}_i[\mathbf T] ) 
&= \mathcal G( \pi \circ [\mathbf T'] )
= \mathcal G(\pi) \circ \mathcal G([\mathbf T'])
\\
&= \sum_{w\in (\mathfrak S_{\mu}/\mathfrak S_{\mu'})_{\text{min}}}w\phi_{\mathbf T' \mathbf T^\lambda}=
\sum_{w\in (\mathfrak S_{\mu}/\mathfrak S_{\mu'})_{\text{min}}}w \phi_{\mathbf T \mathbf T^\lambda}=\mu^{(p)}_i\phi_{\mathbf T \mathbf T^\lambda},
\end{align*}
 since $w m_{\s,\t^\lambda}=m_{\s,\t^\lambda}$ for any $\s \in \mu^{-1}(\mathbf T)$ and  any $w\in\mathfrak S_{\mu}/\mathfrak S_{\mu'}$. This implies that $\mathcal G([\mathbf T])=\phi_{\mathbf T \mathbf T^\lambda}$ by first working over $\Z[\tilde u_1,\ldots, \tilde u_\ell]$ and then applying base change argument as before. 

\item[(ii)] Suppose that 
$\{a_{(p,i), (q,j)}\neq 0\mid 1\le q\le \ell, 1\le j\le l(\bar\lambda)\}$ for a given $(p,i)$ has cardinality $h\ge 2$ and is denoted by $\{a_{(p,i),(p_1,i_1)},\ldots, a_{(p,i), (p_h,i_h)}\}$.
Then $[\mathbf T]$ has a component of the form 
 \[\ytableaushort{{}{}{}{}{}{}{\ldots}{}{}{}{}{i_p}{i_p}{\ldots}{i_p},
 {}{}{}{}{\ldots}{}{i_p}{i_p}{\ldots}{i_p},
 {}{}{\ldots}{}{}{}{}{}{},
 {}{}{i_p}{i_p}{\ldots}{i_p}}, \]
where there are $h$ rows and the number $i_p$ in each row is $a_{(p,i),(p_k,i_k)}$ for $1\le k\le h$.
Write $b= a_{(p,i), (p_h,i_h)}$ and $a= \mu^{(p)}_i-b$. 
Let $\mu'$ be obtained from $\mu$ by splitting $\mu^{(p)}_i$
to $(a,b)$. Thus, $\mu'\lhd \mu$.
Define $\mathbf T'\in \SST(\lambda,\mu')$ to be obtained from 
$\mathbf T$ by replacing $i_p$ in the last row with $(i+1)_p$ and  $k_p$ with $(k+1)_p$ for $k>i$.
Then 
$ [\mathbf T]= \pi\circ [\mathbf T']$, where 
\[\pi= 1_*
\begin{tikzpicture}[baseline = -.5mm,color=\clr]
	\draw[-,line width=1pt] (0.28,-.3) to (0.08,0.04);
	\draw[-,line width=1pt] (-0.12,-.3) to (0.08,0.04);
	\draw[-,line width=1.5pt] (0.08,.4) to (0.08,0);
        \node at (-0.22,-.4) {$\scriptstyle a$};
        \node at (0.35,-.4) {$\scriptstyle b$};
        \node at (0.06,.7){$\scriptstyle \mu^{(p)}_i$};\end{tikzpicture} 
1_*. \]
Note that $\s'\in (\mu')^{-1}(\mathbf T')$ if and only if $\s=\s'w^*\in \mu^{-1}(\mathbf T)$ for some $w\in(\mathfrak S_{\mu}/\mathfrak S_{\mu'})_{\text{min}}$. 
This implies that
\begin{align*}
  \phi_{\mathbf T \mathbf T^\lambda}
  &=\sum_{w\in (\mathfrak S_{\mu}/\mathfrak S_{\mu'})_{\text{min}}}w \phi_{\mathbf T' \mathbf T^\lambda} \\
  &= \mathcal G(\pi) \circ \mathcal G([\mathbf T']), \quad \text{by inductive assumption on $\mu'$}\\
  &= \mathcal G(\pi \circ [\mathbf T'])= \mathcal G([\mathbf T]).
\end{align*}
\end{itemize}

The same argument shows that $ \mathcal G([\mathbf S]^\div)= \phi_{\mathbf T^\lambda \mathbf S}$.
Then, we have $\mathcal G([\mathbf T]\circ [\mathbf S]^\div)=\phi_{\mathbf T \mathbf T^\lambda}\circ \phi_{\mathbf T^\lambda\mathbf S}= \phi_{\mathbf T\mathbf S}$. This completes the proof.
\end{proof}

\begin{rem}
    We remark that there is no dots in the basis given in Proposition \ref{pro:mapsdjm}.
\end{rem}

\section{Bases for a cyclotomic Schur category}
\label{sec:basis-cycschur}

In this section we construct an elementary diagram basis and a double SST basis for the cyclotomic Schur category $\Sch_{\bfu}$. We then show that $\Sch_{\bfu}$ is isomorphic to $\ScatDJM_{\bfu}$, which matches the double SST basis of $\Sch_{\bfu}$ with the cellular basis of $\ScatDJM_{\bfu}$.

\subsection{ A spanning set of $\Sch_{\bfu}$}
 
Recall the spanning set $\PMat_{\lambda,\mu}$ of $\Hom_{\ASch}(\mu,\lambda)$. For any $\lambda,\mu\in\Lambda_{\text{st}}^{\emptyset,\ell}(m)$,
an element $(A,P)$ of $\PMat_{\lambda,\mu}$ can be written as 
a pair of $\ell \times \ell$-block matrices $(A=(A_{pq})_{p,q=1}^\ell, P=(P_{pq})_{p,q=1}^\ell)$ via the bijective map \eqref{bijPM}.  We may also further express such a pair more explicitly as 
\[
\big(A=(a_{(p,i),(q,j)}), P=(\eta_{(p,i),(q,j)}) \big)
\]
with each of the block matrices given by $A_{pq} =(a_{(p,i),(q,j)})_{i,j}$ and $P_{pq} =(\eta_{(p,i),(q,j)})_{i,j}$, where $\eta_{(p,i),(q,j)}$ is a partition consisting of parts not exceeding $a_{(p,i),(q,j)}$. 


For $\lambda,\mu\in \Lambda_{\text{st}}^{\emptyset,\ell}(m)$, we define the set of bounded-partition-enhanced $\ell\times \ell$-block $\N$-matrices
\begin{align}
     \label{PM0}
\PMat_{\lambda,\mu}^\flat:= \big\{ (A,P)\in\PMat_{\lambda,\mu}\mid   l(\eta_{(p,i),(q,j)})\le \min\{p,q\}-1, \forall i,j,p,q 
 \big \}.
\end{align}

\begin{proposition}  
\label{pro:spanofcycshcurcate}
For $\lambda,\mu\in \Lambda_{\text{st}}^\ell(m)$, the Hom-space $\Hom_{\Sch_{\bfu}}(\mu,\lambda)$
is spanned by $\PMat_{\lambda,\mu}^\flat$.
\end{proposition}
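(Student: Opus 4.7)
The plan is to bootstrap from the affine case. Since $\Sch_{\bfu}$ is the quotient of $\ASch$ obtained by imposing the cyclotomic relations \eqref{cyclotomicpoly}, Theorem~\ref{thm:basisASchur} already implies that $\PMat_{\lambda,\mu}$ spans $\Hom_{\Sch_{\bfu}}(\mu,\lambda)$. The task is therefore to show that every element $(A,P)\in \PMat_{\lambda,\mu}$ whose partition $\eta_{(p,i),(q,j)}$ on some leg has length $\ge \min\{p,q\}$ can be rewritten in $\Sch_{\bfu}$ as a $\kk$-linear combination of elements of $\PMat_{\lambda,\mu}^\flat$.

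The reduction proceeds one leg at a time. Fix a leg connecting $\mu^{(q)}_j$ at the bottom to $\lambda^{(p)}_i$ at the top, of thickness $a:=a_{(p,i),(q,j)}$, with its bottom decorated by the dot packet $\omega_{a,\eta}$. Without loss of generality assume $q\le p$; the opposite case is handled symmetrically by first sliding the dots to the top using \eqref{dotmoveadaptor} and \eqref{dotmovesplitss}. Since $\mu^{(q)}$ sits to the right of the red strands $u_1,\ldots,u_q$, Lemma~\ref{lem:cycpolyvanish} yields the vanishing $g_{r,q}\cdot 1_*=0$ at the bottom of the leg for every $r\ge 1$.

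The heart of the proof is the following reduction lemma: on a strand of thickness $a$ positioned to the right of $u_1,\ldots,u_q$, every dot packet $\omega_{a,\nu}$ with $l(\nu)\ge q$ equals a $\kk$-linear combination of packets $\omega_{a,\mu}$ with $l(\mu)\le q-1$ and parts $\le a$. To establish it, expand $g_{r,q}=\prod_{j=1}^q g_r(u_j)$ via \eqref{def-gau}, and for each $r\le a$ translate this thickness-$r$ identity to an identity on the thickness-$a$ strand by means of split--merge and \eqref{equ:r=0andr=a}. In each such expansion, the monomials in which some factor $g_r(u_j)$ contributes $\omega_0=1$ have length strictly less than $q$, while those in which every factor contributes a nonzero $\omega_{r-i_j}$ have length exactly $q$ with parts $\le r$. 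Ordering length-$q$ packets, say, lexicographically on the multiset of parts, the leading monomial of the identity is $\omega_{a,r}^q$; solving for it expresses $\omega_{a,r}^q$ as a combination of strictly lower-order length-$q$ packets plus length-$<q$ packets. A double induction on $(l(\nu),\nu)$ (first on length, then on the lex order of parts), together with the dot-move relations used to bring all dots to the bottom of the leg, then collapses every $\omega_{a,\nu}$ with $l(\nu)\ge q$ down to length $\le q-1$.

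The main obstacle is completing this reduction lemma: rigorously verifying that the cyclotomic identities at the various thicknesses $r\le a$, combined with the split--merge, dot-commutation and dot-move relations of $\Sch_{\bfu}$, together suffice to collapse every length-$\ge q$ monomial in $\omega_{a,\bullet}$ to a sum of length-$<q$ monomials. This is closely analogous to, but somewhat more intricate than, the basis reduction performed for the cyclotomic web category in \cite{SW1}, and is the key new computation. Once the lemma is in hand, applying it to every leg of every diagram in $\PMat_{\lambda,\mu}$, at the end (top or bottom) where $\min\{p,q\}$ is attained, yields the desired spanning by $\PMat_{\lambda,\mu}^\flat$.
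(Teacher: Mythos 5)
Your overall architecture (pass to the quotient of $\ASch$, then reduce the dot packets leg by leg using the cyclotomic vanishing of Lemma~\ref{lem:cycpolyvanish}) is sound, and your reduction lemma is exactly the statement the paper needs. But the mechanism you propose for proving it does not close. The relations you extract from $g_{r,q}=0$ have leading monomial $\omega_{a,r}^{q}$, so a lex/length induction built on them can only eliminate monomials $\omega_{a,\nu}$ in which some part of $\nu$ occurs with multiplicity at least $q$. It says nothing about, e.g., $\omega_{a,3}\omega_{a,2}\omega_{a,1}$ when $q=2$: this has length $3\ge q$ but is divisible by no $\omega_{a,r}^{2}$, and multiplying the identities $\omega_{a,r}^{q}\equiv(\text{lower})$ by further dot packets never produces it as a leading term. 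A count makes the shortfall concrete: for $a=3$, $q=2$, the quotient of $\kk[\omega_1,\omega_2,\omega_3]$ by an ideal whose leading terms are the $\omega_r^2$ retains the $8$ square-free monomials, whereas the target spanning set $\{\omega_\nu \mid l(\nu)\le 1\}$ has only $4$ elements. So the step you flag as ``the key new computation'' is not merely unfinished --- the route through the leading terms $\omega_{a,r}^q$ cannot be completed as described.

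What the paper does instead in the proof of Proposition~\ref{pro:spanofcycshcurcate} is an induction on the degree of the diagram combined with inductions on $t=\min\{p,q\}$ and on $h=l(\eta)$. Given $\omega_{a,\eta}$ with $l(\eta)=h\ge t$ sitting immediately to the right of the red strand $u_t$ (the dots are first brought there by \eqref{movedottofirststrand}), one splits off a substrand of thickness $r=\eta_1$ carrying the full dot $\omega_{r,r}$, leaving $\omega_{\bar\eta}$ behind; relation \eqref{adaptorL} then trades $\omega_{r,r}$, modulo strictly lower-degree terms, for a wrap of that substrand around $u_t$. The wrapped substrand now lies to the right of only $u_1,\dots,u_{t-1}$, so induction on $t$ caps the length of its dot packet at $t-2$; unwrapping via \eqref{adaptorL} costs at most one extra part, and induction on $h$ (together with the overall degree induction absorbing all correction terms) finishes. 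This ``peel off the largest part and wrap it around the red strand'' step is the ingredient your argument is missing; without it, or some equivalent source of relations whose leading terms are the non-rectangular length-$\ge q$ packets, the spanning claim does not follow.
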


\begin{proof}
We prove by induction on the degree $k$  of $(A,P)$ in $\PMat_{\lambda,\mu}$. Let $k_0$ be the minimal degree of a diagram in $\PMat_{\lambda,\mu}$. Note that by Definition \ref{def:degree} $k_0$ is not always zero since the traverse-down has positive degree.   The cases $P=(\emptyset)$, i.e., each matrix entry of $P$ is $\emptyset$ are already in $\PMat_{\lambda,\mu}^\flat$ since $l(\emptyset)=0$ satisfies the condition in \eqref{PM0}. This includes 
the basic case $k=k_0$  since dots are of positive degree and hence $P$ must be $(\emptyset)$ to attain minimum.   Suppose  $P_{pq}\neq (\emptyset)$, for some $p,q$ (and hence $k>k_0$). We have 
\begin{align}
\label{componentone}
\begin{tikzpicture}[baseline = -1mm,scale=.8,color=\clr]
\draw[-,line width=1.2pt] (0.08,-.6) to (0.08,.5);
\node at (.08,-.8) {$\scriptstyle a$};
\draw(0.08,0) \bdot;
\draw(.6,0)node {$\scriptstyle \omega_{r}$};
\draw[-,line width=1pt,color=\cred](-.4,-.6)to (-.4,.5);
\draw (-.4,-.8) node{$\scriptstyle \red{u_1}$};
\end{tikzpicture}
\overset{\eqref{splitmerge}}=
\begin{tikzpicture}[baseline = -1mm,scale=.8,color=\clr]
\draw[-,line width=1.5pt] (0.08,-.6) to (0.08,-.3);
\draw[-,line width=1.5pt] (0.08,.5) to (0.08,.8);
\draw[-,thick] (0.1,-.31) to [out=45,in=-45] (0.1,.51);
\draw[-,thick] (0.06,-.31) to [out=135,in=-135] (0.06,.51);
\draw(-.1,0.2) \bdot;
\draw(-.4,0.2)node {$\scriptstyle \omega_r$};
\node at (-.3,-.15) {$\scriptstyle r$};
\node at (.6,-.15) {$\scriptstyle a-r$};  
\draw[-,line width=1pt,color=\cred](-.8,-.6)to (-.8,.5);
\draw (-.8,-.8) node{$\scriptstyle \red{u_1}$};
\end{tikzpicture}
\overset{\text{Lem }\ref{dotmovefreely}(6)}\equiv
\begin{tikzpicture}[baseline = 10pt, scale=0.4, color=\clr]
\draw[-,line width=1pt,color=\cred](-.15,-.6)to(-.15,2.4);
\draw (-.15,-.9) node{$\scriptstyle \red{u_1}$};
\draw[-,thick](.5,-.2) to[out=135, in=down] (-.5,1);
\draw[-,thick](-.5,1) to[out=up, in=270] (.5,2);
\draw[-,thick] (.5,-.2) to [out=right,in=right] 
(0.5,2);
\draw[-,line width=1.5pt](.5, -.5) to (.5,-.2);
\draw[-,line width=1.5pt](.5,2) to (.5,2.3);
\draw (1.5,-0.2) node{$\scriptstyle {a-r}$};
\draw (.53,-0.85) node{$\scriptstyle {a}$};
\end{tikzpicture}
\overset{\eqref{cyclotomicpoly}}=0.
\end{align}

Thus, we may use Lemma \ref{dotmovefreely} to move the dots between the red strands of $u_1$ and $u_2$ next to the red strand of $u_1$, and 
then use \eqref{componentone} to reduce to a case with $P_{p,1}=(\emptyset)$ and $P_{1,q}=(\emptyset)$.

In general, suppose that $P_{pq}=(\eta_{ij})$ with 
$l(\eta_{ij})=h\ge t:=\min\{ p, q\}$ for some $\eta_{ij}$. Let $\bar {\eta}$ denote the partition obtained from $\eta_{ij}$ by deleting its first part, say $r$. We have
\begin{align*}
\begin{tikzpicture}[baseline = -1mm,scale=.8,color=\clr]
\draw[-,line width=1.2pt] (0.08,-.6) to (0.08,.5);
\node at (.08,-.8) {$\scriptstyle a$};
\draw(0.08,0) \bdot;
\draw(.6,0)node {$\scriptstyle \omega_{\eta_{ij}}$};
\draw[-,line width=1pt,color=\cred](-.4,-.6)to (-.4,.5);
\draw (-.4,-.8) node{$\scriptstyle \red{u_t}$};
\end{tikzpicture}
&\overset{\eqref{splitmerge}}=
\begin{tikzpicture}[baseline = -1mm,scale=.8,color=\clr]
\draw[-,line width=1.5pt] (0.08,-.8) to (0.08,-.3);
\draw[-,line width=1.5pt] (0.08,.5) to (0.08,.8);
\draw[-,thick] (0.1,-.31) to [out=45,in=-45] (0.1,.51);
\draw[-,thick] (0.06,-.31) to [out=135,in=-135] (0.06,.51);
\draw(-.1,0.2) \bdot;
\draw(0.08,-.5) \bdot;
\draw(.5,-.5)node {$\scriptstyle \omega_{\bar{\eta}}$};
\draw(-.4,0.2)node {$\scriptstyle \omega_{r}$};
\node at (-.3,-.15) {$\scriptstyle r$};
\node at (.6,-.15) {$\scriptstyle a-r$};  
\draw[-,line width=1pt,color=\cred](-.8,-.6)to (-.8,.5);
\draw (-.8,-.8) node{$\scriptstyle \red{u_t}$};
\end{tikzpicture}
\overset{\eqref{adaptorL}}\equiv
\begin{tikzpicture}[baseline = 10pt, scale=0.4, color=\clr]
\draw[-,line width=1pt,color=\cred](0,-.6)to (0,2.4);
\draw (0,-.9) node{$\scriptstyle \red{u_t}$};
\draw[-,thick](.5,-.2) to[out=135, in=down] (-.5,1);
\draw[-,thick](-.5,1) to[out=up, in=270] (.5,2);
\draw[-,thick] (.5,-.2) to [out=right,in=right] 
(0.5,2);
\draw[-,line width=1.5pt](.5, -.9) to (.5,-.2);
\draw[-,line width=1.5pt](.5,2) to (.5,2.3);
\draw (1.8,0.2) node{$\scriptstyle {a-r}$};
\draw(0.5,-.5) \bdot;
\draw(1.2,-.6)node {$\scriptstyle \omega_{\bar{\eta}}$};
\end{tikzpicture}
\\
&
\overset{\eqref{dotmovesplitss}}{\underset{\eqref{dotmoveadaptor}}{\equiv}}
\sum_{\overset{\mathbf t,\mathbf v}{\underset{l(\mathbf t)\le h-1}{l(\mathbf v)\le p-1}}} 
\begin{tikzpicture}[baseline = 10pt, scale=0.4, color=\clr]
\draw[-,line width=1pt,color=\cred](0,-.6)to (0,2.4);
\draw (0,-.9) node{$\scriptstyle \red{u_t}$};
\draw[-,thick](.5,-.2) to[out=135, in=down] (-.5,1);
\draw[-,thick](-.5,1) to[out=up, in=270] (.5,2);
\draw[-,thick] (.5,-.2) to [out=right,in=right] 
(0.5,2);
\draw[-,line width=1.5pt](.5, -.9) to (.5,-.2);
\draw[-,line width=1.5pt](.5,2) to (.5,2.3);
\draw (1.6,0) node{$\scriptstyle {a-r}$};
\draw(-0.5,1) \bdot;
\draw(-1,1)node {$\scriptstyle \omega_{\mathbf t}$};
\draw(1.15,1) \bdot;
\draw(1.8,1)node {$\scriptstyle \omega_{\mathbf  v}$};
\end{tikzpicture}\\
&\equiv
\sum_{\overset{\mathbf z,\mathbf v}{\underset{l(\mathbf z)< t-1}{l(\mathbf v)\le h-1}}} 
\begin{tikzpicture}[baseline = 10pt, scale=0.4, color=\clr]
\draw[-,line width=1pt,color=\cred](0,-.6)to (0,2.4);
\draw (0,-.9) node{$\scriptstyle \red{u_t}$};
\draw[-,thick](.5,-.2) to[out=135, in=down] (-.5,1);
\draw[-,thick](-.5,1) to[out=up, in=270] (.5,2);
\draw[-,thick] (.5,-.2) to [out=right,in=right] 
(0.5,2);
\draw[-,line width=1.5pt](.5, -.9) to (.5,-.2);
\draw[-,line width=1.5pt](.5,2) to (.5,2.3);
\draw (1.6,0) node{$\scriptstyle {a-r}$};
\draw(-0.5,1) \bdot;
\draw(-1,1)node {$\scriptstyle \omega_{\mathbf z}$};
\draw(1.15,1) \bdot;
\draw(1.8,1)node {$\scriptstyle \omega_{\mathbf  v}$};
\end{tikzpicture}, ~\quad \text{by induction on $t-1$}
\\
&\overset{\text{Lem} \ref{dotmovefreely}}{\underset{\eqref{dotmoveadaptor}}{\equiv}}
\sum_{\overset{\mathbf z,\mathbf v}{\underset{l(\mathbf z)< t-1}{l(\mathbf v)\le h-1}}} 
\begin{tikzpicture}[baseline = 10pt, scale=0.4, color=\clr]
\draw[-,line width=1pt,color=\cred](-1.4,-.6)to (-1.4,2.4);
\draw (-1.4,-.9) node{$\scriptstyle \red{u_t}$};
\draw[-,thick](.5,-.2) to[out=left, in=left] (.5,2);
\draw[-,thick] (.5,-.2) to [out=right,in=right] 
(0.5,2);
\draw[-,line width=1.5pt](.5, -.9) to (.5,-.2);
\draw[-,line width=1.5pt](.5,2) to (.5,2.3);
\draw (1.6,0) node{$\scriptstyle {a-r}$};
\draw(-0.15,1) \bdot;
\draw(-.7,1)node {$\scriptstyle \omega_{r}$};
\draw(0,.25) \bdot;
\draw(-.2,-.2)node {$\scriptstyle \omega_{\mathbf z}$};
\draw(1.15,1) \bdot;
\draw(1.8,1)node {$\scriptstyle \omega_{\mathbf  v}$};
\end{tikzpicture}\\
&\equiv
\sum_{\mathbf y, l(\mathbf y)\le h-1}
\begin{tikzpicture}[baseline = -1mm,scale=.8,color=\clr]
\draw[-,line width=1.2pt] (0.08,-.6) to (0.08,.5);
\node at (.08,-.8) {$\scriptstyle a$};
\draw(0.08,0) \bdot;
\draw(.4,0)node {$\scriptstyle \omega_{\mathbf y}$};
\draw[-,line width=1pt,color=\cred](-.4,-.6)to (-.4,.5);
\draw (-.4,-.8) node{$\scriptstyle \red{u_t}$};
\end{tikzpicture}\\
& \equiv
\sum_{\mathbf y, l(\mathbf y)\le t-1}
\begin{tikzpicture}[baseline = -1mm,scale=.8,color=\clr]
\draw[-,line width=1.2pt] (0.08,-.6) to (0.08,.5);
\node at (.08,-.8) {$\scriptstyle a$};
\draw(0.08,0) \bdot;
\draw(.4,0)node {$\scriptstyle \omega_{\mathbf y}$};
\draw[-,line width=1pt,color=\cred](-.4,-.6)to (-.4,.5);
\draw (-.4,-.8) node{$\scriptstyle \red{u_t}$};
\end{tikzpicture}, \quad \text{by induction on $h-1$},
\end{align*}
where the second last ``$\equiv$" follows from induction assumption on $r$ and \cite[Corollary~3.12]{SWweb}. 
This together with Lemma \ref{dotmovefreely}(4) shows inductively that a general morphism in the Hom-space $\Hom_{\Sch_{\bfu}}(\mu,\lambda)$ can be written as a linear combination of $\PMat_{\lambda,\mu}^\flat$. The proposition is proved.
\end{proof}

\subsection{The double SST basis}

In this subsection, we shall construct a double SST basis for the path algebra of the cyclotomic Schur category $\Sch_{\bfu}$:
 \begin{align} \label{Schurm}
 \Sch_\bfu(m):= \bigoplus_{\mu,\nu\in \Lambda_{\text{st}}^\ell(m)}\Hom_{\Sch_{\bfu}}(\mu,\nu),
 \qquad \text{ for } m \ge 1.
 \end{align}
 We then establish an algebra isomorphism $\Sch_\bfu(m) \cong \Sc_{m,\bfu}$ and identify the double SST basis for $\Sch_\bfu(m)$ with the cellular basis $\{\phi_{\bf S\bf T}\}$ for $\Sc_{m,\bfu}$ in \eqref{basisofdjm}. 
 In particular, under the isomorphism between Schur category and web category \cite{BEEO}, the $\ell=1$ case follows from the so-called straightening algorithm for codeterminants, see \cite[Lemma 7.4]{Bru24}.
 
For any $\lambda\in \Par^\ell(m)$, let $\Sch^{\rhd\lambda}(m)$ be the span of all morphisms factoring  through any object $\gamma\in \Par^\ell(m)$ with $\gamma\rhd \lambda$. Equivalently, $\Sch^{\rhd\lambda}(m)$ is the two-sided ideal of the algebra $\Sch_\bfu(m)$ generated by all $1_{\gamma}$, for $\gamma\in \Par^\ell(m)$ with $\gamma\rhd \lambda$. The following is a technical preparation for the basis theorem for $\Sch_\bfu(m)$. 
 
\begin{proposition}
 \label{pro:spancycdjm}
For any $\mu\in \Lambda^\ell_{\text{st}}(m)$ and $\lambda\in \emph{\Par}^\ell(m)$, we have
\[
\Hom_{\Sch_{\bfu}}(\lambda,\mu)+\Sch^{\rhd\lambda}(m)= \kk\text{-span} \big\{ [\mathbf T] + \Sch^{\rhd\lambda}(m)\mid \mathbf T\in \emph{\SST}(\lambda,\mu) \big\}.
\]
\end{proposition}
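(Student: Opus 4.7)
The plan is to start with the spanning set $\PMat_{\mu,\lambda}^{\flat}$ of $\Hom_{\Sch_{\bfu}}(\lambda,\mu)$ provided by Proposition~\ref{pro:spanofcycshcurcate}, and to reduce an arbitrary element $(A,P)$, modulo $\Sch^{\rhd\lambda}(m)$, to a $\kk$-linear combination of SST-morphisms $[\mathbf T]$ with $\mathbf T\in\SST(\lambda,\mu)$. There are three possible obstructions to $(A,P)$ already being such a combination: (i) ``backward'' legs $a_{(p,i),(q,j)}\neq 0$ with $p<q$; (ii) residual dot packets $\omega_\eta\neq\emptyset$; (iii) failure of the column-strict condition in the tableau associated to $A$. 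I will treat these obstructions in order.

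\emph{Stage 1 (eliminating backward legs).} Suppose $a_{(p,i),(q,j)}\neq 0$ for some $p<q$; then a leg of thickness $a=a_{(p,i),(q,j)}$ in the ornamented diagram runs from $\lambda^{(q)}_j$ at the bottom to $\mu^{(p)}_i$ at the top, necessarily crossing the red strands $u_{p+1},\ldots,u_q$ via traverse-downs. By Lemma~\ref{dotmovefreely}(4) and Lemma~\ref{adamovecrossings}, the ornamentation may be chosen (modulo lower-degree terms) so that all traverse-downs lie below all traverse-ups. Taking a horizontal slice just above the last traverse-down factors the diagram as $f\circ 1_\gamma\circ g$ for a multi-composition $\gamma$ in which mass $a$ has been transferred from block $q$ down to block $p$. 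A bookkeeping of the combined partial sums $\sum_{k<j}|\gamma^{(k)}|+\sum_{h\le l}\gamma^{(j)}_h$, together with further sorting of parts within each block using the involutive crossings, produces a multi-partition $\gamma^{+}\rhd\lambda$, so $(A,P)\in\Sch^{\rhd\lambda}(m)$.

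\emph{Stage 2 (eliminating dots).} Assume $a_{(p,i),(q,j)}=0$ whenever $p<q$, and let $\omega_\eta$ be a nonzero dot packet at position $(p,i),(q,j)$, with $\ell(\eta)\le q-1$ by the $\flat$-bound. Slide the dots down past the split at $\lambda^{(q)}_j$ via \eqref{dotmovesplitss}, then push them leftward across $u_q,u_{q-1},\ldots,u_2$ using \eqref{adaptorL}, accumulating a polynomial factor $g_r(u_i)$ at each step. By the time the dots meet $u_1$ a full product $g_{r,q}$ has assembled, and an argument analogous to that of Lemma~\ref{lem:cycpolyvanish} combined with \eqref{cyclotomicpoly} annihilates the leading configuration; the intermediate residual terms factor through multi-partitions dominating $\lambda$ by the mechanism of Stage~1. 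This reduces to the case $P=(\emptyset)$.

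\emph{Stage 3 (straightening).} The remaining elements $(A,(\emptyset))$ with $a_{(p,i),(q,j)}=0$ for $p<q$ define $\lambda$-tableaux $\mathbf T(A)$ of type $\mu$ whose rows are weakly increasing and which satisfy SST condition (3). If $\mathbf T(A)\in\SST(\lambda,\mu)$, then $(A,(\emptyset))=[\mathbf T(A)]$ and we are done. Otherwise, a failure of column-strict increase is resolved by a Garnir-style relation derived from the merge-split relation \eqref{mergesplit} applied between the splits of two adjacent columns $\lambda^{(q)}_j$ and $\lambda^{(q)}_{j+1}$ (using crucially that $\lambda^{(q)}_j\ge\lambda^{(q)}_{j+1}$ as $\lambda^{(q)}$ is a partition); this writes $(A,(\emptyset))$ as a sum of $[\mathbf T']$ with $\mathbf T'$ strictly more standard, plus error terms factoring through multi-partitions $\rhd\lambda$. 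A descending induction on a suitable refinement of the dominance order on tableaux completes the reduction. The main obstacle I anticipate is Stage~1: verifying that the sliced composition, after sorting within blocks, genuinely dominates $\lambda$ in the multi-component dominance order is delicate, since the within-block partial sums at a ``split-but-not-yet-merged'' slice are naturally smaller than those of $\lambda$ and must be compensated by the across-block leftward mass shift.
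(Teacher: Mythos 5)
Your Stages 1 and 2 essentially reproduce the two ``Claims'' in the paper's proof: a leg with $a_{(p,i),(q,j)}\neq 0$ for $p<q$, or a nonempty dot packet, forces the diagram (up to lower-degree terms) to factor through $1_\gamma$ for a multicomposition $\gamma$ whose sorted version strictly dominates $\lambda$. Two caveats there. First, the dominance verification you flag as ``delicate'' is exactly the computation the paper relies on (mass $r$ moved from component $q$ into component $q-1$ raises every relevant partial sum by $r$ while the depleted part of $\gamma^{(q)}$ lowers the within-block partial sums by at most $r$), so it does work, but you have not done it. Second, your Stage~2 mechanism is off: a single application of \eqref{adaptorL} at the red strand $u_q$ already converts $\omega_r$ (which equals $g_r(u_q)$ plus lower-order dot terms) into a double crossing landing the $r$-strand in component $q-1$, i.e.\ into something factoring through $\gamma\rhd\lambda$. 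There is nothing to push further left and nothing to annihilate; Lemma~\ref{lem:cycpolyvanish} requires the $g_{r,q}$-decorated strand to sit immediately to the right of the adjacent red strands $u_1,\ldots,u_q$ with nothing in between, a configuration that does not occur at a horizontal slice of a general elementary diagram. The terms you call ``intermediate residual'' are in fact the whole story.

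The genuine gap is Stage 3. After reducing to $P=(\emptyset)$ and no backward legs, you assert that non-column-strict fillings can be straightened by ``a Garnir-style relation derived from \eqref{mergesplit}'' with error terms factoring through $\gamma\rhd\lambda$, but you never produce this relation or the induction that terminates it; this straightening is the actual content of the semistandard spanning statement and is precisely what the paper avoids proving from scratch. The paper instead inducts on the level $\ell$: it peels off the last component by factoring $(A,(\emptyset))=D\circ\big((\hat A,\emptyset)\otimes[A_{\ell\ell}]\big)$, where $\hat A$ lives at level $\ell-1$ and $A_{\ell\ell}$ at level $1$, invokes the known level-one cellular/semistandard basis (via \cite{DJM98} and the web--Schur isomorphism of \cite{BEEO}) together with the inductive hypothesis, and checks that composing with $D$ turns SST-diagrams for $(\tilde\mu,\hat\mu)$ into SST-diagrams for $\mu$. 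If you want to keep your direct route, you must actually derive the straightening identity in $\Sch_{\bfu}$ (including the behaviour of the red strands through the shuffles) and exhibit the well-founded order on fillings; as written, the hardest step of the proposition is asserted rather than proved.
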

 
\begin{proof}
Denote the span on the right-hand side in the formula above by $\text{Span}_{\lambda,\mu}$. Thanks to  Proposition \ref{pro:spanofcycshcurcate}, it suffices to show that 
$(A,P)  +\Sch^{\rhd\lambda}(m)\in \text{Span}_{\lambda,\mu}$ for any 
$(A,P)\in\Mat_{\mu,\lambda}^\flat$.
We proceed by induction on $\ell$ and on the order $\rhd$ of the set $\Par^\ell(m)$.
For $\ell=1$, the result follows from 
the cellular basis of the Schur algebra (e.g., \cite{DJM98}) and  the isomorphism between the web category and the Schur category in \cite{BEEO}.

Suppose $\ell\ge 2$.  
If $\lambda=((m), \emptyset,\ldots,\emptyset)$, then 
 $\Hom_{\Sch_{\bfu}}(\lambda,\mu)$ is spanned by 
\[\{(A,(\emptyset))\in \Par\Mat_{\bar\mu,\bar\lambda}\}.\]
Since $\lambda$ is a row in the first component,  any $A$
above is obtained as $A_{\mathbf T}$ for some $ \mathbf T\in \SST(\lambda,\mu)=\SST(\bar\lambda,\bar \mu)$. 
Thus, $(A,(\emptyset))=[\mathbf T]\in \text{Span}_{\lambda,\mu}$. 
\vspace{2mm}

{\bf Claim.} For any $A\in \Mat_{\bar\mu,\bar \lambda}$, we have
\begin{enumerate}
    \item if  $P\neq (\emptyset)$, then $(A,P)+ \Sch^{\rhd\lambda}(m) = 0$  up to lower degree terms; 
    \item $(A,(\emptyset))\in \Sch^{\rhd\lambda}(m)$ if $a_{(p,i),(q,j)}\neq 0$ for  some $q>p$. 
\end{enumerate}
Here the set $\Mat_{\bar\mu,\bar\lambda}$ is given below \eqref{exam-CFD}.
We prove Claim (1) first.
Suppose $P_{pq}\neq \emptyset$ for some $p,q$.
By assumption, $(A,P)$ factors through a morphism 
\[ 
1_*~ 
\begin{tikzpicture}[baseline = 10pt, scale=0.5, color=\clr]
\draw[-,line width =1pt,color=\cred] (-4.5,.2) to (-4.5,2);
\draw (-4.5,-.2) node{$\scriptstyle \red{u_q}$};    
\draw[-] (-3.5,.2) to (-3.5,2);
\draw (-3.5,-.2) node{$\scriptstyle {\lambda^{(q)}_1}$};
\draw(-2.6,.8) node{$\ldots$};
\draw[-] (-2,.2) to (-2,2);
\draw (-2,-.2) node{$\scriptstyle \lambda^{(q)}_{j-1}$};
\draw[-] (0,.2) to (-1,2);
\draw[-,line width=1.5pt](0,.2) to (0,0);
\draw[-,](0,.2) to (1,2);
\draw (-.5,1) \bdot;
\draw (-1,1) node{$\scriptstyle {\omega_r}$};
\draw (-0.5,.3) node{$\scriptstyle r$}; 
\end{tikzpicture}
~1_* 
\in \Hom_{\Sch_{\bfu}}(\lambda,\nu)
\]
for some $\nu\in \Lambda_{\text{st}}^\ell(m)$, $1\le q\le \ell$, and  $j$ with $1\le r\le \lambda^{(q)}_{j}$.
We have 
\begin{align*}
1_*\begin{tikzpicture}[baseline = 10pt, scale=0.5, color=\clr]
\draw[-,line width =1pt,color=\cred] (-4.5,.2) to (-4.5,2);
\draw (-4.5,-.2) node{$\scriptstyle \red{u_q}$};    
\draw[-] (-3.5,.2) to (-3.5,2);
\draw (-3.5,-.2) node{$\scriptstyle {\lambda^{(q)}_1}$};
\draw(-2.6,.8) node{$\ldots$};
\draw[-] (-2,.2) to (-2,2);
\draw (-2,-.2) node{$\scriptstyle \lambda^{(q)}_{j-1}$};
\draw[-] (0,.2) to (-1,2);
\draw[-,line width=1.5pt](0,.2) to (0,0);
\draw[-,](0,.2) to (1,2);
\draw (-.5,1) \bdot;
\draw (-1,1) node{$\scriptstyle {\omega_r}$};
\draw (-0.5,.3) node{$\scriptstyle r$}; 
\end{tikzpicture}1_*
&
\overset{\text{Lem} \ref{dotmovefreely}}\equiv
1_*\begin{tikzpicture}[baseline = 10pt, scale=0.5, color=\clr]
\draw[-,line width =1pt,color=\cred] (-4.5,.2) to (-4.5,2);
\draw (-4.5,-.2) node{$\scriptstyle \red{u_q}$}; 
\draw[-] (-3.5,1) to[out=up,in=down] (0,2);
\draw[-] (-3.5,1) to[out=down,in=up] (1,.2);
\draw[-] (-2.5,.2) to (-2.5,2);
\draw (-2.5,-.4) node{$\scriptstyle {\lambda^{(q)}_1}$};
\draw(-2,.8) node{$\ldots$};
\draw[-] (-1,.2) to (-1,2);
\draw (-1,-.4) node{$\scriptstyle \lambda^{(q)}_{j-1}$};
\draw[-,line width=1.5pt](1,.2) to (1,0);
\draw[-,](1,.2) to (2,2);
\draw (-3.5,1) \bdot;
\draw (-4,1) node{$\scriptstyle {\omega_r}$};
\draw (-3.5,.3) node{$\scriptstyle r$}; 
\end{tikzpicture}1_*
\\
&\overset{\eqref{adaptorL}}{\equiv}
1_*\begin{tikzpicture}[baseline = 10pt, scale=0.5, color=\clr]
\draw[-,line width =1pt,color=\cred] (-4.5,0) to (-4.5,2);
\draw (-4.5,-.4) node{$\scriptstyle \red{u_q}$}; 
\draw[-] (-5,1) to[out=45,in=down] (0,2);
\draw[-](-5,.9) to (-5,1);
\draw[-] (-5,.9) to[out=-45,in=135] (1,.2);
\draw[-] (-2.5,.2) to (-2.5,2);
\draw (-2.5,-.4) node{$\scriptstyle {\lambda^{(q)}_1}$};
\draw(-2,.8) node{$\ldots$};
\draw[-] (-1,.2) to (-1,2);
\draw (-1,-.4) node{$\scriptstyle \lambda^{(q)}_{j-1}$};
\draw[-,line width=1.5pt](1,.2) to (1,0);
\draw[-,](1,.2) to (2,2);
\draw (-3.8,.1) node{$\scriptstyle r$}; 
\end{tikzpicture}
1_*
\end{align*}
which factors through $1_{\gamma}$
with 
\[
\gamma= \Big(\lambda^{(1)}, \ldots, (\lambda^{(q-1)},r), (\lambda^{(q)}_1,\ldots, \lambda^{(q)}_{j-1},\lambda^{(q)}_{j}-r, \lambda^{(q)}_{j+1},\ldots), \lambda^{(q+1)}, \ldots, \lambda^{(\ell)} \Big).
\]
Denote by $\text{sort}(\gamma)$ the multipartition obtained by component-wise sorting of the multicomposition $\gamma$. It follows by $ \gamma\rhd \lambda$ that $1_{\text{sort}(\gamma)}\in \Sch^{\rhd\lambda}(m)$. Then Claim (1) follows by noting that the two-sided ideal generated by $1_\gamma$ is the same as $1_{\text{sort}(\gamma)}$ since any (thick) permutation diagram is invertible by \eqref{braid}. 

Next, we prove Claim (2). By assumption, $(A,(\emptyset))$ factors through a morphism 
\[ 
1_*~ 
\begin{tikzpicture}[baseline = 10pt, scale=0.5, color=\clr]
\draw[-,line width =1pt,color=\cred] (-4.5,.2) to (-4.5,2);
\draw (-4.5,-.2) node{$\scriptstyle \red{u_q}$};    
\draw[-] (-3.5,.2) to (-3.5,2);
\draw (-3.5,-.2) node{$\scriptstyle {\lambda^{(q)}_1}$};
\draw(-2.6,.8) node{$\ldots$};
\draw[-] (-2,.2) to (-2,2);
\draw (-2,-.2) node{$\scriptstyle \lambda^{(q)}_{j-1}$};
\draw[-] (0,.2) to [out=135,in=down] (-5,2);
\draw[-,line width=1.5pt](0,.2) to (0,0);
\draw[-,](0,.2) to (1,2);
\draw (-0.5,.3) node{$\scriptstyle r$}; 
\end{tikzpicture}
~1_* 
\in \Hom_{\Sch_{\bfu}}(\lambda,\nu),
\]
where $\nu\in \Lambda_{\text{st}}^\ell(m)$   with $r=a_{(p,i), (q,j)}$.
Then the above morphism factors through $1_\gamma$ as in the proof of Claim~ (1). Hence Claim (2) follows. 

By Claims (1)--(2) (and induction on degrees when applying Claim (1)), we may and shall assume that 
$ (A, P)$  satisfies the following conditions
\begin{itemize}
    \item $P=(\emptyset)$,
    \item $a_{(p,i),(q,j)}=0$ for  all $q>p$.
\end{itemize}
Then we decompose 
$ (A, (\emptyset))$ as the composition of two morphisms as follows. 
Let $b_i=\sum\limits_{j, 1\le q\le \ell-1} a_{(\ell,i),(q,j)}$, for $1\le i\le l(\mu^{(\ell)})$.
First, we define $\tilde \mu\in \Lambda_{\text{st}}^{\ell}(m)$ such that $\tilde\mu^{(p)}=\mu^{(p)}$, for $1\le p\le \ell-2$, and 
\begin{itemize}
    \item $\tilde \mu^{(\ell-1)}=(\mu^{(\ell-1)}, b_1,\ldots, b_{l(\mu^{(\ell)})})$,
    \item $\tilde \mu^{(\ell)}=\mu^{(\ell)}-(b_1,b_2,\ldots, b_{l(\mu^{(\ell)})})$. 
\end{itemize}
Let $c= |\tilde \mu^{(\ell)}|$,
$\hat\mu=(\tilde \mu^{(1)}, \ldots, \tilde \mu^{(\ell-1)}) $ and $\hat \lambda=(\lambda^{(1)}, \ldots, \lambda^{(\ell-1)})$. 
Then we have $ \hat \mu\in \Lambda_{\text{st}}^{\ell-1}(m-c)$ and $\hat \lambda\in \Par^{\ell-1}(m-c)$.
We define
 a matrix $\hat A\in \Mat_{\bar{\hat \mu}, \bar{\hat\lambda}}$ as a submatrix of $A$ by deleting the last $l(\lambda^{(\ell)})$ columns. Moreover, we omit the zero rows corresponding to $b_i=0$ for $1\le i\le l(\mu^{(\ell)})$. 
Then, we have 
\[
(A, (\emptyset))= D\circ \Big( 
(\hat A, \emptyset) 
\begin{tikzpicture}[baseline = 10pt, scale=0.5, color=\clr]
\draw[-,line width =1pt,color=\cred] (0,.2) to (0,2);
\draw(0,-.1) node{$\scriptstyle \red{u_{\ell}}$};
\end{tikzpicture} [A_{\ell,\ell}]\Big),
\]
where $A_{\ell,\ell}$ is the $(\ell,\ell)$th block of $A$ and  
\[D= 1_* 
\begin{tikzpicture}[baseline = 10pt, scale=0.5, color=\clr]
\draw[-,line width =1pt,color=\cred] (0,.2) to (0,2);
\draw(0,-.2) node{$\scriptstyle \red{u_{\ell}}$};
\draw[-,line width=1.2pt](1,.2) to (1,2);
\draw[-,line width=1pt](1,2) to (-4,.2);
\draw(-4,-.2)node{$\scriptstyle b_1$};
\draw[-,line width=1.2pt](2,.2) to (2,2);
\draw[-,line width=1pt](2,2) to (-3,.2);
\draw(-3,-.2)node{$\scriptstyle b_2$};
\draw(3,.8) node{$\ldots$};
\draw(-2,.2) node{$\ldots$};
\draw[-,line width=1.2pt](4,.2) to (4,2);
\draw[-,line width=1pt](4,2) to (-1,.2);
\end{tikzpicture}\in \Hom_{\Sch_{\bfu}}(\tilde\mu,\mu).
 \]
By inductive assumption on $\ell-1$ and the result for $\ell=1$, we have that 
\[
\Big( (\hat A, \emptyset) 
\begin{tikzpicture}[baseline = 10pt, scale=0.5, color=\clr]
\draw[-,line width =1pt,color=\cred] (0,.2) to (0,2);
\draw(0,-.1) node{$\scriptstyle \red{u_{\ell}}$};
\end{tikzpicture} [A_{\ell,\ell}] 
\Big)
+ \Sch^{\rhd\lambda}(m) \in \text{Span}_{\lambda, \tilde \mu}. \]
Now the result follows by noting that 
$D\circ [\mathbf T']$ is equal to some $[\mathbf T]\in \SST(\lambda,\mu)$, for any $\mathbf T'\in \SST(\lambda,\tilde \mu)$, where $\mathbf T$ is obtained from $\mathbf T'$ by substituting 
$(l(\mu^{(\ell-1)})+i)_{\ell-1}$ with $i_\ell$, for $1\le i\le l(\mu^{(\ell)})$.
\end{proof}

Recall each $T\in\SST (\la, \nu)$ gives rise to an SST diagram in $\Hom_{\Sch_{\bfu}}(\la,\nu)$; see \S\ref{subsec:SST}. Recall the anti-involution  $\div$ of $\Sch_{\bfu}$ induced by the symmetry $\div$ in \eqref{anti-autocyc}.

\begin{theorem} [Double SST basis]
 \label{thm:SSTbasis}
Let $m\ge 1$. 
\begin{enumerate}
\item 
For $\mu,\nu\in \Lambda_{\text{st}}^\ell(m)$, $\Hom_{\Sch_{\bfu}}(\mu,\nu)$ has a basis given  by 
\begin{align}  \label{doubleSST Hom}
\bigcup_{\lambda\in \Par^\ell(m)}
\big\{ [\mathbf T]\circ [\mathbf S]^\div \mid \mathbf T\in \SST(\lambda,\nu), \mathbf S\in \SST(\lambda,\mu) \big\}. 
\end{align}
\item 
The algebra $\Sch_\bfu(m)$ in \eqref{Schurm} has a basis given  by 
\begin{align}  \label{doubleSST}
\bigcup_{\overset{\lambda\in \Par^\ell(m)}{\mu,\nu\in \Lambda_{\text{st}}^\ell(m)} }
\big\{ [\mathbf T]\circ [\mathbf S]^\div \mid \mathbf T\in \SST(\lambda,\nu), \mathbf S\in \SST(\lambda,\mu) \big\}. 
\end{align}
\item 
We have an algebra isomorphism $\mathcal G: \Sch_\bfu(m)\cong \Sc_{m,\bfu}$, which matches the bases \eqref{doubleSST} and \eqref{basisofdjm}.    
\end{enumerate}
\end{theorem}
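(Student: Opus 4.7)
The strategy is to leverage the functor $\mathcal G:\Sch_\bfu(m)\to \Sc_{m,\bfu}$ of Theorem \ref{thm:G} together with Proposition \ref{pro:mapsdjm} (which computes $\mathcal G([\mathbf T]\circ[\mathbf S]^\div)=\phi_{\mathbf S\mathbf T}$) and the cellular basis theorem \ref{prop:DJM} for $\Sc_{m,\bfu}$. Linear independence of the double SST diagrams in $\Sch_\bfu(m)$ is then automatic: their $\mathcal G$-images are precisely the cellular basis elements $\phi_{\mathbf S\mathbf T}$, a linearly independent set in $\Sc_{m,\bfu}$, which forces the preimages to be linearly independent in $\Sch_\bfu(m)$.

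For the spanning part of (1), I will induct on the dominance order $\rhd$ on $\Par^\ell(m)$, the base case being $\la$ maximal (so that $\Sch^{\rhd\la}(m)=0$). Given $\phi\in\Hom_{\Sch_{\bfu}}(\mu,\nu)$, Proposition \ref{pro:spanofcycshcurcate} writes $\phi$ as a $\kk$-combination of elements of $\PMat_{\nu,\mu}^\flat$; each such red-strand dotted web diagram naturally factors (by inserting an identity morphism between the splits and the merges) through an intermediate $(1+\ell)$-multicomposition $\tau$ recording the leg thicknesses grouped by red strands. Inserting invertible thick crossings, I may reorder $\tau$ to a multipartition $\la\in\Par^\ell(m)$, obtaining a factorization $\phi=\alpha\circ\beta$ with $\beta:\mu\to\la$ and $\alpha:\la\to\nu$ (any dot packets on legs are absorbed into $\alpha$). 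Then Proposition \ref{pro:spancycdjm} applied to $\alpha\in\Hom_{\Sch_{\bfu}}(\la,\nu)$ yields $\alpha=\sum c_\mathbf T[\mathbf T]+\alpha'$ with $\alpha'\in\Sch^{\rhd\la}(m)$, while its $\div$-conjugate applied to $\beta\in\Hom_{\Sch_{\bfu}}(\mu,\la)$ yields $\beta=\sum d_\mathbf S[\mathbf S]^\div+\beta'$ with $\beta'\in\Sch^{\rhd\la}(m)$. Composing gives $\phi$ as a combination of $[\mathbf T]\circ[\mathbf S]^\div$'s plus a term in $\Sch^{\rhd\la}(m)$; the latter factors through multipartitions $\gamma\rhd\la$ and so by the inductive hypothesis is itself a combination of double SST diagrams.

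Part (2) is the direct sum of (1) over all $\mu,\nu\in\Lambda^\ell_\text{st}(m)$. For part (3), Proposition \ref{pro:mapsdjm} shows that $\mathcal G$ carries the double SST basis of $\Sch_\bfu(m)$ bijectively onto the cellular basis \eqref{basisofdjm} of $\Sc_{m,\bfu}$; since $\mathcal G$ is manifestly an algebra homomorphism, it is an algebra isomorphism.

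The main obstacle will be justifying the factorization step cleanly: chicken foot diagrams in $\PMat_{\nu,\mu}^\flat$ carry dot packets on their legs, and sorting the middle composition to a multipartition $\la$ via thick crossings may shuffle these dots (with lower-degree corrections from \eqref{dotmovecrossing}); one must verify that the resulting pieces $\alpha$ and $\beta$ truly lie in the hypotheses of Proposition \ref{pro:spancycdjm}. The accompanying bookkeeping of $\Sch^{\rhd\la}(m)$-corrections along the dominance induction is the other delicate point.
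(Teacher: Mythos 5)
Your proposal is correct and follows essentially the same route as the paper: spanning comes from Proposition~\ref{pro:spancycdjm} combined with its $\div$-image and a descending induction on the dominance order of $\Par^\ell(m)$ (your explicit factorization of an elementary diagram through a sorted middle object $\la$ just makes precise the paper's implicit use of $\Sch_\bfu(m)=\sum_\la \Sch_\bfu(m)1_\la\Sch_\bfu(m)$), while linear independence and part (3) follow by pushing the spanning set onto the DJM cellular basis via $\mathcal G$ and Proposition~\ref{pro:mapsdjm}. The "main obstacle" you flag is actually a non-issue: Proposition~\ref{pro:spancycdjm} applies to arbitrary morphisms in $\Hom_{\Sch_{\bfu}}(\la,\nu)$, so the sorting crossings and dot packets can simply be absorbed into $\alpha$ and $\beta$ with no need to commute dots past crossings.
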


\begin{proof}
Parts (1) and (2) are clearly equivalent by the definition of \eqref{Schurm}, and we prove (2). Applying $\div$ to the equation in Proposition \ref{pro:spancycdjm}  results a similar equation for $\Hom_{\Sch_{\bfu}}(\mu, \lambda)+\Sch^{\rhd\lambda}(m)$. Combining  the equation in Proposition \ref{pro:spancycdjm} (for $\mu=\nu$) and the \
 equation for $\Hom_{\Sch_{\bfu}}(\mu, \lambda)+\Sch^{\rhd\lambda}(m)$  we obtain that $\Sch_\bfu(m)1_\lambda \Sch_\bfu(m) +\Sch^{\rhd\lambda}(m) $ is spanned by 
 \[ 
 \big\{[\mathbf T]\circ [\mathbf S]^\div  +\Sch^{\rhd\lambda}(m) \mid \mathbf T\in \SST(\lambda,\nu), \mathbf S\in \SST(\lambda,\mu), \mu,\nu\in \Lambda_{\text{st}}^\ell(m) \big\}.
 \]
 Therefore, it follows by induction on $\la$ that $\Sch_\bfu(m)$ is spanned by the elements in \eqref{doubleSST}. By Proposition \ref{pro:mapsdjm}, this spanning set \eqref{doubleSST} is mapped by $\mathcal G: \Sch_\bfu(m)\rightarrow \Sc_{m,\bfu}$ to the cellular basis \eqref{basisofdjm} for $\Sc_{m,\bfu}$ and hence must be linearly independent. This proves that \eqref{doubleSST} forms a basis for $\Sch_\bfu(m)$.
 
(3) Follows since the homomorphism $\mathcal G: \Sch_\bfu(m)\rightarrow \Sc_{m,\bfu}$ matches the 2 bases. 
\end{proof}

\begin{rem}
\label{rem:triangularbasis}
Theorem \ref{thm:SSTbasis} shows that $\Sch_{\bfu}$ is  a weakly triangular category (see \cite{GRS}), or equivalently, the path algebra of $\Sch_{\bfu}$,
\[
A_\bfu
:= \bigoplus_{\lambda,\mu\in \Lambda_{\text{st}}^\ell} \Hom_{\Sch_\bfu}(\lambda,\mu)
= \bigoplus_{\lambda,\mu\in \Lambda_{\text{st}}^\ell} 1_\mu A1_\lambda,
\]
admits a triangular basis (see \cite{BS}). In fact, for any $\mu\in \Lambda_{\text{st}}^\ell(m)$ and $ \lambda\in \Par^\ell(m')$, $\gamma\in \Par^\ell(m'')$, we define  
\begin{enumerate}
    \item $X_{\mu,\lambda}=\emptyset=Y(\lambda,\mu)$ unless $m=m'$,
    \item $H(\lambda,\gamma)=\emptyset$ unless $ \lambda=\gamma$. In the later case, $H(\lambda,\lambda)=\{1_\lambda\}$,
   \item for $m=m'$,  $X_{\mu,\lambda}=\{ [\mathbf T]\mid \mathbf T\in \SST(\lambda,\mu)\}$  and 
   $Y_{\lambda,\mu}=\{ [\mathbf T]^\div\mid \mathbf T\in \SST(\lambda,\mu)\}$.
\end{enumerate}
Since $\SST(\lambda,\mu)=\emptyset $ unless $\mu\unlhd\lambda$, by Theorem \ref{thm:SSTbasis}(1), we see that $(A_{\bfu}, \Lambda_{\text{st}}^\ell, \Par^\ell, \lhd)$ has a triangular basis
\[
\{xhy\mid x\in X(\mu,\lambda), h\in H(\lambda,\gamma), y\in Y(\gamma,\nu), \mu,\nu\in \Lambda_{\text{st}}^\ell, \lambda,\gamma\in \Par^\ell\}.
\]
Moreover, by \cite[Corollary 5.39]{BS} and \cite[Theorem 3.10]{GRS} the category $A_{\bfu}$-lfdmod of locally finite dimensional representations of $A_\bfu$ is a (upper finite) highest weight category since $\bar 1_\lambda A_{\unlhd \lambda} \bar 1_\lambda$ has a basis $\{\bar 1_\lambda\}$, where
$A_{\unlhd \lambda}:= A_{\bfu}/(1_\mu\mid \mu \ntrianglelefteq \lambda)$. The standard module $\Delta(\lambda)$, for $\lambda\in\Par^\ell$, is obtained by applying the standardzation functor $A_{\unlhd \lambda} \bar 1_{\lambda} \otimes_{\bar 1_\lambda A_{\unlhd\lambda }\bar 1_\lambda} ? $ to the one-dimensional $\bar 1_\lambda A_{\unlhd\lambda }\bar 1_\lambda$-module $\kk_\lambda$ and it admits a simple head $L(\lambda)$.
\end{rem}

Via the isomorphism in Theorem \ref{thm:SSTbasis}(3),  we have the following simple corollary of \cite[Corollary 5.39]{BS} and \cite[Theorem 3.10]{GRS}; this result was known in \cite[Corollary~ 6.18]{DJM98} by a very different approach.

\begin{corollary}
The cyclotomic Schur algebra  $\Sc_{m,\bfu}$ is quasi-hereditary. The set $\{L(\lambda) \mid \la \in \Par^\ell(m)\}$ forms a complete set of non-isomorphic simple $\Sc_{m,\bfu}$-modules.  
\end{corollary}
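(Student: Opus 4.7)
The plan is to deduce this corollary directly from the isomorphism $\mathcal G: \Sch_\bfu(m) \xrightarrow{\cong} \Sc_{m,\bfu}$ established in Theorem~\ref{thm:SSTbasis}(3), combined with the triangular basis/highest weight structure on the path algebra $A_\bfu$ discussed in Remark~\ref{rem:triangularbasis}. Quasi-heredity and the classification of simples are Morita-invariant, so it suffices to verify them for $\Sch_\bfu(m)$.

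First I would record that $\Sch_\bfu(m)$ is finite-dimensional: the object set $\Lambda_{\text{st}}^\ell(m)$ is finite, and each Hom-space $\Hom_{\Sch_\bfu}(\mu,\nu)$ with $\mu,\nu \in \Lambda_{\text{st}}^\ell(m)$ is a free $\kk$-module of finite rank by the double SST basis of Theorem~\ref{thm:SSTbasis}(1), since the indexing set $\bigcup_{\la\in\Par^\ell(m)} \SST(\la,\nu) \times \SST(\la,\mu)$ is finite. Next, I would observe that $\Sch_\bfu(m) = e \cdot A_\bfu \cdot e$ where $e = \sum_{\mu \in \Lambda_{\text{st}}^\ell(m)} 1_\mu$, and that by Lemma~\ref{lem:Homzero}-style weight considerations morphisms between objects of different total weight vanish. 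Consequently, $\Sch_\bfu(m)$ inherits the triangular basis $(X_{\mu,\la}, H(\la,\la)=\{1_\la\}, Y_{\la,\nu})_{\la\in\Par^\ell(m);\,\mu,\nu\in \Lambda_{\text{st}}^\ell(m)}$ described in Remark~\ref{rem:triangularbasis}, indexed now by the finite poset $(\Par^\ell(m),\lhd)$.

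The main step is then to conclude quasi-heredity. Applying \cite[Corollary~5.39]{BS} (or equivalently \cite[Theorem~3.10]{GRS}) to this finite-dimensional triangular based algebra with $\overline{1}_\la \Sch_\bfu(m)_{\unlhd\la} \overline{1}_\la = \kk\cdot\overline{1}_\la$ yields that $\Sch_\bfu(m)$ is a quasi-hereditary algebra with weight poset $\Par^\ell(m)$, standard modules $\Delta(\la)$ (obtained via the standardization functor applied to $\kk_\la$) and simple heads $L(\la)$, one for each $\la \in \Par^\ell(m)$. Transporting this structure across the isomorphism $\mathcal G$ gives that $\Sc_{m,\bfu}$ is quasi-hereditary with simple modules $\{L(\la) \mid \la \in \Par^\ell(m)\}$.

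I expect the work to be essentially bookkeeping, not a substantive obstacle: the abstract machinery of triangular bases has already been done in Remark~\ref{rem:triangularbasis}, and the only thing to verify carefully is that restricting from the locally unital upper-finite setting of $A_\bfu$ to its finite-dimensional ``weight-$m$'' block $\Sch_\bfu(m)$ preserves the triangular basis and the condition $\overline{1}_\la A_{\unlhd\la} \overline{1}_\la = \kk$. This follows because the SST-diagrams $[\mathbf T], [\mathbf S]^\div$ indexed by $\la \in \Par^\ell(m)$ lie entirely in $\Sch_\bfu(m)$, and the ideal structure respects the grading by $m$. Finally, note that the fact that every simple has the form $L(\la)$ follows from the general theory of quasi-hereditary algebras since $\{L(\la) \mid \la \in \Par^\ell(m)\}$ exhausts the heads of standard modules, which in turn exhaust the simples up to isomorphism.
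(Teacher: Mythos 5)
Your proposal is correct and follows essentially the same route as the paper: the paper derives the corollary in one line from the isomorphism $\mathcal G: \Sch_\bfu(m)\cong \Sc_{m,\bfu}$ of Theorem~\ref{thm:SSTbasis}(3) together with the triangular basis structure of Remark~\ref{rem:triangularbasis} and the cited results of [BS, Corollary~5.39] and [GRS, Theorem~3.10]. The extra bookkeeping you supply (finite-dimensionality of the weight-$m$ block via the double SST basis, and the check that the triangular basis and the condition $\overline{1}_\la A_{\unlhd\la}\overline{1}_\la=\kk\cdot\overline{1}_\la$ restrict to $\Sch_\bfu(m)$) is exactly what the paper leaves implicit.
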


\subsection{Elementary diagram basis and higher level RSK}

For $\mu,\nu\in \Lambda_{\text{st}}^\ell(m)$, we denote 
\[
\SST_{\nu,\mu}^2 := \bigcup_{\lambda\in \Par^\ell(m)}\{(\mathbf S, \mathbf T) \mid \mathbf S \in \SST(\lambda,\nu), \mathbf T\in \SST(\lambda,\mu) \}.
\]
We present the next two theorems together. 

 \begin{theorem}
 \label{thm:RSK}
 Suppose $\mu,\nu\in \Lambda_{\text{st}}^\ell(m)$. There exists a bijective map  
   \begin{align*}
    \varphi_\ell : \PMat^\flat_{\nu,\mu}& \longrightarrow \SST^{\,2}_{\nu,\mu}. 
\end{align*}
\end{theorem}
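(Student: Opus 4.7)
The plan is to establish the bijection by matching cardinalities on both sides via the basis/spanning results already proven, and then to realize an explicit bijection through a block-wise generalization of the classical RSK correspondence.

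First, I would combine the two preceding results to obtain cardinality equality. By Theorem \ref{thm:SSTbasis}(1), the double SST diagrams $\{[\mathbf{T}]\circ [\mathbf{S}]^\div : (\mathbf{S},\mathbf{T}) \in \SST^{\,2}_{\nu,\mu}\}$ form a $\kk$-basis of $\Hom_{\Sch_\bfu}(\mu,\nu)$, so the rank of this Hom-space as a free $\kk$-module equals $|\SST^{\,2}_{\nu,\mu}|$. By Proposition \ref{pro:spanofcycshcurcate}, the set $\PMat^\flat_{\nu,\mu}$ spans the same Hom-space, giving the inequality $|\PMat^\flat_{\nu,\mu}| \ge |\SST^{\,2}_{\nu,\mu}|$. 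The reverse inequality is what needs real combinatorial work.

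For the reverse inequality, I would construct an explicit injection $\PMat^\flat_{\nu,\mu} \hookrightarrow \SST^{\,2}_{\nu,\mu}$ modelled on classical RSK. The underlying $\ell\times\ell$-block matrix $A=(A_{pq})$ of an element of $\PMat^\flat_{\nu,\mu}$ has prescribed block row and column sums dictated by $\nu$ and $\mu$; applying classical RSK to each block $A_{pq}$ yields pairs of semistandard tableaux of common intermediate shapes $\lambda_{pq}$, which must be amalgamated into an $\ell$-multipartition $\lambda = (\lambda^{(1)}, \ldots, \lambda^{(\ell)})$. The extra partition data $\eta_{(p,i),(q,j)}$, whose length is bounded by $\min\{p,q\} - 1$, encodes the additional freedom needed to place entries $(i,p)$ into components $\mathbf{T}^{(j)}$ with $j<p$ permitted by the semistandardness condition~(3) from \S\ref{subsec-cychecke}; the constraint $l(\eta_{(p,i),(q,j)}) \le \min\{p,q\}-1$ is precisely what makes these counts compatible.

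The main obstacle will be designing the block-wise encoding coherently so that the type vectors $\nu$ and $\mu$ are correctly recovered on both tableaux, and so that semistandardness conditions (1)--(3) hold simultaneously across all components of $\lambda$ — the column-strict condition on each component interacts non-trivially with classical RSK insertions in neighbouring blocks. Alternatively, if constructing $\varphi_\ell$ explicitly proves too intricate (indeed, the abstract flags this as a ``conjectural higher level RSK correspondence''), one may instead prove the reverse cardinality inequality by independently establishing linear independence of $\PMat^\flat_{\nu,\mu}$ (via the polynomial representation of Theorem \ref{thm:polyRep}, Remark \ref{rem:faithfulofF}, and a leading-term analysis as in Theorem \ref{thm:basisASchur}, adapted to the cyclotomic relations); then any basis-to-basis correspondence supplies the bijection. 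In either case, once cardinalities coincide, $\PMat^\flat_{\nu,\mu}$ is automatically upgraded to a $\kk$-basis of $\Hom_{\Sch_\bfu}(\mu,\nu)$.
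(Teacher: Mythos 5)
Your framework is the right one and matches the paper's: Theorem \ref{thm:SSTbasis}(1) gives $\mathrm{rank}\,\Hom_{\Sch_\bfu}(\mu,\nu)=|\SST^{\,2}_{\nu,\mu}|$, Proposition \ref{pro:spanofcycshcurcate} supplies the spanning set, hence $|\PMat^\flat_{\nu,\mu}|\ge|\SST^{\,2}_{\nu,\mu}|$, and everything reduces to the reverse inequality. But neither of your two routes to that inequality goes through as described. Your primary route --- apply classical RSK to each block $A_{pq}$ separately and ``amalgamate'' the resulting shapes --- is not yet a construction: the $\ell^2$ independently produced output shapes cannot in general be assembled into a single $\ell$-multipartition carrying a pair of semistandard tableaux of types $\nu$ and $\mu$, and you give no mechanism for absorbing the partition data $P$ beyond an unverified counting heuristic. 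What the paper actually does (\S\ref{subsec:varphi}) is a recursion on $\ell$: the largest parts $\eta^{ij}_1$ of the partitions in the $(\ell,\ell)$-block are split off into a matrix $\hat A=A_{\ell\ell}-A'_{\ell\ell}$, which is fed to the level-one RSK $\varphi_1$, while the remainder (including $A'_{\ell\ell}$ together with the truncated partitions) is repackaged as an $(\ell-1)\times(\ell-1)$-block datum $(\widetilde A,\widetilde P)$ and fed to $\varphi_{\ell-1}$; injectivity (Lemma \ref{lem:inj}) then follows from the injectivity of $(A,P)\mapsto\big((\widetilde A,\widetilde P),\hat A\big)$ in Lemma \ref{lem:AA}(6). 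This is essentially different from block-wise RSK. Note also that Conjecture \ref{conjRSK} concerns a \emph{combinatorial} realization with compatibility properties, not the existence of the bijection, which the recursion does establish.

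Your fallback --- prove linear independence of $\PMat^\flat_{\nu,\mu}$ directly by a leading-term analysis as in Theorem \ref{thm:basisASchur} --- meets a concrete obstruction: the polynomial representation $\mathcal F$ of Theorem \ref{thm:polyRep} does not factor through the cyclotomic quotient $\Sch_\bfu$ (for instance $1_\lambda$ with $\lambda^{(0)}\neq\emptyset$ acts by the identity on a nonzero module, contradicting \eqref{cyclotomicpoly}), so the affine leading-term computation is simply unavailable here. The only faithful target the paper has for $\Sch_\bfu$ is $\ScatDJM_\bfu$ via $\mathcal G$, where the natural linearly independent set is the DJM cellular basis, i.e.\ the double SST basis --- which is exactly why the paper derives the independence of $\PMat^\flat_{\nu,\mu}$ \emph{from} the injection $\varphi_\ell$ rather than the other way around. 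Without an explicit injection $\PMat^\flat_{\nu,\mu}\hookrightarrow\SST^{\,2}_{\nu,\mu}$, or some other proof of $|\PMat^\flat_{\nu,\mu}|\le|\SST^{\,2}_{\nu,\mu}|$, the argument is incomplete.
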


 \begin{theorem}
 \label{thm:basis2CycSchur}
    Suppose $\mu,\nu\in \Lambda_{\text{st}}^\ell(m)$. Then $\PMat_{\nu,\mu}^\flat$ forms a basis for $\Hom_{\Sch_{\bfu}}(\mu,\nu)$.
\end{theorem}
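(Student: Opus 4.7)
The plan is to deduce linear independence of $\PMat_{\nu,\mu}^\flat$ from the spanning property already proved in Proposition \ref{pro:spanofcycshcurcate} together with the equinumerosity statement supplied by Theorem \ref{thm:RSK} and the freeness of $\Hom_{\Sch_{\bfu}}(\mu,\nu)$ established in Theorem \ref{thm:SSTbasis}(1). So this final statement is essentially a ``cardinality-matching'' corollary of the three previous results, with no new diagrammatic work required.

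First, I will record that Proposition \ref{pro:spanofcycshcurcate} already gives a surjection
\[
\phi:\bigoplus_{(A,P)\in\PMat_{\nu,\mu}^\flat}\kk\cdot(A,P)\;\twoheadrightarrow\;\Hom_{\Sch_{\bfu}}(\mu,\nu).
\]
Next, Theorem \ref{thm:SSTbasis}(1) identifies $\Hom_{\Sch_{\bfu}}(\mu,\nu)$ with the free $\kk$-module on the set $\SST^{\,2}_{\nu,\mu}$ via the double SST basis $\{[\mathbf T]\circ[\mathbf S]^\div\}$. Composing with the bijection $\varphi_\ell$ of Theorem \ref{thm:RSK}, I obtain an isomorphism
\[
\psi:\bigoplus_{(A,P)\in\PMat_{\nu,\mu}^\flat}\kk\cdot(A,P)\;\stackrel{\cong}{\longrightarrow}\;\Hom_{\Sch_{\bfu}}(\mu,\nu)
\]
of free $\kk$-modules of the same (finite) rank $|\PMat_{\nu,\mu}^\flat|=|\SST^{\,2}_{\nu,\mu}|$.

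The final step invokes the standard fact that any surjective $\kk$-module endomorphism of a finitely generated module over a commutative ring is an isomorphism (a classical consequence of Nakayama/Cayley--Hamilton). Applying this to $\psi^{-1}\circ\phi$, which is a surjective endomorphism of a free $\kk$-module of finite rank, I conclude that $\phi$ is an isomorphism, so $\PMat_{\nu,\mu}^\flat$ is a $\kk$-basis of $\Hom_{\Sch_{\bfu}}(\mu,\nu)$.

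The only non-trivial ingredient is Theorem \ref{thm:RSK}, whose proof (a higher level RSK-type correspondence) is the real content; once it is in hand, Theorem \ref{thm:basis2CycSchur} is immediate from the counting argument above. Thus no separate linear-independence calculation inside $\Sch_{\bfu}$ is needed, and the main potential pitfall --- namely that $\kk$ is only assumed commutative, not a field --- is handled by the surjective-implies-bijective principle for finitely generated modules.
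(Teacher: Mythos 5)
Your counting strategy is essentially the paper's, but as written it contains a circularity. You take the full bijectivity of $\varphi_\ell$ (Theorem \ref{thm:RSK}) as an independent input and say that ``once it is in hand, Theorem \ref{thm:basis2CycSchur} is immediate.'' In the paper the logical dependency runs the other way: the proofs of Theorems \ref{thm:RSK} and \ref{thm:basis2CycSchur} are explicitly interdependent, and the \emph{surjectivity} of $\varphi_\ell$ is only obtained as a consequence of the basis theorem. What is available independently is just Lemma \ref{lem:inj}, the injectivity of $\varphi_\ell$ (proved by induction on $\ell$ via the block-matrix reduction of \S\ref{subsec:varphi}), which yields the inequality $|\PMat^\flat_{\nu,\mu}| \leq |\SST^{\,2}_{\nu,\mu}|$. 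So your isomorphism $\psi$, built from the bijection $\varphi_\ell$, is not legitimately available at this stage.

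The repair is small and leaves the rest of your argument intact: with only the inequality $|\PMat^\flat_{\nu,\mu}| \leq |\SST^{\,2}_{\nu,\mu}| = \operatorname{rank}\Hom_{\Sch_{\bfu}}(\mu,\nu)$ from Lemma \ref{lem:inj}, observe that a free $\kk$-module of finite rank $n$ over a nonzero commutative ring cannot be generated by fewer than $n$ elements; hence the spanning set $\PMat^\flat_{\nu,\mu}$ from Proposition \ref{pro:spanofcycshcurcate} has exactly $n$ elements, and the resulting surjection between free modules of equal finite rank is an isomorphism (your surjective-implies-bijective principle applies here and correctly handles general commutative $\kk$). This proves the basis claim, and only then does one conclude that $\varphi_\ell$ is bijective, completing Theorem \ref{thm:RSK} as well. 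Your instinct that no new diagrammatic work is needed is right; the only error is in which half of Theorem \ref{thm:RSK} you are allowed to assume.
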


Our proofs of Theorems \ref{thm:RSK} and \ref{thm:basis2CycSchur} below are interdependent: Theorem~\ref{thm:basis2CycSchur} follows readily from two main results above (i.e., Proposition~ \ref{pro:spanofcycshcurcate} and Theorem \ref{thm:SSTbasis}) together with a much weak fact from Theorem~ \ref{thm:RSK} that $|\PMat^\flat_{\nu,\mu}| \leq |\SST^2_{\nu,\mu}|$. On the other hand, for Theorem~ \ref{thm:RSK} we take the level one map $\varphi_1$ to be the celebrated Robinson-Shensted-Knuth (RSK) correspondence. Both the construction of the map $\varphi_\ell$ and the proof that $\varphi_\ell$ is bijective for general $\ell$ in the next subsection rely essentially on the category $\Sch_{\bfu}$. It will be of considerable interest to construct a version of the bijection $\varphi_\ell$ combinatorially.
 
Denote by
\[
\PMat^\flat:= \bigcup_{m\ge 0} \bigcup_{\mu,\nu\in \Lambda_{\text{st}}^\ell(m)}  \PMat^\flat_{\nu,\mu}
\]
the set of bounded-partition-enhanced $\ell\times \ell$-block $\N$-matrices (cf. \eqref{PM0}), and denote by 
\[
\SST^2:= \bigcup_{m\ge 0} \bigcup_{\mu,\nu\in \Lambda_{\text{st}}^\ell(m)} \SST^2_{\nu,\mu}
\]
the set of pairs of semi-standard tableaux of the same $\ell$-multipartition shape.

\begin{conjecture} [Higher level RSK correspondence]
 \label{conjRSK}
    There exists a combinatorial construction of a (level $\ell$) one-to-one correspondence $\psi_\ell:\PMat^\flat \longrightarrow \SST^{\,2}$ (or more explicitly, bijections $\psi_\ell:\PMat^\flat_{\nu,\mu} \longrightarrow \SST^{\,2}_{\nu,\mu}$, for all $\mu,\nu\in \Lambda_{\text{st}}^\ell(m)$), so that the level one map $\psi_1$ is the RSK correspondence and the level $\ell$ map $\psi_\ell$ extends the level $(\ell-1)$ map $\psi_{\ell-1}$. 
\end{conjecture}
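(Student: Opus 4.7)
The plan is to construct the bijection $\psi_\ell$ inductively on $\ell$, with $\psi_1$ given by the classical RSK correspondence applied to $\N$-matrices (at level one every $\eta_{(1,i),(1,j)}$ must be empty, so $\PMat^\flat$ reduces to this standard setting). For the inductive step, given $(A, P) \in \PMat^\flat_{\nu,\mu}$ at level $\ell$, I would first apply $\psi_{\ell-1}$ to the $(\ell-1)\times(\ell-1)$ sub-block data obtained by restricting $A$ and $P$ to indices $p,q \le \ell-1$, producing a pair of semistandard $(\ell-1)$-multitableaux of a common shape. The remaining matrix data in the $\ell$-th block row and block column, together with the associated partition enhancements, must then be incorporated to build the $\ell$-multipartition output.

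The natural idea is a Schensted-style insertion algorithm: each unit contributed by $a_{(p,i),(q,j)}$ becomes a box labelled $i_p$ placed into row $j$ of the $q$-th component of $\mathbf T$ (and symmetrically for $\mathbf S$), while each part of the partition $\eta_{(p,i),(q,j)}$ triggers a ``promotion'' that transfers a designated number of boxes from the $q$-th component to an earlier one. The length bound $l(\eta_{(p,i),(q,j)}) \le \min\{p,q\}-1$ exactly guarantees that such promotions preserve the semistandard condition, which requires entries $(i,p)$ to appear only in components of index $\ge p$. A promising formalism for making this precise is Fomin's growth-diagram framework: one would decorate an $\ell \times \ell$ grid of cells with multipartitions along its boundary and prescribe local rules encoding both the matrix entry and the partition enhancement at each cell, with the bounded-length constraints translating into admissibility conditions on the local rules. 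Bijectivity would then follow from invertibility of each local rule.

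The main obstacle I anticipate is ensuring that the insertion produces tableaux $\mathbf S$ and $\mathbf T$ of a common shape $\lambda \in \Par^\ell(m)$, and that the map $\psi_\ell$ genuinely extends $\psi_{\ell-1}$ in the sense that inputs supported entirely in the first $\ell-1$ block components yield outputs supported in the first $\ell-1$ components. These two compatibilities are interrelated and are likely to be the litmus test for selecting the correct local rule among competing candidates. As a fallback verification route, one can match a candidate $\psi_\ell$ against the non-constructive bijection $\varphi_\ell$ provided by Theorem \ref{thm:RSK}, using the triangular structure furnished by Theorem \ref{thm:SSTbasis} and Theorem \ref{thm:basis2CycSchur} to reduce the comparison to a shape-by-shape cardinality check; but the real content of the conjecture is the request for an intrinsically combinatorial algorithm, so such a comparison is only a sanity check, not a substitute for the construction.
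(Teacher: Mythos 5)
This statement is a \emph{conjecture} that the paper explicitly leaves open: the authors construct a bijection $\varphi_\ell:\PMat^\flat_{\nu,\mu}\to\SST^2_{\nu,\mu}$ (Theorem \ref{thm:RSK}), but its bijectivity is deduced from the categorical results (Proposition \ref{pro:spanofcycshcurcate} and Theorem \ref{thm:SSTbasis}) rather than combinatorially, and they pose Conjecture \ref{conjRSK} precisely because no intrinsically combinatorial construction is known. Your proposal is an honest research plan, but it is not a proof: no local rules for the growth diagram are actually written down, the claim that the bound $l(\eta_{(p,i),(q,j)})\le\min\{p,q\}-1$ ``exactly guarantees'' that your promotions preserve semistandardness is asserted rather than verified, and the two compatibilities you correctly identify as the crux (common shape for $\mathbf S$ and $\mathbf T$, and the extension property $\psi_\ell|_{\ell-1}=\psi_{\ell-1}$) are left unresolved.

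There is also a concrete defect in the inductive step as you state it. You propose to ``first apply $\psi_{\ell-1}$ to the $(\ell-1)\times(\ell-1)$ sub-block data obtained by restricting $A$ and $P$ to indices $p,q\le\ell-1$.'' That restriction is not an element of $\PMat^\flat_{\tilde\nu,\tilde\mu}$ for any multicompositions of a common size unless the $\ell$-th block row and column of $A$ vanish: the row and column sums of the truncated matrix no longer match $\nu^{(p)}$ and $\mu^{(q)}$. The paper's own reduction in \S\ref{subsec:varphi} handles exactly this by splitting off the first parts $\eta^{ij}_1$ of the $(\ell,\ell)$-block partitions into a separate level-one matrix $\hat A$, and absorbing the remaining mass of the $\ell$-th block row and column into an enlarged $(\ell-1)$-st component via the auxiliary compositions $(b_1,\ldots,b_{h_\ell})$ and $(c_1,\ldots,c_{t_\ell})$ (diagrammatically, steps (S1)--(S5) of \S\ref{subsec:ex}). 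If you want a combinatorial $\psi_\ell$, the more promising starting point is to mimic that specific reduction and then prove surjectivity of the reassembly step by a purely combinatorial argument (e.g.\ an explicit inverse insertion), rather than a cell-by-cell growth diagram whose local rules are yet to be found. As you note yourself, comparing cardinalities against $\varphi_\ell$ only re-proves existence of \emph{some} bijection, which is already Theorem \ref{thm:RSK}; it does not produce the combinatorial algorithm the conjecture asks for.
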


\begin{rem}
    The above conjecture has been proved by Eriksson \cite{Er26}, and Theorem~ \ref{thm:RSK} follows from this. We have kept our original proof of Theorem~ \ref{thm:RSK} below to make the paper self-contained (but skipped a long subsection of examples explaining our proof). 
\end{rem}
\subsection{Proof of Theorems \ref{thm:RSK} and \ref{thm:basis2CycSchur} }
\label{subsec:varphi}

We construct a map $\varphi_\ell : \PMat^\flat_{\nu,\mu} \rightarrow \SST^2_{\nu,\mu}$ inductively on $\ell$.

Suppose first $\ell=1$. We define the bijective map
\begin{equation}
    \label{bijectionlevelone}
 \varphi_1: \PMat_{\nu,\mu}^\flat=\Mat_{\nu,\mu} \longrightarrow \SST_{\nu,\mu}^2   
\end{equation}
for $\mu,\nu \in \Lambda_{\text{st}}(m)$ to be the well-known RSK correspondence; any bijection will do too. 

Suppose $\ell>1$ and we have defined the map $\varphi_{\ell-1}$. To define the map $\varphi_\ell$, we need some more notation. 

Given any $\mu,\nu \in \Lambda^\ell_{\text{st}}(m)$, let $h_p=l(\nu^{(p)})$ and $t_p= l(\mu^{(p)})$, for  $1\le p\le \ell$.
Recall $\big(A=(A_{pq}), P=(P_{pq}) \big)\in \Par\Mat_{\nu,\mu}$ consists of a pair of $\ell\times \ell$ block matrices under the bijective map \eqref{bijPM}, where 
\begin{align}  \label{APeta}
A_{pq}=(a_{(p,i), (q,j)})_{1\le i\le h_p, 1\le j\le t_q}, \quad P_{pq}=(\eta_{(p,i), (q,j)})_{1\le i\le h_p, 1\le j\le t_q}.
\end{align}
We further introduce a shorthand notation 
\begin{align}  \label{Peta}
P_{\ell\ell}=(\eta^{ij})_{1\le i\le h_\ell, 1\le j\le t_\ell}, 
\end{align}
where each $\eta^{ij} =(\eta^{ij}_1, \eta^{ij}_2, \eta^{ij}_3, \ldots)$ is a partition. We denote by $\overline{\eta}^{ij} =(\eta^{ij}_2, \eta^{ij}_3, \ldots)$ the remainder of $\eta_{ij}$ after the removal of its largest part. 
We define a new pair of matrices 
\begin{align}  \label{eq:Aprime}
A'_{\ell\ell}=\left( \eta^{ij}_1\right)_{1\le i\le h_\ell, 1\le j\le t_\ell }, 
\quad \text{and } 
P'_{\ell\ell} =\left( \bar\eta^{ij}\right)_{1\le i\le h_\ell, 1\le j\le t_\ell },
\end{align}
of the same size as for $A_{\ell\ell}$ and $P_{\ell\ell}.$

Replacing the $(\ell,\ell)$th block $(A_{\ell\ell},P_{\ell\ell})$ in the pair $(A,P)$ by a new $(\ell,\ell)$th block $(A'_{\ell\ell},P'_{\ell\ell})$ gives us a new pair $(\widetilde A, \widetilde P)$. Combining the $(\ell-1)$st and $\ell$th row (and respectively, column) blocks of $(\widetilde A, \widetilde P)$ as a single $(\ell-1)$st row (and respectively, column) block, we shall view $(\widetilde A =(\widetilde A_{pq}), \widetilde P =(\widetilde P_{pq}) )$ as a pair of $(\ell-1)\times (\ell-1)$-block matrices from now on. 

In this way, we have $(\widetilde A, \widetilde P) \in \PMat_{\tilde{\nu},\tilde{\mu}}$, for suitable $\tilde\mu,\tilde \nu \in \Lambda^{\ell-1}_{\text{st}}$, by noting that $l(\overline{\eta}^{ij}) = l(\eta^{ij})-1 \leq \ell -2$ if $\eta^{ij} \neq \emptyset$. 
More explicitly, $\tilde\mu,\tilde \nu$ are obtained from $\mu =(\mu^{(1)}, \ldots, \mu^{(\ell)}), \nu =(\nu^{(1)}, \ldots, \nu^{(\ell)})$ as follows:
\begin{itemize}
    \item 
    $\tilde\nu =(\nu^{(1)}, \ldots, \nu^{(\ell-2)}, \tilde\nu^{(\ell-1)}) \in \Lambda^{\ell-1}_{\text{st}}$
    with
     \[
     \tilde \nu^{(\ell-1)}=( \nu^{(\ell-1)}, b_1, \ldots, b_{h_\ell})\in \Lambda_{\text{st}}, 
     \text{ where }
     b_i=\sum_{q\le \ell-1, t} a_{(\ell,i),(q,t)} +\sum_{1\le j\le h_\ell} \eta^{ij}_1, \forall i; 
     \] 
 \item $\tilde\mu =(\mu^{(1)}, \ldots, \mu^{(\ell-2)}, \tilde\mu^{(\ell-1)})\in \Lambda^{\ell-1}_{\text{st}}$
  with
   \[
   \tilde \mu^{(\ell-1)}=( \mu^{(\ell-1)}, c_1, \ldots, c_{t_\ell})\in \Lambda_{\text{st}}, 
   \text{ where }
   c_j=\sum_{p\le \ell-1, t} a_{(p,t),(\ell,j)} +\sum_{1\le i\le t_\ell} \eta^{ij}_1, \forall j. 
   \]
\end{itemize}
It is understood that the $b_k$'s which are equal to $0$ are removed to view $\tilde \nu^{(\ell-1)}$ as a strict composition, and similar remarks apply in other similar situations.

Define
\[
\hat \nu=\nu^{(\ell)}-(b_1, b_2,\ldots, b_{h_\ell}), 
 \text{ and }   \hat \mu = \mu^{(\ell)}-(c_1, c_2,\ldots, c_{t_\ell}).
 \]
(Here we may view  $\hat \mu $ and $\hat \nu$ as strict compositions by omitting zero parts.) Denote by $|B|$ the sum of all the entries of a matrix $B$. The following is clear from the definitions above. 

 \begin{lemma}
 \label{lem:AA}
     We have 
\begin{enumerate}
\item $d:=|\hat \nu|=|\hat \mu|= |A_{\ell\ell}| - |A_{\ell\ell}'|$,
\item  $\hat{A} :=A_{\ell\ell} -A'_{\ell\ell} \in \Mat_{\hat \nu, \hat \mu}$,
\item $|\tilde \nu|=|\tilde \mu|=m-d$,
\item If $(A,P)\in \Par\Mat_{\nu,\mu}^{\,\flat}$, then 
$(\widetilde A,\widetilde P)\in \Par\Mat_{\tilde\nu,\tilde\mu}^{\,\flat}$,
\item the assignment $(A,P) \mapsto \big((\widetilde A,\widetilde P), \hat{A} \big)$ is injective.
\end{enumerate}
\end{lemma}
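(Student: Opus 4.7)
The plan is to verify the six claims by direct bookkeeping on row- and column-sums, exploiting the two-step nature of the construction: one first modifies only the $(\ell,\ell)$-block, replacing $(A_{\ell\ell},P_{\ell\ell})$ by $(A'_{\ell\ell},P'_{\ell\ell})$ as in \eqref{eq:Aprime}, and then fuses the last two row- and column-strips into a single $(\ell-1)$-st strip. All six claims will then reduce to elementary index-chasing.

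For (1), (2), (3) and (5), the key identity is that each entry $\eta^{ij}_1$ of $A'_{\ell\ell}$ is exactly the part removed from $\eta^{ij}$. Unwinding the definition of $b_i$ and using the row-sum identity $\nu^{(\ell)}_i=\sum_{q,t}a_{(\ell,i),(q,t)}$, one computes
\[
|\hat\nu|=|\nu^{(\ell)}|-\sum_i b_i
=\sum_{i,t}a_{(\ell,i),(\ell,t)}-\sum_{i,j}\eta^{ij}_1
=|A_{\ell\ell}|-|A'_{\ell\ell}|,
\]
and a symmetric calculation gives the same expression for $|\hat\mu|$, proving (1). The $i$-th row sum of $A_{\ell\ell}-A'_{\ell\ell}$ will telescope to $\hat\nu_i$ by the same cancellation (and analogously for the columns), yielding (2) and (5). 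Claim (3) is then immediate from $|\tilde\nu|=|\nu|-|\hat\nu|=m-d$.

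For (4), I will check the two constraints defining $\PMat^\flat$ in \eqref{PM0}: the length bound $l(\eta)\le\min(p,q)-1$ and the implicit bound that each part is at most the corresponding entry of $A$. Blocks $P_{pq}$ with $(p,q)\neq(\ell,\ell)$ are inherited unchanged, but after fusion they may lie in a merged block whose new index-minimum only weakens the effective length bound---the worst case is that both old indices are $\ge\ell-1$ and get merged into new index $\ell-1$, so $\min(p,q)-1\le\ell-2$ is the controlling inequality, which is satisfied by the original hypothesis. For the new $(\ell-1,\ell-1)$-block entries coming from $P'_{\ell\ell}$, the length bound reads $l(\bar\eta^{ij})=l(\eta^{ij})-1\le(\ell-1)-1$, as needed; the part-size bound holds because $\eta^{ij}$ is a partition, so every part of $\bar\eta^{ij}$ is at most $\eta^{ij}_1=(A'_{\ell\ell})_{ij}$.

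For (6), injectivity will follow from explicit reconstructibility: once $\mu$ and $\nu$ are fixed, the block dimensions are determined, so the merged $(\ell-1,\ell-1)$-block of $(\widetilde A,\widetilde P)$ splits canonically back into the four sub-blocks $A_{\ell-1,\ell-1}$, $A_{\ell-1,\ell}$, $A_{\ell,\ell-1}$, $A'_{\ell\ell}$ (and analogously for $P$); then $A_{\ell\ell}=A'_{\ell\ell}+\hat A$, and each $\eta^{ij}$ is recovered uniquely as the partition whose first part is $(A'_{\ell\ell})_{ij}$ and whose remaining parts are read off from $(P'_{\ell\ell})_{ij}$. The main obstacle throughout is simply the careful index bookkeeping across the two stages of the construction and keeping track of which original block an entry of $\widetilde A$ came from; once the notation is set up, each of the six verifications is routine.
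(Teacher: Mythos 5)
Your verification is correct and is exactly what the paper intends: the authors give no proof beyond declaring the lemma ``clear from the definitions,'' and your row/column-sum bookkeeping for (1)--(3) and (5), the length and part-size checks for (4) (including the key point that passing from $\eta^{ij}$ to $\bar\eta^{ij}$ drops the length by one, which is what makes the strengthened bound $\le \ell-2$ in the merged $(\ell-1,\ell-1)$-block hold), and the reconstruction argument for (6) supply precisely the omitted routine details. One point worth making explicit in (6): your claim that ``the block dimensions are determined'' once $\mu,\nu$ are fixed requires retaining the zero parts among the $b_i$, $c_j$, $\hat\nu_i$, $\hat\mu_j$ as positional markers (as the paper itself does in its worked example, where $\tilde\nu=(2,6,6,9,0,7)$ keeps the $0$ and $\widetilde A$ keeps its all-zero row); if one literally deletes them to form strict compositions, the positions of the vanishing rows and columns are lost and injectivity can genuinely fail, so the reconstruction should be phrased as recovering $A$ row-by-row via $\nu^{(\ell)}_i=b_i+\hat\nu_i$ with the zero rows and columns reinstated in their original positions.
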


By the inductive assumption on the maps $\varphi_{\ell-1}$ and $\varphi_1$, we have
\begin{equation}
\label{mapell-1andone}
    \varphi_{\ell-1}(\tilde A,\tilde P) = (\tilde{\mathbf S}, \tilde{\mathbf T}) \quad \text{ and } \quad  \varphi_1 (\hat{A}) =(\hat{\mathbf S}, \hat {\mathbf T}),
\end{equation}
where 
$(\tilde {\mathbf S},\tilde {\mathbf T})\in \SST(\tilde\lambda, \tilde \nu)\times \SST(\tilde\lambda,\tilde\mu)$ and 
$(\hat {\mathbf S}, \hat{\mathbf T})\in \SST(\hat\lambda, \hat\nu )\times \SST(\hat \lambda, \hat\mu )$, for some 
$\tilde \lambda\in \Par^{\ell-1}( m-d )$ and $\hat \lambda\in \Par(d)$. 

Let $\lambda\in \Par^\ell(m)$ be such that 
$\lambda^{(i)}= \tilde\lambda^{(i)}$ for $1\le i\le \ell-1$ and 
$\lambda^{(\ell)}= \hat \lambda$. Define 
$(\mathbf S,\mathbf T)\in \SST(\lambda,\nu)\times \SST(\lambda,\mu)$ whose  $\ell$ components are specified as follows:
\begin{itemize}
    \item
    $\mathbf S^{(\ell)}$ and $\mathbf T^{(\ell)}$ are copies of 
    $\hat{\mathbf S} $ and $\hat {\mathbf T}$ (by replacing $i$ with $i_\ell$ for all $i$), respectively; 
    \item 
    $\mathbf S^{(p)}$ is obtained from 
    $\tilde{\mathbf S}^{(p)}$ by substituting 
    $(h_{\ell-1}+i)_{\ell-1}$ with $i_{\ell} $ for $1\le i\le h_\ell $, $1\le p\le \ell-1$;
   \item  
   similarly,   $\mathbf T^{(q)}$ is obtained from 
    $\tilde{\mathbf T}^{(q)}$ by substituting 
    $(t_{\ell-1}+j)_{\ell-1}$ with $j_\ell$ for
    $1\le j\le t_\ell $, $1\le q\le \ell-1$. 
\end{itemize}
This allows us to define the map $\varphi_\ell : \Par\Mat^\flat_{\nu,\mu} \rightarrow \SST^2_{\nu,\mu}, 
(A,P)  \mapsto (\mathbf S, \mathbf T),$ as stated in Theorem \ref{thm:RSK}. 

The map $\varphi_\ell$ as defined satisfies the following key property.

\begin{lemma}  \label{lem:inj}
The map $\varphi_\ell: \Par\Mat^{\,\flat}_{\nu,\mu} \rightarrow \SST^{\,2}_{\nu,\mu}$ defined above is injective.
\end{lemma}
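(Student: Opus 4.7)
The plan is to prove injectivity by induction on $\ell$, using Lemma \ref{lem:AA}(6) as the key reduction tool: since the assignment $(A,P) \mapsto \big((\widetilde A, \widetilde P), \hat{A}\big)$ is already known to be injective, it suffices to recover both $(\widetilde A, \widetilde P)$ and $\hat{A}$ from $(\mathbf{S}, \mathbf{T}) := \varphi_\ell(A,P)$.

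For the base case $\ell = 1$, the map $\varphi_1$ is the RSK correspondence, which is a bijection. For the inductive step, assume $\varphi_{\ell-1}$ is injective. To recover $\hat{A}$, I first isolate the components $\mathbf{S}^{(\ell)}$ and $\mathbf{T}^{(\ell)}$; by construction these are precisely $\hat{\mathbf{S}}$ and $\hat{\mathbf{T}}$ under the relabeling $i \mapsto i_\ell$, which is invertible since $\mu,\nu$ are fixed. Applying $\varphi_1^{-1}$ then yields $\hat{A}$. From the row and column sum vectors of $\hat{A}$, one reads off $\hat\nu$ and $\hat\mu$, and consequently the vectors $(b_1,\dots,b_{h_\ell})$ and $(c_1,\dots,c_{t_\ell})$, whence $\tilde\nu$ and $\tilde\mu$ are explicitly determined by $\mu,\nu$.

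To recover $(\widetilde A, \widetilde P)$, I undo the substitution $(h_{\ell-1}+i)_{\ell-1} \leftrightarrow i_\ell$ in each $\mathbf{S}^{(p)}$ for $p \le \ell-1$, and the analogous substitution $(t_{\ell-1}+j)_{\ell-1} \leftrightarrow j_\ell$ in each $\mathbf{T}^{(q)}$ for $q \le \ell-1$, to reconstruct $(\widetilde{\mathbf{S}}, \widetilde{\mathbf{T}}) \in \SST^{\,2}_{\tilde\nu,\tilde\mu}$. By Lemma \ref{lem:AA}(4), $(\widetilde A, \widetilde P) \in \PMat^\flat_{\tilde\nu,\tilde\mu}$, so the inductive hypothesis applies and $(\widetilde A, \widetilde P)$ is uniquely determined. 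A final appeal to Lemma \ref{lem:AA}(6) then recovers $(A,P)$, completing the induction.

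The main subtlety, though ultimately routine, is bookkeeping around the relabelings: one must check that an entry $i_\ell$ appearing in $\mathbf{S}^{(p)}$ for $p \le \ell-1$ can be unambiguously distinguished from a genuine $i_\ell$-entry (as there is none, since by construction all $\ell$-subscripted entries in $\mathbf{S}^{(p)}$ for $p \le \ell-1$ came exclusively from the substitution), and that zero $b_i$'s or $c_j$'s, which are suppressed when viewing $\tilde\nu^{(\ell-1)}$ and $\tilde\mu^{(\ell-1)}$ as strict compositions, correspond precisely to absent rows on both sides of the substitution and so introduce no ambiguity. Once this bookkeeping is settled, the inductive step is essentially a direct unwinding of the definitions.
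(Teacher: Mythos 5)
Your proposal is correct and follows essentially the same route as the paper: both arguments reduce injectivity of $\varphi_\ell$ to (i) the fact that $(\tilde{\mathbf S},\tilde{\mathbf T})$ and $(\hat{\mathbf S},\hat{\mathbf T})$ are recoverable from $(\mathbf S,\mathbf T)$ by undoing the relabelings, (ii) the injectivity of $\varphi_{\ell-1}$ (inductive hypothesis) and of $\varphi_1$, and (iii) Lemma \ref{lem:AA}(6). The paper phrases this as ``two preimages with equal images must coincide'' while you phrase it as explicit reconstruction of the preimage, but these are the same argument; your extra bookkeeping remarks on the relabelings and the suppressed zero parts are consistent with (and slightly more explicit than) what the paper states.
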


\begin{proof}
    Suppose that $(A,P), (B,Q)\in \Par\Mat_{\nu,\mu}^\flat$.
     Let $( \tilde{\mathbf S},\tilde {\mathbf T})$ and $(\hat {\mathbf S}, \hat {\mathbf T})$ be given in \eqref{mapell-1andone} for $(A,P)$.
     Similarly, associated to $(B,Q)$, we have $(\widetilde B, \widetilde Q), B_{\ell\ell}, B'_{\ell\ell}, \hat{B}$, and we denote
     \[  
     \varphi_{\ell-1}(\tilde B,\tilde Q) = (\tilde{\mathbf S}_1, \tilde{\mathbf T}_1) 
     \quad \text{ and } \quad
     \varphi_1 (\hat{B}) =(\hat{\mathbf S}_1, \hat {\mathbf T}_1).
     \]
     
  Suppose   $ \varphi_\ell (A,P)=(\mathbf S, \mathbf T)=\varphi_\ell(B,Q)=(\mathbf S_1, \mathbf T_1)$.
By the construction of $(\mathbf S,\mathbf T)$ and $(\mathbf S_1, \mathbf T_1) $  above, we have 
\[ 
(\tilde{\mathbf S},\tilde{\mathbf T})= (\tilde{\mathbf S}_1,\tilde{\mathbf T}_1) 
\quad \text{ and } \quad 
(\hat{\mathbf S},\hat {\mathbf T})= (\hat{\mathbf S}_1, \hat {\mathbf T}_1).
\]
Recall $\varphi_1$ is injective. By the inductive assumption that $\varphi_{\ell-1}$ is injective, we have 
  \[ 
  (\tilde A,\tilde P)= (\tilde B,\tilde Q) 
  \quad \text{and } \quad
  \hat{A} = \hat{B}.
  \]
This implies by Lemma~\ref{lem:AA}(6) that $(A,P)= (B,Q)$ and hence $\varphi_\ell$ is injective.
\end{proof}

Now we are ready to prove Theorems \ref{thm:RSK} and \ref{thm:basis2CycSchur}.

\begin{proof}  [Proof of Theorems \ref{thm:RSK} and \ref{thm:basis2CycSchur}]

By Theorem \ref{thm:SSTbasis}, the $\kk$-module $\Hom_{\Sch_{\bfu}}(\mu,\nu)$ is free with basis $\SST^2_{\nu,\mu}$, and by Proposition~ \ref{pro:spanofcycshcurcate}, $\Hom_{\Sch_{\bfu}}(\mu,\nu)$ admits a second spanning set $\PMat^\flat_{\nu,\mu}$  of cardinality $|\PMat^\flat_{\nu,\mu}| \leq |\SST^2_{\nu,\mu}|$ by Lemma \ref{lem:inj}. It follows by a standard argument that $|\PMat^\flat_{\nu,\mu}| = |\SST^2_{\nu,\mu}|$ and  $\PMat_{\nu,\mu}^\flat$ is a basis for $\Hom_{\Sch_{\bfu}}(\mu,\nu)$. Therefore, the map $\varphi_\ell$ is bijective by Lemma~\ref{lem:inj} again. 
\end{proof}

\subsection{Proof of the basis theorem for $\W_{\bfu}$}
\label{subsec:proofbasis}

Now we give a proof of \cite[Theorem~ 4.3]{SWweb},  
the basis theorem for the cyclotomic web category $\W_\bfu$, for $\bfu \in \kk^\ell$. 

\begin{proof} 
 There is an obvious functor $\mathcal F: \AW\rightarrow \Sch_{\bfu}$ of $\kk$-linear categories, which sends an object $\lambda\in \Lambda_{\text{st}}$ to $(\emptyset^\ell,\lambda)\in \Lambda_{\text{st}}^{1+\ell}$ and sends any morphism $g$ to 
 $\begin{tikzpicture}[baseline = 10pt, scale=0.5, color=\clr]
\draw[-,line width =1pt,color=\cred] (-4.5,.2) to (-4.5,2);
\draw (-4.5,-.2) node{$\scriptstyle \red{u_1}$};    
\draw[-,line width =1pt,color=\cred] (-3.5,.2) to (-3.5,2);
\draw (-3.5,-.2) node{$\scriptstyle \red{u_2}$};
\draw(-2.6,.8) node{$\ldots$};
\draw[-,line width =1pt,color=\cred] (-2,.2) to (-2,2);
\draw (-2,-.2) node{$\scriptstyle \red{u_{\ell}}$};
\draw (-1.2,1) node{$g$};
\end{tikzpicture}
$. 
Thanks to Lemma~ \ref{lem:cycpolyvanish}, $\mathcal F$ factors through $\W_\bfu$ so we have a functor $\mathcal F: \W_\bfu \rightarrow \Sch_{\bfu}$. Since $\mathcal F$ sends the spanning set $\PMat^\ell_{\nu,\mu}$ of $\Hom_{\W_{\bfu}}(\mu,\nu)$ onto the elementary diagram basis  $\PMat_{(\emptyset^{\ell},\nu),(\emptyset^\ell,\mu)}^\flat$ of the corresponding Hom-space for $\Sch_\bfu$, the spanning set $\PMat^\ell_{\nu,\mu}$ must be a basis. 
\end{proof}

The following can be read off from the above proof.

\begin{corollary}
\label{cor:fullsubcategory}
$\W_\bfu$ is a full subcategory of the cyclotomic Schur category $\Sch_\bfu$ with objects $\{(\emptyset^\ell,\mu)\mid \mu\in \Lambda_{\text{st}}\}$.
\end{corollary}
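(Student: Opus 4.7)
The plan is to extract the statement directly from the proof of \cite[Theorem~4.3]{SW1} just given. The functor $\mathcal F: \W_\bfu \to \Sch_\bfu$ produced there sends an object $\lambda \in \Lambda_{\text{st}}$ to $(\emptyset^\ell,\lambda) \in \Lambda_{\text{st}}^{1+\ell}$ and, on morphisms, inserts $\ell$ parallel vertical red identity strands labelled $u_1,\ldots,u_\ell$ to the left of a dotted web diagram. Essential surjectivity onto the subcategory whose object set is $\{(\emptyset^\ell,\mu)\mid \mu\in\Lambda_{\text{st}}\}$ is immediate by definition, so the content of the corollary is the full faithfulness of $\mathcal F$ on these objects.

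To check this, I would invoke Theorem~\ref{thm:basis2CycSchur}, which gives $\PMat^\flat_{(\emptyset^\ell,\nu),(\emptyset^\ell,\mu)}$ as a $\kk$-basis of $\Hom_{\Sch_\bfu}((\emptyset^\ell,\mu),(\emptyset^\ell,\nu))$. For objects of the form $(\emptyset^\ell,\mu)$, all row- and column-blocks of any pair $(A,P) \in \PMat^\flat$ except the $(\ell,\ell)$-block are forced to be zero, and the bounded-partition constraint $l(\eta_{(p,i),(q,j)}) \le \min\{p,q\}-1$ collapses to $l(\eta_{(\ell,i),(\ell,j)}) \le \ell-1$. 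This is exactly the indexing data describing the elementary diagram basis $\PMat^\ell_{\nu,\mu}$ of $\Hom_{\W_\bfu}(\mu,\nu)$ from \cite{SW1}.

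Since $\mathcal F$ sends each elementary diagram indexed by $(A,P)\in\PMat^\ell_{\nu,\mu}$ to the diagram obtained by adjoining $\ell$ red identity strands at the far left, which is precisely the elementary diagram indexed by the same data in the $(\ell,\ell)$-block of $\PMat^\flat_{(\emptyset^\ell,\nu),(\emptyset^\ell,\mu)}$, the functor carries one basis bijectively onto the other. Consequently $\mathcal F$ induces a $\kk$-linear isomorphism on every relevant Hom-space, yielding the full subcategory embedding. The only thing requiring care is the identification of the two indexing sets under the forgetful/identification maps already set up in Section~\ref{sec:basis-cycschur}; this is a bookkeeping exercise with no real obstacle beyond what is implicit in the proof of the basis theorem.
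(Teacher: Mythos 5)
Your proposal is correct and follows essentially the same route as the paper: the paper proves \cite[Theorem~4.3]{SW1} in \S\ref{subsec:proofbasis} precisely by observing that $\mathcal F$ carries the spanning set $\PMat^\ell_{\nu,\mu}$ of $\Hom_{\W_\bfu}(\mu,\nu)$ bijectively onto the elementary diagram basis $\PMat^\flat_{(\emptyset^\ell,\nu),(\emptyset^\ell,\mu)}$ from Theorem~\ref{thm:basis2CycSchur}, and the corollary is then read off from the resulting isomorphism on Hom-spaces. Your identification of the surviving $(\ell,\ell)$-block data and the bound $l(\eta_{(\ell,i),(\ell,j)})\le \ell-1$ is exactly the bookkeeping the paper leaves implicit, and the absence of red/black crossings means no ornamentation ambiguity arises.
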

\section{Affine Schur category over $\C$}
\label{sec:C}

Throughout this section, we set $\kk$ to be any field of characteristic zero, e.g., $\C$. Under such an assumption on $\kk$, we greatly simplify the presentation of  $\ASch$ and its cyclotomic quotients. This also helps us to understand better the origins of more involved general relations in the original presentation of  $\ASch$ (for general $\kk$, e.g., $\kk =\Z$). 

\subsection{The affine web category over $\C$}
 \label{affineweboverC}

We give a reduced version of the affine web category, following \cite[Section 5]{SWweb}.

\begin{definition} 
\label{def-affine-webC}
The affine web category $\AWC$ is the strict $\kk$-linear monoidal category generated by objects $a\in \mathbb Z_{\ge 1}$. The morphisms are generated by 
\begin{align}
\label{merge+split+crossing C}
\begin{tikzpicture}[baseline = -.5mm,scale=.8,color=\clr]
	\draw[-,line width=1pt] (0.28,-.3) to (0.08,0.04);
	\draw[-,line width=1pt] (-0.12,-.3) to (0.08,0.04);
	\draw[-,line width=1.5pt] (0.08,.4) to (0.08,0);
        \node at (-0.22,-.4) {$\scriptstyle a$};
        \node at (0.35,-.4) {$\scriptstyle b$};\node at (0,.55){$\scriptstyle a+b$};\end{tikzpicture} 
&:(a,b) \rightarrow (a+b),&
\begin{tikzpicture}[baseline = -.5mm,scale=.8,color=\clr]
	\draw[-,line width=1.5pt] (0.08,-.3) to (0.08,0.04);
	\draw[-,line width=1pt] (0.28,.4) to (0.08,0);
	\draw[-,line width=1pt] (-0.12,.4) to (0.08,0);
        \node at (-0.22,.5) {$\scriptstyle a$};
        \node at (0.36,.5) {$\scriptstyle b$};
        \node at (0.1,-.45){$\scriptstyle a+b$};
\end{tikzpicture}
&:(a+b)\rightarrow (a,b),&
\begin{tikzpicture}[baseline=-.5mm,scale=.8,color=\clr]
	\draw[-,line width=1pt] (-0.3,-.3) to (.3,.4);
	\draw[-,line width=1pt] (0.3,-.3) to (-.3,.4);
        \node at (0.3,-.42) {$\scriptstyle b$};
        \node at (-0.3,-.42) {$\scriptstyle a$};
         \node at (0.3,.55) {$\scriptstyle a$};
        \node at (-0.3,.55) {$\scriptstyle b$};
\end{tikzpicture}
&:(a,b) \rightarrow (b,a),
\end{align}
and 
\begin{equation}
\label{dotgeneratorC}
 \begin{tikzpicture}[baseline = 3pt, scale=0.5, color=\clr]
\draw[-,thick] (0,0) to[out=up, in=down] (0,1.4);
\draw(0,0.6) \bdot;
\node at (0,-.3) {$\scriptstyle 1$};
\end{tikzpicture} \;,
\end{equation}
subject to the following relations \eqref{webassoc C}--\eqref{dotmovecrossingC}, for $a,b,c,d \in \Z_{\ge 1}$ with $d-a=c-b$:
\begin{align}
\label{webassoc C}
\begin{tikzpicture}[baseline = 0,color=\clr]
	\draw[-,thick] (0.35,-.3) to (0.08,0.14);
	\draw[-,thick] (0.1,-.3) to (-0.04,-0.06);
	\draw[-,line width=1pt] (0.085,.14) to (-0.035,-0.06);
	\draw[-,thick] (-0.2,-.3) to (0.07,0.14);
	\draw[-,line width=1.5pt] (0.08,.45) to (0.08,.1);
        \node at (0.45,-.41) {$\scriptstyle c$};
        \node at (0.07,-.4) {$\scriptstyle b$};
        \node at (-0.28,-.41) {$\scriptstyle a$};
\end{tikzpicture}
&=
\begin{tikzpicture}[baseline = 0, color=\clr]
	\draw[-,thick] (0.36,-.3) to (0.09,0.14);
	\draw[-,thick] (0.06,-.3) to (0.2,-.05);
	\draw[-,line width=1pt] (0.07,.14) to (0.19,-.06);
	\draw[-,thick] (-0.19,-.3) to (0.08,0.14);
	\draw[-,line width=1.5pt] (0.08,.45) to (0.08,.1);
        \node at (0.45,-.41) {$\scriptstyle c$};
        \node at (0.07,-.4) {$\scriptstyle b$};
        \node at (-0.28,-.41) {$\scriptstyle a$};
\end{tikzpicture}\:,
\qquad
\begin{tikzpicture}[baseline = -1mm, color=\clr]
	\draw[-,thick] (0.35,.3) to (0.08,-0.14);
	\draw[-,thick] (0.1,.3) to (-0.04,0.06);
	\draw[-,line width=1pt] (0.085,-.14) to (-0.035,0.06);
	\draw[-,thick] (-0.2,.3) to (0.07,-0.14);
	\draw[-,line width=1.5pt] (0.08,-.45) to (0.08,-.1);
        \node at (0.45,.4) {$\scriptstyle c$};
        \node at (0.07,.42) {$\scriptstyle b$};
        \node at (-0.28,.4) {$\scriptstyle a$};
\end{tikzpicture}
=\begin{tikzpicture}[baseline = -1mm, color=\clr]
	\draw[-,thick] (0.36,.3) to (0.09,-0.14);
	\draw[-,thick] (0.06,.3) to (0.2,.05);
	\draw[-,line width=1pt] (0.07,-.14) to (0.19,.06);
	\draw[-,thick] (-0.19,.3) to (0.08,-0.14);
	\draw[-,line width=1.5pt] (0.08,-.45) to (0.08,-.1);
        \node at (0.45,.4) {$\scriptstyle c$};
        \node at (0.07,.42) {$\scriptstyle b$};
        \node at (-0.28,.4) {$\scriptstyle a$};
\end{tikzpicture}\:,
\\
\label{mergesplitC}
\begin{tikzpicture}[baseline = 7.5pt,scale=.8, color=\clr]
	\draw[-,line width=1pt] (0,0) to (.275,.3) to (.275,.7) to (0,1);
	\draw[-,line width=1pt] (.6,0) to (.315,.3) to (.315,.7) to (.6,1);
        \node at (0,1.13) {$\scriptstyle b$};
        \node at (0.63,1.13) {$\scriptstyle d$};
        \node at (0,-.1) {$\scriptstyle a$};
        \node at (0.63,-.1) {$\scriptstyle c$};
\end{tikzpicture}
&=
\sum_{\substack{0 \leq s \leq \min(a,b)\\0 \leq t \leq \min(c,d)\\t-s=d-a}}
\begin{tikzpicture}[baseline = 7.5pt,scale=.8, color=\clr]
	\draw[-,thick] (0.58,0) to (0.58,.2) to (.02,.8) to (.02,1);
	\draw[-,thick] (0.02,0) to (0.02,.2) to (.58,.8) to (.58,1);
	\draw[-,thin] (0,0) to (0,1);
	\draw[-,line width=1pt] (0.61,0) to (0.61,1);
        \node at (0,1.13) {$\scriptstyle b$};
        \node at (0.6,1.13) {$\scriptstyle d$};
        \node at (0,-.1) {$\scriptstyle a$};
        \node at (0.6,-.1) {$\scriptstyle c$};
        \node at (-0.1,.5) {$\scriptstyle s$};
        \node at (0.77,.5) {$\scriptstyle t$};
\end{tikzpicture},
\\
\label{splitmergeC}
\begin{tikzpicture}[baseline = -1mm,scale=.8,color=\clr]
	\draw[-,line width=1.5pt] (0.08,-.8) to (0.08,-.5);
	\draw[-,line width=1.5pt] (0.08,.3) to (0.08,.6);
\draw[-,thick] (0.1,-.51) to [out=45,in=-45] (0.1,.31);
\draw[-,thick] (0.06,-.51) to [out=135,in=-135] (0.06,.31);
        \node at (-.33,-.05) {$\scriptstyle a$};
        \node at (.45,-.05) {$\scriptstyle b$};
\end{tikzpicture}
&= 
\binom{a+b}{a}\:\:
\begin{tikzpicture}[baseline = -1mm,scale=.5,color=\clr]
	\draw[-,line width=1.5pt] (0.08,-.8) to (0.08,.6);
        \node at (.08,-1.2) {$\scriptstyle a+b$};
\end{tikzpicture},
\\
\label{dotmovecrossingC}
\begin{tikzpicture}[baseline = 7.5pt, scale=0.35, color=\clr]
\draw[-,thick] (0,-.2) to  (1,2.2);
\draw[-,thick] (1,-.2) to  (0,2.2);
\draw(0.2,1.6)\bdot;
\node at (0, -.5) {$\scriptstyle 1$};
\node at (1, -.5) {$\scriptstyle 1$};
\end{tikzpicture} 
&=
\begin{tikzpicture}[baseline = 7.5pt, scale=0.35, color=\clr]
\draw[-, thick] (0,-.2) to (1,2.2);
\draw[-,thick] (1,-.2) to(0,2.2);
\draw(.8,0.3)\bdot;
\node at (0, -.5) {$\scriptstyle 1$};
\node at (1, -.5) {$\scriptstyle 1$};
\end{tikzpicture} 
+
\begin{tikzpicture}[baseline = 7.5pt, scale=0.35, color=\clr]
\draw[-, thick] (0,.5) to (0,2.2);
\draw[-, thick] (0,-.2) to (0,.5);
\draw[-, thick]   (1,1.8) to (1,2.2); 
\draw[-, thick] (1,1.8) to (1,-.2); 
\node at (0, -.5) {$\scriptstyle 1$};
\node at (1, -.5) {$\scriptstyle 1$};
\end{tikzpicture},
\qquad
\begin{tikzpicture}[baseline = 7.5pt, scale=0.35, color=\clr]
\draw[-, thick] (0,-.2) to (1,2.2);
\draw[-,thick] (1,-.2) to(0,2.2);
\draw(.2,0.2)\bdot;
\node at (0, -.5) {$\scriptstyle 1$};
\node at (1, -.5) {$\scriptstyle 1$};
\end{tikzpicture}
= 
\begin{tikzpicture}[baseline = 7.5pt, scale=0.35, color=\clr]
\draw[-,thick] (0,-.2) to  (1,2.2);
\draw[-,thick] (1,-.2) to  (0,2.2);
\draw(0.8,1.6)\bdot;
\node at (0, -.5) {$\scriptstyle 1$};
\node at (1, -.5) {$\scriptstyle 1$};
\end{tikzpicture}
+
\begin{tikzpicture}[baseline = 7.5pt, scale=0.35, color=\clr]
\draw[-,thick] (0,-.3) to (0,2.2);
\draw[-,thick] (1,2.2) to (1,-.3); 
\node at (0, -.6) {$\scriptstyle 1$};
\node at (1, -.6) {$\scriptstyle 1$};
\end{tikzpicture}.
\end{align}
\end{definition}

Following \cite[Section 5]{SWweb}, the (redundant generating) morphisms $\wkdota$ in $\AWC$, for $1\le r \le a$, can be defined as:
\begin{equation}
\label{newgendot}
\wkdotr :=\frac{1}{r!}
\begin{tikzpicture}[baseline = 1.5mm, scale=.5, color=\clr]
\draw[-, line width=1.5pt] (0.5,2) to (0.5,2.5);
\draw[-, line width=1.5pt] (0.5,0) to (0.5,-.4);
\draw[-,thin]  (0.5,2) to[out=left,in=up] (-.5,1)
to[out=down,in=left] (0.5,0);
\draw[-,thin]  (0.5,2) to[out=left,in=up] (0,1)
 to[out=down,in=left] (0.5,0);      
\draw[-,thin] (0.5,0)to[out=right,in=down] (1.5,1)
to[out=up,in=right] (0.5,2);
\draw[-,thin] (0.5,0)to[out=right,in=down] (1,1) to[out=up,in=right] (0.5,2);
\node at (0.5,.7){$\scriptstyle \cdots$};
\draw (-0.5,1) \bdot; 
\draw (0,1) \bdot; 
\node at (0.5,-.6) {$\scriptstyle r$};
\draw (1,1) \bdot;
\draw (1.5,1) \bdot; 
\node at (-.22,0) {$\scriptstyle 1$};
\node at (1.2,0) {$\scriptstyle 1$};
\node at (.3,0.3) {$\scriptstyle 1$};
\node at (.7,0.3) {$\scriptstyle 1$};
\end{tikzpicture}, 
\qquad 
\wkdota :=
\begin{tikzpicture}[baseline = -1mm,scale=.8,color=\clr]
\draw[-,line width=1.5pt] (0.08,-.3) to (0.08,0);
\draw[-,line width=1.5pt] (0.08,.8) to (0.08,1.1);
\draw[-,thick] (0.1,-.01) to [out=45,in=-45] (0.1,.81);
\draw[-,thick] (0.06,-.01) to [out=135,in=-135] (0.06,.81);
\draw(-.1,0.5) \bdot;
\draw(-.4,0.5)node {$\scriptstyle \omega_r$};
\node at (-.3,.15) {$\scriptstyle r$};
\node at (.6,.15) {$\scriptstyle a-r$};
\end{tikzpicture}  .
\end{equation}

\begin{proposition} [{\cite[Theorem 5.2]{SWweb}}]
 \label{prop:AWisom}
    The monoidal categories $\AWC$ and $\AW$ are isomorphic by identifying generating objects and generating morphisms in the same symbols.
 \end{proposition}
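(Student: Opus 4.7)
The plan is to construct explicit functors $F:\AWC\to\AW$ and $G:\AW\to\AWC$ and prove they are mutually inverse. The functor $F$ is the ``obvious'' one: it sends each generating object $a$ to $a$, each generating morphism in \eqref{merge+split+crossing C} to the same diagram in \eqref{merge+split+crossing}, and the single dot $\bdot$ of \eqref{dotgeneratorC} to the morphism $\wdotgen$ of \eqref{dotgenerator} (which in $\AW$ also equals the simplified notation $\dotgen$). That $F$ is well-defined amounts to checking that the relations \eqref{webassoc C}--\eqref{dotmovecrossingC} of $\AWC$ hold in $\AW$; the first three families are literally relations of $\AW$, while the dot-on-crossing relation \eqref{dotmovecrossingC} for thin strands is the $a=b=1$ specialization of \eqref{dotmovecrossing}. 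Conversely, the functor $G$ matches objects and matches every generating morphism of $\AW$ in \eqref{merge+split+crossing}--\eqref{mergesplit} and \eqref{webassoc}--\eqref{splitmerge} to the same diagram in $\AWC$, and sends each thick dot generator $\wkdota$ (for $1\le r\le a$) to the balloon expression on the right-hand side of \eqref{newgendot}.

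The main obstacle is proving that $G$ respects the more elaborate relations of $\AW$, namely \eqref{dotmovecrossing}, \eqref{dotmovesplitss} and \eqref{intergralballon}, where the thick dots $\wkdota$ appear. I would handle these by a systematic ``reduction to thin strands'': every thick dot is by definition \eqref{newgendot} a suitable composition of splits, merges, and thin dots, so each relation to be checked can be rewritten entirely in terms of the thin-strand generators and then verified using only the relations of $\AWC$. The key technical inputs are (i) the balloon evaluation \eqref{splitmergeC}, which makes precise how thick splits/merges explode into iterated thin splits/merges with a binomial factor, and (ii) the thin dot-on-crossing relation \eqref{dotmovecrossingC}, which will be used repeatedly to slide thin dots past thin crossings while generating the ``smoothing'' correction terms that, after summing, produce exactly the right-hand sides of \eqref{dotmovecrossing} and \eqref{dotmovesplitss}. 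I expect the verification of \eqref{dotmovecrossing} for general $a,b$ to be the most intricate step, since one must control the combinatorial bookkeeping of many thin-strand smoothing terms and identify the resulting sum with the $t!$-weighted crossing diagrams on the right-hand side.

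A cleaner alternative that avoids all combinatorial grinding is to use the polynomial representation of Theorem~\ref{thm:polyRep}, which over a field of characteristic zero is faithful on $\AWC$ (by exactly the same leading-term argument used in the proof of Theorem~\ref{thm:basisASchur} and Remark~\ref{rem:faithfulofF}, restricted to the subcategory generated by the thin-strand morphisms). Using \eqref{newgendot} to express $\wkdota$ as an operator on symmetric polynomials, one checks directly that it acts by multiplication by $e_r(x_1,\ldots,x_a)$ times a combinatorial constant, matching the action prescribed for $\wkdota$ in $\AW$; hence every relation in $\AW$ involving thick dots is automatically satisfied in $\AWC$ after passing to the faithful polynomial representation. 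This reduces the problem to checking the $a=b=1$ cases, which are already built into $\AWC$.

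Finally, I would verify that $F\circ G=\mathrm{id}_{\AW}$ and $G\circ F=\mathrm{id}_{\AWC}$. On the generators of $\AWC$ the composite $G\circ F$ is the identity, so this direction is immediate. On the generators of $\AW$ the only nontrivial check is that $F(G(\wkdota))=\wkdota$; but $G$ sends $\wkdota$ to the right-hand side of \eqref{newgendot} in $\AWC$, and $F$ maps that balloon expression back into $\AW$, where the relation \eqref{intergralballon} (for $r=a$, and its straightforward generalization for $r<a$ obtained by combining \eqref{intergralballon} with \eqref{splitmerge}) identifies it with $\wkdota$. This completes the argument.
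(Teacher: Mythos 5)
This proposition is imported verbatim from the prequel (it is cited as \cite[Theorem 5.2]{SW1}), so the present paper contains no proof to compare against; what follows measures your plan against the strategy that the prequel (and the surrounding text of Section~\ref{sec:C}) clearly follows. Your skeleton is the right one and essentially the standard one: the functor $F:\AWC\to\AW$ is well defined because \eqref{webassoc C}--\eqref{splitmergeC} are literally among \eqref{webassoc}--\eqref{splitmerge} and \eqref{dotmovecrossingC} is the $a=b=1$ case of \eqref{dotmovecrossing}; the inverse $G$ is forced by \eqref{newgendot} (note this is where characteristic zero enters, via the $1/r!$); the relation \eqref{intergralballon} becomes a tautology under $G$; and the check $F\circ G=\mathrm{id}$, $G\circ F=\mathrm{id}$ on generators is exactly as you describe. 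The entire mathematical content therefore sits in deriving \eqref{dotmovecrossing} and \eqref{dotmovesplitss} for general thicknesses inside $\AWC$ from the thin-strand relations, which is precisely the role of \cite[Lemmas 5.3--5.4]{SW1}; your ``reduction to thin strands'' route is the correct one, though your proposal stops short of carrying out the combinatorics that constitute the actual proof.

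The genuine gap is in your ``cleaner alternative.'' You propose to bypass the combinatorics by invoking faithfulness of the polynomial representation on $\AWC$, ``by exactly the same leading-term argument'' as in Theorem~\ref{thm:basisASchur}. But the leading-term argument only yields linear independence of the images of a candidate spanning set; faithfulness additionally requires knowing that this set \emph{spans} each Hom-space of $\AWC$, and establishing that spanning (sliding thick dots down legs, past crossings, splits and merges) uses precisely the relations \eqref{dotmovecrossing} and \eqref{dotmovesplitss} in $\AWC$ that you are trying to prove. The argument is circular as stated. Note that the paper itself uses the logic in the opposite direction: in the proof of Theorem~\ref{thm:polyRep} the relations, already established in $\AWC$ by \cite[Lemmas 5.3--5.4]{SW1}, are used to verify that the linear maps satisfy them, not the other way around. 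So the shortcut does not relieve you of the diagrammatic derivation; it presupposes it. Your primary route stands, but the key lemmas must actually be proved.
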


\subsection{Affine Schur category over $\C$}

The presentation for affine Schur category $\ASch$ given in Definition \ref{def-affine-Schur} will be simplified to a reduced presentation below over $\C$.

\begin{definition}   \label{def-affine-SchurC}
The affine Schur category  $\ASchC$ is a strict $\kk$-monoidal category with generating objects $a\in \Z_{\ge 1}$ and $u\in \kk$. We denote the object $a\in \N$ by $\stra$ and denote the object $u \in \kk$ by a red strand labeled by $u$ as $\stru .$
The morphisms are generated by 
\begin{equation} 
\label{generator-affschur C}
  \begin{tikzpicture}[baseline = 10pt, scale=0.4, color=\clr] 
                \draw[-,line width=1pt] (-2,0.2) to (-1.5,1);
	\draw[-,line width=1pt ] (-1,0.2) to (-1.5,1);
	\draw[-,line width=1.5pt] (-1.5,1) to (-1.5,1.8);
                \draw (-1,0) node{$\scriptstyle {b}$};
                \draw (-2,0) node{$\scriptstyle {a}$};
                \draw (-1.5,2.2) node{$\scriptstyle {a+b}$};
                \end{tikzpicture}, 
                \quad 
\begin{tikzpicture}[baseline = 10pt, scale=0.4, color=\clr] 
                \draw[-,line width=1pt] (-2,0.2) to (-1,1.8);
	\draw[-,line width=1pt ] (-1,0.2) to (-2,1.8); 
                \draw (-1,0) node{$\scriptstyle {b}$};
                \draw (-2,0) node{$\scriptstyle {a}$};
                \draw (-1,2.2) node{$\scriptstyle {a}$};
                 \draw (-2,2.2) node{$\scriptstyle {b}$};
                \end{tikzpicture},  
                \quad 
  \begin{tikzpicture}[baseline = 10pt, scale=0.4, color=\clr]
                \draw[-,line width=1pt] (-2,1.8) to (-1.5,1);
	\draw[-,line width=1pt ] (-1,1.8) to (-1.5,1);
	\draw[-,line width=1.5pt] (-1.5,1) to (-1.5,0.2);
                \draw (-1,2.2) node{$\scriptstyle {b}$};
                \draw (-2,2.2) node{$\scriptstyle {a}$};
                \draw (-1.5,0) node{$\scriptstyle {a+b}$};
                \end{tikzpicture}, 
                \quad  \;
 ~
 \begin{tikzpicture}[baseline = 10pt, scale=0.5, color=\clr]
\draw[-,thick] (-1.5,1.9) to (-1.5,0.2);      \draw (-1.5,1) \bdot;
\draw (-1.5,-.1) node{$\scriptstyle {1}$};
\end{tikzpicture}~, 
 \end{equation}
and
\begin{equation}
\label{rightleftcrossinggen C}
\begin{tikzpicture}[baseline = 10pt, scale=.8, color=\clr]
 \draw[-,line width=1.2pt] (-0.3,.3) to (.3,1);
\draw[-,line width=1pt,color=\cred] (0.3,.3) to (-.3,1);
\draw(-.3,0.2) node{$\scriptstyle a$};
\draw (.3, 0.2) node{$\scriptstyle \red{u}$};
\end{tikzpicture} \;\; (\text{traverse-up}), 
                \qquad
\begin{tikzpicture}[baseline = 10pt, scale=.8, color=\clr]
 \draw[-,line width=1pt,color=\cred] (-0.3,.3) to (.3,1);
\draw[-,line width=1.2pt] (0.3,.3) to (-.3,1);
\draw(-.3,0.2) node{$\scriptstyle \red{u}$};
\draw (.3, 0.2) node{$\scriptstyle a$};
\end{tikzpicture} \;\; (\text{traverse-down}),    
\end{equation}
subject to relations \eqref{webassoc C}--\eqref{dotmovecrossingC} for the generators in \eqref{generator-affschur C} and additional relations \eqref{chamvanish1 C}--\eqref{adaptrermovecross C} involving also the traverse-ups and traverse-downs \eqref{rightleftcrossinggen C}:
\begin{align}
\label{chamvanish1 C}
\begin{tikzpicture}[baseline = 10pt, scale=0.35, color=\clr]
\draw[-,line width=1pt,color=\cred](0,-.22)to (0,2.5);
\draw (0,-.5) node{$\scriptstyle \red{u}$};
\draw[-,thick](-.5,-.1) to[out=45, in=down] (.5,1.4);
\draw[-,thick](.5,1.4) to[out=up, in=300] (-.5,2.4);
\draw (-.5,-0.5) node{$\scriptstyle {1}$};
\end{tikzpicture}
&=~
\begin{tikzpicture}[baseline = 10pt, scale=0.35, color=\clr]
\draw[-,line width=1pt,color=\cred](-.5,-.25)to (-.5,2.15);
\draw (-.5,-.6) node{$\scriptstyle \red{u}$};
\draw[-,line width=1pt] (-1.3,2.15) to (-1.3,-0.2);      
\draw (-1.3,1) \bdot;
\draw (-1.3,-.6) node{$\scriptstyle {1}$};
\end{tikzpicture}
- u  
\begin{tikzpicture}[baseline = 10pt, scale=0.35, color=\clr]
\draw[-,line width=1pt,color=\cred](-.5,-.25)to (-.5,2.15);
\draw (-.5,-.6) node{$\scriptstyle \red{u}$};
\draw[-,line width=1pt] (-1.3,2.15) to (-1.3,-0.2);      
\draw (-1.3,-.6) node{$\scriptstyle {1}$};
\end{tikzpicture}
~, \qquad \qquad  
\begin{tikzpicture}[baseline = 10pt, scale=0.35, color=\clr]
\draw[-,line width=1pt,color=\cred](0,-.6)to (0,2.2);
\draw (0,-.9) node{$\scriptstyle \red{u}$};
\draw[-,thick](.5,-.5) to[out=135, in=down] (-.5,1);
\draw[-,thick](-.5,1) to[out=up, in=270] (.5,2.2);
\draw (.5,-0.9) node{$\scriptstyle {1}$};
\end{tikzpicture}
~=~ 
\begin{tikzpicture}[baseline = 10pt, scale=0.35, color=\clr]
\draw[-,line width=1pt,color=\cred](-1.8,-.25)to (-1.8,2.15);
\draw (-1.8,-.6) node{$\scriptstyle \red{u}$};
\draw[-,line width=1pt] (-1,2.15) to (-1,-0.25);          
\draw (-1,1) \bdot;
\draw (-1,-.6) node{$\scriptstyle {1}$};
\end{tikzpicture}
- u
\begin{tikzpicture}[baseline = 10pt, scale=0.35, color=\clr]
\draw[-,line width=1pt,color=\cred](-1.8,-.25)to (-1.8,2.15);
\draw (-1.8,-.6) node{$\scriptstyle \red{u}$};
\draw[-,line width=1pt] (-1,2.15) to (-1,-0.25);          
\draw (-1,-.6) node{$\scriptstyle {1}$};
\end{tikzpicture},
\\
 \label{adaptrermovecross C}
\begin{tikzpicture}[baseline = 7.5pt, scale=0.35, color=\clr]
\draw[-,line width=1pt] (0,-.35) to (1.5,2);
\draw[-,line width=1pt](0,2) to (1.5,-.35);
\draw[-,line width=1pt,color=\cred](.8,2) to[out=down,in=90] (0.2,1) to [out=down,in=up] (.8,-.4);
\draw (.8,-.8) node{$\scriptstyle \red{u}$};
\node at (0, -.7) {$\scriptstyle 1$};
\node at (1.5, -.7) {$\scriptstyle 1$};
\end{tikzpicture}
&=~ 
\begin{tikzpicture}[baseline = 7.5pt, scale=0.35, color=\clr]
\draw[-,line width=1pt] (0,-.35) to (2.1,2);
\draw[-,line width=1pt](0,2) to   (2,-.35);
\draw[-,line width=1pt,color=\cred](.8,2) to
[out=down,in=up] (1.5,1) to [out=down,in=up] (.8,-.4);
\draw (.8,-.7) node{$\scriptstyle \red{u}$};
\node at (0, -.7) {$\scriptstyle 1$};
\node at (2, -.7) {$\scriptstyle 1$};
\end{tikzpicture}
~+~ 
\begin{tikzpicture}[baseline = 7.5pt, scale=0.35, color=\clr]
\draw[-, line width=1pt] (0,-.2) to (0,1.8);
\draw[-, line width=1pt,color=\cred] (.5,1.8) to (.5,-.2);
\draw (.5,-.6) node{$\scriptstyle \red{u}$};
\draw[-, line width=1pt] (1,1.8) to (1,-.2); 
\node at (0, -.6) {$\scriptstyle 1$};
\node at (1, -.6) {$\scriptstyle 1$};
\end{tikzpicture}, 
\\
\label{adaptermovemerge C}
\begin{tikzpicture}[anchorbase,scale=.7,color=\clr]
	\draw[-,line width=1pt,color=\cred] (0.4,0) to (-0.6,1);
  \draw (-.6,1.2) node{$\scriptstyle \red{u}$};
	\draw[-,thick] (0.08,0) to (0.08,1);
	\draw[-,thick] (0.1,0) to (0.1,.6) to (.5,1);
        \node at (0.6,1.13) {$\scriptstyle a$};
        \node at (0.1,1.16) {$\scriptstyle b$};
\end{tikzpicture}
& =\!\!
\begin{tikzpicture}[anchorbase,scale=.7,color=\clr]
	\draw[-,line width=1pt,color=\cred] (0.7,0) to (-0.3,1);
  \draw (-.3,1.16) node{$\scriptstyle \red{u}$};
	\draw[-,thick] (0.08,0) to (0.08,1);
	\draw[-,thick] (0.1,0) to (0.1,.2) to (.9,1);
        \node at (0.9,1.13) {$\scriptstyle a$};
        \node at (0.1,1.16) {$\scriptstyle b$};
\end{tikzpicture},\:
\qquad
\begin{tikzpicture}[anchorbase,scale=.7,color=\clr]
	\draw[-,line width=1pt,color=\cred] (-0.4,0) to (0.6,1);
  \draw (.6,1.16) node{$\scriptstyle \red{u}$};
	\draw[-,thick] (-0.08,0) to (-0.08,1);
	\draw[-,thick] (-0.1,0) to (-0.1,.6) to (-.5,1);
         \node at (-0.1,1.16) {$\scriptstyle b$};
        \node at (-0.6,1.13) {$\scriptstyle a$};
\end{tikzpicture}
\!\!=\!\!
\begin{tikzpicture}[anchorbase,scale=.7,color=\clr]
	\draw[-,line width=1pt,color=\cred] (-0.7,0) to (0.3,1);
 \draw (.3,1.16) node{$\scriptstyle \red{u}$};
	\draw[-,thick] (-0.08,0) to (-0.08,1);
	\draw[-,thick] (-0.1,0) to (-0.1,.2) to (-.9,1);
         \node at (-0.1,1.16) {$\scriptstyle b$};
        \node at (-0.95,1.13) {$\scriptstyle a$};
\end{tikzpicture},
\\
\notag
\begin{tikzpicture}[baseline=-3.3mm,scale=.7,color=\clr]
\draw[-,line width=1pt,color=\cred] (0.4,.2) to (-0.6,-.8);
\draw (-.6,-.91) node{$\scriptstyle \red{u}$};
\draw[-,thick] (0.08,0.2) to (0.08,-.75);
\draw[-,thick] (0.1,0.2) to (0.1,-.4) to (.5,-.8);
\node at (0.6,-.91) {$\scriptstyle c$};
\node at (0.07,-.9) {$\scriptstyle b$};
\end{tikzpicture}
&
=\!\!
\begin{tikzpicture}[baseline=-3.3mm,scale=.7,color=\clr]
\draw[-,line width=1pt,color=\cred] (0.7,0.2) to (-0.3,-.8);
\draw (-.3,-.91) node{$\scriptstyle \red{u}$};
\draw[-,thick] (0.08,0.2) to (0.08,-.75);
\draw[-,thick] (0.1,0.2) to (0.1,0) to (.9,-.8);
\node at (1,-.91) {$\scriptstyle c$};
\node at (0.1,-.9) {$\scriptstyle b$};
\end{tikzpicture}
,\:
\qquad
\begin{tikzpicture}[baseline=-3.3mm,scale=.7,color=\clr]
\draw[-,line width=1pt,color=\cred] (-0.4,0.2) to (0.6,-.8);
\draw (.6,-.91) node{$\scriptstyle \red{u}$};
\draw[-,thick] (-0.08,0.2) to (-0.08,-.75);
\draw[-,thick] (-0.1,0.2) to (-0.1,-.4) to (-.5,-.8);
\node at (-0.1,-.9) {$\scriptstyle b$};
\node at (-0.6,-.91) {$\scriptstyle a$};
\end{tikzpicture}
\!\!=\!\!
\begin{tikzpicture}[baseline=-3.3mm,scale=.7,color=\clr]
\draw[-,line width=1pt,color=\cred] (-0.7,0.2) to (0.3,-.8);
\draw (.3,-.91) node{$\scriptstyle \red{u}$};
\draw[-,thick] (-0.08,0.2) to (-0.08,-.75);
\draw[-,thick] (-0.1,0.2) to (-0.1,0) to (-.9,-.8);
\node at (-0.1,-.9) {$\scriptstyle b$};
\node at (-0.95,-.91) {$\scriptstyle a$};
\end{tikzpicture}.
\end{align}
\end{definition}

\begin{theorem}
\label{thm:ASchurisom}
   The monoidal categories $\ASchC$ and $\ASch$ are isomorphic.
\end{theorem}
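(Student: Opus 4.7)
The plan is to construct mutually inverse $\kk$-linear monoidal functors $\Phi : \ASchC \to \ASch$ and $\Psi : \ASch \to \ASchC$, both the identity on generating objects $a \in \Z_{\ge 1}$ and $u \in \kk$. For the web part I will leverage the already established isomorphism $\AWC \cong \AW$ from Proposition~\ref{prop:AWisom}.

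First I would define $\Phi$ by matching each generator of $\ASchC$ in \eqref{generator-affschur C}--\eqref{rightleftcrossinggen C} to its namesake in $\ASch$, sending the single dot $\dotgen$ to $\wdotgen$. The web relations \eqref{webassoc C}--\eqref{dotmovecrossingC} hold in $\ASch$ by Proposition~\ref{prop:AWisom}, and \eqref{adaptermovemerge C} is literally \eqref{adaptermovemerge}. Because $g_1(u) = \omega_1 - u\cdot\mathrm{id}$ by the definition \eqref{def-gau}, the relations in \eqref{chamvanish1 C} are precisely the $r=1$ instances of \eqref{adaptorR}--\eqref{adaptorL}, and \eqref{adaptrermovecross C} is the $a=b=1$ instance of \eqref{adaptrermovecross}. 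Hence $\Phi$ is well-defined.

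To construct $\Psi : \ASch \to \ASchC$ I will send every shared generator to itself and use the balloon formula \eqref{newgendot} to send the thick dotted generators $\wkdota$ (not primitive in $\ASchC$) to the corresponding expressions in $\ASchC$. The affine web relations \eqref{webassoc}--\eqref{intergralballon} hold in $\ASchC$ by Proposition~\ref{prop:AWisom}, and \eqref{adaptermovemerge} matches \eqref{adaptermovemerge C}. The core task is to verify the full relations \eqref{adaptorR}, \eqref{adaptorL}, and \eqref{adaptrermovecross} with arbitrary thicknesses and arbitrary $r$ inside $\ASchC$. I will isolate this into two intermediate lemmas, which are exactly Lemma~\ref{adpterLRa=1} and Lemma~\ref{chraterzero} invoked in the proof of Theorem~\ref{thm:polyRep}. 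First, \eqref{adaptrermovecross} for arbitrary thicknesses $a,b$ is obtained by resolving the thick crossing via \eqref{crossgen} into a sum of thin crossings sandwiched between splits and merges, pulling the red strand freely through these splits and merges via \eqref{adaptermovemerge C}, and applying the thin case \eqref{adaptrermovecross C}. Second, \eqref{adaptorR}--\eqref{adaptorL} at thickness $a=1$ and general $r$ are obtained by induction on $r$, iterating \eqref{chamvanish1 C} together with \eqref{dotmovecrossingC} to reproduce exactly the coefficients $\prod_{0 \le j \le i-1}(u+j)$ of $g_r(u)$. The general-$a$ case of \eqref{adaptorR}--\eqref{adaptorL} then follows by realizing $\wkdota$ via the balloon \eqref{newgendot}, isotoping the red strand around the balloon, applying the $a=1$ lemma leg by leg, and reassembling with \eqref{adaptermovemerge C} and \eqref{splitmergeC}.

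To finish, both compositions $\Phi\circ\Psi$ and $\Psi\circ\Phi$ fix every generator. The only nontrivial check is that $\Phi(\Psi(\wkdota)) = \wkdota$ in $\ASch$, which is the balloon identity \eqref{newgendot} holding inside $\ASch$ itself and is a direct consequence of \eqref{intergralballon} together with \eqref{splitmerge}. I expect the inductive passage from $r=1$ to general $r$ in the $a=1$ case of \eqref{adaptorR}--\eqref{adaptorL} to be the most delicate step, as the coefficients $\prod_{0 \le j \le i-1}(u+j)$ must be produced on the nose; the characteristic-zero assumption enters crucially through the invertibility of the factor $r!$ implicit in \eqref{newgendot}.
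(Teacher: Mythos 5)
Your overall architecture (two mutually inverse functors, with all the substance in the direction $\ASch\to\ASchC$) matches the paper's, which simply notes that $\ASchC$ has fewer relations and reduces the theorem to verifying \eqref{adaptorR}--\eqref{adaptorL} and \eqref{adaptrermovecross} inside $\ASchC$; your identification of $g_1(u)=\omega_1-u$, and of \eqref{chamvanish1 C} and \eqref{adaptrermovecross C} as the thin special cases of those relations, is correct, so the easy direction is fine. The genuine gap is in your verification of \eqref{adaptrermovecross} for general thicknesses. The formula \eqref{crossgen} resolves the thick crossing into a signed sum of diagrams built \emph{only} from splits and merges --- it produces no thin crossings --- so if the red strand really could be ``pulled freely through'' every constituent of that resolution via \eqref{adaptermovemerge C}, you would conclude that the left-hand side of \eqref{adaptrermovecross} equals the first term of the right-hand side with \emph{no} correction terms, contradicting the very relation you are proving. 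The correction terms $\sum_t t!(\cdots)$ arise from dragging a traverse-down (which behaves like a $(x-u)$-dot) past the black--black crossing, i.e.\ from an $x_1s_1=s_1x_2+1$ phenomenon, and neither \eqref{adaptermovemerge C} nor \eqref{crossgen} can produce them; there is also no red--black Reidemeister~II move available to relocate the red strand across a black strand. The paper's Lemma~\ref{chraterzero} instead argues by induction on $a+b$: it peels one thin strand off the thickness-$a$ strand with \eqref{splitmerge}, applies the inductive hypothesis and the $a=1$ case, reassembles with \eqref{swallows}--\eqref{sliders}, and finally cancels the nonzero scalar $a$ --- a second, independent use of characteristic zero that your sketch misses entirely.

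A secondary problem concerns \eqref{adaptorR}--\eqref{adaptorL}: these relations carry a single parameter $r$, namely the thickness of the black strand wrapping the red one (there is no independent pair ``$a$ and $r$''), so your proposed ``thickness $a=1$, general $r$'' induction does not parse. What is actually needed is only your second step, which is the paper's Lemma~\ref{adpterLRa=1}: multiply by $r!$, explode the strand into a balloon of $r$ thin legs via \eqref{splitmergeC}, slide the red strand through the outer merge and split by \eqref{adaptermovemerge C} so that it wraps each leg separately, apply \eqref{chamvanish1 C} to deposit $g_1(u)$ on each leg, and then invoke the identity of \cite[Lemma~2.11]{SW1} that a balloon with $g_1(u)$ on every leg equals $r!\,g_r(u)$. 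That last identity is precisely where the coefficients $\prod_{0\le j\le i-1}(u+j)$ are produced; your sketch acknowledges they ``must be produced on the nose'' but supplies no mechanism, so you should either cite this lemma or prove it.
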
  

We already know that $\AWC \cong \AW$ by Proposition~\ref{prop:AWisom}, and note that $\ASchC$ has fewer relations than $\ASch$.
Therefore, to show that $\ASchC$ is isomorphic to $\ASch$, it suffices to verify that all relations in $\ASch$ but not in $\AW$, i.e., the relations \eqref{adaptorR}--\eqref{adaptorL} and \eqref{adaptrermovecross}--\eqref{adaptermovemerge}, hold in $\ASchC$. Note that the relation \eqref{adaptermovemerge} is identical to  \eqref{adaptermovemerge C} in $\ASchC$. The verification of relations \eqref{adaptorR}--\eqref{adaptorL} and \eqref{adaptrermovecross} in $\ASchC$ will be carried out in Lemma~\ref{adpterLRa=1} and Lemma~\ref{chraterzero} below.

\begin{lemma}
\label{adpterLRa=1}
The relations \eqref{adaptorR}--\eqref{adaptorL} hold in $\ASchC$, for all $a\ge 1$.
\end{lemma}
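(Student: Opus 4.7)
The plan is induction on $r$, with the base case $r=1$ being exactly the two equations of \eqref{chamvanish1 C}, which are defining relations of $\ASchC$.

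For the inductive step $r\geq 2$, I would focus on \eqref{adaptorR}; the relation \eqref{adaptorL} follows by a symmetric argument (or by applying the horizontal-reflection anti-involution, which descends to $\ASchC$ as its defining relations are invariant under it). Write $U_r$ and $D_r$ for the generating traverse-up and traverse-down of thickness $r$, and set $U_{(r-1,1)}:=(U_{r-1}\otimes\text{id}_1)\circ(\text{id}_{r-1}\otimes U_1)$ and $D_{(r-1,1)}:=(\text{id}_{r-1}\otimes D_1)\circ(D_{r-1}\otimes\text{id}_1)$. Repeatedly applying \eqref{adaptermovemerge C} to slide $\merge_{r-1,1}$ and $\splits_{r-1,1}$ past the red strand, combined with $\splits_{r-1,1}\circ\merge_{r-1,1}=r\cdot\text{id}_{(r-1,1)}$ from \eqref{splitmergeC}, yields
\[
(\splits_{r-1,1}\otimes\text{id}_u)\circ(D_rU_r)\circ(\merge_{r-1,1}\otimes\text{id}_u)=r\cdot D_{(r-1,1)}U_{(r-1,1)}.
\]
The nested composition $D_{(r-1,1)}U_{(r-1,1)}$ performs the $(r-1)$-strand loop around the red strand (with the $1$-strand idle) followed by the $1$-strand loop (with the $(r-1)$-strand idle). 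Since $g_{r-1}(u)$ and $g_1(u)$ act on disjoint strands, the inductive hypothesis together with the base case collapse this nested composition to $(g_{r-1}(u)\otimes g_1(u))\otimes\text{id}_u$ on $(r-1,1,u)$.

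To un-sandwich, I next invoke the companion identity $\merge_{r-1,1}\circ\splits_{r-1,1}=r\cdot\text{id}_r$ on the thick-$r$ strand, valid in $\AWC\cong\AW$ by Proposition~\ref{prop:AWisom}: setting $A=\merge\circ\splits\in\End_{\AW}(r)$, the equality $A^2=rA$ follows from \eqref{splitmergeC}, and since $\End_{\AW}(r)$ is a polynomial algebra by Theorem~\ref{basisAW}, the polynomial-rep computation forces $A=r\cdot\text{id}_r$. Sandwiching the previous display between $\merge_{r-1,1}\otimes\text{id}_u$ on the left and $\splits_{r-1,1}\otimes\text{id}_u$ on the right then gives
\[
r\cdot D_rU_r=\big(\merge_{r-1,1}\circ(g_{r-1}(u)\otimes g_1(u))\circ\splits_{r-1,1}\big)\otimes\text{id}_u.
\]

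The proof is completed by the purely web-theoretic identity
\[
\merge_{r-1,1}\circ\big(g_{r-1}(u)\otimes g_1(u)\big)\circ\splits_{r-1,1}=r\cdot g_r(u)\quad\text{in }\End_{\AWC}(r),
\]
from which \eqref{adaptorR} follows after dividing by $r$. The hard part will be this final identity: it is essentially the balloon-associativity content of \cite[Lemma~2.11]{SW1}, where $g_r(u)$ is characterized as (up to a factor of $r!$) the $r$-fold balloon with $g_1(u)$ on each thin strand; verifying it involves expanding via \eqref{def-gau} and propagating elementary dot packets through splits and merges via \eqref{dotmovesplitss}. The rising factorial coefficients $u^{\overline i}=u(u+1)\cdots(u+i-1)$ in \eqref{def-gau} are precisely tuned to match the terms produced by the dot-propagation, so although the bookkeeping is delicate, it is mechanical once the balloon framework is in place.
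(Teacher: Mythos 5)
Your overall strategy is sound and is essentially a ``one step at a time'' version of the paper's argument: the paper splits the thick $r$-strand all the way into $1^r$ in one shot (picking up a factor $r!$ from iterating \eqref{splitmergeC}), slides the merges/splits through the red strand via \eqref{adaptermovemerge C}, applies the $r=1$ relation \eqref{chamvanish1 C} to each thin loop, and then invokes \cite[Lemma~2.11]{SW1} to identify the resulting balloon decorated with $g_1(u)$'s as $r!\,g_r(u)$; your induction via the decomposition $r=(r-1)+1$ and the identity $\merge_{r-1,1}\circ(g_{r-1}(u)\otimes g_1(u))\circ\splits_{r-1,1}=r\,g_r(u)$ is the same idea, and that last identity does follow from the same balloon characterization by coassociativity of splits.

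However, there is a concrete false step. You assert ``$\splits_{r-1,1}\circ\merge_{r-1,1}=r\cdot\mathrm{id}_{(r-1,1)}$ from \eqref{splitmergeC}'', but \eqref{splitmergeC} gives the \emph{other} composition: $\merge_{r-1,1}\circ\splits_{r-1,1}=\binom{r}{1}\,\mathrm{id}_{r}=r\,\mathrm{id}_r$ (thick strand at top and bottom). The composition $\splits_{r-1,1}\circ\merge_{r-1,1}$ is governed by \eqref{mergesplitC} and equals $\mathrm{id}_{(r-1,1)}$ plus a nonidentity ``exchange one unit'' term, so your first displayed equation, and hence the two-sided sandwich, does not hold; the extra term does not commute with the dot packets and would leave an unwanted summand. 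The repair is to sandwich on one side only, exactly as the paper does: slide the merge upward through both red crossings to get $D_rU_r\circ(\merge_{r-1,1}\otimes\mathrm{id}_u)=(\merge_{r-1,1}\otimes\mathrm{id}_u)\circ D_{(r-1,1)}U_{(r-1,1)}$, apply the inductive hypothesis and base case to replace $D_{(r-1,1)}U_{(r-1,1)}$ by $g_{r-1}(u)\otimes g_1(u)\otimes\mathrm{id}_u$, then precompose with $\splits_{r-1,1}\otimes\mathrm{id}_u$ and use the genuine relation $\merge_{r-1,1}\circ\splits_{r-1,1}=r\,\mathrm{id}_r$ to conclude $r\,D_rU_r=\big(\merge_{r-1,1}\circ(g_{r-1}(u)\otimes g_1(u))\circ\splits_{r-1,1}\big)\otimes\mathrm{id}_u=r\,g_r(u)\otimes\mathrm{id}_u$. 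Relatedly, your ``companion identity'' paragraph is unnecessary and circular: $\merge\circ\splits=r\,\mathrm{id}_r$ \emph{is} the relation \eqref{splitmergeC}, and the proposed derivation of $A^2=rA$ would itself require knowing $\splits\circ\merge$, which is the problematic composition.
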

(The relations \eqref{adaptorR}--\eqref{adaptorL} can be viewed as generalizations of \eqref{chamvanish1 C}.)

\begin{proof}
The relation \eqref{adaptorR} follows by 
\begin{align*}
r!~ 
\begin{tikzpicture}[baseline = 10pt, scale=0.4, color=\clr]
\draw[-,line width=1pt,color=\cred](0,-1)to (0,1.8);
\draw(0,-1.2) node{$\scriptstyle \red u $};
\draw[-,line width= 1.2pt](-.5,-.9) to[out=45, in=down] (.5,.7);
\draw[-,line width= 1.2pt](.5,.7) to[out=up, in=300] (-.5,1.7);
\draw (-.5,-1.2) node{$\scriptstyle {r}$};
\end{tikzpicture}
& \overset{\eqref{splitmergeC}} =          
\begin{tikzpicture}[baseline = 5pt, scale=.4, color=\clr]
\draw[-,line width=1.2pt] (.5,1) to (.5,2.4);
\draw[-,line width=1pt,color=\cred](0.8,2.4) to [out=down,in=up] (.2,1.5) to[out=down,in=up] (1.6,.7) to (1.6,-1.5);
\draw(1.6,-1.8) node{$\scriptstyle \red u$};
\draw[-, line width=1.2pt] (0.5,-1) to (0.5,-1.6);
\draw[-,thin]  (0.5,1) to[out=left,in=up] (-.5,0)to[out=down,in=left] (0.5,-1);
\draw[-,thin]  (0.5,1) to[out=left,in=up] (0,0) to[out=down,in=left] (0.5,-1);
\draw[-,thin] (0.5,-1)to[out=right,in=down] (1.5,0)to[out=up,in=right] (0.5,1);
\draw[-,thin] (0.5,-1)to[out=right,in=down] (1,0)to[out=up,in=right] (0.5,1);
\node at (0.5,-.4){$\scriptstyle \cdots$};
\node at (-.4,-1) {$\scriptstyle 1$};
\node at (.3,-.7) {$\scriptstyle 1$};
\node at (1.2,-1) {$\scriptstyle 1$};
\draw (.5,-1.8) node{$\scriptstyle {r}$};
    \end{tikzpicture}
~\overset{\eqref{adaptermovemerge C}}{\underset{\eqref{webassoc C}}{=}}
    ~
\begin{tikzpicture}[baseline = 5pt, scale=.4, color=\clr]
\draw[-,line width=1.2pt] (.5,2) to (.5,2.8);
\draw[-,line width=1pt,color=\cred](1.6,2.4) to [out=down,in=up] (-1,.5) to[out=down,in=up] (1.6,-1.5) to (1.6,-1.6);
\draw(1.6,-1.8) node{$\scriptstyle \red u$};
\draw[-, line width=1.2pt] (0.5,-1) to (0.5,-1.6);
\draw[-,thin]  (0.5,2) to[out=left,in=up] (-.5,1);
\draw[-,thin](-.5,1) to (-.5,0);
\draw[-,thin](-.5,0)to[out=down,in=left](0.5,-1);
\draw[-,thin]  (0.5,2) to[out=left,in=up](0,1);
\draw[-,thin]  (0,0)  to[out=down,in=left] (0.5,-1);
\draw[-,thin](0,1) to (0,0);
\draw[-,thin] (0.5,-1)to[out=right,in=down](1.5,0.2);
\draw[-,thin](1.5,.8) to (1.5,0.2);
\draw[-,thin](1.5,.8)  to[out=up,in=right](0.5,2);
\draw[-,thin] (0.5,-1)to[out=right,in=down](1,0.2);
\draw[-,thin](1,.8) to (1,0.2);
\draw[-,thin](1,.8) to[out=up,in=right] (0.5,2);
\node at (0.5,.6){$\scriptstyle \cdots$};
\node at (-.4,-1) {$\scriptstyle 1$};
\node at (.2,0) {$\scriptstyle 1$};
\node at (1,-1.2) {$\scriptstyle 1$};
\draw (.5,-1.8) node{$\scriptstyle {r}$};
\end{tikzpicture} 
    ~\overset{\eqref{dotmoveadaptor}}=~ 
\begin{tikzpicture}[baseline = 5pt, scale=.4, color=\clr]
\draw[-, line width=1.2pt] (0.5,1) to (0.5,1.8);
\draw[-, line width=1.2pt] (0.5,-1) to (0.5,-1.6);
\draw[-,thin]  (0.5,1) to[out=left,in=up] (-.5,0)to[out=down,in=left] (0.5,-1);
\draw[-,thin](0.5,1) to[out=left,in=up] (0,0)to[out=down,in=left] (0.5,-1);
\draw[-,thin] (0.5,-1)to[out=right,in=down] (1.5,0)to[out=up,in=right] (0.5,1);
\draw[-,thin] (0.5,-1)to[out=right,in=down] (1,0)to[out=up,in=right] (0.5,1);
\node at (0.5,-.4){$\scriptstyle \cdots$};
\draw (-0.5,0) \bdot;
\node at (-1,0) {$\scriptstyle g$}; 
\draw (0,0) \bdot; 
\node at (0.35,0) {$\scriptstyle g$};
\draw (1,0) \bdot;
\draw (1.5,0) \bdot;
\node at (1.85,0) {$\scriptstyle g$};
\node at (-.4,-1) {$\scriptstyle 1$};
\node at (.3,-.7) {$\scriptstyle 1$};
\node at (1.2,-1) {$\scriptstyle 1$};
\draw (.5,-1.8) node{$\scriptstyle {r}$};
\draw[-,line width=1pt,color=\cred](2.2,-1.6)to (2.2,1.8);
\draw(2.2,-1.8) node{$\scriptstyle \red u$};
\end{tikzpicture}\\
&=
r!\sum_{0\le i\le r}(-1)^i\prod_{0\le j\le i-1}(u+j)\:\:
 \begin{tikzpicture}[baseline = -1mm,scale=.7,color=\clr]
\draw[-,line width=1.2pt] (0.08,-.6) to (0.08,.5);
\node at (.08,-.8) {$\scriptstyle r$};
\draw(0.08,0) \bdot;
\draw(-.5,0)node {$\scriptstyle \omega_{r-i}$};
\draw[-,line width=1pt,color=\cred](.5,-.6)to (.5,.5);
\draw (.5,-.8) node{$\scriptstyle \red{u}$};
\end{tikzpicture} \;  .
\end{align*}
 where the last equality follows from \cite[Lemma~2.11]{SWweb}.
The remaining relation \eqref{adaptorL} can be checked similarly.
\end{proof}

\begin{lemma} \label{chraterzero}
The relation \eqref{adaptrermovecross} holds in $\ASchC$, for all $a,b$.
\end{lemma}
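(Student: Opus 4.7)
The plan is to reduce the $a\times b$ case to the $1\times 1$ base case \eqref{adaptrermovecross C} by exploding the thick crossing into a braid of thin crossings. Using \eqref{splitmergeC}, \eqref{mergesplitC}, and the derived relations \eqref{swallows}--\eqref{sliders}, I would rewrite the $a\times b$ crossing as a composition of $ab$ thin ($1\times 1$) crossings arranged in a standard braid pattern, with merges assembled at the top and splits distributed at the bottom translating between $a$- (or $b$-) strands and strings of $1$-strands. Since the red strand slides freely through every merge and split by \eqref{adaptermovemerge C}, I can transport the red arc into the interior of the expanded crossing so that it only meets thin $1\times 1$ crossings.

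Then I would apply \eqref{adaptrermovecross C} locally at each thin crossing the red arc must traverse: this replaces the ``red on one side'' configuration by the ``red on the other side'' configuration plus a correction term in which the two thin strands do \emph{not} cross and the red strand passes vertically between them. Iterating across each of the $\min(a,b)$ thin crossings produces a sum of diagrams indexed by the subset $S$ of ``corrected'' thin crossings; after re-merging, the contribution with $|S|=0$ is precisely the first summand on the right-hand side of \eqref{adaptrermovecross}, and the contributions with $|S|=t\ge 1$ become the asserted $t$-rung bubble diagrams. The factor $t!$ arises via \eqref{splitmergeC} and \eqref{mergesplitC}: the $t$ uncrossed thin pairs, when re-merged into the $t$-strand bubble, contribute the binomial/factorial symmetry of these identities combined with the number of ways of ordering the $t$ strands.

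The main obstacle will be this last coefficient-matching step: rigorously verifying that, upon re-merging the perimeter, the many ways in which a subset $S$ with $|S|=t$ can arise collapse via \eqref{splitmergeC}, \eqref{webassoc C}, and \eqref{mergesplitC} to exactly $t!$ copies of the asserted diagram, with no residual error. If the bookkeeping becomes unwieldy in this direct form, a cleaner alternative that I would pursue is induction on $a+b$ with base case \eqref{adaptrermovecross C}: split one strand off the $a$-strand using \eqref{splitmergeC} (paying a binomial factor), use \eqref{adaptermovemerge C} to slide the red strand past the resulting merge, apply the induction hypothesis to the smaller $(a-1)\times b$ and $1\times b$ crossings, and then re-merge via \eqref{mergesplitC}; the resulting coefficient identity $\binom{a}{1}^{-1}\sum(\cdots)=t!\,(\cdots)$ is a finite combinatorial check that can be verified term by term.
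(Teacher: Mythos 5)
Your ``cleaner alternative'' is in fact exactly the proof the paper gives: induction on $a+b$ with base case \eqref{adaptrermovecross C}, assuming $b\ge a$ by the symmetry $\div$, splitting a single thin strand off the $a$-strand via \eqref{splitmergeC} at the cost of the factor $\binom{a}{1}=a$, sliding the red strand through the resulting merges and splits via \eqref{adaptermovemerge C} together with \eqref{swallows}--\eqref{sliders}, and invoking the inductive hypothesis for the $(a-1)\times b$ and $1\times b$ crossings. Two points you leave implicit are worth making explicit. First, the paper's induction step actually applies the $1\times b$ case \emph{twice} (once to move the red strand past the split-off thin strand before the $(a-1)\times b$ crossing, and once after), and the resulting three families of terms recombine, after reindexing the rung label $t\mapsto t+1$ in one sum, into $a$ times the claimed right-hand side; one then cancels the scalar $a$, which is where the characteristic-zero hypothesis on $\kk$ is used. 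Second, the case $a=1$, $b\ge 2$ needs a separate word (splitting a strand off the $a$-strand is vacuous there); the paper handles it by applying $\div$ and running the same computation on the $b$-strand.

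Your primary route --- exploding the thick crossing into $ab$ thin crossings and applying \eqref{adaptrermovecross C} locally --- is where I would be more cautious, and you rightly flag the coefficient matching as the obstacle. The difficulty is not only combinatorial: the thick crossing is equal to the split--braid--merge composite only up to the normalizing factor $a!\,b!$ coming from \eqref{splitmergeC}, so every correction term produced by a local application of \eqref{adaptrermovecross C} must be recollected against this factor, and the uncrossed thin pairs must be shown to reassemble into the single $t$-rung bubble with total coefficient exactly $t!$ after quotienting by the symmetrization. This can presumably be pushed through, but it is genuinely more delicate than the inductive argument, which confines all coefficient bookkeeping to a single finite identity at each step. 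As written, the proposal establishes the statement only modulo that unverified collapse; the inductive fallback is the one that closes the argument.
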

(The relation \eqref{adaptrermovecross} can be viewed as a generalization of \eqref{adaptrermovecross C}.)

\begin{proof}
By the symmetry $\div$, we may assume that $b\ge a$ 
    in \eqref{adaptrermovecross}. We prove by induction on $a+b$.  We compute the case for $a\ge2$ below. (The case for $a=1$ can be similarly proved by applying the $\div$ to the computation below.)
     We have 
\begin{align*}
       a~
\begin{tikzpicture}[baseline = 7.5pt, scale=0.35, color=\clr]
\draw[-,line width=1pt,color=\cred](.8,2) to[out=down,in=90] (0.2,1) to [out=down,in=up] (.8,-.4);
\draw(.8,-.6) node {$\scriptstyle \red u$};
\draw[-,line width=1.2pt] (0,-.35) to (1.5,2);
\draw[-,line width=1.2pt](0,2) to  (1.5,-.35);
\node at (0, -.6) {$\scriptstyle a$};
\node at (1.5, -.6) {$\scriptstyle b$};
\end{tikzpicture}
        & 
        \overset{\eqref{splitmerge}}=
\begin{tikzpicture}[baseline = 7.5pt, scale=0.35, color=\clr]
\draw[-,line width=1.2pt] (0.5,-1) to (0.5,0);
\draw[-,line width=1.2pt](0.5,0) to [out=left,in =down] (0,.5);
\draw[-,line width=1pt](0.5,0) to [out=right,in =down]node[below]{$~\scriptstyle 1$} (1,.5);
\draw[-,line width=1.2pt] (-.3,2.2) to  (2.3,-.2);
\draw[-,line width=1pt](1,.5) to [out=up,in =right] (0.5,1);
\draw[-,line width=1pt,color=\cred](0.25,2.1) to[out=down,in=up] (-.3,.5) to [out=down,in=up] (1,-.8);
\draw(1,-1) node {$\scriptstyle \red u$};
\draw[-,line width=1.2pt](.5,1) to [out=180,in =50] (0,.5);
\draw[-,line width=1.2pt](.5,1)to (.5,2.4);
\end{tikzpicture}
   ~\overset{\eqref{sliders}} =
   ~
\begin{tikzpicture}[baseline = 7.5pt, scale=0.35, color=\clr]
\draw[-,line width=1.2pt] (0.5,-1) to (0.5,0);
\draw[-,line width=1.2pt](0.5,0) to [out=left,in =down] (0,2);
\draw[-,line width=1pt](0.5,0) to [out=right,in =down]node[below]{$~\scriptstyle 1$} (1,2);
\draw[-,line width=1.2pt] (-1.2,2.5) to  (1.5,0);
\draw[-,line width=1pt](1,2) to [out=up,in =right] (0.5,2.5);
\draw[-,line width=1pt,color=\cred](-.2,2.3) to[out=down,in=up] (-.6,.7) to [out=down,in=up] (1,-.9);
\draw(1,-1.1) node {$\scriptstyle \red u$};
\draw[-,line width=1.2pt](.5,2.5) to [out=180,in =50] (0,2);
\draw[-,line width=1.2pt](.5,2.5)to (.5,3);
\end{tikzpicture}
  \overset{\eqref{adaptermovemerge}}
  = 
\begin{tikzpicture}[baseline = 7.5pt, scale=0.35, color=\clr]
\draw[-,line width=1.2pt] (0,-1) to (0,-.5);
\draw[-,line width=1.2pt](0,-.5) to [out=left,in =down] (-1,1);
\draw[-,line width=1pt](0,-.5) to [out=right,in =down]node[below]{$~\scriptstyle1$} (.8,2);
\draw[-,line width=1.2pt] (-1,1) to (0,2);
\draw[-,line width=1.2pt] (-1.2,2.5) to  (1.5,0);
\draw[-,line width=1pt](.8,2) to [out=up,in =right] (0.5,2.5);
\draw[-,line width=1pt,color=\cred](-.2,2.3) to[out=down,in=up] (-.8,1.3) to [out=down,in=up] (.8,-.7);
\draw(.8,-1) node {$\scriptstyle \red u$};
\draw[-,line width=1.2pt](.5,2.5) to [out=180,in =50] (0,2);
\draw[-,line width=1.2pt](.5,2.5)to (.5,3);
\end{tikzpicture}
\\
  & = 
\begin{tikzpicture}[baseline = 7.5pt, scale=0.35, color=\clr]
\draw[-,line width=1.2pt] (0,-1) to (0,-.5);
\draw[-,line width=1.2pt](0,-.5) to [out=left,in =down] (-1,1);
\draw[-,line width=1pt](0,-.5) to [out=right,in =down]node[below]{$~\scriptstyle1$} (.8,2);
\draw[-,line width=1.2pt] (-1,1) to (0,2);
\draw[-,line width=1.2pt] (-1.2,2.5) to  (1.5,0);
\draw[-,line width=1pt](.8,2) to [out=up,in =right] (0.5,2.5);
\draw[-,line width=1pt,color=\cred](-.3,2.3) to[out=-45,in=up] (.7,-.7);
\draw(.7,-1) node {$\scriptstyle \red u$};
\draw[-,line width=1.2pt](.5,2.5) to [out=180,in =50] (0,2);
\draw[-,line width=1.2pt](.5,2.5)to (.5,3);
\end{tikzpicture}
+
~\sum_{1\le t\le a-1}t! ~
\begin{tikzpicture}[baseline = 7.5pt, scale=0.35, color=\clr]
\draw[-,line width=1.2pt] (0,-1) to (0,-.5);
\draw[-,line width=1.2pt](0,-.5) to [out=left,in =down] (-1,1);
\draw[-,line width=1pt, color=\cred](-.3,3) to (-.3,0.8);
\draw[-,line width=1pt, color=\cred](-.3,0.8)to [out=down,in=135] (1.3,-.7);
\draw(1.3,-.9) node {$\scriptstyle \red u$};
\draw[-,line width=1pt](0,-.5) to [out=right,in =down]node[below]{$\scriptstyle1$} (.8,0);
\draw[-,line width=1.2pt] (-1,1) to node[left]{$\scriptstyle t$} (-1,3);
\draw[-,line width=1.2pt] (0,2.5) to  (0,1.2);
\draw[-,line width=1.5](-1,2.5) to (-.3,1.5);
\draw[-,line width=1.5] (-.3,1.5) to (0.2,1);
\draw[-,line width=1.5] (-.3,2.2) to (0,2.5);
\draw[-,line width=1.5](-1,1.5) to (-.3,2.2);
\draw[-,line width=1.2pt]  (0.2,1)to(1.7,0);
\draw[-,line width=1pt](.8,0)to (.8,2.5);
\draw[-,line width=1pt](.8,2.5) to [out=up,in =right] (.5,3);
\draw[-,line width=1.2pt](.5,3) to [out=180,in =50] (0,2.5);
\draw[-,line width=1.2pt](.5,3)to (.5,3.5);
\end{tikzpicture} 
\quad \text{ by induction on $a-1$,}\\
& =
~\begin{tikzpicture}[baseline = 7.5pt, scale=0.35, color=\clr]
\draw[-,line width=1.2pt] (0,-1) to (0,-.5);
\draw[-,line width=1.2pt](0,-.5) to [out=left,in =down] (-1,1);
\draw[-,line width=1pt](0,-.5) to [out=right,in =down] (.8,2);
\draw[-,line width=1.2pt] (-1,1) to (0,2);
\draw[-,line width=1.2pt] (-1.2,1.5) to  (1.8,-.8);
\draw[-,line width=1pt](.8,2) to [out=up,in =right]node[right]{$~\scriptstyle 1$} (0.5,2.5);
\draw[-,line width=1pt,color=\cred](-.3,2.2) to[out=down,in=90] (1.2,1) to [out=down,in=up] (.8,-.8);
\draw(.8,-1) node {$\scriptstyle \red u$};
\draw[-,line width=1.2pt](.5,2.5) to [out=180,in =50] (0,2);
\draw[-,line width=1.2pt](.5,2.5)to (.5,3);
\end{tikzpicture}
+
~\begin{tikzpicture}[baseline = 7.5pt, scale=0.35, color=\clr]
\draw[-,line width=1.2pt] (0,-1) to (0,-.5);
\draw[-,line width=1.2pt](0,-.5) to [out=left,in =down] (-1,1);
\draw[-,line width=1pt](0,-.5) to [out=right,in =down]node[below]{$\scriptstyle1$} (.4,0.6);
\draw[-,line width=1.2pt] (-1,1) to (0,2);
\draw[-,line width=1.2pt] (-1,2) to (1,0);
\draw[-,line width=1pt, color=\cred](.3, 2.8) to (1,-.7);
\draw(1,-1) node {$\scriptstyle \red u$};
\draw[-,line width=1.2pt]  (1,0)to(1.5,-.5);
\draw[-,line width=1pt](1.3,-.2)to (1.3,2);
\draw[-,line width=1pt](1.3,2) to [out=up,in =right] (.8,2.5);
\draw[-,line width=1.2pt](.8,2.5) to [out=180,in =50] (0,2);
\draw[-,line width=1.2pt](.8,2.5)to (.8,3);
\end{tikzpicture}\\ 
&+~
\sum_{1\le t\le a-1}t! ~
\begin{tikzpicture}[baseline = 7.5pt, scale=0.35, color=\clr]
\draw[-,line width=1.2pt] (0,-1) to (0,-.5);
\draw[-,line width=1.2pt](0,-.5) to [out=left,in =down] (-1,1);
\draw[-,line width=1pt, color=\cred](-.3,3) to (-.3,0.8);
\draw[-,line width=1pt, color=\cred](-.3,0.8)to [out=down,in=135] (1.3,-.6);
\draw(1.3,-.8) node {$\scriptstyle \red u$};
\draw[-,line width=1pt](0,-.5) to [out=right,in =down]node[below]{$\scriptstyle1$} (.8,0);
\draw[-,line width=1.2pt] (-1,1) to node[left]{$\scriptstyle t$} (-1,3);
\draw[-,line width=1.2pt] (0,2.5) to  (0,1.2);
\draw[-,line width=1.5](-1,2.5) to (-.3,1.5);
\draw[-,line width=1.5] (-.3,1.5) to (0.2,1);
\draw[-,line width=1.5] (-.3,2.2) to (0,2.5);
\draw[-,line width=1.5](-1,1.5) to (-.3,2.2);
\draw[-,line width=1.2pt]  (0.2,1)to(1.7,0);
\draw[-,line width=1pt](.8,0)to (.8,2.5);
\draw[-,line width=1pt](.8,2.5) to [out=up,in =right] (.5,3);
\draw[-,line width=1.2pt](.5,3) to [out=180,in =50] (0,2.5);
\draw[-,line width=1.2pt](.5,3)to (.5,3.5);
\end{tikzpicture} \quad \text{by the result for $a=1$.}
\end{align*}
The right-hand side above can be rewritten as follows:
\begin{align*}
    ~\overset{\eqref{splitmerge},\eqref{sliders}}{\underset{\eqref{swallows}}{=}}
    &
    a~
\begin{tikzpicture}[baseline = 7.5pt, scale=0.35,color=\clr]
\draw[-,line width=1pt,color=\cred](.8,2) to[out=down,in=90] (1.5,1) to [out=down,in=up] (.8,-.4);
\draw(.8,-.6) node {$\scriptstyle \red u$};
\draw[-,line width=1.2pt] (0,-.35) to (1.5,2);
\draw[-,line width=1.2pt](0,2) to  (1.5,-.35);
\node at (0, -.6) {$\scriptstyle a$};
\node at (1.5, -.6) {$\scriptstyle b$};
\end{tikzpicture}
        ~+~
\begin{tikzpicture}[baseline = 7.5pt, scale=0.35, color=\clr]
\draw[-, line width=1pt] (0,-.3) to (0,2.2);
\draw[-, line width=1.2pt](0,-.3) to (0,.4);
\draw[-, line width=1.2pt] (1.8,-.2) to (1.8,.5);
\draw[-, line width=1.2pt] (1.8,1.8) to (1.8,2.2);
\draw[-, line width=1.2pt]   (0,.2) to  (1.8,1.8);
\draw[-, line width=1.2pt]   (0,1.6) to(1.8,0);
\draw[-,line width=1pt,color=\cred](.8,2) to[out=down,in=up] (1.5,1) to [out=down,in=up] (.8,-.4);
\draw(.8,-.6) node {$\scriptstyle \red u$};
\draw[-, line width=1.2pt]   (0,1.8) to (0,2.2); 
\draw[-, line width=1pt] (1.8,2) to (1.8,-.2); 
\node at (0, -.5) {$\scriptstyle a$};
\node at (2, -.5) {$\scriptstyle b$};
\node at (2, 2.5) {$\scriptstyle a$};
\node at (0, 2.5) {$\scriptstyle b$};
\node at (2,1.2) {$\scriptstyle 1$};
\end{tikzpicture}
        +
        ~\sum_{1\le t\le a-1}t!
        ~
\begin{tikzpicture}[baseline = 7.5pt, scale=0.35, color=\clr]
\draw[-,line width=1.2pt] (0,-1) to (0,-.5);
\draw[-,line width=1.2pt](0,-.5) to [out=left,in =down] (-1,1);
\draw[-,line width=1pt](0,-.5) to [out=right,in =down]node[below]{$\scriptstyle1$} (.8,0);
\draw[-,line width=1.2pt] (-1,1) to node[left]{$\scriptstyle t$} (-1,3);
\draw[-,line width=1.2pt] (.8,2.5) to node[right] {$\scriptstyle t$}  (1.5,.2);
\draw[-,line width=1pt, color=\cred](-.3,3) to (-.3,0.8);
\draw[-,line width=1pt, color=\cred](-.3,0.8)to [out=down,in=135] (1.3,-.7);
\draw(1.3,-1) node {$\scriptstyle \red u$};
\draw[-,line width=1.5](-1,2.5) to (-.3,1.5);
\draw[-,line width=1.5] (-.3,1.5) to (0.2,1);
\draw[-,line width=1.5] (-.3,2.2) to (0,2.5);
\draw[-,line width=1.5](-1,1.5) to (-.3,2.2);
\draw[-,line width=1.2pt]  (0.2,1)to(1.7,0);
\draw[-,line width=1pt](.8,0)to (.8,2.5);
\draw[-,line width=1.2pt](.8,2.5) to [out=up,in =right] (.5,3);
\draw[-,line width=1.2pt](.5,3) to [out=180,in =50] (0,2.5);
\draw[-,line width=1.2pt](.5,3)to (.5,3.5);
\end{tikzpicture}
\\
=\, & a~
\begin{tikzpicture}[baseline = 7.5pt, scale=0.35,color=\clr]
\draw[-,line width=1pt,color=\cred](.8,2) to[out=down,in=90] (1.5,1) to [out=down,in=up] (.8,-.4);
\draw(.8,-.6) node {$\scriptstyle \red u$};
\draw[-,line width=1.2pt] (0,-.35) to (1.5,2);
\draw[-,line width=1.2pt](0,2) to  (1.5,-.35);
\node at (0, -.6) {$\scriptstyle a$};
\node at (1.5, -.6) {$\scriptstyle b$};
\end{tikzpicture}
~+~
\begin{tikzpicture}[baseline = 7.5pt, scale=0.35, color=\clr]
\draw[-, line width=1pt] (0,-.3) to (0,2.2);
\draw[-, line width=1.2pt](0,-.3) to (0,.4);
\draw[-, line width=1.2pt] (1.8,-.2) to (1.8,.5);
\draw[-, line width=1.2pt] (1.8,1.8) to (1.8,2.2);
\draw[-, line width=1.2pt]   (0,.2) to  (1.8,1.8);
\draw[-, line width=1.2pt]   (0,1.6) to(1.8,0);
\draw[-,line width=1pt,color=\cred](.8,2) to[out=down,in=90] (1.5,1) to [out=down,in=up] (.8,-.4);
\draw(.8,-.6) node {$\scriptstyle \red u$};
\draw[-, line width=1.2pt]   (0,1.8) to (0,2.2); 
\draw[-, line width=1pt] (1.8,2) to (1.8,-.2); 
\node at (0, -.5) {$\scriptstyle a$};
\node at (2, -.5) {$\scriptstyle b$};
\node at (2, 2.5) {$\scriptstyle a$};
\node at (0, 2.5) {$\scriptstyle b$};
\node at (2,1.2) {$\scriptstyle 1$};
\end{tikzpicture}
        +
        ~\sum_{1\le t\le a-1}t!
        ~\begin{tikzpicture}[baseline = 7.5pt, scale=0.35, color=\clr]
\draw[-,line width=1.2pt] (0,-1) to (0,-.5);
\draw[-,line width=1.2pt](0,-.5) to [out=left,in =down] (-1,1);
\draw[-,line width=1pt](0,-.5) to [out=right,in =down] (.5,1.5);
\draw(0.5,-.6) node{$\scriptstyle1$};
\draw[-,line width=1pt,color=\cred](-.4,3)to (0.9,.8);
\draw[-,line width=1pt,color=\cred](0.9,.8) to (.9,-.5);
\draw(1,-.7) node {$\scriptstyle \red u$};
\draw[-,line width=1.2pt] (-1,1) to node[left]{$\scriptstyle t$} (-1,3);
\draw[-,line width=1.2pt] (.5,2.5) to node[right] {$\scriptstyle t$}  (1.6,-.25);
\draw[-,line width=1.5](-1,2.5) to (1,.3);
\draw[-,line width=1.5](-1,1.5) to (-.3,2.5);
\draw[-,line width=1.2pt]  (1,.3)to(1.7,-.3);
\draw[-,line width=1pt](.5,1.5)to (.5,2.5);
\draw[-,line width=1.2pt](.5,2.5) to [out=up,in =right] (.2,3);
\draw[-,line width=1.2pt](.2,3) to [out=180,in =50] (-0.3,2.5);
\draw[-,line width=1.2pt](.2,3)to (.2,3.5);
\end{tikzpicture}~
\\
& \qquad 
+ \sum_{1\le t\le a-1}t!
~\begin{tikzpicture}[baseline = 7.5pt, scale=0.35, color=\clr]
\draw[-,line width=1.2pt] (-.5,-1) to (-.5,-.5);
\draw[-,line width=1.2pt](-0.5,-.5) to [out=left,in =down] (-1,1);
\draw[-,line width=1pt](-.5,-.5) to [out=right,in =down]node[left]{$\scriptstyle1$} (-.2,1.2);
\draw[-,line width=1pt,color=\cred](-.2,3.2)to (0.2,-.6);
\draw(.2,-.8) node {$\scriptstyle \red u$};
\draw[-,line width=1.2pt,color=\cgreen] (.3,.3) to (.6,-.3);
\draw[-,line width=1.2pt] (-1,1) to node[left]{$\scriptstyle t$} (-1,3);
\draw[-,line width=1.2pt] (.5,2.5) to node[right] {$\scriptstyle t$}  (1.4,-.9);
\draw[-,line width=1.2pt](-1,2.5) to (.3,.3);
\draw[-,line width=1pt](.5,-.3)to  (.5,1.5);
\draw[-,line width=1.5](-1,1.5) to (-.3,2.5);
\draw[-,line width=1.2pt]  (.5,-.2)to(1.5,-1);
\draw[-,line width=1pt](.5,1.5)to (.5,2.5);
\draw[-,line width=1.2pt](.5,2.5) to [out=up,in =right] (.2,3);
\draw[-,line width=1.2pt](.2,3) to [out=180,in =50] (-0.3,2.5);
\draw[-,line width=1.2pt](.2,3)to (.2,3.5);
\end{tikzpicture} ~ \text{ by the result for $a=1$}
\\ 
\overset{\eqref{splitmerge},\eqref{sliders}}{\underset{\eqref{webassoc},\eqref{swallows},\eqref{adaptermovemerge}}{=}}
& a~
\begin{tikzpicture}[baseline = 7.5pt, scale=0.35,color=\clr]
\draw[-,line width=1pt,color=\cred](.8,2) to[out=down,in=90] (1.5,1) to [out=down,in=up] (.8,-.4);
\draw(.8,-.6) node {$\scriptstyle \red u$};
\draw[-,line width=1.2pt] (0,-.35) to (1.5,2);
\draw[-,line width=1.2pt](0,2) to  (1.5,-.35);
\node at (0, -.6) {$\scriptstyle a$};
\node at (1.5, -.6) {$\scriptstyle b$};
\end{tikzpicture}
~+~
\begin{tikzpicture}[baseline = 7.5pt, scale=0.35, color=\clr]
\draw[-, line width=1pt] (0,-.3) to (0,2.2);
\draw[-, line width=1.2pt](0,-.3) to (0,.4);
\draw[-, line width=1.2pt] (1.8,-.2) to (1.8,.5);
\draw[-, line width=1.2pt] (1.8,1.8) to (1.8,2.2);
\draw[-, line width=1.2pt]   (0,.2) to  (1.8,1.8);
\draw[-, line width=1.2pt]   (0,1.6) to(1.8,0);
\draw[-,line width=1pt,color=\cred](.8,2) to[out=down,in=90] (1.5,1) to [out=down,in=up] (.8,-.4);
\draw(.8,-.6) node {$\scriptstyle \red u$};
\draw[-, line width=1.2pt]   (0,1.8) to (0,2.2); 
\draw[-, line width=1pt] (1.8,2) to (1.8,-.2); 
\node at (0, -.5) {$\scriptstyle a$};
\node at (2, -.5) {$\scriptstyle b$};
\node at (2, 2.5) {$\scriptstyle a$};
\node at (0, 2.5) {$\scriptstyle b$};
\node at (2,1.2) {$\scriptstyle 1$};
\end{tikzpicture}
        +
        ~\sum_{1\le t\le a-1}(a-t)t!~   
\begin{tikzpicture}[baseline = 7.5pt, scale=0.35, color=\clr]
\draw[-, line width=1pt] (0,-.3) to (0,2.2);
\draw[-, line width=1.2pt](0,-.3) to (0,.4);
\draw[-, line width=1.2pt] (1.8,-.2) to (1.8,.5);
\draw[-, line width=1.2pt] (1.8,1.8) to (1.8,2.2);
\draw[-, line width=1.2pt]   (0,.2) to  (1.8,1.8);
\draw[-, line width=1.2pt]   (0,1.6) to(1.8,0);
\draw[-,line width=1pt,color=\cred](.8,2) to[out=down,in=90] (1.5,1) to [out=down,in=up] (.8,-.4);
\draw(.8,-.6) node {$\scriptstyle \red u$};
\draw[-, line width=1.2pt]   (0,1.8) to (0,2.2); 
\draw[-, line width=1pt] (1.8,2) to (1.8,-.2); 
\node at (0, -.5) {$\scriptstyle a$};
\node at (2, -.5) {$\scriptstyle b$};
\node at (2, 2.5) {$\scriptstyle a$};
\node at (0, 2.5) {$\scriptstyle b$};
\node at (2,1.2) {$\scriptstyle t$};
\end{tikzpicture}
\\
&\qquad\qquad
     + \sum_{1\le t\le a-1}(t+1)^2t!~
\begin{tikzpicture}[baseline = 7.5pt, scale=0.35, color=\clr]
\draw[-, line width=1pt] (0,-.3) to (0,2.2);
\draw[-, line width=1.2pt](0,-.3) to (0,.4);
\draw[-, line width=1.2pt] (1.8,-.2) to (1.8,.5);
\draw[-, line width=1.2pt] (1.8,1.8) to (1.8,2.2);
\draw[-, line width=1.2pt] (0,.2) to  (1.8,1.8);
\draw[-, line width=1.2pt] (0,1.6) to(1.8,0);
\draw[-,line width=1pt,color=\cred](.8,2) to[out=down,in=90] (1.5,1) to [out=down,in=up] (.8,-.4);
\draw(.8,-.6) node {$\scriptstyle \red u$};
\draw[-, line width=1.2pt]   (0,1.8) to (0,2.2); 
\draw[-, line width=1pt] (1.8,2) to (1.8,-.2); 
\node at (0, -.5) {$\scriptstyle a$};
\node at (2, -.5) {$\scriptstyle b$};
\node at (2, 2.5) {$\scriptstyle a$};
\node at (0, 2.5) {$\scriptstyle b$};
\node at (2.5,1.2) {${\scriptstyle t+1}$};
\end{tikzpicture}
        ,~ \\ 
=\, & a
\begin{tikzpicture}[baseline = 7.5pt, scale=0.35,color=\clr]
\draw[-,line width=1pt,color=\cred](.8,2) to[out=down,in=90] (1.5,1) to [out=down,in=up] (.8,-.4);
\draw(.8,-.6) node {$\scriptstyle \red u$};
\draw[-,line width=1.2pt] (0,-.35) to (1.5,2);
\draw[-,line width=1.2pt](0,2) to  (1.5,-.35);
\node at (0, -.6) {$\scriptstyle a$};
\node at (1.5, -.6) {$\scriptstyle b$};
\end{tikzpicture}
~
 +
~a\sum_{1\le t\le \min\{a,b\}} t! 
\begin{tikzpicture}[baseline = 7.5pt, scale=0.35, color=\clr]
\draw[-, line width=1pt] (0,-.3) to (0,2.2);
\draw[-, line width=1.2pt](0,-.3) to (0,.4);
\draw[-, line width=1.2pt] (1.8,-.2) to (1.8,.5);
\draw[-, line width=1.2pt] (1.8,1.8) to (1.8,2.2);
\draw[-, line width=1.2pt] (0,.2) to  (1.8,1.8);
\draw[-, line width=1.2pt] (0,1.6) to(1.8,0);
\draw[-,line width=1pt,color=\cred](.8,2) to[out=down,in=90] (1.5,1) to [out=down,in=45] (.8,-.4);
\draw(.8,-.6) node {$\scriptstyle \red u$};
\draw[-, line width=1.2pt]   (0,1.8) to (0,2.2); 
\draw[-, line width=1pt] (1.8,2) to (1.8,-.2); 
\node at (0, -.5) {$\scriptstyle a$};
\node at (2, -.5) {$\scriptstyle b$};
\node at (2, 2.5) {$\scriptstyle a$};
\node at (0, 2.5) {$\scriptstyle b$};                    
\node at (2,1.2) {$\scriptstyle t$};
\end{tikzpicture}.  
\end{align*}
Summarizing, we have proved that
\begin{align*}
       a~
\begin{tikzpicture}[baseline = 7.5pt, scale=0.35, color=\clr]
\draw[-,line width=1pt,color=\cred](.8,2) to[out=down,in=90] (0.2,1) to [out=down,in=up] (.8,-.4);
\draw(.8,-.6) node {$\scriptstyle \red u$};
\draw[-,line width=1.2pt] (0,-.35) to (1.5,2);
\draw[-,line width=1.2pt](0,2) to  (1.5,-.35);
\node at (0, -.6) {$\scriptstyle a$};
\node at (1.5, -.6) {$\scriptstyle b$};
\end{tikzpicture}
&= a
   ~
\begin{tikzpicture}[baseline = 7.5pt, scale=0.35,color=\clr]
\draw[-,line width=1pt,color=\cred](.8,2) to[out=down,in=90] (1.5,1) to [out=down,in=up] (.8,-.4);
\draw(.8,-.6) node {$\scriptstyle \red u$};
\draw[-,line width=1.2pt] (0,-.35) to (1.5,2);
\draw[-,line width=1.2pt](0,2) to  (1.5,-.35);
\node at (0, -.6) {$\scriptstyle a$};
\node at (1.5, -.6) {$\scriptstyle b$};
\end{tikzpicture}
~
 +
~a\sum_{1\le t\le \min\{a,b\}} t! 
\begin{tikzpicture}[baseline = 7.5pt, scale=0.35, color=\clr]
\draw[-, line width=1pt] (0,-.3) to (0,2.2);
\draw[-, line width=1.2pt](0,-.3) to (0,.4);
\draw[-, line width=1.2pt] (1.8,-.2) to (1.8,.5);
\draw[-, line width=1.2pt] (1.8,1.8) to (1.8,2.2);
\draw[-, line width=1.2pt] (0,.2) to  (1.8,1.8);
\draw[-, line width=1.2pt] (0,1.6) to(1.8,0);
\draw[-,line width=1pt,color=\cred](.8,2) to[out=down,in=90] (1.5,1) to [out=down,in=45] (.8,-.4);
\draw(.8,-.6) node {$\scriptstyle \red u$};
\draw[-, line width=1.2pt]   (0,1.8) to (0,2.2); 
\draw[-, line width=1pt] (1.8,2) to (1.8,-.2); 
\node at (0, -.5) {$\scriptstyle a$};
\node at (2, -.5) {$\scriptstyle b$};
\node at (2, 2.5) {$\scriptstyle a$};
\node at (0, 2.5) {$\scriptstyle b$};                    
\node at (2,1.2) {$\scriptstyle t$};
\end{tikzpicture}.  
\end{align*}
Canceling the nonzero scalar $a$ on both sides proves the relation \eqref{adaptrermovecross}.
\end{proof}
The proof of Theorem \ref{thm:ASchurisom} is completed. 

\begin{rem} 
Over a field $\kk$ of characteristic zero, the cyclotomic Schur category $\Sch_\bfu$ (cf. Definition~\ref{def-of-cyc-Schur}) can be viewed  as the quotient of the $\kk$-linear category $\ASchC$ in Definition \ref{def-affine-SchurC} by the relations in \eqref{cyclotomicpoly} and additional relations  $1_\lambda=0$, for all $\lambda\notin \Lambda_{\text{st}}^{1+\ell}$. This follows by the isomorphism $\ASchC \cong \ASch$ in Theorem ~\ref{thm:ASchurisom}. 
\end{rem}

\vspace{2mm}

\noindent {\bf Funding and Competing Interests.}

LS is partially supported by NSFC (Grant No. 12071346) and Natural Science foundation of Shanghai (Grant No. 25ZR1401352), and he thanks Institute of Mathematical Science and Department of Mathematics at University of Virginia for their hospitality and support. WW is partially supported by DMS--2401351. We thank Holden Eriksson for pointing out some crucial typos. We thank 2 anonymous referees for helpful comments and corrections. 

The authors have no competing interests to declare that are relevant to the content
of this article. The authors declare that the data supporting the findings of this study
are available within the paper.


\bibliographystyle{alpha}
\bibliography{affineSchur}

\begin{thebibliography}{BEAEO20}

\bibitem[AMR06]{AMR06}
Susumu Ariki, Andrew Mathas, and Hebing Rui.
\newblock Cyclotomic {N}azarov-{W}enzl algebras.
\newblock {\em Nagoya Math. J.}, 182:47--134, 2006.

\bibitem[BCNR17]{BCNR}
Jonathan Brundan, Jonathan Comes, David Nash, and Andrew Reynolds.
\newblock A basis theorem for the affine oriented {B}rauer category and its cyclotomic quotients.
\newblock {\em Quantum Topol.}, 8(1):75--112, 2017.

\bibitem[BEAEO20]{BEEO}
Jonathan Brundan, Inna Entova-Aizenbud, Pavel Etingof, and Victor Ostrik.
\newblock Semisimplification of the category of tilting modules for {$GL_ n$}.
\newblock {\em Adv. Math.}, 375:107331, 37, 2020.

\bibitem[BK08]{BK08}
Jonathan Brundan and Alexander Kleshchev.
\newblock Schur-{W}eyl duality for higher levels.
\newblock {\em Selecta Math. (N.S.)}, 14(1):1--57, 2008.

\bibitem[Bru25]{Bru24}
Jonathan Brundan.
\newblock The {$q$}-{S}chur category and polynomial tilting modules for quantum {${\rm GL}_n$}.
\newblock {\em Pacific J. Math.}, 336(1-2):63--112, 2025.

\bibitem[BS24]{BS}
Jonathan Brundan and Catharina Stroppel.
\newblock Semi-infinite highest weight categories.
\newblock {\em Mem. Amer. Math. Soc.}, 293(1459):vii+152, 2024.

\bibitem[CKM14]{CKM}
Sabin Cautis, Joel Kamnitzer, and Scott Morrison.
\newblock Webs and quantum skew {H}owe duality.
\newblock {\em Math. Ann.}, 360(1-2):351--390, 2014.

\bibitem[CPS88]{CPS}
E.~Cline, B.~Parshall, and L.~Scott.
\newblock Finite-dimensional algebras and highest weight categories.
\newblock {\em J. Reine Angew. Math.}, 391:85--99, 1988.

\bibitem[DJM98a]{DJM98}
Richard Dipper, Gordon James, and Andrew Mathas.
\newblock Cyclotomic {$q$}-{S}chur algebras.
\newblock {\em Math. Z.}, 229(3):385--416, 1998.

\bibitem[DJM98b]{DJM98B}
Richard Dipper, Gordon James, and Andrew Mathas.
\newblock The {$(Q,q)$}-{S}chur algebra.
\newblock {\em Proc. London Math. Soc. (3)}, 77(2):327--361, 1998.

\bibitem[DKM25]{DKM25}
Nicholas Davidson, Jonathan Kujawa, and Robert Muth.
\newblock Superalgebra deformations of web categories: {A}ffine and cyclotomic webs.
\newblock {\em arXiv preprint \arxiv{2511.21671}}, 2025.

\bibitem[DS00]{DS00}
Jie Du and Leonard Scott.
\newblock The {$q$}-{${\rm Schur}^2$} algebra.
\newblock {\em Trans. Amer. Math. Soc.}, 352(9):4325--4353, 2000.

\bibitem[Eli15]{El15}
Ben Elias.
\newblock Light ladders and clasp conjectures.
\newblock \arxiv{1510.06840}, 2015.

\bibitem[Eri26]{Er26}
Holden Eriksson.
\newblock A note on a cyclotomic-friendly application of {RSK}.
\newblock \arxiv{2602.18612}, 2026.

\bibitem[Gre93]{Gre93}
J.~A. Green.
\newblock Combinatorics and the {S}chur algebra.
\newblock {\em J. Pure Appl. Algebra}, 88(1-3):89--106, 1993.

\bibitem[GRS23]{GRS}
Mengmeng Gao, Hebing Rui, and Linliang Song.
\newblock Representations of weakly triangular categories.
\newblock {\em J. Algebra}, 614:481--534, 2023.

\bibitem[Los16]{Los16}
Ivan Losev.
\newblock Proof of {V}aragnolo-{V}asserot conjecture on cyclotomic categories {$\mathcal O$}.
\newblock {\em Selecta Math. (N.S.)}, 22(2):631--668, 2016.

\bibitem[Mat03]{Math03}
Andrew Mathas.
\newblock Tilting modules for cyclotomic {S}chur algebras.
\newblock {\em J. Reine Angew. Math.}, 562:137--169, 2003.

\bibitem[MS21]{MS21}
Ruslan Maksimau and Catharina Stroppel.
\newblock Higher level affine {S}chur and {H}ecke algebras.
\newblock {\em J. Pure Appl. Algebra}, 225(8):Paper No. 106442, 44, 2021.

\bibitem[Rou08]{R08}
Rapha\"{e}l Rouquier.
\newblock {$q$}-{S}chur algebras and complex reflection groups.
\newblock {\em Mosc. Math. J.}, 8(1):119--158, 184, 2008.

\bibitem[RSVV16]{RSVV}
Rapha\"{e}l Rouquier, Peng Shan, Michela Varagnolo, and Eric Vasserot.
\newblock Categorifications and cyclotomic rational double affine {H}ecke algebras.
\newblock {\em Invent. Math.}, 204(3):671--786, 2016.

\bibitem[SSW25]{SSW25}
Yaolong Shen, Linliang Song, and Weiqiang Wang.
\newblock Affine and cyclotomic $q$-{S}chur categories via webs.
\newblock \arxiv{2504.10270}, 2025.

\bibitem[SW25]{SWweb}
Linliang Song and Weiqiang Wang.
\newblock Affine and cyclotomic webs.
\newblock {\em J. Lond. Math. Soc. (2)}, 112(3):Paper No. e70278, 43, 2025.

\bibitem[Wad11]{Wad11}
Kentaro Wada.
\newblock Presenting cyclotomic {$q$}-{S}chur algebras.
\newblock {\em Nagoya Math. J.}, 201:45--116, 2011.

\bibitem[Web17a]{WebMemoirs}
Ben Webster.
\newblock Knot invariants and higher representation theory.
\newblock {\em Mem. Amer. Math. Soc.}, 250(1191):v+141, 2017.

\bibitem[Web17b]{Web17}
Ben Webster.
\newblock Rouquier's conjecture and diagrammatic algebra.
\newblock {\em Forum Math. Sigma}, 5:Paper No. e27, 71, 2017.

\bibitem[Yvo06]{Y06}
Xavier Yvonne.
\newblock A conjecture for {$q$}-decomposition matrices of cyclotomic {$v$}-{S}chur algebras.
\newblock {\em J. Algebra}, 304(1):419--456, 2006.

\end{thebibliography}

\end{document}